\documentclass[11pt,pdftex]{scrartcl}

\usepackage{comment}
\usepackage{amsmath,amsthm,amssymb}
\usepackage{mathtools}
\usepackage{stmaryrd}
\usepackage{bbm}
\usepackage[all]{xy}
\SelectTips{cm}{}
\usepackage{sfmath}
\usepackage[utf8]{inputenc}
\usepackage{graphics}
\usepackage{float}
\usepackage{graphicx}
\usepackage{booktabs}
\usepackage{hyperref}
\usepackage{quiver}
\RequirePackage{color}
\definecolor{VeryDarkGreen}{rgb}{0,0.22,0.12}
\definecolor{VeryDarkBrown}{rgb}{0.16,0.12,0.08}
\hypersetup{
bookmarks,
bookmarksdepth=2,
bookmarksopen,
bookmarksnumbered,
pdfstartview=FitH,
colorlinks,backref,hyperindex,
linkcolor=VeryDarkBrown,
citecolor=VeryDarkGreen,
urlcolor=VeryDarkGreen
}
\usepackage{todonotes}
\newcommand{\mydot}{\bullet}

\newcommand{\myR}{{\mathbf R}}
\newcommand{\myL}{{\mathbf L}}
\newcommand{\stens}{\mathbin{\mathop{\otimes}\limits^{!}}}
\newcommand{\bigstens}[1]{\displaystyle \mathop{\bigotimes}^{!}_{#1}}
\newcommand{\gr}{\mathrm{gr}}
\newcommand{\cosimp}[3]{\xymatrix@1{#1 \ar@<.4ex>[r] \ar@<-.4ex>[r] & {\ }#2 \ar@<0.8ex>[r] \ar[r] \ar@<-.8ex>[r] & {\ } #3 \ar@<1.2ex>[r] \ar@<.4ex>[r] \ar@<-.4ex>[r] \ar@<-1.2ex>[r] & \cdots }}

\newcommand{\tld}{\widetilde }

\reversemarginpar

\renewcommand{\mathbb}{\mathbbmss}
\newcommand{\bF}{{\mathbb{F}}}

    \makeatletter
    \@ifdefinable\equationname{\let\equationname\equationautorefname}
    \def\equationautorefname~#1\@empty\@empty\null{(#1\@empty\@empty\null)}%
    \@ifdefinable\AMSname{\let\AMSname\AMSautorefname}
    \def\AMSautorefname~#1\@empty\@empty\null{(#1\@empty\@empty\null)}
    \@ifdefinable\itemname{\let\itemname\itemautorefname}
    \def\itemautorefname~#1\@empty\@empty\null{(#1\@empty\@empty\null)%
    }%
    \makeatother
    \makeatletter
    \renewcommand{\theenumi}{\alph{enumi}}

    \renewcommand{\p@enumii}{\theenumi$\m@th\vert$}
    \renewcommand{\labelitemi}{$\m@th\circ$}
    \renewcommand{\labelitemii}{$\m@th\diamond$}
    \makeatother
    \renewcommand{\phi}{\varphi}
    
    \renewcommand{\theta}{\vartheta}
    
    \renewcommand{\epsilon}{\varepsilon}
\renewcommand{\to}[1][]{\xrightarrow{\ #1\ }}

\newcommand{\onto}[1][]{\protect{\xrightarrow{\ #1\ }\hspace{-0.8em}\rightarrow}}
\newcommand{\into}[1][]{\xhookrightarrow{\ #1\ }}

\RequirePackage{amsthm}
\RequirePackage{aliascnt}
\newcommand{\basetheorem}[3]{%
    \newtheorem{#1}{#2}[#3]
    \newtheorem*{#1*}{#2}
    \expandafter\def\csname #1autorefname\endcsname{#2}
}%
\newcommand{\maketheorem}[3]{%
    \newaliascnt{#1}{#3}
    \newtheorem{#1}[#1]{#2}
    \aliascntresetthe{#1}
    \expandafter\def\csname #1autorefname\endcsname{#2}
    \newtheorem*{#1*}{#2}
}%
\theoremstyle{plain}   %-------------------standard Style-------------------------
\basetheorem{theorem}{Theorem}{subsection}
\maketheorem{prop}{Proposition}{theorem}
\maketheorem{proposition}{Proposition}{theorem}
\maketheorem{cor}{Corollary}{theorem}
\maketheorem{corollary}{Corollary}{theorem}
\maketheorem{lem}{Lemma}{theorem}
\maketheorem{lemma}{Lemma}{theorem}
\maketheorem{claim}{Claim}{theorem}
\maketheorem{const}{Construction}{theorem}
\maketheorem{construction}{Construction}{theorem}

\theoremstyle{definition}    %------------text not italic style------------------

\maketheorem{defn}{Definition}{theorem}
\maketheorem{definition}{Definition}{theorem}

\theoremstyle{remark}    %----------------also text not italic ------------------

\maketheorem{exa}{Example}{theorem}
\maketheorem{example}{Example}{theorem}
\maketheorem{rem}{Remark}{theorem}
\maketheorem{remark}{Remark}{theorem}

\newcommand{\cO}{\mathcal{O}}
\newcommand{\id}{\operatorname{id}}

\newcommand{\Spec}{\operatorname{Spec}}

\newcommand{\Hom}{\operatorname{Hom}}
\newcommand{\Ext}{\operatorname{Ext}}
\newcommand{\RHom}{\myR\!\Hom}
\newcommand{\moduleomegacan}[1]{\omega_{#1}}
\newcommand{\omegacan}[1]{\omega^{\mydot}_{#1}}

\newcommand{\colim}{\operatorname{colim}}

\newcommand{\Perf}{\operatorname{Perf}}
\newcommand{\Ffinites}{\operatorname{Ring}^{p\text{-}\operatorname{gens}}}
\newcommand{\RegFfinites}{\operatorname{RegRing}^{p\text{-}\operatorname{basis}}}
\newcommand{\sHom}{\mathcal{H}om}
\newcommand{\RsHom}{\myR\!\mathcal{H}om}
\newcommand{\Wedge}{\bigwedge}
\begin{document}

% Bitte zunächst dies angemessen ausfüllen
\title{$F$-finite schemes have a dualizing complex}
\author{Bhargav Bhatt, Manuel Blickle, Karl Schwede, Kevin Tucker}
%\date{Aktuelles Datum Eintragen}

%\subtitle{}

\maketitle

\begin{abstract}
In this paper we show that any Noetherian $F$-finite scheme has a dualizing complex $\omegacan{X}$ with the property that for all finite type maps $f \colon X \to Y$ between $F$-finite Noetherian schemes there is a canonical isomorphism $\omegacan{X} \to[\cong] f^!\omegacan{Y}$  in $D^b_{coh}(X)$. This, in particular, applies to the Frobenius morphism $F \colon X \to X$ so that we obtain a canonical isomorphism $\omegacan{X} \to[\cong] F^!\omegacan{X}$.

To prove this, we rely on a result of Gabber that every Noetherian $F$-finite ring is a quotient of a regular ring, from which it follows that every $F$-finite Noetherian scheme has a (potentially non-canonical) dualizing complex \cite{Gabber_someTstructures}.  To make this canonical, we identify the dualizing complex of any $F$-finite Noetherian scheme as a unit of an alternate symmetric monoidal structure on $D^b_{coh}(X)$ we call the $!$-tensor product.  We also sketch an alternate approach to finding this canonical dualizing complex following the more classical     \cite{HartshorneResidues,ConradGDualityAndBaseChange}.  %This alternate approach requires checking numerous compatibilities not all of which we have verified.
%
%This is more elementary at the price of numerous compatibilities which have to be verified, and we only verify it in full detail in the case of nonschemes. The second, much more conceptual proof uses a new symmetric monoidal structure on $D^b_{coh}(X)$ called the $!$-tensor product, and identifies the canonical dualizing complex constructed above as the unit for this monoidal structure.
%
    % In this paper we build on an argument sketched for affine schemes by Gabber in \cite{Gabber_someTstructures} to show that every $F$-finite Noetherian scheme $X$ has a \emph{canonical dualizing complex} $\omegacan{X}$ with the property that for all essentially finite type maps $f \colon X \to Y$ between $F$-finite Noetherian schemes there is a canonical isomorphism $\omegacan{X} \to[\cong] f^!\omegacan{Y}$  in $D^b_{coh}(X)$. Applied to the absolute Frobenius morphism $F \colon X \to X$ we obtain a canonical isomorphism $\omegacan{X} \to[\cong] F^!\omegacan{X}$. 

    % In the second part of the paper we construct a symmetric monoidal structure on $D^b_{coh}(X)$ called the $!$-tensor product, and identify the canonical dualizing complex constructed above as the unit for this monoidal structure. 
\end{abstract}

\tableofcontents

\section{Introduction}

Among the most important rings in algebraic and arithmetic geometry in positive characteristic are the $F$-finite rings, namely commutative rings of characteristic $p>0$ for which the Frobenius endomorphism is finite. Examples include quotients and localizations of polynomial or power series rings over a perfect field. Such rings and schemes are known to avoid various pathologies. For example, every Noetherian $F$-finite ring is excellent of finite Krull dimension \cite{Kunz_noetherianCharP}; moreover, every such ring is a quotient of an $F$-finite regular ring \cite{Gabber_someTstructures}, and hence admits a dualizing complex (cf. \cite[Theorem 3.5.1]{LurieEllipticCohomologyII} and \cite{MaPolstra.FSingularitiesBook} for other expositions of Gabber's result).

Our purpose here is to leverage Gabber's construction further by showing that every Noetherian $F$-finite scheme has a \emph{canonical dualizing complex} compatible with all finite type morphisms. More precisely, we show:

\begin{theorem}[{\autoref{candualffin}, \autoref{FFinPullProper}, \autoref{cor.FiniteTypePullbacksViaMonoidal} {\itshape cf.} \autoref{thm.CanDualRegular}}]
\label{thmdualexistintro}
    For every Noetherian $F$-finite scheme $X$ there exists a canonical dualizing complex $\moduleomegacan{X}^{\mydot} \in D^b_{coh}(X)$ such that if $f : Y \to X$ is any finite type morphism of $F$-finite schemes, there is a canonical isomorphism% in the derived category
    \[ 
         \moduleomegacan{Y}^{\mydot} \to[\cong] f^! \moduleomegacan{X}^{\mydot}.
    \]
    Moreover, if $X$ is an $F$-finite regular integral scheme, we have
\[
    \moduleomegacan{X}^\mydot \cong \Wedge^n \Omega_{X}[n]   
\]
where $n$ is the generic rank of the finitely generated locally free sheaf of (absolute) Kähler differentials $\Omega_X := \Omega_{X/\bF_p}$.
\end{theorem}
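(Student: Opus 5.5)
We plan to build $\omegacan{X}$ in three stages: first for regular $F$-finite rings with a $p$-basis, where there is an explicit candidate; then for arbitrary $F$-finite Noetherian rings by transport along Gabber's presentations; and finally for schemes by gluing. In the regular case, let $R$ be an $F$-finite regular integral domain. Then $\Omega_R=\Omega_{R/\bF_p}$ is finite locally free of rank $n$, and locally it is free on $dx_1,\dots,dx_n$ for a $p$-basis $x_1,\dots,x_n$. We set $\omegacan{R}:=\Wedge^n\Omega_R[n]$, taking the generic rank on each connected component. This is an invertible sheaf shifted by $n$, so it is dualizing because $R$ is regular.

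The central computation in the regular case is a canonical isomorphism $\omegacan{R}\cong F^!\omegacan{R}=\RHom_R(F_*R,\omegacan{R})$, which amounts to a canonical trace $F_*\Wedge^n\Omega_R\to\Wedge^n\Omega_R$. For this we use the Cartier operator, i.e. the inverse of the Cartier isomorphism $C^{-1}\colon\Omega^n_R\to H^n(F_*\Omega^\mydot_R)$. It exists for any $F$-finite regular ring with a $p$-basis and induces a perfect pairing, and it is canonical because it is independent of the $p$-basis. More generally, for a finite type map $f\colon S\to R$ of regular $F$-finite rings we need $\omegacan{S}\cong f^!\omegacan{R}$. We check this on two classes. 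For smooth maps we use $f^!=f^*\otimes\Wedge^d\Omega_{S/R}[d]$ together with the exact sequence $0\to f^*\Omega_R\to\Omega_S\to\Omega_{S/R}\to0$. This sequence is left exact even in the absolute setting, because $p$-bases extend along smooth maps. For closed immersions of regular schemes we use the fundamental local isomorphism $f^!=\Wedge^c(I/I^2)^\vee[-c]$ and the conormal sequence $I/I^2\to\Omega_R\otimes S\to\Omega_S\to0$, which is injective with locally free cokernel by regularity and $F$-finiteness. The sign and ordering conventions must be chosen so that these isomorphisms compose coherently.

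For a general $F$-finite Noetherian ring $A$ we take, by Gabber, a surjection $\pi\colon R\onto A$ with $R$ regular $F$-finite, and define $\omegacan{A}:=\pi^!\omegacan{R}=\RHom_R(A,\omegacan{R})$. To show this is independent of $\pi$, we compare two presentations $R_1\onto A$ and $R_2\onto A$ through $R_1\otimes_{\bF_p}R_2$. That ring need not be Noetherian, so instead we localize, complete, or pass to a finite type regular ring over both that still surjects onto $A$. Working in the ``category of presentations,'' the canonical isomorphisms from the regular case then give a compatible system; this requires checking a cocycle condition on triples. Once independence is established, compatibility with finite morphisms and with open immersions follows formally from the transitivity $(gf)^!\cong f^!g^!$. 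Because the construction is Zariski-local and canonical, it glues to the scheme case. For a general finite type $f\colon Y\to X$ we factor it locally as a closed immersion into $\mathbb A^n_X$ followed by the projection, and obtain $\omegacan{Y}\cong f^!\omegacan{X}$ from the smooth and finite cases.

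We expect the main obstacle to be canonicity, in two forms: making the isomorphisms independent of the chosen presentation and factorization, and verifying the coherence (cocycle) conditions among them. The classical $f^!$ formalism à la Hartshorne and Conrad is notorious for exactly these compatibility checks. To handle it more conceptually, we would first try the alternative the abstract indicates: construct the $!$-tensor product $M\stens N:=\omega\otimes^{\myL}(\RsHom(M,\omega)\otimes^{\myL}\RsHom(N,\omega))$ using any dualizing complex $\omega$, check that the result is independent of $\omega$, and characterize $\omegacan{X}$ intrinsically as its unit, made canonical by the choice of Frobenius structure. Since units of symmetric monoidal structures are unique up to unique isomorphism and $f^!$ for finite type $f$ is symmetric monoidal for $\stens$, the isomorphism $\omegacan{Y}\cong f^!\omegacan{X}$ becomes automatic. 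What remains is to identify this unit with $\Wedge^n\Omega_X[n]$ in the regular case via the Cartier computation above.
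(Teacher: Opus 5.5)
Your primary route is the classical approach that the paper sketches in \autoref{sec.ClassicalDuality} and deliberately does not complete: explicit $\Wedge^n\Omega[n]$ on regular rings via the smooth and lci formulas, then $\omegacan{A}:=\pi^!\omegacan{R}$ for a Gabber presentation, then gluing. It stalls exactly at the step you defer to ``checking a cocycle condition''. The regular case is fine; it is \autoref{thm.CanDualRegPullsback}. The comparison of two presentations $R_1\twoheadrightarrow A$ and $R_2\twoheadrightarrow A$, however, is left unspecified (``localize, complete, or pass to a finite type regular ring over both''). In the paper (\autoref{prop.DualizingComplexesIsomorphicForArbitraryFFiniteRing}) this comparison passes through Gabber's $G(A;x,y)$, which is reached from polynomial extensions of $G(R_1;\bar x)$ and $G(R_2;\bar y)$ only by completion maps. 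Making it canonical would require several things:
base change of $(-)^!$ along completions, which are not residually stable and so lie outside the Hartshorne--Conrad compatibilities;
a pseudofunctorial comparison of the Hartshorne--Conrad $(-)^!$ with the compactification one;
independence of the choice of $p$-generators;
and the cocycle and localization compatibilities needed for gluing and for a canonical $\omegacan{Y}\cong f^!\omegacan{X}$.
The paper obtains only an isomorphism this way. You name the obstacle but supply no mechanism that overcomes it.

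Your fallback does not supply that mechanism either. You presumably mean $\RsHom(\RsHom(M,\omega)\otimes^{\myL}\RsHom(N,\omega),\omega)$; as written, $\omega\otimes^{\myL}(\cdots)$ is contravariant in $M$ and $N$. A $\stens$-product built from a chosen dualizing complex $\omega$ has unit $\omega$ by construction. Replacing $\omega$ by $\omega\otimes L[m]$ changes the product by $-\otimes L^{-1}[-m]$. So the product is not independent of $\omega$, and the unit characterization is circular. Monoidality of $f^!$ is likewise tautological once $f^!\omega$ is used on the source. In any case, non-unital monoidality alone does not force $f^!$ to preserve units; the paper proves this separately in \autoref{FFinPull}.

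Frobenius structures cannot rigidify either. $F^!$ commutes with shifts and $F^!(\omega\otimes L)\cong F^!\omega\otimes L^{\otimes p}$. Hence every shift of a Frobenius-compatible $\omega$, and every twist by a $(p-1)$-torsion line bundle, is again Frobenius-compatible. The isomorphism $F^!\omega\cong\omega$ is also only determined up to units.

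The missing idea is a product defined with no reference to any dualizing complex:
$M\stens_S N:=\RHom_{S\widehat{\otimes_{\mathbb{F}_p}}S}(S,M\widehat{\otimes_{\mathbb{F}_p}}N)$, where $S\widehat{\otimes_{\mathbb{F}_p}}S$ is the Adams completion of $S\otimes_{\mathbb{F}_p}S$ along the multiplication map. Since $S\otimes_{\mathbb{F}_p}S$ is not Noetherian and its diagonal ideal need not be finitely generated, the substance of the paper is showing that this completion is a classical Noetherian ring (\autoref{FfinCompDiag}). That proof uses Gabber presentations, \autoref{PushoutFFin}, Quillen convergence, and a reduction to the Artinian case via Morrow's pro-vanishing. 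It is completed by the boundedness and K\"unneth statements \autoref{BoundExtProduct} and \autoref{ExtProductRHomFFin}.

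The rest then follows in order. For regular $S$, the unit is $\det\Omega_S[n]$ by the fundamental local isomorphism for the regular ideal $J\subset S\widehat{\otimes_{\mathbb{F}_p}}S$ with $J/J^2\cong\Omega_S$ (\autoref{regularFfindiag}); no Cartier computation is involved. The unit is transported to arbitrary $S$ along a Gabber surjection by \autoref{FFinPull}. Canonicity is then uniqueness of units. Schemes are handled by gluing $X\widehat{\times_{\mathbb{F}_p}}X$, proper pullback (\autoref{FFinPullProper}) and Nagata compactification, and Frobenius compatibility is a corollary.
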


Said differently, we aim to show the existence of dualizing complexes on Noetherian $F$-finite schemes which are $(-)^!$-crystals for finite type morphisms. Note in particular, for any $F$-finite Noetherian ring $R$ and any surjection $S \twoheadrightarrow R$ from an $F$-finite regular ring such that $\Omega_S$ has generic rank $n$, our canonical dualizing complex satisfies
\begin{equation}
    \label{eq.CanonicalDualizingComplexForFFiniteRing}
    \omegacan{R} \cong \myR\Hom_S(R, \wedge^n \Omega_S[n])
\end{equation}
and hence the right-hand side is independent of the choice of $S$. However, unlike in the classical settings of duality theory, we do not have the reference point afforded by working (essentially) of finite type over a (nice) fixed base scheme. This is especially relevant for the absolute Frobenius endomorphism, where Theorem~\ref{thmdualexistintro} immediately yields:

\begin{cor} 
\label{cor.UpperShriekDualizes}
If $X$ is an $F$-finite Noetherian scheme, the canonical dualizing complex $\omegacan{X}$ satisfies $F^! \omegacan{X} \cong \omegacan{X}$.
    In other words, there is a canonical $F_*\cO_X$-linear isomorphism 
    \[
        \RsHom_{\cO_X}(F_* \cO_X, \omegacan{X}) \cong F_* \omegacan{X}.
    \]
\end{cor}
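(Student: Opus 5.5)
We obtain this directly from \autoref{thmdualexistintro}, applied to the absolute Frobenius $F \colon X \to X$. Since $X$ is $F$-finite, $F$ is a finite morphism. It is therefore of finite type, and it is a morphism between $F$-finite Noetherian schemes (source and target are both $X$). \autoref{thmdualexistintro} then gives a canonical isomorphism $\omegacan{X} \to[\cong] F^!\omegacan{X}$; this is the first assertion.

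For the second formulation we unwind $F^!$ for a finite morphism. Since $F$ is finite, hence proper and affine, the standard description of the upper shriek applies (as in \cite{HartshorneResidues}). For $K \in D^+_{qc}(X)$, the object $F^!K$ corresponds under the equivalence between quasi-coherent $F_*\cO_X$-modules and quasi-coherent $\cO_X$-modules on the source to
\[
F_* F^! K \cong \RsHom_{\cO_X}(F_*\cO_X, K),
\]
with its natural $F_*\cO_X$-module structure. This identification comes from Grothendieck duality for the finite map $F$. Applying $F_*$ to the isomorphism $\omegacan{X} \cong F^!\omegacan{X}$ gives
\[
F_*\omegacan{X} \cong F_*F^!\omegacan{X} \cong \RsHom_{\cO_X}(F_*\cO_X, \omegacan{X}).
\]
This is $F_*\cO_X$-linear, because $F_*$ of a map of $\cO_X$-modules on the source is linear over $F_*\cO_X$.

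The only point needing care is canonicity. The isomorphism in \autoref{thmdualexistintro} must be the canonical one attached to $f = F$ itself, not the one coming from some auxiliary factorization. The identification $F_*F^! \cong \RsHom(F_*\cO_X,-)$ for finite maps must also be the standard one, and it is canonical. Given the main theorem, there is no real obstacle.
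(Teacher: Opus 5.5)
Your proposal is correct and is essentially the paper's argument: the paper deduces this corollary immediately from \autoref{thmdualexistintro} applied to the finite, hence finite type, absolute Frobenius $F\colon X \to X$. It also records the affine case as \autoref{FrobCompatCanDual}, a consequence of \autoref{FFinPull} for finite maps, and your second formulation is the standard identification $F_*F^! \cong \RsHom_{\cO_X}(F_*\cO_X,-)$ for finite morphisms.
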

In the special case of regular $X$, a more direct proof is contained \autoref{thm.CanDualRegular}. The result is nontrivial even in the regular domain case, as \autoref{exam.EllipticCurve} illustrates. Indeed, for a connected scheme, any two dualizing complexes differ by a shift and tensoring with an invertible sheaf, so any other dualizing complex is, up to shift, of the form $\omegacan{X} \otimes L$ for some invertible sheaf $L$. If we ask that $\omegacan{X} \otimes L$ also satisfy Frobenius compatibility, then we would need
\(
    F^!(\omegacan{X} \otimes L) \cong \omegacan{X} \otimes L.
\)
But since $F^!(\omegacan{X} \otimes L) \cong \omegacan{X} \otimes L^{\otimes p}$, this forces $L^{\otimes (p-1)} \cong \cO_X$, so $L$ must be a $(p-1)$-torsion line bundle. This is not true in general. Thus the desired compatibility with $(-)^!$ for finite type maps, and in particular with Frobenius, depends crucially on our choice of dualizing complex, which in the regular $F$-finite case is determined by the determinant of the K\"ahler differentials.

By applying the lowest nonzero cohomology functor to our dualizing complex we also obtain \emph{canonical} canonical modules. When $X$ is normal, these canonical modules admit an explicit elementary description. In particular, the following is an immediate consequence of the preceding results.
\begin{cor}%[\autoref{cor.CanonicalCanonicalModuleDefinition}]
Suppose that $X$ is a connected, locally equidimensional, $F$-finite Noetherian scheme.
 If $\moduleomegacan{X}$ denotes the lowest nonzero cohomology of $\moduleomegacan{X}^{\mydot}$, then 
$\moduleomegacan{X}$ is a canonical module for $X$ such that 
$
    \sHom_{\cO_X}(F_* \cO_X, \moduleomegacan{X}) \cong F_* \moduleomegacan{X}
$ as $F_*\cO_X$-modules. Moreover, if $X$ is normal and integral, we have
\[
    \moduleomegacan{X} \cong (\Wedge^n \Omega_{X})^{\vee\vee}    
\]
where $n$ is the generic rank of $\Omega_X := \Omega_{X/\bF_p}$  and $(-)^{\vee} = \sHom(-, \cO_X)$ denotes the $\cO_X$-dual.
\end{cor}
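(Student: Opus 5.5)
We will deduce the corollary from \autoref{thmdualexistintro} and \autoref{cor.UpperShriekDualizes} using standard facts about dualizing complexes on locally equidimensional schemes. Since $X$ is connected, we may shift $\omegacan{X}$ locally, and the codimension function it defines is then a genuine codimension function on $X$. Local equidimensionality, together with catenarity (every $F$-finite Noetherian scheme is excellent by \cite{Kunz_noetherianCharP}), implies that this function is constant on the generic points, up to the single global shift allowed by connectedness. Hence the lowest nonzero cohomology $\moduleomegacan{X} = \mathcal{H}^{-d}(\omegacan{X})$ sits in one degree $-d$ everywhere. By the usual local duality argument, at each point $x$ its stalk is a canonical module of $\cO_{X,x}$: it is the lowest cohomology of a normalized dualizing complex, shifted by $d - \dim \cO_{X,x}$. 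We check this stalkwise, using that localization of a dualizing complex is dualizing.

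For the Frobenius compatibility we apply $\mathcal{H}^{-d}$ to the isomorphism $\RsHom_{\cO_X}(F_*\cO_X,\omegacan{X}) \cong F_*\omegacan{X}$ of \autoref{cor.UpperShriekDualizes}. On the right, $F_*$ is exact since $F$ is affine, so we get $F_*\moduleomegacan{X}$. On the left, $\omegacan{X}$ has no cohomology below degree $-d$, so the hypercohomology spectral sequence shows that $\mathcal{H}^{-d}\RsHom(F_*\cO_X,\omegacan{X}) = \sHom(F_*\cO_X, \mathcal{H}^{-d}\omegacan{X})$. The isomorphism we obtain is $F_*\cO_X$-linear because the original one is.

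For the normal integral case, we use that $\moduleomegacan{X}$ is reflexive of rank one, since canonical modules over a normal scheme are $S_2$ and of rank one at the generic point. So it suffices to identify it with $\Wedge^n\Omega_X$ on the regular locus $U$, whose complement has codimension at least $2$. On $U$, \autoref{thmdualexistintro} gives $\omegacan{U} \cong \Wedge^n\Omega_U[n]$, compatibly with the restriction $\omegacan{X}|_U \cong \omegacan{U}$, which comes from the open immersion case of $j^!$. Thus $\moduleomegacan{X}|_U \cong \Wedge^n\Omega_U$, with $d=n$ on $U$ and hence globally by the constancy above. Reflexive sheaves on a normal scheme are determined by their restriction off codimension $2$, and $U$ is open because $X$ is excellent. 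This gives $\moduleomegacan{X} \cong j_*\Wedge^n\Omega_U \cong (\Wedge^n\Omega_X)^{\vee\vee}$, where the last isomorphism holds because the double dual is reflexive and agrees with $\Wedge^n\Omega_X$ on $U$, since $\Omega_X$ is locally free there.

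The main obstacle is the first step: verifying that the lowest cohomology really sits in a single degree and is a canonical module at every point. This requires knowing that the codimension function of $\omegacan{X}$ matches a dimension-type function, and that is exactly where local equidimensionality and connectedness enter. The remaining steps are formal.
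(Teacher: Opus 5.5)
Your argument is correct and is the routine deduction the paper has in mind. The paper gives no separate proof and calls the statement an immediate consequence of \autoref{thmdualexistintro} and \autoref{cor.UpperShriekDualizes}: take $\mathcal{H}^{-d}$ of the Frobenius isomorphism, and in the normal case compare with $\Wedge^n\Omega$ on the regular locus and extend by reflexivity, exactly as you do; note that your constancy-of-$d$ step reads ``locally equidimensional'' as ``every local ring $\cO_{X,x}$ is equidimensional,'' which is the reading under which the statement holds.
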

\noindent
While similar local statements hold for any choice of canonical module, the key point is that the canonicity of our $\moduleomegacan{X}$ ensures global compatibility with the Frobenius endomorphism. 

%\todo[inline]{Insert shriek tensor product stuff. \\ (Kevin: Here's a one paragraph attempt?) \\ BB: I edited it slightly}

%The above results give a {\em construction} of the canonical dualizing complex of a Noetherian $F$-finite scheme $X$ via classical duality theory. In the second part of the paper, we give an {\em intrinsic characterization} of this complex (restricting to the affine case for simplicity). The main observation is that even though the absolute product $X \times_{\mathrm{Spec}(\mathbb{F}_p)} X$ is not Noetherian in general, there is a well-behaved notion of the completion of $X \times_{\Spec(\mathbb{F}_p)} X$ along the diagonal $X \to X \times_{\Spec(\mathbb{F}_p)} X$ as a derived Noetherian scheme (\autoref{FfinCompDiag}). This enables us to construct a symmetric monoidal structure on $D^+_{coh}(X)$ that we call the $\stens$-product (\autoref{stensFFin}).  We then obtain the following. %We then identify the canonical dualizing complex (together with an additional datum) as the unit for the $\stens$-product.

The key ingredient to proving \autoref{thmdualexistintro} is the construction of a new and well-behaved completion $S \widehat{\otimes_{\mathbf{F}_p}} S$ of $S \otimes_{\mathbf{F}_p} S$ for a Noetherian $F$-finite ring $S$, see \autoref{compffin}. This completion allows us to construct a $\stens$-product operation on $D^+_{coh}(S)$ via the formula
\[ M \stens_S N = \RHom_{S \widehat{\otimes_{\mathbf{F}_p}} S}(S, M \widehat{\otimes_{\mathbf{F}_p}} N),\]
see \autoref{stensFFin}. Using this operation, we obtain the following characterization of the canonical dualizing complexes for $F$-finite rings:

\begin{theorem}[\autoref{mainthmdualcanffin} and \autoref{rem.UnitForShriekTesnorIsCanDual}]
    Let $S$ be a Noetherian $F$-finite ring. Then the $\stens$-product is a naturally symmetric monoidal unital product on $D^+_{coh}(S)$. 
    Furthermore, the unit of the $\stens$-product operation is identified as the canonical dualizing complex $\omegacan{S} \in D^b_{coh}(S)$.  \end{theorem}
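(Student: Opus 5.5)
The plan is to reduce everything to the case of an $F$-finite regular ring $P$ surjecting onto $S$, which exists by Gabber's theorem, and to compute the $\stens$-product there. First I construct the completed tensor product. Choose a finite $p$-basis-style presentation: since $F_*S$ is finite over $S$, pick $x_1,\dots,x_n \in S$ generating $S$ over $S^p$ (a ``$p$-generating set''). The kernel $I$ of multiplication $S\otimes_{\bF_p}S \to S$ is then generated by the elements $x_i\otimes 1 - 1\otimes x_i$ up to radical. Indeed, modulo these elements the two copies of $S$ agree on $S^{p^e}[x]$ for all $e$, so every element of $I$ has a $p^e$-th power in the ideal generated by them. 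I would therefore define $S\widehat{\otimes}S$ as the derived $I$-adic completion and check, by comparing with $S\otimes_{S^{p^e}} S$ at each finite level, that it is Noetherian (derived Noetherian in general) and independent of choices. For a regular $P$ with $p$-basis $x_1,\dots,x_n$, Kunz's theorem shows that $P\widehat{\otimes}P \cong P[[t_1,\dots,t_n]]$ with $t_i = x_i\otimes1-1\otimes x_i$, which is regular. The diagonal $P$ is then cut out by the regular sequence $t_1,\dots,t_n$.

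Next I establish the monoidal structure. Unitality, associativity and symmetry of $M\stens_S N=\RHom_{S\widehat\otimes S}(S,M\widehat\otimes N)$ follow formally from the analogous structures on the completed ``$X\times X$'' and ``$X\times X\times X$'': associativity is base change along the two diagonals of the triple completed product, and symmetry comes from the swap. The real content is the existence of a unit and its identification. In the regular case the Koszul resolution gives
\[
M\stens_P N \cong (M\otimes^{\myL}_P N)\otimes_P \det(I/I^2)^{\vee}[-n] \cong M\otimes^{\myL}_P N\otimes_P (\Wedge^n\Omega_P)^{-1}[-n],
\]
using $I/I^2\cong \Omega_P$. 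Hence $\Wedge^n\Omega_P[n]$ is a unit, and by \autoref{thm.CanDualRegular} it is the canonical dualizing complex.

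For general $S = P/J$, I would show that $i^!=\RHom_P(S,-)$ intertwines the $\stens$-products: $i^!M\stens_S i^!N\cong i^!(M\stens_P N)$. This comes from base change for the closed immersion $S\widehat\otimes S\to P\widehat\otimes P$, together with the fact that $\RHom$ commutes with completed tensor products for finite modules. Then $i^!(\Wedge^n\Omega_P[n])$ is a unit for $\stens_S$, and by \eqref{eq.CanonicalDualizingComplexForFFiniteRing} it equals $\omegacan{S}$. Uniqueness of units up to unique isomorphism makes this independent of $P$, which is consistent with \autoref{candualffin}.

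I expect the main obstacle to be the first step: showing that the completion is well behaved (Noetherian, independent of the generators, and compatible with the surjection $P\to S$) when $S$ is not regular. There $S\otimes_{\bF_p}S$ is non-Noetherian and the completion must be taken in the derived sense. My first attempt would be to realize $S\widehat\otimes S$ as $(P\widehat\otimes P)\otimes^{\myL}_{P\otimes P}(S\otimes S)$, completed, and then prove finiteness from the regular case.
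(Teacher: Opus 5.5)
Your overall outline matches the paper's proof of \autoref{mainthmdualcanffin}: compute the unit $\Wedge^n\Omega_P[n]$ on a regular $P$ via the Koszul complex, push it to $S=P/J$ with $i^!$, and invoke uniqueness of units. The gap is in your first step, which everything else rests on. The claim that the diagonal ideal $I$ is the radical of $I_0=(x_i\otimes1-1\otimes x_i)$ is false.

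Modulo $I_0$ the two copies of $S$ are only forced to agree on the subalgebra generated by the $x_i$. On $s^{p^e}$ they differ by $(s\otimes1-1\otimes s)^{p^e}$, which need not lie in $I_0$. What your computation actually shows is the much weaker relation $I=I_0+I^{[p^e]}$ for all $e$.

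Concretely, take $S=\bF_p\llbracket X\rrbracket$ with $p$-basis $X$, which is the kind of ring Gabber's construction outputs. Then $(S\otimes_{\bF_p}S)/I_0=S\otimes_{\bF_p[X]}S$. For $a\in S$ transcendental over $\bF_p(X)$, the element $a\otimes1-1\otimes a$ is not nilpotent there. Since $X\otimes1-1\otimes X$ is a nonzerodivisor, your derived $I_0$-completion has reduction $S\otimes_{\bF_p[X]}S\neq S$. So it is not $S\llbracket t\rrbracket$, and your own regular-case formula fails under your definition. In this example $I$ is not even finitely generated up to radical, so there is no well-behaved derived $I$-adic completion to fall back on. The relation $I=I_0+I^{[p^e]}$ does show that the classical $I$-adic completion is $\lim_e S\otimes_{S^{p^e}}S$. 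However, by the paper's semi-classical remark, this is only known to be a quotient of the correct object when $S$ is not regular.

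This is exactly where the paper needs new input, in three places.
\begin{itemize}
\item It defines $S\widehat{\otimes_{\bF_p}}S$ intrinsically as the Adams completion $\mathrm{Comp}(S\otimes_{\bF_p}S\to S)$. This carries a complete filtration with graded pieces $\mathrm{Sym}^*_S(L_{S/\bF_p})$.
\item \autoref{PushoutFFin} identifies it with the derived base change $(P\widehat{\otimes_{\bF_p}}P)\otimes^{\myL}_{P\otimes P}(S\otimes S)$, with no further completion needed. This works because $S\otimes S$ is perfect over $P\otimes P$, $S\otimes_PS\to S$ is nilpotent on $\pi_0$, and Adams completions compose. Independence of $P$ is then automatic.
\item \autoref{FfinCompDiag} proves the result is classical and Noetherian, by reducing to the Artinian case via Morrow's pro-result on cotangent complexes.
\end{itemize}
Your fallback guesses the pushout, but without an intrinsic definition you get neither independence of $P$ nor a reason it is the completion.

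The same pushout square also supplies the uniform bound of \autoref{BoundExtProduct}. That bound is what makes $\RHom$ commute with completed exterior products (\autoref{ExtProductRHomFFin}); this is not a general fact. Your associativity argument and the monoidality of $i^!$ both depend on it.

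Finally, monoidality of $i^!$ does not by itself make $i^!(\Wedge^n\Omega_P[n])$ a unit, because $i^!$ is not essentially surjective on $D^+_{coh}(S)$. You need the separate base-change argument in the second half of \autoref{FFinPull}: $M\stens_S i^!\omegacan{P}\simeq\RHom_{S\widehat{\otimes_{\bF_p}}P}(S,M\widehat{\otimes_{\bF_p}}\omegacan{P})\simeq M\stens_P\omegacan{P}\simeq M$.
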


\noindent
The reason this observation yields good behavior of the dualizing complex under finite type maps is that the upper shriek functor respects this monoidal structure; see \autoref{FFinPull} and \autoref{FFinPullProper}.

Instead of using the above approach via the $\stens$-monoidal structure, one may attempt to redo the classical approach to Grothendieck duality and dualizing complexes to obtain our main result \autoref{thmdualexistintro}, using the explicit descriptions of $f^!$ for finite and smooth maps as in \cite{HartshorneResidues,ConradGDualityAndBaseChange}.
  We explore this approach in  \autoref{sec.ClassicalDuality}.  Explicitly, in \autoref{thm.CanDualRegPullsback} we prove that $f^!$ yields an isomorphism of canonical dualizing complexes $\omega_S^{\mydot} \cong f^! \omega_R^{\mydot}$ for finite type maps $R \to S$ of $F$-finite regular rings, independent of the factorization of $f$ into polynomial and finite maps.  Here if $R$ is a regular ring such that $\Omega_{R/\bF_p}$ has generic rank $n$, then $\omegacan{R} := \bigwedge^n \Omega_{R/\bF_p}[n]$ as above.  Then, in \autoref{prop.DualizingComplexesIsomorphicForArbitraryFFiniteRing} for any $F$-finite ring $A$, we utilize this to provide a concrete isomorphism between the dualizing complexes $\myR\Hom_R(A, \omegacan{R})$ and $\myR\Hom_S(A, \omegacan{S})$ where $R, S$ are regular rings and there are finite maps $R, S \to A$ that make $A$ into an $R$ (or $S$) module.  However, while we believe this to be possible, we do not develop herein
  a full alternative proof of the main result as this would require lengthy verification that this isomorphism satisfies the necessary additional compatibilities.  For more about what we think would be required to complete this approach, see the discussion at the end of \autoref{ss:CanDualFFin}.

Gabber's construction enters so centrally into both the proofs and the formulation of our main results that it is worth indicating its role already here. In \autoref{sec:gabberconstr} we review the construction itself, and in \autoref{subsection:functorialityofGabber} we establish functoriality and uniqueness properties. % that will be used throughout the paper.
Recall that Gabber's construction associates to each $F$-finite ring $R$ with $p$-generating set $x := x_1, \dots, x_n$ (that is, $R = R^p[x_1, \dots, x_n]$) a regular ring $G(R;x)$ together with a surjection
\[
    G(R; x) \to R
\]
such that some $p$-basis of $G(R;x)$ maps onto $x$.
We also prove several simple functoriality and uniqueness results for this construction which we suspect are known to experts but for which we know of no reference. In particular, we explain how Gabber's construction relates to a surjection $S \to R$ from a regular ring with a $p$-basis (\autoref{lem.UniversalGabberViaCompletion}), and we describe functoriality properties of $G(R;x)$ (\autoref{thm.gabbifyIsRightAdjointCompletion}, \autoref{cor.GabberConstructionCompletedAndCompleted}, \autoref{cor.GabberConstructionAndLocalization}), including an explicit description of what happens when one enlarges the $p$-generating set (\autoref{cor.ExtendingThePGeneratingSet}).

\subsection*{Outline of the paper}

\begin{itemize}
\item \autoref{sec.Preliminaries} provides background on $p$-basis, differential bases and the fundamental local isomorphism.
\item In \autoref{sec.CanDualizingClassical} we recall Gabber's construction of regular rings surjecting onto $F$-finite rings, proving some basic results about it.
\item In \autoref{sec.DualizingComplexesAsUnits}, we prove our main results using the symmetric monoidal structure discussed above.
\item In \autoref{sec.ClassicalDuality}, we sketch an approach to our main results via classical Grothendieck duality.  %However, we do not prove all the necessary details in order to make this fully rigorous in general, see \autoref{ss:CanDualFFin}.
\end{itemize}

\subsection*{Acknowledgements}

The first three authors first considered this problem when visiting Oberwolfach in September 2017.  They also worked on it in MSRI in Spring 2019.  The final three authors worked on this at CIRM Luminy in January 2023.  The authors thank Ofer Gabber for the inspiration for this paper and a number of enlightening conversations on its topic. We further thank Gebhard Böckle, Andy Jiang, Alicia Lamarche,  and Jacob Lurie for numerous valuable discussions.  Finally we thank Shiji Lyu and Eamon Quinlan-Gallego for valuable comments on a previous draft.
\begin{itemize}
\item{} {Bhatt was partially supported by the NSF (\#1801689, \#1952399, \#1840234), the Packard Foundation, and grants from the Simons Foundation (MPS-SIM-00622511, MPS-PERF-00001529-02).}
\item{} {Blickle was partially supported by DFG Grants SFB/TRR45, CRC326 and AEI-DFG \#541528446}
\item{} {Schwede was partially supported by NSF Grants \#1801849  \#2101800, \#2501903, and NSF FRG Grant \#1952522, Simons Foundation Travel Support for Mathematicians SFI-MPS-TSM00013051 and a fellowship from the Simons Foundation.}
\item{} {Tucker was partially supported by
NSF Grants \#2200716, \#2501904 and Simons Foundation Travel Support for Mathematicians SFI-MPS-TSM-00014083.}
\end{itemize}

\section{Preliminaries}
\label{sec.Preliminaries}

\subsection{Preliminaries on $p$-bases and differential bases}\label{pbasisdiffbasis}

Throughout this paper, if $R$ is a ring of characteristic $p > 0$, we use $R^p \subseteq R$ to denote the image of the Frobenius map $F \colon R \to R$.

\begin{defn}[$p$-basis]
    Suppose $R$ is a Noetherian ring and $A \subseteq R$ is a subring.  A subset $\Gamma \subseteq R$ is called \emph{$p$-independent} over $A$ if the set of monomials 
    \[
        \{ x_1^{b_1} \cdot \dots \cdot x_m^{b_m} \;|\; x_i \in \Gamma, x_i \neq x_j \text{ for $i \neq j$}, 0 \leq b_i \leq p-1 \}
    \]
    is linearly independent over the subring $A[R^p]$ of $R$.  It is called a \emph{$p$-generating set} over $A$ if $R = A[R^p,\Gamma]$.  A $p$-independent $p$-generating set is called a \emph{$p$-basis} over $A$. If $A \subseteq R^p$ we may drop ``over $A$'' in the notation.  
\end{defn}

\begin{defn}[Differential basis]
    Suppose $R$ is a Noetherian ring and $A \subseteq R$ is a subring.  A subset $\Gamma \subseteq R$ is called a \emph{differential basis} of $R$ over $A$ if $\{ dx \;|\; x \in \Gamma\}$ is a free $R$-basis of  the module of Kähler differentials $\Omega_{R/A}^1$.
\end{defn}

% By \cite[38.A, page 269]{Matsumura_CommutativeAlgebra} every $p$-basis is a differential basis.  Conversely, %\cite[Theorem 1]{TycDifferentialBasisPBasisSmoothness} or \cite{KimuraNiitsumaDiffBasisPBasisInRLR1984} 
% as we are Noetherian, every differential basis is also a $p$-basis by \cite[Proposition 58]{Andre.HomologieDeFrobenius}, see also \cite[Theorem 1]{TycDifferentialBasisPBasisSmoothness}, \cite{KimuraNiitsumaDiffBasisPBasisInRLR1984}, and \cite[Theorem 11.5]{MaPolstra.FSingularitiesBook}.

By \cite[38.A, page 269]{Matsumura_CommutativeAlgebra} every $p$-basis is a differential basis.  
Conversely, if $R$ is a Noetherian ring of characteristic $p$, then every differential basis of $R$ is also a $p$-basis by \cite[Proposition 58]{Andre.HomologieDeFrobenius}; see also \cite[Corollary 11.6]{MaPolstra.FSingularitiesBook}.
For historical background, see also \cite[Theorem 1]{TycDifferentialBasisPBasisSmoothness} and \cite{KimuraNiitsumaDiffBasisPBasisInRLR1984}; however, as explained in \cite[Remark 11.1 and the discussion preceding Theorem 11.5]{MaPolstra.FSingularitiesBook}, the argument in \cite{TycDifferentialBasisPBasisSmoothness} relies on \cite{FogartyKahlerDifferentialsAndHilberts14th}, whose proof requires additional Noetherian hypotheses.

Let $R$ be $F$-finite. We denote by $\Omega_R$ the absolute Kähler differentials $\Omega^1_{R/\bF_p}$. The cotangent sequence \cite[\href{https://stacks.math.columbia.edu/tag/00RS}{Tag 00RS}]{stacks-project} for the ring maps $\bF_p \to R^p \to R$
\[ 
    \Omega^1_{R^p/\bF_p} \otimes_{R^p} R \to \Omega^1_{R/\bF_p} \to \Omega^1_{R/R^p} \to 0
\] 
shows that $\Omega_R=\Omega^1_{R/\bF_p} \cong \Omega_{R/R^p}^1$, since the image of the first map is generated by $dr^p=pr^{p-1}dr=0$. As $R$ is $F$-finite, $\Omega_{R/R^p}^1$ is a finitely generated $R$-module, hence so is $\Omega_{R}$.

If $R$ is regular and $F$-finite, then the map $\mathbb{F}_p \to R$ is a regular morphism (i.e. flat with geometrically regular fibers) and Popescu's theorem \cite[\href{https://stacks.math.columbia.edu/tag/07GC}{Tag 07GC}]{stacks-project} asserts that $R$ can be written as a directed colimit of smooth $\mathbb{F}_p$ algebras $R_i$. Hence $\Omega_{R} = \colim \Omega_{R_i}$ is a colimit of flat modules and hence $\Omega_R$ is a finitely generated flat $R$-module, i.e. a locally free $R$-module of finite rank. The equivalence of the notions of differential basis and $p$-basis together with Kunz's theorem \cite{Kunz_regular} now imply:

\begin{lem}\label{lem.regularFfiniteHasLocalPbasis}
    A Noetherian $F$-finite ring is regular if and only if it is reduced and locally has a $p$-basis (equivalently a differential basis). 
\end{lem}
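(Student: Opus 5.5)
Both directions reduce to the local case: regularity and the existence of a local $p$-basis are local conditions, and $F$-finiteness, reducedness and the Noetherian property pass to localizations. So fix a Noetherian $F$-finite ring $R$ and a prime $\mathfrak p$, and work with $R_{\mathfrak p}$. Throughout I use the equivalence of $p$-bases and differential bases for Noetherian rings (Matsumura and André, as recalled above), so it suffices to produce, or to use, a differential basis. The other main input is Kunz's theorem: $R$ is regular if and only if Frobenius is flat.

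\emph{Regular implies the condition.} A regular ring is reduced. By the Popescu argument just given, $\Omega_R$ is finitely generated and flat over $R$, hence locally free of finite rank. On a small enough affine open, or after localizing at $\mathfrak p$, choose $x_1,\dots,x_n$ whose differentials $dx_i$ form a free basis of $\Omega_R$; this is possible because $\Omega_R \cong \Omega_{R/R^p}$ is generated by the $dr$. Then $\{x_i\}$ is a differential basis, hence a $p$-basis, on that open set.

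\emph{The condition implies regular.} Suppose $R$ is reduced and, locally, $\Gamma=\{x_1,\dots,x_n\}$ is a $p$-basis. Then $R$ is a free $R^p$-module with the monomial basis. Since $R$ is reduced, Frobenius $F\colon R\to R^p$ is an isomorphism of rings, and under it the inclusion $R^p\subseteq R$ becomes the Frobenius endomorphism of $R$. Hence Frobenius is free, in particular flat, and Kunz's theorem gives that $R$ is regular. The finiteness of $\Gamma$ is not even needed for flatness, but $F$-finiteness makes $\Gamma$ finite locally anyway.

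The parenthetical ``equivalently a differential basis'' is exactly the André/Matsumura equivalence in the Noetherian setting. The only subtle point is the one the text flags: passing from ``differential basis'' to ``$p$-basis'' needs André's result rather than Tyc's argument. Otherwise the proof is a short assembly of cited theorems. The step I would check most carefully is the use of reducedness to identify $R^p\subseteq R$ with Frobenius. Without reducedness, freeness over $R^p$ says nothing about $\ker F$; this is why reducedness appears in the statement.
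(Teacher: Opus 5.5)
Your proof is correct and follows the paper's argument. In one direction, regularity gives via Popescu that $\Omega_R$ is locally free, hence locally a differential basis and so, by Andr\'e, a $p$-basis. In the other, reducedness together with a $p$-basis makes Frobenius free, and Kunz's theorem gives regularity. You have only made explicit two points the paper leaves implicit: the choice of the local basis $dx_i$, and the identification of $R^p \subseteq R$ with Frobenius using reducedness.
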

Without the $F$-finiteness assumption, the result is false, as the example of a power series ring in one variable over a non-$F$-finite field shows \cite[Example 3.8]{KimuraNiitsumaRegularLocalOfCharpAndPBasis1980}.

We point out that there are regular $F$-finite rings $R$ where $F_* R$ is not free and so in particular $R$ does not have a $p$-basis, as the following example shows.

\begin{example}
\label{exam.EllipticCurve}
Consider $R = \bF_2[x,y]/(y^2+xy+y+x^3+x+1)$ an affine chart on a smooth ordinary elliptic curve.  $R$ is then regular and $F$-finite and it is known that $F_* R \cong R \oplus Q$ where $Q$ is the prime ideal whose corresponding point has order $p=2$ in the elliptic curve group, see for instance \cite{SannaiTanaka-ACharacterizationOfOrdinaryAbelian} or \cite[Exercise 2.19]{PatakfalviSchwedeTucker}.  A direct computation shows that $Q = (x+1,y+1)$ is the point of order $2$.  It is easy to verify that $Q$ is not principal.  We also observe that $F_* R \cong R \oplus Q$ is not free since its determinant is also equal to $Q$, which is still not principal.  In particular, the regular ring $R$ does not have a $p$-basis.  Note we used Macaulay2 and Sage to verify some aspects of this particular example \cite{M2,sagemath}.  

Similar examples can also be done with elliptic curves in other characteristics, indeed a similar computation is explored in \cite[Example 5.2]{deStefaniPolstraYao.GlobalizingFInvariants}.
\end{example}

If $R$ is reduced, $F$-finite and has a $p$-basis $x_1,\ldots,x_n$ (i.e. $R$ is in particular regular). Then by the above $dx_1,\dots,dx_n$ is a basis of $\Omega^1_R$ and hence $dx_1\wedge \dots \wedge dx_n$ is a free generator of $\omega_R = \wedge^n\Omega^1_R$, hence $\omega_R \cong R$. We recall the following observation which can also be found in work of Baudin: 

\begin{proposition}[{{\itshape cf.} \cite[Lemma 5.1.14]{Baudin.DualityBetweenCartierCrystalsAndPerverseFPSheaves}}]
    \label{prop.FrobeniusUpperShriekPBasis}
    Suppose $R$ is an $F$-finite regular ring with a $p$-basis $x_1,\ldots,x_n$.  Then projection $x_1^{p-1}\cdot \ldots\cdot x_n^{p-1} \mapsto dx_1 \wedge \dots dx_n$ gives an $F_*R$-module isomorphism
    \[
        F_* F^!\omega_R = Hom_R(F_* R, \omega_R) \cong F_* \omega_R.
    \]
\end{proposition}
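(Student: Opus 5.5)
Since $x_1,\dots,x_n$ is a $p$-basis, $F_*R$ is a free $R$-module with basis the monomials $x^{b} = x_1^{b_1}\cdots x_n^{b_n}$, $0\le b_i\le p-1$. Here $R$ acts on $F_*R$ through $R^p$, and we identify $F_*R$ with $R$ via $F$. Hence $\Hom_R(F_*R,R)$ is free with the dual basis $\{(x^b)^*\}$. Set $\Phi := (x^{p-1})^*$, the projection onto the coefficient of $x_1^{p-1}\cdots x_n^{p-1}$; this is the classical trace map. The statement uses $\omega_R = \wedge^n\Omega_R = R\cdot dx_1\wedge\dots\wedge dx_n$, and we read the projection map as $\phi\colon F_*R\to\omega_R$, $\phi(r) = \Phi(r)\,dx_1\wedge\dots\wedge dx_n$. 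Note that $\Phi$ is a map $F_*R\to R$ in which the target $R$ is the original ring, so $\phi$ is $R$-linear. The claimed isomorphism is then $F_*\omega_R \to \Hom_R(F_*R,\omega_R)$, sending $s\cdot dx_1\wedge\cdots\wedge dx_n$ (with $s\in F_*R$) to $\phi(s\,\cdot\,-)$. This is the map obtained by restricting the evaluation of the $F_*R$-module structure on the Hom at $\phi$, and it is $F_*R$-linear by construction.

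The proof has three steps. First, I would check that $F_*\omega_R$ is free of rank one over $F_*R$, generated by $dx_1\wedge\cdots\wedge dx_n$. Second, I would check that $\Hom_R(F_*R,\omega_R)\cong\Hom_R(F_*R,R)$ is free of rank one over $F_*R$, generated by $\Phi$. This is the standard computation. For $0\le b_i\le p-1$ we have $\Phi(x^{p-1-b}\cdot x^{c}) = \delta_{b,c}$: when $c\neq b$ the product is a monomial $x^{p-1-b+c}$ with some exponent different from $p-1$. Reducing exponents $\ge p$ pulls out $p$-th powers $x_i^p\in R^p$ and leaves a basis monomial other than $x^{p-1}$, so $\Phi$ kills it. Hence $(x^b)^* = \Phi(x^{p-1-b}\,\cdot\,-)$, so $\Phi$ generates. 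Injectivity follows from the same duality: if $\Phi(s\,\cdot\,-)=0$, write $s=\sum s_b x^b$; pairing with $x^{p-1-b}$ recovers each $s_b$, so $s=0$. Third, these two facts together show the map $s\mapsto\phi(s\,\cdot\,-)$ is an $F_*R$-linear isomorphism. The first identification $F_*F^!\omega_R = \Hom_R(F_*R,\omega_R)$ is the definition of $F^!$ for the finite map $F$.

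The main obstacle is not the algebra but the bookkeeping in the second step. One must handle reduction of exponents $\ge p$ carefully, since in $F_*R$ the coefficients are $p$-th powers, and confirm that the whole computation is independent of how elements are written. Freeness of $F_*R$ on the monomials, which is the definition of $p$-basis, makes the expansion unique, so this goes through. I would also remark that the isomorphism depends on the $p$-basis only through $dx_1\wedge\cdots\wedge dx_n$ paired with $\Phi$. Checking this independence, for instance via the Cartier operator interpretation in which $\Phi$ corresponds to $C(x^{p-1}dx/x\ldots)$, is not needed for the statement as posed.
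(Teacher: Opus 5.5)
Your proposal is correct and takes essentially the same approach as the paper. Both arguments use the monomial $R$-basis of $F_*R$ coming from the $p$-basis, take the projection onto the $x_1^{p-1}\cdots x_n^{p-1}$-coefficient (times $dx_1\wedge\cdots\wedge dx_n$) as the generator, and recover the remaining dual-basis projections by precomposing with multiplication by monomials; your explicit check $\Phi(x^{p-1-b}x^{c})=\delta_{b,c}$ and the injectivity argument just spell out details the paper leaves implicit.
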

\begin{proof}
    From the $p$-basis $x_1, \dots, x_n$ we obtain the $R$-basis $F_* x_1^{a_1} \cdots x_n^{a_n}$ with $0 \leq a_i \leq p-1$ of $F_*R$. A free generator of the Hom-set is the map which sends the $F_* x^{p-1}_1\cdot\ldots\cdot x^{p-1}_n$ to $dx_1\wedge\ldots\wedge dx_n$ and the other basis elements to zero. The other projections can be obtained by pre-composing this one by multiplication by basis elements.%Clearly, via evaluation at $x_1^{p-1}\cdot \ldots\cdot x_n^{p-1}$ this is mapped to the free generator $dx_1\wedge\ldots\wedge dx_n$ of $\omega_R$.
\end{proof}
This immediately implies that schemes $f \colon X \to \Spec R$ of finite type over $\Spec R$, where $R$ is regular with $p$-basis, have a dualizing complex $\omega_X^{\mydot} = f^!\omega_R$ that satisfies $F^! \omega_X^{\mydot} \cong \omega_X^{\mydot}$, also see \cite[Corollary 5.1.15]{Baudin.DualityBetweenCartierCrystalsAndPerverseFPSheaves} and \cite[\href{https://stacks.math.columbia.edu/tag/0AU5}{Tag 0AU5}]{stacks-project}. One thing we show in this paper is that this dualizing complex on $X$ is independent of choices.
% This was also observed in recently in \cite[Lemma 5.1.14]{Baudin}] actually we were using since many years that the property that the dualizing complex is unit Cartier transports via essentially finite type maps.   

\subsection{The fundamental local isomorphism -- a key tool for duality}
\label{subsec.FunLocalIso}

Grothendieck duality is essentially the story of the existence of a functor $f^!$ which is the right adjoint of $\myR f_* $ for $f : Y \to X$ a proper morphism of Noetherian schemes.  For non-proper maps, one can define $f^!$ via compactification and restriction to an open set, but there are alternate highly explicit descriptions as well which actually play a key role in the most classical proofs of Grothendieck duality.  The descriptions of $f^!$ for smooth and finite maps of rings are recorded below as they will be important in both the modern and classical approaches to our theorem.  Additionally, we recall the \emph{fundamental local isomorphism} which gives another characterization in the case of an LCI morphism.

Suppose $f : R \to S$ is a finite type map of Noetherian rings.
When $f \colon R \to S$ is smooth (and hence separable) of relative dimension $d$, we have
\[
f^!(\_) \simeq f^\sharp(\_) = \bigwedge^d \Omega_f[d] \otimes_{
S} \myL f^*(\_).
\]

For $f \colon R \to S$ finite, one has
\[
f^!(\_) \simeq f^\flat(\_) := \RHom_{R}(S, \_).
\]
Further assuming that $R \to S$ is an lci surjection with kernel $J$ of codimension $c$, the \emph{fundamental local isomorphism}, recalled immediately below, yields a functorial isomorphism 
\begin{equation}
\label{eq:etadef}
    \eta \colon f^\flat(\_) \to[\cong] \bigwedge^c(J/J^2)^\vee[-c] \otimes_S (S \otimes^{\myL}_R \_ )
\end{equation}
cf. \cite[III, Proposition 7.2]{HartshorneResidues}, \cite[I, Theorem 4.5]{AltmanKleimanGD},  \cite[Equation (2.5.2)]{ConradGDualityAndBaseChange}. We recall the construction of this isomorphism and its proof: 

%\todo[inline]{{\textbf{Karl:}}  This is also used in Bhargav's section.  I will try to move it.}

\begin{lemma}[Fundamental local isomorphism]
\label{dualregular}
If $J$ is a regular ideal of codimension $c$ in a commutative ring $S$, then for any $M \in D(S)$, there is a canonical isomorphism
\[ 
    \eta \colon \RHom_S(S/J,M) \to[\simeq] \bigwedge^c(J/J^2)^\vee[-c] \otimes_{S/J} (S/J \otimes_S^{\myL} M)
\]
in $D(S/J)$ which is functorial in $M$.
\end{lemma}
\begin{proof}
%When regarded as functors of $M \in D(S)$, both sides of the above expression are colimit preserving $S$-linear functors $D(S) \to D(S/J)$: this is clear for the RHS, and follows for the LHS as $S/J \in \Perf(S)$ by regularity of $J \subseteq S$. As any such functor is uniquely determined by its value on $M=S$, it
As $S/J$ is perfect, by tensoring with $M$, it suffices to prove the claim when $M = S$, that is
\[ \RHom_S(S/J,S) \simeq \bigwedge^c(J/J^2)^\vee[-c]\]
in $D(S/J)$. The vanishing in degrees $\neq c$ can be checked locally using a Koszul calculation. The same calculation also shows that the canonical map $\mathrm{Ext}^c_S(S/J,S) \to \mathrm{Ext}^c_S(S/J,S/J)$ is an isomorphism. The identification in top degree then results  from the sequence of natural isomorphisms 
\[ \mathrm{Ext}^c_S(S/J,S) \simeq \mathrm{Ext}^c_S(S/J,S/J) \simeq \mathrm{Tor}_c^S(S/J,S/J)^\vee \simeq \bigwedge^c \mathrm{Tor}_1^S(S/J,S/J)^\vee \simeq \bigwedge^c(J/J^2)^\vee\]
of finite projective $S/J$-modules.
\end{proof}

\section{Gabber's construction}
\label{sec.CanDualizingClassical}

Gabber proved that if $R$ is an $F$-finite ring with a $p$-generating set $x = x_1, \dots, x_n$ (that is, $R = R^p[x_1, \dots, x_n]$), then there exists a regular ring $S$ surjecting onto $R$ such that $S$ admits a $p$-basis mapping onto $x$.

In this section we review this construction and establish several additional properties that will be used later. These results will also play a role in our discussion of the classical approach to dualizing complexes for $F$-finite schemes in \autoref{sec.ClassicalDuality}.

%On a regular $F$-finite scheme the sheaf of Kähler differentials $\Omega_X^1$ is finitely generated and locally free. We take its top exterior power 
%\[ \omegacan{X}= \bigwedge^n \Omega_X^1[n]
%\]
%with $n = \operatorname{rank} \Omega_X^1$ placed in appropriate homological degree as our canonical dualizing complex. We observe that the explicit duality in terms of differential forms of \cite{HartshorneResidues,ConradGDualityAndBaseChange} applies in this context to show that these dualizing complexes for $F$-finite regular schemes are compatible with $f^!$-pullback for finite type maps as well as with localization and completion. 

%A construction of Gabber \cite{Gabber_someTstructures} shows that any $F$-finite Noetherian ring $R$ is the quotient of a regular $F$-finite ring $S$. If $\pi: S \to R$ denotes the quotient map we define 
%\[
%\omegacan{R} = \pi^! \omegacan{S}.
%\]
%By carefully analyzing Gabber's construction we show that this definition is independent of the chosen presentation. This does not follow from classical duality as the comparison map involved is not of finite type -- however it can be reduced to the case of a completion of a regular ring along some ideal. This is our key technical observation, see \autoref{lem.UniversalGabberViaCompletion}.

%Once one has a canonical dualizing complexes on Noetherian $F$-finite affine schemes it is an immediate consequence of the gluing Lemma \cite{BBD} that these local dualizing complexes glue to a unique global dualizing complex which is compatible with $( )^!$-pullback for all finite type maps. 

\subsection{A review of Gabber's Construction}
\label{sec:gabberconstr}

In this mostly expository subsection, we review a construction of Gabber \cite[Remark 13.6]{Gabber_someTstructures} which assigns to any $F$-finite ring $R$ with a tuple $x_1,\ldots,x_n$ of $p$-generators a regular $F$-finite ring $R_\infty$ with a $p$-basis and a surjection $R_\infty \to R$. By tracking the construction carefully, we obtain a more elementary presentation and also gain better control over the dependence on the initial choice of $p$-generators.

Let $R$ be an $F$-finite Noetherian ring of characteristic $p > 0$. We continue to let $R^p$ denote the image $F_R(R)$ of the Frobenius endomorphism $F_R \colon R \to R$.

We have two ring homomorphisms
\begin{equation*}
    \iota \colon R^p \into[r^p \mapsto r^p] R \qquad \mbox{ and } \qquad \phi \colon R \onto[r \mapsto r^p] R^p
\end{equation*}
where $\iota$ is given by the inclusion of $R^p \subseteq R$ and $\phi$ is induced by the Frobenius on $R$. By construction we have $\iota \circ \phi = F_R$, but observe that the same holds for the opposite composition as well $\phi \circ \iota = F_{R^p}$. Moreover, since $R$ is $F$-finite, the map $\iota$ is finite type.% (equivalently, $\phi$ is finite, since it is integral).

Gabber's construction consists of iterating and taking the limit over the following process:
\begin{const}\label{gabbify_first_step}
Let $(R;x)$ be an $F$-finite ring with $x=x_1,\ldots,x_n$ an $n$-tuple of elements of $R$ which is a $p$-generating set of $R$, i.e. $R^p[x] = R$. Define
\[
    R'= R[X_1,\ldots,X_n]/(X_i^p-x_i | i \in \{1,\ldots,n\})
\]
and maps 
\[
    \iota\colon R \into[r \mapsto r] R' \text{ and } \phi\colon R' \onto[rX_i \mapsto r^px_i] R.
\]
Then the following properties are immediate to verify:
\begin{enumerate}
\item $\iota$ is injective and $\phi$ is surjective.
\item $\iota\circ \phi = F_{R'}$ and $\phi \circ \iota = F_R$. In particular, $R'^{p} = \iota(R)$.
\item Write $x_1' =\overline{X_1}, \ldots ,x_n'=\overline{X_n} \in R'$.  Then $x' = x_1', \dots, x_n'$ form a $p$-basis of $R'$ which is mapped bijectively to the $p$-generating set $x_1,\ldots,x_n$ via $\phi$.
\end{enumerate}
\end{const}

%{\color{red}
%\begin{remark}  What happens if we don't choose a $p$-generating set, but instead a finite subset of $R$?
%      Let $(R;x)$ be an $F$-finite ring with $x=x_1,\ldots,x_n$ an $n$-tuple of elements of $R$.
%    % which is a $p$-generating set of $R$, i.e. $R^p[x] = R$. 
%    We may still define
%    \[
%        R'= R[X_1,\ldots,X_n]/(X_i^p-x_i | i \in \{1,\ldots,n\})
%    \]
%    and maps 
%    \[
%        \iota\colon R \into[r \mapsto r] R' \text{ and } \phi\colon R' \onto[rX_i \mapsto r^px_i] R^p[x_1, \dots, x_n] \hookrightarrow R.
%    \]
%   Then the following properties are immediate to verify:
%    \begin{enumerate}
%    \item $\iota$ is injective %and $\phi$ is surjective.
%    \item $\iota\circ \phi = F_{R'}$ and $\phi \circ \iota = F_R$. In particular, $R'^{p} = \iota(R)$.
%    \item Write $x_1' =\overline{X_1}, \ldots ,x_n'=\overline{X_n} \in R'$.  Then $x' = x_1', \dots, x_n'$ form a $p$-independent subset of $R'$ (even $p$-independent over $R$) which is mapped bijectively to $x_1,\ldots,x_n$ via $\phi$.
%    \end{enumerate}
%\end{remark}
%}

Starting from an $F$-finite ring with $p$-generating set $(R;x)$, this construction produces an $F$-finite ring with $p$-basis $(R';x')$, together with a surjection $\phi\colon R' \to R$ identifying the $p$-basis $x'$ of $R'$ with the given $p$-generators $x$ of $R$, and an injection $\iota\colon R \to R'$ whose image is $R'^p$. This characterization determines $(R',x')$ uniquely, although $R'$ is generally not reduced and hence not regular.

We make this precise in the following lemma.% which also provides the key technical statement needed later:
\begin{lem}\label{gabbify_first_step_keyFacts}
   Let $(R;x)$ be an $F$-finite ring with a $p$-generating set $x=x_1,\ldots,x_n$. 
    \begin{enumerate}
    \item $R$ is reduced and $x_1,\ldots,x_n$ is a $p$-basis of $R$ if and only if $\phi\colon R' \to R$ is an isomorphism (which is equivalent to $R'$ being reduced).
    \item If we have a further map
        \[
            (S;y) \to[\psi] (R';x') \to[\phi] (R;x)
        \]
    such that $y$ is a $p$-generating set of $S$ and $\psi$ maps $y$ bijectively to $x'$ then 
    $\ker\psi = \ker (\phi \circ \psi)^{[p]}$.
    \end{enumerate}
\end{lem}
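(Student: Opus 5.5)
For part (b) I read $\ker(\phi\circ\psi)^{[p]}$ as the Frobenius power $I^{[p]}$ of $I:=\ker(\phi\circ\psi)$, i.e.\ the ideal of $S$ generated by $\{f^p \mid f \in I\}$. Both parts rest on one structural observation about $R'$. Via $\iota$, the ring $R'$ is a free $R$-module with basis the monomials $x'^{a}=x_1'^{a_1}\cdots x_n'^{a_n}$, $0\le a_i\le p-1$. This is clear from the presentation $R'=R[X_1,\dots,X_n]/(X_i^p-x_i)$ by monic relations. In these coordinates $\phi$ is the map
\[
\bigoplus_a R\cdot x'^a \longrightarrow R,\qquad (r_a)_a\mapsto \sum_a r_a^p\,x^a .
\]
I will use this description throughout, together with the identity $\iota\circ\phi=F_{R'}$ from \autoref{gabbify_first_step}.

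For (a), first suppose $R$ is reduced and $x$ is a $p$-basis. Surjectivity of $\phi$ is already known, so only injectivity is needed. Since $R$ is reduced, $r\mapsto r^p$ is an isomorphism $R\to R^p$. Hence $\sum r_a^p x^a=0$ with all $r_a^p=0$ forced is exactly the $p$-independence of the $x^a$ over $R^p$, which gives injectivity. Conversely, suppose $\phi$ is an isomorphism. Then $F_R=\phi\circ\iota$ is a composite of injective maps, so $R$ is reduced. Running the displayed identification backwards then gives $p$-independence of $x$. For the parenthetical equivalence:
\begin{itemize}
\item if $\phi$ is an isomorphism, then $R'\cong R$ is reduced;
\item if $R'$ is reduced, then $F_{R'}=\iota\circ\phi$ is injective, so $\phi$ is injective and hence bijective.
\end{itemize}

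For (b), the inclusion $I^{[p]}\subseteq\ker\psi$ is immediate. For $f\in I$ we have $\psi(f^p)=F_{R'}(\psi(f))=\iota(\phi\psi(f))=0$. For the reverse inclusion, let $s\in\ker\psi$. Since $y$ $p$-generates $S$, I can write $s=\sum_a s_a^p y^a$ for some $s_a\in S$. Applying $\psi$ and using $\psi(y)=x'$ gives
\[
0=\psi(s)=\sum_a \psi(s_a)^p\,x'^a=\sum_a \iota\bigl(\phi\psi(s_a)\bigr)\,x'^a .
\]
The $x'^a$ form a free basis of $R'$ over $\iota(R)$, and $\iota$ is injective. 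Therefore $\phi\psi(s_a)=0$, that is $s_a\in I$, for every $a$. It follows that $s\in I^{[p]}$.

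The only point needing care is the freeness of $R'$ over $\iota(R)$ with basis $x'^a$; this is precisely why the coefficients can be read off. Note that the expression $s=\sum s_a^p y^a$ need not be unique, since $y$ is only $p$-generating. This causes no problem: the argument shows that every such expression has all coefficients in $I$, and one expression suffices.
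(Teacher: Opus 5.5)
Your proof is correct and follows the paper's argument essentially step for step. In (a) you read off coefficients in the basis $x'^a$ over $\iota(R)$ and use reducedness to kill the $r_a$. In (b) you expand $s=\sum s_a^p y^a$, apply $\psi$, and use that the $x'^a$ are a basis over $\iota(R)$ with $\iota$ injective, which forces every $s_a$ into $\ker(\phi\circ\psi)$. Beyond the paper, you also verify the parenthetical equivalence with reducedness of $R'$, which the paper leaves implicit, and you correctly write $F_R=\phi\circ\iota$ where the paper's text has the composition in the reverse order.
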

\begin{proof}
(a) We use multi-index notation such that $x^i$ is short for $x_1^{i_1}x_2^{i_2}\cdots x_n^{i_n}$ with $0 \leq i \leq p-1$ meaning that $0 \leq i_j \leq p-1$ holds for all $i_j$ in the entries of the tuple $i=(i_1,\ldots,i_n)$. Let $a = \sum_{0 \leq i \leq p-1} \iota(r_{i})x'^{i}$ be an element of $\ker \phi$ written (uniquely) in the $p$-basis $x_1',\ldots,x_n'$ with $r_{i} \in R$. 
Then $0 = \phi(a)=\sum_{0 \leq i \leq p-1} r_{i}^px^{i}$ as $\phi \circ \iota = F_R$ and $\phi(x')=x$. If $x$ is a $p$-basis of $R$ we may conclude that all coefficients $r_{i}^p$ are also zero. If $R$ is reduced then $F_R$ is injective and it follows that all $r_{i}$ are zero, and hence $a=0$. This shows that $\phi$ is injective. Since $\phi$ is always surjective it follows that $\phi$ is an isomorphism.

Conversely, if $\phi$ is an isomorphism, then $F_R=\iota \circ \phi$ is injective. Hence $R$ is reduced and the induced map $R \to R^p$ is a ring isomorphism. Thus the $p$-independence of $x'$ over ${R'}^p=\iota(R)$ in $R'$ implies the $p$-independence of $x$ over $R^p$ in $R$, by construction of $\phi\colon R' \to R$.

(b) We first show that $(\ker (\phi \circ \psi))^{[p]} \subseteq \ker \psi$. Since $F_{R'} = \iota \circ \phi$ and $\iota$ is injective, we have
\[
    \ker(\phi \circ \psi)=\ker(\iota \circ \phi \circ \psi) = \ker(F_{R'} \circ \psi) = \ker(\psi \circ F_{S}),
\]
hence $\ker(\phi \circ \psi)^{[p]} = \ker(\psi \circ F_S)^{[p]} \subseteq \ker\psi$. Conversely, let $a \in \ker \psi$. As $y$ is a $p$-generating set of $S$, we may write $a = \sum_{0 \leq {i} \leq {p-1}} s_{{i}}^p {y}^{{i}}$ for suitable $s_i \in S$. Applying $\psi$ one obtains 
\[
    0 = \psi(a) = \sum_{0 \leq i \leq p-1} \psi(s_{{i}})^p {\psi(y)}^{{i}} = \sum_{0 \leq i \leq p-1} (\iota\circ\phi\circ\psi)(s_{{i}}) {x'}^{{i}}
\]
using $F_{R'}=\iota \circ \phi$ by \autoref{gabbify_first_step} (b), and the assumption that $\psi$ maps $y$ to $x'$ bijectively. Also by \autoref{gabbify_first_step} (c), the $x'$ form a $p$-basis of $R'$ so we conclude that all $(\iota\circ\phi\circ\psi)(s_{{i}})$ are zero. Since $\iota$ is injective it follows that $s_{{i}} \in \ker(\phi\circ\psi)$. But now $a=\sum_{0 \leq i \leq p-1} s_{{i}}^p {y}^{{i}} \in \ker(\phi \circ \psi)^{[p]}$ and the result follows.
\end{proof}

Iterating \autoref{gabbify_first_step} and taking the limit one obtains Gabber's construction of a reduced $F$-finite ring $R_\infty$ with a $p$-basis surjecting onto $R$. 
\begin{const}\label{gabbify}
    Let $(R;x)$ be an $F$-finite ring with $p$-generating set $x$. Set $R_0 \coloneqq R$ and $R_i \coloneqq R_{i-1}'$ with maps $\phi\colon R_i \to R_{i-1}$ from the preceding construction. Define:
    \[
        G(R;x) \coloneqq \varprojlim_\phi R_i = \varprojlim \big( \dots \to[\phi] R_i \to[\phi] R_{i-1} \to[\phi] \dots \to[\phi] R_1 \to[\phi] R \big) 
    \]
    Denote the natural maps $G(R;x) \to R_i$ by $\phi_i$ and the kernel of $\phi_0 \colon G(R;x) \to R$ by $I$. Then the following properties hold:
    \begin{enumerate}
        \item The system of maps $\phi \colon R_i \to R_{i-1}$ from \autoref{gabbify_first_step}  
        defines a morphism of inverse systems inducing the identity on $R_\infty = \varprojlim_\phi R_i$. It follows that the maps $\iota \colon R_i \to R_{i+1}$, viewed as a map between limit systems, induce the Frobenius on $R_\infty$. In particular, since the limit of injective maps is injective, the Frobenius $F_{R_\infty}$ is injective, and hence $G(R;x)$ is reduced. 
        \item For $i \geq 1$, the $p$-bases of the rings $R_i$ are compatible with the maps in the inverse system by \autoref{gabbify_first_step}, and hence determine a $p$-basis $x_\infty$ of $G(R;x)$. We suppress this $p$-basis from notation since it is in natural bijection with the $p$-generating set $x$ of $R$. 
        \item For all $i$ we have $\ker \phi_i = I^{[p^i]}$.
        \item $G(R;x) \cong \varprojlim G(R;x)/I^{[p^i]}$. In particular, $\bigcap I^{[p^i]} = 0$. 
        \item The ideal $I$ is finitely generated.
        \item $G(R;x)$ is a Noetherian, $I$-adically complete reduced ring with $p$-basis. Hence $G(R;x)$ is a regular ring. 
    \end{enumerate}
\end{const}
\begin{proof}
(a) and (b) are immediate. For (c), the case $i=0$ is tautological. Applying \autoref{gabbify_first_step_keyFacts} (b) to $R_\infty \to R_i \to R_{i-1}$ shows inductively that $\ker \phi_i = \ker(\phi_{i-1})^{[p]}=(I^{[p^{i-1}]})^{[p]}=I^{[p^i]}$. As $R_i = R_\infty/\ker\phi_i$ (d) is a direct consequence of (c). 

For (e) choose elements $g_1,\ldots,g_m \in I$ such that their images $\phi_1(g_j)$ generate $\phi_1(I) = \ker(\phi\colon R_1 \to R) \subseteq R_1$. This is possible since $\phi_1$ is surjective. Clearly, $I=(g_1,\ldots,g_m)+\ker \phi_1$. By part (c) $\ker \phi_1 = I^{[p]}$ so we conclude $I=(g_1,\ldots,g_m)+I^{[p]}$. Applying Frobenius powers to this equation yields
\begin{equation}\label{eq:IcontainedIp}
    I^{[p^e]} = (g_1^{p^e},\ldots,g_m^{p^e})+I^{[p^{e+1}]} \text{ for all $e \geq 0$.}
\end{equation}
Let $x \in I$ be arbitrary. We want to show that $x$ can be expressed as a combination of the elements $g_1,\ldots,g_m$. The equality $I=(g_1,\ldots,g_m)+I^{[p]}$ allows us to find $w_{1,0},\ldots,w_{m,0} \in R_\infty$ such that 
\[
    x - \sum_{i=1}^m w_{i,0}g_i \in I^{[p]}.
\]
As by \autoref{eq:IcontainedIp} $I^{[p]} = (g_1^p,\ldots,g_m^p)+I^{[p^2]}$ we find $w_{1,1},\ldots,w_{m,1} \in R_\infty$ such that
\[
    x - \sum_{i=1}^m w_{i,0}g_i - \sum_{i=1}^m w_{i,1}g_i^p \in I^{[p^2]}.
\]
Proceeding inductively suppose $w_{1,k},\ldots,w_{m,k} \in R_\infty$ for $0 \leq k \leq e-1$ have been chosen such that $x - \sum_{k=0}^{e-1} \sum_{i=1}^m w_{i,k}g_i^{p^k} \in I^{[p^{e}]}$. Then \autoref{eq:IcontainedIp} shows we may find $w_{1,e},\ldots,w_{m,e} \in R_\infty$ such that:
\[
    x - \sum_{k=0}^{e} \sum_{i=1}^m w_{i,k}g_i^{p^k} \in I^{[p^{e+1}]}
\]
By description of $G(R;x) = \varprojlim R_{\infty}/I^{[p^e]}$ in part (d) it follows that 
\[
    x = \sum_{k=0}^{\infty} \sum_{i=1}^m w_{i,k}g_i^{p^k} = g_1(\sum_{k=0}^\infty w_{1,k}g_1^{p^k-1})+\ldots+g_m(\sum_{k=0}^\infty w_{m,k}g_m^{p^k-1})
\]
Each of the series $b_i \coloneqq \sum_{k=0}^\infty w_{i,k}g_i^{p^k-1}$ converges in $G(R;x) = \varprojlim G(R;x)/I^{[p^e]}$ as $g_i \in I$ and $p^k-1 \geq p^{k-1}$ for $k \geq 1$. This shows that $x = b_1g_1 + \ldots + b_mg_m$ lies in $(g_1,\ldots, g_m)$ concluding our argument. 

Finally, since $I$ is finitely generated, its Frobenius powers $I^{[p^e]}$ are necessarily cofinal with its ordinary powers $I^n$. Hence $G(R;x)$ is $I$-adically complete. Since $G(R;x)/I = R$ is Noetherian, it follows \cite[\href{https://stacks.math.columbia.edu/tag/05GH}{Tag 05GH}]{stacks-project} that $G(R;x)$ is Noetherian as well. Since $G(R;x)$ is reduced (a) and has a $p$-basis (b) it follows by the result of Kunz \cite[\href{https://stacks.math.columbia.edu/tag/0EC0}{Tag 0EC0}]{stacks-project}, \cite{Kunz_regular} that $G(R;x)$ is regular.  
\end{proof}

%As an immediate consequence to this construction one obtains that every $F$-finite ring is a quotient of a regular $F$-finite ring and hence has a dualizing complex. 

%\begin{cor}[{\emph{c.f.} \cite[Corollary 5.1.1]{Baudin.DualityBetweenCartierCrystalsAndPerverseFPSheaves}}]
 %   \label{prop.DualizingComplexAffine}
  %  Let $R$ be a Noetherian and $F$-finite ring. Then $R$ has a dualizing complex $\omega^\bullet_{R}$ which is equipped with a quasi-isomorphism 
   % \[
    %    \kappa\colon \omega^\bullet_{R} \to[\cong] F^! \omega^\bullet_{R} = \RHom_R(F_*R, \omegacan{R})
%    \]
%\end{cor}
%\begin{proof}
 %   By \autoref{gabbify} let $S=G(R;x)$ be a regular $F$-finite ring with $p$-basis with a surjection $\iota: S \onto R$. Now apply \autoref{prop.FrobeniusUpperShriekPBasis} and the discussion immediately after.
  %\end{proof}

%The statement of this corollary, the existence of a dualizing complex equipped with such a quasi-isomorphism, is a standard assumption in many papers on birational geometry in positive characteristic. In order to globalize this result we need to show that this construction of the dualizing complex $\omega_R^\bullet$ can be made unique, i.e. independent of the surjection $S \to R$.  We also would like the  dualizing complex to be compatible with arbitrary finite type maps. 

%In order to tackle this in the following subsections w

\subsection{Functoriality and compatibility of Gabber's construction}
\label{subsection:functorialityofGabber}
We now record some functoriality properties of Gabber's construction.

\begin{defn}
Fix a prime $p$. Let $\Ffinites$ be the category of pairs $(R;x)$ where $R$ is an $F$-finite Noetherian ring and $x=(x_1,\ldots, x_n)$ is a finite tuple of $p$-generators of $R$, i.e. a tuple exhibiting the $F$-finiteness of $R$. A morphism 
\[
\phi: (R;x) \to (S;y)
\]
is a ring map $\phi\colon R \to S$ and a map of tuples $\phi'\colon x \to y$ such that $\phi'(x_i)$ coincides with $\phi(x_i)$ as elements of $S$ (warning, there may be repeated entries). Let $\RegFfinites$  denote the full subcategory consisting of those pairs $(R;x)$ such that $R$ is regular and $x$ is a $p$-basis of $R$. 
\end{defn}

\begin{prop}\label{thm.gabbifyIsRightAdjointCompletion}
The assignment $(R,x) \mapsto G(R;x) \big(= (G(R;x),x_\infty) \;\big)$ of \autoref{gabbify} is a functor 
\[
    G\colon \Ffinites \to \RegFfinites
\]
which is right adjoint to the inclusion $\iota \colon \RegFfinites \subseteq \Ffinites$. The unit of adjunction $\epsilon\colon \id \to G\circ\iota$ is an isomorphism and the counit $\delta\colon \iota \circ G \to \id$ is a surjection. In particular, $G$ is essentially surjective. 
\end{prop}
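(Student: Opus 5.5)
The approach is to prove the universal property of $G(R;x)$ directly, one level of the tower at a time. Fix $(S;y) \in \RegFfinites$ and $(R;x) \in \Ffinites$. I will show that composition with $\phi_0 \colon G(R;x) \to R$ gives a bijection
\[
  \Hom_{\RegFfinites}\big((S;y), G(R;x)\big) \to \Hom_{\Ffinites}\big((S;y),(R;x)\big),
\]
natural in both variables. Once this holds, the usual formal argument makes $G$ a functor on morphisms, as the map corresponding to $(G(R;x);x_\infty) \to (R;x) \to (T;z)$. It also makes $G$ right adjoint to the inclusion, with counit $\delta = \phi_0$. Here $\phi_0$ is surjective by \autoref{gabbify}.

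First I would observe that \autoref{gabbify_first_step} is functorial. A morphism $\psi \colon (A;a) \to (B;b)$ with tuple map $\psi' \colon a \to b$ induces $\psi^{(1)} \colon A' \to B'$. On coefficients it acts by $\psi$, and it sends $X_j$ to the variable indexed by $\psi'(a_j)$; this is well defined because $\psi(a_j) = \psi'(a_j)$ is a $p$-th root relation already present in $B'$. The resulting map commutes with the maps $\phi$ and $\iota$, and it sends $a'$ to $b'$ compatibly with $\psi'$.

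Next take $(S;y)$ regular with $p$-basis $y$. Since $S$ is reduced, \autoref{gabbify_first_step_keyFacts}(a) shows that $\phi \colon S' \to S$ is an isomorphism, and iterating gives $S_i \cong S$ for all $i$. Hence $G(S;y) \cong S$ with $y_\infty \mapsto y$, which is the statement that the unit $\epsilon$ is an isomorphism. Essential surjectivity of $G$ then follows immediately.

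For existence of lifts, start from a morphism $\psi_0 \colon (S;y) \to (R;x) = (R_0;x)$. Inductively set $\psi_i$ to be the composite
\[
  S \cong S' \to[\psi_{i-1}^{(1)}] R_{i-1}' = R_i .
\]
The functoriality above gives $\phi \circ \psi_i = \psi_{i-1}$ and shows that $\psi_i$ carries $y$ to the $p$-basis of $R_i$ via the same tuple map. Passing to the limit produces $\tilde\psi \colon S \to G(R;x)$ with $\phi_0 \circ \tilde\psi = \psi_0$, and $\tilde\psi$ respects the $p$-bases.

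For uniqueness, suppose $\chi_i \colon S \to R_i$ is any compatible system sending $y$ to $x^{(i)}$. Since $\iota \circ \phi = F_{R_i}$, we get
\[
  \chi_i(s^p) = \chi_i(s)^p = \iota\big(\chi_{i-1}(s)\big).
\]
Because $S = S^p[y]$, the map $\chi_i$ is determined by $\chi_{i-1}$ together with the values on $y$. By induction on $i$, every $\chi_i$ is therefore determined by $\psi_0$.

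Naturality in $(R;x)$ follows from the same uniqueness. The same argument also yields functoriality of $G$ on non-regular sources: a morphism $(R;x) \to (T;z)$ induces compatible maps $R_i \to T_i$ by iterating the construction, and these are compatible with the $p$-bases.

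I expect the main obstacle to be bookkeeping rather than mathematics. Tuples may contain repeated entries while a $p$-basis cannot, so one must check carefully that the tuple maps are respected at every stage. One must also confirm that the induced map $S' \to R_{i-1}'$ is well defined, which amounts to verifying $\psi_{i-1}(y_j) = x^{(i-1)}_{k(j)}$ at each step of the induction.
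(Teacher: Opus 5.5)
Your proposal is correct and follows essentially the same route as the paper: lift morphisms level by level using the functoriality of \autoref{gabbify_first_step}, pass to the limit, and use \autoref{gabbify_first_step_keyFacts} (a) to see that the tower of a regular $(S;y)$ is constant, so that $\phi_0$ is an isomorphism there. The only difference is packaging. You verify the universal property of $\delta=\phi_0$ directly, with an explicit uniqueness argument via $\chi_i(s^p)=\iota(\chi_{i-1}(s))$ and $S=S^p[y]$. The paper instead defines $G$ on morphisms first and takes $\epsilon$ to be the inverse of $\delta\iota$; it only asserts the uniqueness and the triangle identities, which your argument actually supplies.
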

\begin{proof}
The key assertion of \autoref{gabbify} is that $G$ indeed lands in $\RegFfinites$. If $\psi \colon (R;x) \to (S;y)$ is a morphism in $\Ffinites$, then \autoref{gabbify_first_step} yields a commutative diagram
\[
\xymatrix{
(R;x) \ar[r]^\psi & (S;y) \\
(R_1;X) \ar[u]^{\phi_R}\ar[r]^{\psi_1} & (S_1;Y) \ar[u]_{\phi_S}
}
\]
where
\[
\psi_1 \colon R_1=\frac{R[X_1,\ldots,X_n]}{(X_i^p - x_i| i\in 1,\ldots n )} \to \frac{S[Y_1,\ldots,Y_m]}{(Y_i^p - y_i| i\in 1,\ldots m )}=S_1
\]
is the unique map which is $\psi$ on the coefficients and maps the class of $X_i$ to the class of $Y_j$ if $\psi'(x_i)=y_j$ in the map of tuples. Since the $x$'s and $X$'s are in bijection this is also the only map which lifts $\psi$ to a map $(R_1;X) \to (S_1;Y)$ in $\Ffinites$. By iterating and taking the limit as in \autoref{gabbify} we obtain an induced map $\psi_\infty \colon G(R;x) \to G(S;y)$ which makes the diagram
\[
\xymatrix{
(R;x) \ar[r]^\psi & (S;y) \\
G(R;x) \ar[u]^{\delta}\ar[r]^{\psi_\infty} & G(S;y) \ar[u]_{\delta}
}
\]
commute. Again, since the vertical maps induce bijections on the level of $p$-generating sets, the map $G(R;x) \to G(S;y)$ is unique. It follows that the construction is compatible with composition.

The counit of adjunction $\delta \colon \iota \circ G \to \id$ is given by the surjection $\phi_0\colon G(R;x) \to R$ of \autoref{gabbify}, which identifies the $p$-basis $x_\infty$ of $G(R;x)$ with the given $p$-generators $x$ of $R$. By \autoref{gabbify_first_step_keyFacts} (a), the restriction of the counit to $\RegFfinites$,
\(
    \delta\iota \colon \iota \circ G \circ \iota \to \iota,
\)
is an isomorphism. Since $\iota$ is fully faithful, this determines the unit
\(
    \epsilon \colon \id \to G \circ \iota
\)
as the corresponding inverse map. The required adjunction identities are then immediate.
\end{proof}

The following proposition computes the adjoint map of $(S;x) \to (R;y)$ in the case that the ring map is surjective and induces a bijection on the level of the $p$-generators.

\begin{prop}\label{lem.UniversalGabberViaCompletion}
    Suppose $S$ is a regular ring with a $p$-basis $x = x_1, \ldots, x_n$ and $\pi \colon S \onto R := S / J$ is a surjective ring map. Then the $y_i=\pi(x_i)$ are a $p$-generating set for $R$. The surjection $(S;x) \to (R;y)$ factors uniquely through 
    \[
       (S;x) \to[\pi_\infty] G(R;y) \to[\phi_0] R
    \]
    and the map $\pi_\infty \colon (S;x) \to G(R;y)$ is just the completion of $S$ along $J$.  In particular, $G(R;y) \cong \widehat{S}^J$.
\end{prop}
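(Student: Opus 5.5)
First, since $S = S^p[x]$ and $\pi$ is a surjective ring map with $\pi(S^p) \subseteq R^p$, we get $R = \pi(S) = R^p[y]$, so $y$ is a $p$-generating set. The map $(S;x)\to(R;y)$ is a morphism in $\Ffinites$ with $S$ in $\RegFfinites$. By the adjunction of \autoref{thm.gabbifyIsRightAdjointCompletion}, it factors uniquely as $\phi_0\circ\pi_\infty$ with $\pi_\infty\colon (S;x)\to G(R;y)$ a map in $\RegFfinites$, namely $\pi_\infty = G(\pi)\circ\epsilon$, i.e.\ the limit of the compatible lifts $\pi_i\colon S\to R_i$. These lifts are constructed inductively as in the proof of \autoref{thm.gabbifyIsRightAdjointCompletion}: $\pi_i$ is the unique map that lifts $\pi_{i-1}$ and sends $x$ to the $p$-basis of $R_i$.

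The main step is to compute the kernels $\ker\pi_i$. We apply \autoref{gabbify_first_step_keyFacts}(b) to $(S;x)\to[\pi_i](R_i;x^{(i)})\to[\phi] (R_{i-1};x^{(i-1)})$, where $\pi_i$ sends the $p$-generating set $x$ bijectively to the $p$-basis of $R_i$. This gives $\ker\pi_i = (\ker \pi_{i-1})^{[p]}$, so inductively $\ker\pi_i = J^{[p^i]}$. Next we check surjectivity of each $\pi_i$. The image of $\pi_i$ contains the $p$-basis $x^{(i)}$. It also contains $\iota(R_{i-1}) = R_i^p$, because $\pi_i(s^p)=\pi_i(s)^p = \iota\phi\pi_i(s)=\iota\pi_{i-1}(s)$ and $\pi_{i-1}$ is surjective by induction. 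Hence the image is $R_i^p[x^{(i)}]=R_i$. Therefore $R_i\cong S/J^{[p^i]}$ compatibly with the transition maps.

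Passing to the limit gives $G(R;y)=\varprojlim R_i\cong \varprojlim S/J^{[p^i]}$, and $\pi_\infty$ is the natural map. Since $S$ is Noetherian, $J$ is finitely generated, so the Frobenius powers $J^{[p^i]}$ are cofinal with the ordinary powers $J^n$: we have $J^{[p^i]}\subseteq J^{p^i}$ and $J^{N}\subseteq J^{[p^i]}$ for $N \geq m(p^i-1)+1$, where $m$ is the number of generators of $J$. Hence $\varprojlim S/J^{[p^i]}\cong \widehat{S}^J$, and $\pi_\infty$ is the completion map.

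I expect the main obstacle to be the kernel computation, specifically checking that the hypotheses of \autoref{gabbify_first_step_keyFacts}(b) hold at each stage. This requires the lift $\pi_i$ to send $x$ bijectively onto the $p$-basis of $R_i$, which holds by construction of the morphisms in $\Ffinites$. The rest is bookkeeping with the adjunction and cofinality.
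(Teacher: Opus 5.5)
Your proposal is correct and follows essentially the same route as the paper. Both arguments compute $\ker\pi_i = J^{[p^i]}$ inductively via \autoref{gabbify_first_step_keyFacts}(b), identify $R_i \cong S/J^{[p^i]}$, and pass to the limit using cofinality of Frobenius powers with ordinary powers of the finitely generated ideal $J$; your explicit proof that each $\pi_i$ is surjective (which the paper only asserts ``by construction'') and your explicit cofinality bound are useful added detail.
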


\begin{proof}
    The argument is essentially the same as in \autoref{gabbify} (d): Consider the commutative diagram
    \[
    \xymatrix{
     &&&&&& S \ar[d]^{\pi} \ar[dl]^{\pi_1}\ar[dlll]^{\pi_i} \ar[dllllll]_{\pi_\infty} \\
     G(R;y)  \ar[rrr]_{\phi_i} & & &  R_i\ar[r]_{\phi}  & \ldots  \ar[r]_\phi & R_1  \ar[r]_{\phi} & R
    }
    \]
    where the bottom row is \autoref{gabbify} for $(R;{y})$ noting that the system for $(S;{x})$ is constant due to \autoref{gabbify_first_step_keyFacts} (a).  By construction all maps $\pi_i$ are surjective, but $\pi_\infty$ need not be. We claim that $\ker \pi_i = (\ker \pi)^{[p^i]}=J^{[p^i]}$ for all $i \geq 0$. For $i=0$, this is clear. Proceeding by induction on $i$, we apply \autoref{gabbify_first_step_keyFacts} (b) to the maps $\pi_{i-1} \colon S \to[\pi_i] R_i \to[\phi] R_{i-1}$ to conclude
    \[
        \ker \pi_i = (\ker \pi_{i-1})^{[p]} = (J^{[p^{i-1}]})^{[p]} = J^{[p^i]}.
    \]
    Therefore 
    \[
        G(R;y) = \varprojlim R_i = \varprojlim S/\ker \pi_i = \varprojlim S/J^{[p^i]} = S^{\wedge_J}
    \]
    as $J$ is finitely generated since $S$ is Noetherian. 
\end{proof}

%\subsubsection*{Additional notes on and consequences of Gabber's construction}

We close this section with several compatibilities and examples of Gabber's construction that follow directly from \autoref{lem.UniversalGabberViaCompletion}.
 Although not strictly necessary for the rest of the paper, they are nevertheless useful for gaining a better understanding of the construction.

%When proving the above compatibilities, we noted a number of statements and computed several examples which were not strictly necessary in our above work, but perhaps are useful at least in terms of gaining intuition about Gabber's construction (they were for us).  We begin with two corollaries of \autoref{lem.UniversalGabberViaCompletion}.

%\todo[inline]{\emph{Karl:} Please read over the next statement carefully.}

\begin{cor}[Compatibility with completion]
    \label{cor.GabberConstructionCompletedAndCompleted}
    Suppose $R$ is an $F$-finite Noetherian ring with $p$-generating set ${x}=x_1,\ldots,x_n$ and induced $S := G(R;{x}) \xrightarrow{\pi_{\infty}^R} R$.  Suppose $J \subseteq R$ is an ideal with preimage $J' \subseteq G(R;\underline{x})$.  Then we have a commutative diagram
    \[
        \xymatrix{
            G(R; {x}) = S \ar[d]_{\pi_{\infty}^R} \ar[r]^-{\widehat{(-)}^{J'}} & \widehat{S}^{J'} \ar[r]^-{\sim} & G(\widehat{R}^J; {x}) \ar[d]^{\pi_{\infty}^{\widehat{R}^J}}\\
            R \ar[rr]_{\widehat{(-)}^{J}} & & \widehat{R}^J.
        }
    \]
    In particular, $G(R;{x})\,\widehat{\;}^{{}_{J'}} \cong G(\widehat{R}^J; \underline{x})$.
\end{cor}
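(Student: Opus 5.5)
The plan is to reduce everything to \autoref{lem.UniversalGabberViaCompletion}, applied to the composite surjection $S \to R \to R/J$. Here $S = G(R;x)$ is regular with $p$-basis $x_\infty$ by \autoref{gabbify}. We have $J' = (\pi_\infty^R)^{-1}(J)$, and the map $S \to R/J$ is surjective with kernel $J'$.

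First step: take $\widehat{R}^J$ with the $p$-generating set given by the images of $x$. These images do $p$-generate. Indeed, $R$ is $F$-finite Noetherian, so $F_*\widehat{R}^J \cong \widehat{R}^J \otimes_R F_*R$, since completion commutes with finite modules and $F_*(J^{[p]}$-adic completion$)$ agrees with the $J$-adic one. Hence $\widehat{R}^J = (\widehat{R}^J)^p[x]$.

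Second step: realize both $\widehat{S}^{J'}$ and $G(\widehat{R}^J;x)$ as limits of the same system. The ring $\widehat{S}^{J'}$ is regular, being the completion of a regular excellent ring. It is $F$-finite and has $x_\infty$ as a $p$-basis, because $F_*\widehat{S}^{J'} = \widehat{S}^{J'} \otimes_S F_*S$ is free on the same monomials. The map $\widehat{S}^{J'} \to \widehat{R}^J$ is surjective, since $R = S/I$ and completion is exact on finite modules. Its kernel is $K = I\widehat{S}^{J'}$, and it sends $x_\infty$ bijectively to $x$.

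\autoref{lem.UniversalGabberViaCompletion} then gives $G(\widehat{R}^J;x) \cong (\widehat{S}^{J'})^{\wedge K}$. Now $K \subseteq J'\widehat{S}^{J'}$, and $\widehat{S}^{J'}$ is already $J'$-adically complete. So it is $K$-adically complete as well: a closed ideal inside the Jacobson radical of a complete Noetherian ring gives a complete ring (\cite[\href{https://stacks.math.columbia.edu/tag/090T}{Tag 090T}]{stacks-project}-type statement). Therefore $G(\widehat{R}^J;x) \cong \widehat{S}^{J'}$.

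Third step: check commutativity and canonicity. By the uniqueness in \autoref{lem.UniversalGabberViaCompletion}, or equivalently the adjunction of \autoref{thm.gabbifyIsRightAdjointCompletion}, the isomorphism is the unique map in $\RegFfinites$ compatible with the projections to $\widehat{R}^J$. The outer square commutes because both composites $S \to \widehat{R}^J$ are the unique $p$-basis-preserving maps lifting $S \to R \to \widehat{R}^J$. Concretely, they agree modulo each $\ker$ on the finite levels $R_i$, by functoriality of \autoref{gabbify_first_step}.

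The main obstacle is the second step, namely verifying that $\widehat{S}^{J'}$ lies in $\RegFfinites$ with $p$-basis $x_\infty$ and that $K$-adic completion is trivial. If the Jacobson-radical argument is awkward, I would instead compare directly: $\ker(\widehat{S}^{J'} \to (\widehat{R}^J)_i) = K^{[p^i]}$, and $K^{[p^i]} + J'^{n}\widehat{S}^{J'}$ are cofinal with $J'^n$ after passing to limits over both indices.
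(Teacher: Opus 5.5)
Your proposal is correct and follows essentially the same route as the paper: you apply \autoref{lem.UniversalGabberViaCompletion} to the surjection $\widehat{S}^{J'} \to \widehat{R}^J$, whose source is regular with $p$-basis $x_\infty$, and observe that $\widehat{S}^{J'}$ is already complete along the kernel $I\widehat{S}^{J'} \subseteq J'\widehat{S}^{J'}$ by Stacks Tag 090T; you simply supply more detail on the $p$-basis and the surjectivity. Two small points: your opening sentence, which speaks of applying the lemma to $S \to R/J$, misdescribes what you actually do, and the Jacobson-radical remark and the cofinality fallback are unnecessary, since $K \subseteq J'\widehat{S}^{J'}$ together with $J'$-adic completeness is exactly the hypothesis of that Stacks lemma.
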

\begin{proof}
    There is a surjection $\pi = (\pi_{\infty}^R)\,\widehat{\;}^{{}_{J'}}: \widehat{S}^{J'} \to \widehat{R}^J$ and the source is a regular ring with a $p$-basis mapping bijectively to ${x}$.  Note also that $J' \widehat{S}^{J'} \supseteq \ker(\pi) = (\ker(\pi_{\infty}^R))\,\widehat{\;}^{{}_{J'}}$ and so the source is already $\ker \pi$-adically complete (\cite[\href{https://stacks.math.columbia.edu/tag/090T}{Tag 090T}]{stacks-project}).  Hence we can apply \autoref{lem.UniversalGabberViaCompletion} and the result follows.
\end{proof}

\begin{cor}[Compatibility with localization]
\label{cor.GabberConstructionAndLocalization}
    With notation as above, suppose $W \subseteq R$ is a multiplicative set and $V \subseteq G(R; x)$ is a multiplicative subset surjecting onto $W$ by $\pi_{\infty} \colon G(R; x) \to R$.  Let $J = \ker \pi_{\infty}$.  We write $\frac{x}{1}$ to be the induced $p$-generating set of $W^{-1}R$.  Then we have a canonical isomorphism 
    \[
        G(W^{-1} R, \frac{x}{1}) \cong (V^{-1} G(R; x)){}^{\wedge_{V^{-1}J}}.
    \]
\end{cor}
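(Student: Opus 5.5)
The plan is to deduce this directly from \autoref{lem.UniversalGabberViaCompletion}, in the same way \autoref{cor.GabberConstructionCompletedAndCompleted} was deduced. Put $S := G(R;x)$ with $p$-basis $x_\infty$, and let $\pi_\infty\colon S \onto R$ be the surjection with kernel $J$. Localizing gives a surjection $V^{-1}\pi_\infty \colon V^{-1}S \onto V^{-1}R$. Because $V$ maps onto $W$, the image of $V$ in $R$ is exactly $W$, so $V^{-1}R = W^{-1}R$. The kernel of $V^{-1}\pi_\infty$ is $V^{-1}J$, since localization is exact.

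Next I check the hypotheses of \autoref{lem.UniversalGabberViaCompletion} for the surjection $V^{-1}S \onto W^{-1}R$.

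\begin{itemize}
\item \emph{Regularity.} $V^{-1}S$ is Noetherian and regular, because $S$ is regular by \autoref{gabbify}~(f).
\item \emph{$p$-basis.} I claim $x_\infty/1$ is a $p$-basis of $V^{-1}S$. Since $v^{-1} = v^{p-1}/v^p$, we get $(V^{-1}S)^p[\text{$x_\infty/1$}] \supseteq S^p[x_\infty]$ together with all inverses of elements of $V$, hence everything. For independence, note that $V^{-1}S$ is flat over $S$. Also $F_*(V^{-1}S) = V^{-1}(F_*S)$ as modules over $V^{-1}S$, because localizing at $v$ and at $v^p$ agree. So the free basis of monomials of $F_*S$ over $S$ stays a free basis after localizing.
\item \emph{Matching generators.} The $p$-basis $x_\infty/1$ maps bijectively onto the tuple $x/1$.
\end{itemize}

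Then \autoref{lem.UniversalGabberViaCompletion} gives
\[
G(W^{-1}R;\tfrac{x}{1}) \cong (V^{-1}S)^{\wedge_{V^{-1}J}},
\]
which is the claimed isomorphism. Canonicity follows from the uniqueness of the factorization in \autoref{lem.UniversalGabberViaCompletion}. It can equally be seen from the adjunction in \autoref{thm.gabbifyIsRightAdjointCompletion}: the map $(V^{-1}S;x_\infty/1) \to (W^{-1}R;x/1)$ in $\Ffinites$ has source in $\RegFfinites$, so it factors uniquely through $G(W^{-1}R;x/1)$.

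The only real point to check is the second item, that localization preserves the $p$-basis. The flatness and $F_*$-commutes-with-localization argument handles it. Alternatively, one can use that $\Omega$ localizes, so a differential basis stays a differential basis, and then invoke the equivalence of $p$-bases and differential bases in the Noetherian case.
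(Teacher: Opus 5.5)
Your proposal is correct and is the paper's argument: the paper's proof is the single line ``apply \autoref{lem.UniversalGabberViaCompletion} to the map $V^{-1}G(R;x) \to (W^{-1}R;\frac{x}{1})$.'' Your checks supply the hypotheses that line leaves implicit: the identification $V^{-1}R = W^{-1}R$, the kernel $V^{-1}J$, regularity of $V^{-1}G(R;x)$, and the fact that $x_\infty/1$ is still a $p$-basis mapping onto $\frac{x}{1}$.
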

\begin{proof}
    Apply \autoref{lem.UniversalGabberViaCompletion} to the map $V^{-1}G(R;x) \to (W^{-1}R;\frac{x}{1})$.
\end{proof}

\begin{cor}[Extending the $p$-generating set]
\label{cor.ExtendingThePGeneratingSet}
Suppose $R$ is an $F$-finite Noetherian ring with $p$-generating set $x = x_1, \dots, x_n$.  Suppose $y = y_1, \dots, y_m \in R$ are a sequence of additional elements.  Then we have a (non-canonical) isomorphism
\[
G(R; x , y) \cong G(R; x)\llbracket t_1, \dots, t_m \rrbracket
\]
so that the following diagram commutes:
\[
\xymatrix@C=60pt@R=30pt{
    G(R; x , y) \ar@{<->}[d]_{\sim} \ar@{->>}[r]^-{\pi_{\infty}^{x , y}} & R\\
    G(R; x)\llbracket t_1, \dots, t_m \rrbracket \ar@{->>}[ur]_-{\pi_{\infty}^{x}, t_i \mapsto 0}
}
\]
\end{cor}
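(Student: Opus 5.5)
The idea is to apply \autoref{lem.UniversalGabberViaCompletion} to a regular ring $T := G(R;x)\llbracket t_1,\dots,t_m\rrbracket$ equipped with a suitable $p$-basis and a surjection onto $(R;x,y)$. Then the completion statement identifies $G(R;x,y)$ with a completion of $T$ which is already complete.

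\textbf{Step 1: lifts of the $y_j$.} Write $S := G(R;x)$, with $p$-basis $x_\infty$ and surjection $\pi^x_\infty\colon S\to R$ with kernel $I$. Since $x$ is a $p$-generating set of $R$, each $y_j$ is a polynomial in $x$ with coefficients in $R^p$. Choose lifts $\tilde y_j \in S$ of the $y_j$; for instance, lift such a polynomial expression by replacing $x$ with $x_\infty$ and lifting the $p$-th power coefficients. This choice is exactly the source of non-canonicity.

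\textbf{Step 2: the ring $T$ and its $p$-basis.} Set $T := S\llbracket t_1,\dots,t_m\rrbracket$ and $z_j := \tilde y_j + t_j \in T$. I claim that $x_\infty, z_1,\dots,z_m$ is a $p$-basis of $T$. It is standard that $x_\infty, t_1,\dots,t_m$ is a $p$-basis of $T$: indeed $T^p = S^p\llbracket t^p\rrbracket$, and $T$ is free over it on the monomials $x_\infty^a t^b$ with $0\le a,b\le p-1$, using $F$-finiteness of $S$. Since $\tilde y_j\in S = S^p[x_\infty]$, the triangular change of variables $t_j\mapsto t_j+\tilde y_j$ preserves the property of being a differential basis, because $dz_j = dt_j + d\tilde y_j$ with $d\tilde y_j$ in the span of the $dx_\infty$. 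By the equivalence of differential bases and $p$-bases recalled in \autoref{pbasisdiffbasis}, it is therefore again a $p$-basis. Also $T$ is regular, Noetherian and reduced.

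\textbf{Step 3: apply \autoref{lem.UniversalGabberViaCompletion}.} Let $\pi\colon T\to R$ be $\pi^x_\infty$ with $t_i\mapsto 0$. It sends $x_\infty\mapsto x$ and $z_j\mapsto y_j$, bijectively on tuples. Hence \autoref{lem.UniversalGabberViaCompletion} gives $G(R;x,y)\cong \widehat T^{J}$ with $J=\ker\pi = IT+(t_1,\dots,t_m)$, compatibly with the maps to $R$. Finally, $T$ is already $J$-adically complete. To see this, note that $S$ is $I$-adically complete by \autoref{gabbify}(f), and a power series ring over an $I$-adically complete Noetherian ring is complete for $(I,t)$; this follows from Stacks Tag 090T or direct double-limit reasoning. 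So $G(R;x,y)\cong T$, and the triangle commutes by construction.

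\textbf{Main obstacle.} I expect the main obstacle to be Step 2, verifying the $p$-basis property after the coordinate change. The differential-basis route handles it cleanly, provided one checks that $\Omega_T$ (absolute) is free on $dx_\infty, dt_j$. That freeness needs the computation $\Omega_T\cong\Omega_{T/T^p}$ together with the explicit freeness of $T$ over $T^p$.
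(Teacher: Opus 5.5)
Your argument is correct, and it is a rearrangement of the paper's proof rather than the same proof. Both proofs use \autoref{lem.UniversalGabberViaCompletion} and the same non-canonical shift of variables by lifts of the $y_j$; what differs is where the shift is made.

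The paper applies the proposition to the polynomial ring $G(R;x)[Y_1,\dots,Y_m]$. There the $p$-basis $x_\infty, Y$ mapping to $(x,y)$ is evident, so nothing needs to be verified. The substitution $t_i = Y_i - g_i$ is then just a ring automorphism of the polynomial ring. It shows $J = (t) + KS$, and the $J$-adic completion is identified with $G(R;x)\llbracket t\rrbracket$.

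You instead apply the proposition directly to the already complete ring $T = G(R;x)\llbracket t\rrbracket$, so the shift moves onto the $p$-basis. This costs you one thing the paper never needs: showing that $x_\infty, \tilde y_j + t_j$ is a $p$-basis of a power series ring. In exchange, the last step becomes ``$T$ is already complete.''

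Your differential-basis argument for the $p$-basis claim is valid, via Matsumura 38.A and André's theorem as recalled in the paper. There is also a more elementary check that avoids André's theorem:
\begin{itemize}
\item Since $\tilde y_j \in S = S^p[x_\infty] \subseteq T^p[x_\infty]$, you have $T^p[x_\infty, z] = T^p[x_\infty, t] = T$.
\item So the $p^{n+m}$ reduced monomials in $x_\infty, z$ span a module that is free of rank $p^{n+m}$ over $T^p$.
\item A spanning set of that size in a free module of that rank is automatically a basis, since a surjective endomorphism of a finitely generated module is injective.
\end{itemize}

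One small correction in Step 3: Stacks Tag 090T goes the wrong way for you. It passes completeness from a larger ideal to a smaller finitely generated one. You know $T$ is $(t)$-complete and want completeness for the larger ideal $IT+(t)$. Your ``direct'' argument is the one that works:
\begin{itemize}
\item $J^{2n} \subseteq I^nT + (t)^n \subseteq J^n$.
\item $T/(I^nT + (t)^n) \cong (S/I^n)[t]/(t)^n$.
\item The inverse limit of these quotients is $S\llbracket t\rrbracket$ because $S$ is $I$-adically complete.
\end{itemize}
The last sentence of the paper's proof implicitly needs this same computation.
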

\begin{proof}
Consider the regular ring $S = G(R; x)[Y_1, \dots, Y_m]$ with surjection to $R$,
\[
    S = G(R; x)[Y_1, \dots, Y_m] \to[\nu] R
\]
given by $\pi_{\infty}$ on $G(R;x)$ and so that $Y_i \mapsto y_i$.  Write $J = \ker \nu$ and note it contains $K = \ker \pi_{\infty}$.
By \autoref{lem.UniversalGabberViaCompletion}, we have an isomorphism 
\[
    G(R; x , y) \cong \widehat{S}^J.
\]
Now, for each $y_i \in R$, choose a lift $g_i \in G(R; x)$ and consider the elements $t_i = Y_i - g_i$.  We then have an equality
\[
S = G(R; x)[t_1, \dots, t_m]
\]
which is simply a change of coordinates and it follows that $J = \ker{\nu} = (t_1, \dots, t_m)S + KS$. The completion $\widehat{S}^J$ is then seen to be $G(R; x)\llbracket t_1, \dots, t_m\rrbracket$ and the result follows.
\end{proof}

We complete this section with a number of explicit examples of these instances.

\begin{example}
Let $(R;x)$ be an $F$-finite ring with $p$-generators $x$.
\begin{enumerate}
\item For $t \in \bF_p$ let $\psi\colon  (\bF_p[X], X) \to (\bF_p,t)$ given by sending $X$ to $t$. Then by \autoref{thm.gabbifyIsRightAdjointCompletion} we have $G(\bF_p,t)$ is the $(X-t)$-adic completion of $\bF_p[X]$. This shows that $G(\bF_p,t)$ is just equal to a power series ring $\bF_p\llbracket Y \rrbracket$ in the variable $Y = X-t$.
\item Let $Y=Y_1,\ldots,Y_m$ be some variables. Then $x , Y$ is a set of $p$-generators for the polynomial ring $R[Y]$. The natural map $G(R;x)[Y] \to G(R[Y];x , Y)$ is then the completion of $G(R;x)[Y]$ along the kernel of $G(R;x)[Y] \to R[Y]$. %\todo[inline]{check, and in case it is correct write down proof.  \\ \emph{Karl:} Ok, $S[Y] = G(R;x)[Y]$ is a regular ring with a surjection onto $R[Y]$.  However, I'm not sure I see that I see that $S[Y]$ is complete with respect to $\ker(S[Y] \to R[Y]) = (\ker(S \to R) S[Y])$.  For instance, if $R = \mathbb{F}_p$ and $x = 0$, then $G(R; x) = \mathbb{F}_p\llbracket X \rrbracket$.  Now, I think $G(R[Y], (x,Y)) = \bF_p[Y]\llbracket x \rrbracket$ which is not equal to $\bF_p\llbracket x \rrbracket[Y]$.}
\item Let $Y=Y_1,\ldots,Y_m$ be some variables. Then  $x_\infty , Y$ is a $p$-basis of the regular Noetherian ring  $G(R;x)[Y]$. Applying \autoref{thm.gabbifyIsRightAdjointCompletion} to any $G(R;x)$-linear surjection $\pi \colon G(R;x)[Y] \to R$ where $x_\infty \mapsto x$ and $Y \mapsto y$ for some $y \in R$ shows that $G(R;x , Y)$ is just the completion of the regular ring $G(R;x)[Y]$ along the ideal $J= \ker \pi$. That is, the map $G(( R;x) \to G(R; x , Y))$ factors as the composition of a polynomial extension $G(R;x) \to G(R;x)[Y]$ followed by the completion along an ideal $G(R;x)[Y] \to G(R; x , Y)$.
\end{enumerate}
\end{example}

\section{Dualizing complexes as units of the $\stens$-monoidal structure}
\label{sec.DualizingComplexesAsUnits}

In this section, we explain how to characterize a canonical dualizing complex intrinsically\footnote{The existence of surjections $R \to S$ is still critical to the arguments, but now only in checking the properties.}: it is the unit of a  symmetric monoidal structure on $D^+_{coh}(S)$ called the $\stens$-product. The corresponding uniqueness will guarantee the existence of a satisfactory theory of dualizing complexes on arbitrary $F$-finite Noetherian schemes.

Technically, the main new observation is that there is a good notion of the completion of the (typically non-Noetherian) ring $S \otimes_{\mathbb{F}_p} S$ along the (typically not finitely generated) kernel of the diagonal map $S \otimes_{\mathbb{F}_p} S \to S$; we used derived algebraic geometry to construct this completion, but it turns out to be a classical Noetherian ring.  After developing this theory, in \S \ref{DualCompStensClassical} we will explain the format of the argument for the $\stens$-product in the classical setting of finite type algebras over a field where there are no complications involving the diagonal.  Then, in \S \ref{ss:DualDiagFFin}, we will carry out the proof of our main result, showing that the canonical dualizing complex from \S \ref{ss:CanDualFFin} provides a unit for this symmetric monoidal structure (by imitating the arguments in the finite type case treated in \S \ref{DualCompStensClassical}).

\subsection{Completions revisited}
\label{sec:CompRev}

Our goal in this section to describe a version of the classical $I$-adic completion operation for Noetherian rings that works well for arbitrary rings without assuming the ideal is finitely generated. To make the arguments flow, we formulate statements in the generality of animated rings, so our operations (e.g., tensor products) are understood to be derived.  Our arguments are inspired by the characteristic $0$ story in \cite{BhattCompddR}. The starting point is the following convergence result, essentially due to Quillen in \cite{QuillenMITnotes} in different language:

%\begin{proposition}[Quillen]
%\label{quillenconv}
%Let $A \to B$ be a map of animated rings whose fibre $I$ is connected, i.e., $\pi_0(I) = 0$. Then there is a natural complete descending $\mathbf{N}$-indexed filtration $F^*$ on $A$ with an identification $\gr^*_F A \simeq \mathrm{Sym}^*_B(L_{B/A}[-1])$.
%\end{proposition}

\begin{proposition}[Animated completions and Quillen convergence]
\label{quillenconv}
Let $A \to B$ be a map of animated rings which is surjective on $\pi_0$. Then there is a natural descending $\mathbf{N}$-indexed filtration $F^*$ on $A$ with an identification $\gr^*_F A \simeq \mathrm{Sym}^*_B(L_{B/A}[-1])$. 

If we further assume that the fibre $I$ of $A \to B$ is connected, then $F^n \in D^{\leq -n}$, and hence the filtration $F^*$ is complete.
\end{proposition}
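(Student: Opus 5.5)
One constructs the filtration by left Kan extension from the polynomial case and then checks the associated graded and the connectivity there. Consider the $\infty$-category of maps $A \to B$ of animated rings surjective on $\pi_0$. It is generated under sifted colimits by the maps
\[
  P = \mathbf{Z}[x_1,\dots,x_m,y_1,\dots,y_k] \to Q = \mathbf{Z}[x_1,\dots,x_m], \qquad y_j \mapsto 0,
\]
with $m,k$ finite. This follows from the standard description of animated pairs (surjections) as the animation of the category of such "polynomial surjections"; cf. the treatment of animated pairs by Brantner–Mathew or Mao. On such a generator set $F^n P := J^n$, where $J = (y_1,\dots,y_k)$ is the $J$-adic filtration. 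Since $J$ is generated by a regular sequence, $J^n/J^{n+1} \cong \mathrm{Sym}^n_Q(J/J^2)$. Moreover $J/J^2 \simeq L_{Q/P}[-1]$, because $L_{Q/P}$ is free on the $dy_j$ placed in homological degree $1$. This gives
\[
  \gr^n_F P \simeq \mathrm{Sym}^n_Q(L_{Q/P}[-1])
\]
functorially in the generator. Define $F^*$ on a general $A \to B$ by left Kan extension of the functor $(P\to Q) \mapsto (J^n)_{n}$, valued in filtered objects of $D(\mathbf{Z})$, or in filtered $A$-modules. Because $\gr$ commutes with colimits, and both $L_{-/-}$ and derived $\mathrm{Sym}$ commute with sifted colimits, the identification $\gr^*_F A \simeq \mathrm{Sym}^*_B(L_{B/A}[-1])$ extends from generators to all pairs. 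Also $F^0 = A$, since $F^0P = P$ on generators. Naturality is built in.

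For the second part, assume the fibre $I$ is connected. Then $A \to B$ is an isomorphism on $\pi_0$ and surjective on $\pi_1$, i.e. $1$-connective. By the Hurewicz-type connectivity estimate for the cotangent complex, $L_{B/A}$ is then $2$-connective, so $L_{B/A}[-1] \in D^{\le -1}$ (cohomological convention). Derived $\mathrm{Sym}^n_B$ of a $1$-connective module is $n$-connective; this can be checked on free generators and extended by sifted colimits, using that $\mathrm{Sym}^n$ of $M[1]$ is a shift of the derived exterior power. Hence $\gr^n_F \in D^{\le -n}$.

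We must upgrade this to $F^n \in D^{\le -n}$ itself. The plan is to show that $F^n$ is the limit of $F^n/F^m$ for $m \to \infty$. Each $F^n/F^m$ is an iterated extension of $\gr^n,\dots,\gr^{m-1}$, all in $D^{\le -n}$, so it lies in $D^{\le -n}$. The transition maps are surjective on the relevant $\pi_*$ because their fibres are increasingly connective, so $\lim^1$ does not spoil connectivity. It remains to see that $\widehat{F^n} := \lim_m F^n/F^m$ agrees with $F^n$, i.e. that $\lim_m F^m = 0$. This is where the main obstacle lies, since left Kan extension does not commute with limits. I would approach it by comparing with the Adams-type resolution: write $A$ as the totalization of the cobar construction $B^{\otimes_A(\bullet+1)}$. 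Connectedness of $I$ gives convergence of this totalization, because $B^{\otimes_A(k+1)}$ differs from $B$ by terms of connectivity growing with $k$. One then identifies $F^*$ with the filtration induced by powers of the augmentation ideal in each cosimplicial degree, which is complete. Alternatively one can check directly that $\mathrm{cofib}(F^m \to A) \to A$ becomes an equivalence in degrees $> -m$, so $F^m \in D^{\le -m}$ by induction using $A/F^m$. I expect this convergence argument, i.e. Quillen's theorem proper, to be the heart of the proof. With it, $F^n \in D^{\le -n}$, and completeness follows since $\lim_n F^n$ lies in $\bigcap_n D^{\le -n} = 0$.
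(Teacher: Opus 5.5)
Your construction of $F^*$ ($J$-adic filtration on polynomial surjections, then left Kan extension) and of the identification of $\gr^*_F A$ is the same as the paper's. Your estimate $\gr^n_F A \in D^{\le -n}$ for connected $I$ is also correct.

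The gap is the passage from $\gr^n_F A$ to $F^n$. As you note, this needs $\lim_m F^m = 0$. But completeness is what the statement \emph{deduces} from $F^n \in D^{\le -n}$, so it has to be proved independently, and neither of your routes does so.

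Your alternative is only a restatement. Saying that $A \to A/F^m$ (note the direction of the map) is an isomorphism in degrees $> -m$ is equivalent to $F^m \in D^{\le -m}$.

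The Adams route is circular. Convergence of $\mathrm{Tot}\,\mathrm{Cech}(A \to B)$ to $A$ is indeed easy, but not for the reason you give. The fibre of $B^{\otimes_A(k+1)} \to B$ contains $B \otimes_A I[1]$ as a retract for every $k \geq 1$, so its connectivity does not improve with $k$. What converges is the $\mathrm{Tot}$-tower $A/I^{\otimes_A n}$, because $I^{\otimes_A n} \in D^{\le -n}$. To transfer this to $F^*$, however, you would need two further facts:
\begin{enumerate}
\item completeness of the augmentation-ideal filtration on each $B^{\otimes_A(k+1)}$; but $B^{\otimes_A(k+1)} \to B$ is again a map with connected fibre, which is exactly the case being proved;
\item the identification of $F^*$ with the totalized filtration.
\end{enumerate}
In the paper these are \autoref{derIadic} and \autoref{QuillenConvAdams}, and both are proved \emph{using} the present proposition. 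The latter uses completeness of $F^*$ on $A$ to upgrade an isomorphism on graded pieces to a filtered isomorphism.

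What is missing is a genuine connectivity estimate for $F^n$ itself, which cannot be extracted from the graded pieces. The paper does not prove this either; it imports it from Quillen's Koszul computation (\cite[Proposition 8.8]{QuillenMITnotes}, \cite[Proposition 4.11]{BhattCompddR}). The mechanism there, as I recall it, is roughly as follows.
\begin{enumerate}
\item Compute $F^n$ on a simplicial resolution $P_\bullet \to Q_\bullet$ by polynomial surjections adapted to the connectivity of $I$ (for instance, with no variables killed in simplicial degree $0$). Then $F^n = |J_\bullet^n|$ for the degreewise kernel $J_\bullet$.
\item Show by a Koszul/Dold--Puppe type argument that these degreewise powers are $n$-connective.
\end{enumerate}
Once that is in place, completeness follows from the Milnor sequence as in your last sentence. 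Without it, the second half of your proposal is unproved; the simplest repair is to cite Quillen's estimate, as the paper does.
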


A version of the proposition in the global context of algebraic derived stacks was carefully explained in \cite[Appendix A]{DanHLDerivedThetaStrat} as well. The construction of $F^*$ given below naturally produces a filtered animated $A$-algebra in the sense of \cite[\S 4.3]{RaksitHH} or equivalently (via the Rees construction) an affine derived scheme over $\mathbf{A}^1/\mathbf{G}_m \times \mathrm{Spec}(A)$; for our applications, it suffices to consider $F^*$ as a filtered $E_\infty$-$A$-algebra, i.e., as a commutative algebra object of the filtered derived $\infty$-category $\mathcal{DF}(A) := \mathrm{Fun}(\mathbf{N}^{op}, \mathcal{D}(A))$ of $A$.

\begin{proof}[Proof sketch]
We proceed via animation to construct the filtration. 

First, for a surjection $A \to B$ of polynomial $\mathbf{Z}$-algebras with kernel $I$, we simply take $F^*$ to be the $I$-adic filtration on $A$. As $I$ is a regular ideal, we have $I/I^2 \simeq L_{B/A}[-1]$ and also that $\mathrm{Sym}^*_B(I/I^2) \simeq \oplus_n I^n/I^{n+1}$ as graded $B$-algebras, which gives the desired filtration in this case. 

Next, note that the construction of $F^*$ as well as the identification of the associated graded in the previous paragraph is clearly functorial in the map $A \to B$. Animating this construction then gives the filtration in general. 

The assertion $F^n \in D^{\leq -n}$ when $I$ is connected relies on certain Koszul calculations by Quillen, see \cite[Proposition 4.11]{BhattCompddR} or \cite[Proposition 8.8]{QuillenMITnotes}. Granting this property, the completeness is immediate: the Milnor sequence for homotopy groups of inverse limits shows that $\pi_i(\lim_n F^n) = 0$ for all $i$, so $\lim_n F^n = 0$.
\end{proof}

The assumption $\pi_0(I) = 0$ appearing in the last sentence of \autoref{quillenconv} is quite restrictive: it does not apply when $A$ and $B$ are classical unless $A=B$. To extend this to maps $A \to B$ where we only demand a completeness property of the kernel, as well as to make compatibility of completions with compositions transparent, we shall need the following notion of completion, which helps push classical problems into the homotopical realm so that \autoref{quillenconv} might apply; this construction is due to Carlsson and inspired by the construction of the Adams spectral sequence in stable homotopy theory.

\begin{definition}[The Adams completion]
Fix a map $A \to B$ of animated rings. We denote its Cech nerve by
\[ \mathrm{Cech}(A \to B) := \left( \cosimp{B}{B \otimes_A B}{B \otimes_A B \otimes_A B} \right), \]
regarded as a cosimplicial animated $A$-algebra; let 
\[ \mathrm{Comp}(A \to B) := \lim \mathrm{Cech}(A \to B),\]
where the limit is computed in all (possibly non-connective) derived rings or equivalently complexes; we call this the {\em Adams completion} of $A$ along $B$. We say that $A$ is {\em Adams complete} along $A \to B$ (or along $I = \mathrm{fib}(A \to B)$) if the map $A \to \mathrm{Comp}(A \to B)$ is an isomorphism.
\end{definition}

\begin{example}[Descendable maps are Adams complete]
\label{DescendableComp}
Let $A \to B$ be a descendable map of animated rings; for example, $A$ and $B$ can be discrete with $B=A/I$ for a nilpotent ideal $I$. Then $A$ is Adams complete along $B$.
\end{example}

\begin{remark}[The $\mathrm{Tot}$-tower filtration]
For a map $A \to B$ of animated rings, write $I$ for the fibre of $A \to B$. Then the partial totalization $\mathrm{Tot}_{\leq n} \mathrm{Cech}(A \to B)$ identifies with $A/I^{\otimes_A n}$, and hence we have
\[ \mathrm{Comp}(A \to B) = \lim_n \mathrm{Tot}_{\leq n} \mathrm{Cech}(A \to B) \simeq \lim_n A/I^{\otimes_A n}.\]
It follows that if $I$ is connective (i.e., $A \to B$ is surjective on $\pi_0$), then $\mathrm{Comp}(A \to B)$ is connective, and thus naturally an animated ring. This will be the case of primary interest for us; in this case, we shall also write $A^{\wedge}_I := \mathrm{Comp}(A \to B)$ instead, and refer to it as the $I$-adic Adams completion of $A$.
\end{remark}

When $A \to B$ is surjective map of (classical) Noetherian rings with kernel $I$, then Carlsson has identified $A^{\wedge}_I$ with the classical $I$-adic completion $\lim_n A/I^n$ of $A$, see \cite{CarlssonDerComp,BhattCompddR}. Let us sketch a proof of this fact using the (by now well-understood) theory of derived $I$-completions. We shall prove the following slightly more general statement for later use:

\begin{proposition}[The case of finitely generated ideals]
\label{ClassicalAdams}
Let $A \to B$ be a map of animated rings that is surjective on $\pi_0$. Assume that the kernel $I = \ker(\pi_0(A) \to \pi_0(B))$ is finitely generated. Then $A' := \mathrm{Comp}(A \to B)$ is naturally identified with the derived $I$-completion $\widehat{A}$ of $A$.
\end{proposition}

When $A$ is a classical Noetherian ring, it is known that derived completions of finitely generated $A$-modules identify with their classical completions (see \cite[Tag 0EEU]{stacks-project}), so the above proposition fulfils the promise made right before. 

\begin{proof}
By definition,  the animated $A$-algebra $A'$ is an inverse limit of derived $I$-complete $A$-complexes and thus itself derived $I$-complete. The map $A \to A'$ then extends uniquely to a map $\widehat{A} \to A'$ that we shall show is an isomorphism. Choose generators $f_1,...,f_r \in I$. As both $\widehat{A}$ and $A'$ are derived $I$-complete, the isomorphy of $\widehat{A} \to A'$ can be checked after tensoring with the animated $A$-algebra $C := \mathrm{Kos}(A; f_1,....,f_r)$ by the derived Nakayama lemma. But $C$ is perfect as an $A$-complex, so tensoring with $C$ commutes with limits. As the formation of $\widehat{A} \to A'$ is then compatible with base change and because our assumptions are compatible with this base change, we reduce to checking the theorem in the case that $I=0$ or equivalently $\pi_0(A) = \pi_0(B)$. As $I=0$, we have $A=\widehat{A}$, so we must show that $A \to A' := \mathrm{Comp}(A \to B)$ is an isomorphism, i.e., that $A$ is Adams complete along $B$. If $A$ has only finitely many homotopy groups (which would be the case if the original $A$ in the question had this property, and is the main case relevant for the sequel), then the map $A \to \pi_0(A)$ is descendable; the factorization $A \to B \to \pi_0(B) = \pi_0(A)$ then forces $A \to B$ to be descendable too, which implies $A \simeq \mathrm{Comp}(A \to B)$. In the general case, one uses \autoref{QuillenConvAdams} below.
\end{proof}

Our goal is to explain two properties of the Adams completion. First, we construct a derived $I$-adic filtration on the Adams completion with understandable graded pieces, essentially generalizing \autoref{quillenconv}:

\begin{proposition}[The derived $I$-adic filtration]
\label{derIadic}
Let $A \to B$ be a map of animated rings which is surjective on $\pi_0$, i.e., the fibre $I$ lies is connective. Then the $I$-adic Adams completion $A^{\wedge}_{I}$ admits a natural complete descending $\mathbf{N}$-indexed filtration with an identification $\gr^* A^{\wedge}_I \simeq \mathrm{Sym}^*_B(L_{B/A}[-1])$.
\end{proposition}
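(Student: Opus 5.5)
The plan is to reduce \autoref{derIadic} to \autoref{quillenconv} by base-changing along the Čech nerve. Each term $A^n := B^{\otimes_A (n+1)}$ of $\mathrm{Cech}(A \to B)$ comes with a map $A^n \to B$, namely the multiplication map (equivalently, any coface projection). The fibre of $A^n \to B$ is connected, because $\pi_0(B^{\otimes_A(n+1)}) = \pi_0(B)^{\otimes_{\pi_0 A}(n+1)} \to \pi_0(B)$ is an isomorphism: $\pi_0 A \to \pi_0 B$ is surjective. So \autoref{quillenconv} applies to each $A^n \to B$. It gives a \emph{complete} filtration $F^*_n$ on $A^n$ with $F^k_n \in D^{\leq -k}$ and $\gr^*_{F_n} A^n \simeq \mathrm{Sym}^*_B(L_{B/A^n}[-1])$.

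Since the construction in \autoref{quillenconv} is natural in the map, and the maps $A^n \to B$ assemble into a map from the cosimplicial object $\mathrm{Cech}(A \to B)$ to the constant cosimplicial object $B$, the filtrations form a cosimplicial filtered $E_\infty$-$A$-algebra $F^*_\bullet$. I define
\[ F^k A^{\wedge}_I := \lim_{\Delta} F^k_\bullet. \]
Then $F^0 A^\wedge_I = \lim_\Delta A^\bullet = A^{\wedge}_I$. Completeness follows because limits commute with limits: $\lim_k F^k A^\wedge_I = \lim_\Delta \lim_k F^k_\bullet = 0$. Similarly, $\gr^k A^\wedge_I \simeq \lim_\Delta \gr^k_{F_\bullet} A^\bullet = \lim_\Delta \mathrm{Sym}^k_B(L_{B/A^\bullet}[-1])$.

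It remains to identify this totalization with $\mathrm{Sym}^k_B(L_{B/A}[-1])$. The transitivity sequence for $A \to A^n \to B$ gives
\[ L_{A^n/A}\otimes_{A^n} B \to L_{B/A} \to L_{B/A^n}. \]
Since $A^n = B^{\otimes_A(n+1)}$, we have $L_{A^n/A} \otimes_{A^n} B \simeq L_{B/A}^{\oplus (n+1)}$, and the first map is the sum map. Hence
\[ L_{B/A^n}[-1] \simeq \ker\bigl(L_{B/A}^{\oplus(n+1)} \to L_{B/A}\bigr) \oplus L_{B/A}[-1]. \]
More invariantly, as a cosimplicial $B$-module, $L_{B/A^\bullet}[-1] \simeq L_{B/A}[-1] \oplus (L_{B/A}\otimes_{\mathbf Z} \bar C^\bullet)$, where $\bar C^\bullet$ is a reduced cosimplicial abelian group. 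This is the cosimplicial object $\ker(\mathbf{Z}^{[n]} \to \mathbf{Z})$ coming from the Čech nerve of a point, which is contractible; that is, its totalization is $0$.

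Applying $\mathrm{Sym}^k_B$ and using that $\mathrm{Sym}^k$ of a sum decomposes as $\bigoplus_{i+j=k} \mathrm{Sym}^i \otimes \mathrm{Sym}^j$, the terms with $j>0$ are $\mathrm{Sym}^j$ of a contractible cosimplicial object. Such cosimplicial objects, being split coaugmented (extra codegeneracy), have vanishing totalization. This is the Eilenberg–Zilber / Dold–Puppe type step, and I expect it to be the main obstacle. Naively one might worry about nonlinear functors on non-split cosimplicial objects; here, however, the contractibility comes from an extra codegeneracy which is preserved by any functor applied levelwise, so the totalization of $\mathrm{Sym}^j_B$ of it is still $0$. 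The $j=0$ term gives $\mathrm{Sym}^k_B(L_{B/A}[-1])$, which is constant in the cosimplicial direction. Hence
\[ \gr^k A^\wedge_I \simeq \mathrm{Sym}^k_B(L_{B/A}[-1]). \]

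To make the splitting honest, I would rather argue directly that the coaugmented cosimplicial object
\[ \mathrm{Sym}^k_B(L_{B/A}[-1]) \to \mathrm{Sym}^k_B(L_{B/A^\bullet}[-1]) \]
admits an extra codegeneracy. One expects this because after the base change along $A \to B$ the Čech nerve acquires one. Concretely, the maps $L_{B/A^n} \to L_{B/A^{n-1}}$ induced by the extra codegeneracy $B^{\otimes(n+1)} \to B^{\otimes n}$ (multiply the first two factors) are compatible over $B$ and supply the contraction. Finally, naturality in $A \to B$ is inherited from naturality in \autoref{quillenconv}, and multiplicativity of the identification $\gr^*$ as graded algebras follows since all constructions are performed in filtered $E_\infty$-algebras.
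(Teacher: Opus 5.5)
You follow the paper in the first half: filter each $A^n$ by \autoref{quillenconv}, totalize, and deduce both completeness and $\gr^k A^\wedge_I \simeq \lim_\Delta \mathrm{Sym}^k_B(L_{B/A^\bullet}[-1])$. One small omission there: connectedness of $\mathrm{fib}(A^n \to B)$ needs $\pi_1(A^n) \to \pi_1(B)$ surjective as well as the $\pi_0$-isomorphism. This holds because the factor inclusions $B \to A^n$ are sections.

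\textbf{The gap.} The real content of the proposition is identifying this totalization, and the splitting you use there is false. We have $L_{B/A^n} = \mathrm{cofib}(\Sigma) \simeq \mathrm{fib}(\Sigma)[1]$, and $\mathrm{fib}(\Sigma) = \ker(\Sigma) \simeq L_{B/A}^{\oplus n}$ since $\Sigma$ is split surjective. So $L_{B/A^n}[-1] \simeq \ker(\Sigma)$, with no extra summand $L_{B/A}[-1]$. At $n=0$ your formula would give $0 = L_{B/B}[-1] \simeq L_{B/A}[-1]$.

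Dually, $\bar C^\bullet = \ker(\mathbf{Z}^{[\bullet]} \to \mathbf{Z})$ is not contractible. Its normalized cochain complex is $\mathbf{Z}\cdot(e_1-e_0)$ in degree $1$, so $\mathrm{Tot}(\bar C^\bullet) \simeq \mathbf{Z}[-1]$; the contractible object is $\mathbf{Z}^{[\bullet]}$ itself. Hence $L_{B/A^\bullet}[-1] \simeq L_{B/A}\otimes_{\mathbf{Z}} \bar C^\bullet$ is a non-constant cosimplicial model of $L_{B/A}[-1]$ with no constant summand. Your answer for $k=1$ comes out right only because these two errors cancel. For $k \geq 2$ there is no direct sum for the binomial decomposition of $\mathrm{Sym}^k$ to act on.

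Your fallback fails too. For $k \geq 1$ the degree-$0$ term $\mathrm{Sym}^k_B(L_{B/B}[-1])$ is $0$. An extra codegeneracy on the coaugmented object $\mathrm{Sym}^k_B(L_{B/A}[-1]) \to \mathrm{Sym}^k_B(L_{B/A^\bullet}[-1])$ would therefore exhibit $\mathrm{Sym}^k_B(L_{B/A}[-1])$ as a retract of $0$. Also, ``multiply the first two factors'' is just the ordinary codegeneracy $s^0$. An extra codegeneracy for $A \to A^\bullet$ requires an $A$-algebra retraction $B \to A$, which you only get after a base change you never justify.

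\textbf{The fix.} Use the transitivity triangle as a fibre sequence rather than a splitting:
$L_{A^\bullet/A}\otimes_{A^\bullet} B[-1] \to L_{B/A}[-1] \to L_{B/A^\bullet}[-1]$, whose first term is cosimplicially null-homotopic. Applying $\mathrm{Sym}^k_B$, the constant object $\mathrm{Sym}^k_B(L_{B/A}[-1])$ gets a finite filtration with graded pieces $\mathrm{Sym}^i_B(L_{A^\bullet/A}\otimes_{A^\bullet} B[-1]) \otimes_B \mathrm{Sym}^{k-i}_B(L_{B/A^\bullet}[-1])$. The $i=0$ piece is the target of the comparison map. So the fibre of the comparison map is built from the pieces with $i>0$, which are cosimplicially null-homotopic and have vanishing totalization.

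The paper secures the null-homotopy by checking the assertion after base change along $A \to B$. This is legitimate because the filtration, the identification of $\gr^*$, and the comparison map all commute with base change, and $-\otimes_A B$ is conservative on the image of $D(B) \to D(A)$. After the base change $A \to B$ has a section, so $A \to A^\bullet$ is a cosimplicial homotopy equivalence and $L_{A^\bullet/A}\otimes_{A^\bullet} B$ is null-homotopic.
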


We refer to the filtration in the proposition as the {\em derived $I$-adic filtration} on $A$. 

\begin{proof}
There is a natural map $\mathrm{Cech}(A \to B) \to B$ of cosimplicial animated $A$-algebras. By the surjectivity assumption on $A \to B$, each level $B^i  \to B$ of this map satisfies the hypothesis of Quillen's theorem: the induced map on $\pi_0$ is an isomorphism, and the map on $\pi_1$ (and in fact all $\pi_i$) is surjective as there exist sections $B \to B^i$. Consequently, we learn that $B^i$ admits a natural complete descending $\mathbf{N}$-indexed filtration with an identification $\gr^* B^i \simeq \mathrm{Sym}^*_B(L_{B/B^i}[-1])$. Totalizing, we learn that $A^{\wedge}_I = \lim \mathrm{Cech}(A \to B)$ admits a natural complete descending $\mathbf{N}$-indexed filtration with an identification $\gr^* A^{\wedge}_I \simeq  \lim \mathrm{Sym}^*_B(L_{B/B^\bullet}[-1])$ of graded animated $B$-algebras. It is thus sufficient to show that the natural map $a_{B/A}:\mathrm{Sym}^*_B(L_{B/A}[-1]) \to \lim \mathrm{Sym}^*_B(L_{B/B^\bullet}[-1])$ of graded animated $B$-algebras is an isomorphism. As the formation of the filtration, the identification of its associated graded, and the formation of the map $a_{B/A}$ (as a map of graded animated $B$-algebras) commutes with base change on $A$, the desired assertion can be checked after base change along any map $A \to A'$ with the property that tensoring with $A'$ is conservative on the image of $D(B) \to D(A)$. After such a base change (e.g., with $A'=B$), we may assume that  $A \to B$ admits a section.  Our task is to show that the map $a_n:\mathrm{Sym}^n_B(L_{B/A}[-1]) \to \mathrm{Sym}^n_B(L_{B/B^\bullet}[-1])$ of cosimplicial $B$-modules (where the LHS is a constant cosimplicial object) induces an isomorphism on totalizations. For $n=1$, the fibre of this map is given by $L_{B^\bullet/A} \otimes_{B^\bullet} B[-1]$, which vanishes on totalization as $A \to B^\bullet$  is a cosimplicial homotopy equivalence of cosimplicial animated $A$-algebras by the choice of a section. For $n > 1$, the fibre $\mathrm{fib}(a_n)$ admits a finite filtration with graded pieces of the form $\mathrm{Sym}^i_B(L_{B^\bullet/A} \otimes_{B^\bullet} B[-1]) \otimes_B \mathrm{Sym}^{n-i}(L_{B/B^\bullet}[-1])$ $i > 0$; as $i > 0$, each of these pieces is cosimplicial homotopic to $0$ (as the term $\mathrm{Sym}^i_B(L_{B^\bullet/A} \otimes_{B^\bullet} B[-1]) $ is so) whence the totalization vanishes.
\end{proof}

The filtration in \autoref{derIadic} agrees with that in \autoref{quillenconv} in case of overlap:

\begin{corollary}[Quillen's filtration as a derived $I$-adic filtration]
\label{QuillenConvAdams}
Let $A \to B$ be a map of animated rings whose fibre $I$ is connected, i.e., $\pi_0(I) = 0$. Then $A \simeq A^{\wedge}_I$, with the filtration in \autoref{quillenconv} naturally identifying with the derived $I$-adic filtration.
\end{corollary}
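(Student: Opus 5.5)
The plan is to compare the two filtrations on $A$ via the natural map $A \to A^{\wedge}_I = \lim \mathrm{Cech}(A \to B)$, using the completeness statements available in each setting. By construction, Quillen's filtration $F^*$ of \autoref{quillenconv} is functorial in the map $A \to B$. I apply it levelwise to the augmented cosimplicial diagram $A \to \mathrm{Cech}(A\to B) \to B$, each level $B^i \to B$ having the same target $B$. This gives a map of filtered objects
\[ F^*_{A \to B} \to \lim_{[i]} F^*_{B^i \to B}. \]
The right-hand side is, by definition, the derived $I$-adic filtration constructed in the proof of \autoref{derIadic}. On associated gradeds this map is the map
\[ a_{B/A}\colon \mathrm{Sym}^*_B(L_{B/A}[-1]) \to \lim \mathrm{Sym}^*_B(L_{B/B^\bullet}[-1]), \]
which was shown to be an isomorphism in that proof; that argument used only surjectivity on $\pi_0$, which holds here. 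So the comparison map of filtrations is an isomorphism on each graded piece, and by induction on $n$ it is an isomorphism on each quotient $A/F^n \to A^{\wedge}_I/F^n_{\mathrm{der}}$.

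Next I pass to the limit. The derived $I$-adic filtration on $A^{\wedge}_I$ is complete by \autoref{derIadic}, so $A^{\wedge}_I \simeq \lim_n A^{\wedge}_I/F^n_{\mathrm{der}}$. Since $I$ is connected, \autoref{quillenconv} gives $F^n \in D^{\leq -n}$, so Quillen's filtration on $A$ is complete and $A \simeq \lim_n A/F^n$. Combining these with the levelwise isomorphisms of quotients, the map $A \to A^{\wedge}_I$ is an isomorphism. It is moreover an isomorphism of filtered objects: it identifies $F^n$ with $F^n_{\mathrm{der}}$ as the fibres of the isomorphic towers. This proves both claims.

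The step I expect to need the most care is checking that the map on gradeds induced by functoriality really is $a_{B/A}$. This amounts to checking that Quillen's identification $\gr^* \simeq \mathrm{Sym}^*(L[-1])$ is natural in the map $A \to B$ over a fixed target. I would verify this on polynomial surjections, where the filtration is the classical $I$-adic one and the identification is the natural map $\mathrm{Sym}^*(I/I^2) \to \oplus I^n/I^{n+1}$, which is visibly functorial. Naturality for general animated rings then follows by animation, because both constructions are left Kan extended from that case.

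A secondary subtlety concerns the cosimplicial levels $B^i \to B$. For these, the fibre need not be connected; it is only surjective on $\pi_0$, since $\pi_0$ is an isomorphism. This is harmless: the derived filtration only needs Quillen's construction levelwise, and its completeness is supplied by \autoref{derIadic} itself. Quillen's completeness is used only for $A$, where the hypothesis $\pi_0(I)=0$ holds.
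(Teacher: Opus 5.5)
Your proposal is correct and is essentially the paper's proof. The paper likewise refines $A \to A^{\wedge}_I$ to a filtered map from Quillen's filtration to the derived $I$-adic filtration and concludes from the matching graded pieces; it leaves implicit the completeness of both filtrations, which you spell out. One small correction that does not affect your argument: the fibres of the levels $B^i \to B$ are in fact connected, since $\pi_0(B^i) \simeq \pi_0(B)$ and the sections $B \to B^i$ make $\pi_1(B^i) \to \pi_1(B)$ surjective, which is exactly how the proof of \autoref{derIadic} obtains levelwise completeness.
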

\begin{proof}
By construction of the derived $I$-adic filtration, the natural map $A \to A^{\wedge}_I = \lim \mathrm{Cech}(A \to B)$ refines  to a filtered map carrying the filtration $F^*$ from \autoref{quillenconv} to the derived $I$-adic filtration on $A^{\wedge}_I$. As the graded pieces match up, the map is a filtered isomorphism, proving the corollary.
\end{proof}

Our second main result about Adams completions is its compatibility with composition; this was much easier (for us) to prove using the definition of the Adams completion given above, and is the main reason we did not adopt the animated $I$-adic completion from \autoref{quillenconv} as our definition.

\begin{proposition}[Composition of completions]
\label{AdamsComptwo}
Let $A \to B \to C$ be a composition of maps of animated rings that are both surjective on $\pi_0$, with fibres $I = \mathrm{fib}(A \to B)$, $J = \mathrm{fib}(B \to C)$ and $K = \mathrm{fib}(A \to C)$.  Assume that $B$ is Adams complete along $J$ and that the same holds true after every base change on $B$. Then $A^{\wedge}_I \simeq A^{\wedge}_K$, and similarly after any base change on $A$.
\end{proposition}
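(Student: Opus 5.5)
The plan is to compare both completions with a double cosimplicial object. Consider the bicosimplicial animated $A$-algebra
\[ X^{\bullet,\star} := \mathrm{Cech}(A \to B)^{\bullet} \otimes_A \mathrm{Cech}(A\to C)^{\star}, \]
or, more usefully, the augmented version obtained by base changing the Cech nerve of $B \to C$ along the terms of $\mathrm{Cech}(A \to B)$. Concretely, for each $[m]$ set $B^m := B^{\otimes_A (m+1)}$. Form $\mathrm{Cech}(B^m \to B^m \otimes_B C)$, where $B^m$ is viewed as a $B$-algebra via one chosen factor; by cosimpliciality it is better to use all factors symmetrically. The cleanest route is to show directly that the natural map $A^\wedge_I \to A^\wedge_K$ is an isomorphism by exhibiting both as limits of a common diagram.

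First I will produce the map. Since $A \to C$ factors through $B$, there is a map of Cech nerves $\mathrm{Cech}(A\to B) \to \mathrm{Cech}(A \to C)$, hence $A^\wedge_I \to A^\wedge_K$. For the inverse, I use the bicosimplicial object $Y^{m,n} := B^{\otimes_A(m+1)} \otimes_A C^{\otimes_A (n+1)}$, together with its edge inclusions. Fixing $m$ and taking the limit over $n$ gives $\mathrm{Comp}(B^m \to B^m \otimes_A C^{\otimes\star}\text{-nerve})$, which is the Adams completion of $B^{m}\otimes_A C^{\otimes 0}$-type rings along maps to $C$-base changes. The key claim is that for each fixed $m$ the limit over $n$ of $Y^{m,n}$ is equivalent to $B^m$ itself, up to replacing $B^m$ by the base change $B^m \otimes_A C^{\otimes -}$. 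Here I use the hypothesis: $B^m = B \otimes_A B^{\otimes m}$ is a base change of $B$ (along $B \to B^m$), so $B^m$ is Adams complete along $B^m \to B^m \otimes_B C$. Then I need to identify the Cech nerve of $B^m \to B^m\otimes_B C$ with the Cech nerve of $B^m \to B^m \otimes_A C$ after completion, which uses that $B^m\otimes_A C \to B^m \otimes_B C$ is a base change of $B \otimes_A C \to C$, a map with a section, hence descendable (\autoref{DescendableComp}-type argument). So $\lim_n Y^{m,n} \simeq B^m$. Totalizing over $m$ gives $\lim_{m,n} Y \simeq A^\wedge_I$.

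Symmetrically, fixing $n$ and taking the limit over $m$: $Y^{m,n} = \mathrm{Cech}(C^n \to C^n \otimes_A B)$ with $C^n := C^{\otimes_A(n+1)}$, and $C^n \to C^n\otimes_A B$ has a retraction-type structure because $B \to C$; more precisely $C^n\otimes_A B \to C^n$ (multiplication via $B \to C$) retracts the coaugmentation, so the augmented Cech nerve is split and $\lim_m Y^{m,n} \simeq C^n$. Totalizing over $n$ gives $A^\wedge_K$. Commuting the two limits identifies $A^\wedge_I \simeq A^\wedge_K$, and one checks this agrees with the map constructed earlier. Compatibility with base change on $A$ follows since all hypotheses and constructions are stable under base change, the hypothesis being assumed for all base changes of $B$.

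The main obstacle is the first step: the hypothesis concerns completeness of $B$ along $J$ (i.e. along $B \to C$), whereas the double Cech nerve naturally produces the nerve of $B^m \to B^m\otimes_A C$. Matching these requires a careful comparison of two towers (an iterated-limit/cofinality argument, showing the nerve over $A$ is, termwise, obtained from the nerve over $B$ by base changes along split maps), and I expect the bookkeeping of which factor gives the $B$-algebra structure on $B^m$ to be the delicate point.
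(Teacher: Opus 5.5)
Your double-\v{C}ech strategy is sound, and the fixed-$n$ direction is fine: $C^{\otimes_A(n+1)}\to C^{\otimes_A(n+1)}\otimes_A B$ has a ring retraction induced by $B\to C$, so that row is split.

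The gap is the fixed-$m$ direction, which is exactly the step you flagged. You need $B^m\simeq\lim_n B^m\otimes_A C^{\otimes_A(n+1)}$, i.e.\ that $B^m$ is Adams complete along $B^m\to B^m\otimes_A C$, and your justification does not work. A section $C\to B\otimes_A C$ of the multiplication map $\mu\colon B\otimes_A C\to C$ exhibits $C$ as a retract of $B\otimes_A C$. That is the wrong direction for descendability of $\mu$, which needs $B\otimes_A C$ to lie in the thick $\otimes$-ideal generated by $C$.

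For a concrete failure, take $A=\mathbb{Q}[x]/(x^2)$ and $B=C=\mathbb{Q}$, which satisfy all the hypotheses. Then $\pi_*(\mathbb{Q}\otimes^{\myL}_A\mathbb{Q})$ contains a non-nilpotent class $t$ in degree $2$. Inverting $t$ gives a nonzero module killed by $-\otimes\mathbb{Q}$, so base change along $\mu$ is not conservative and $\mu$ is not descendable.

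The correct substitute does not rescue the step either. It is true that $\mu$ is a $\pi_0$-isomorphism with a section, hence has connected fibre and is universally Adams complete by \autoref{QuillenConvAdams}. But passing from completeness along $B^m\to B^m\otimes_B C$ to completeness along $B^m\to B^m\otimes_A C$ via the factorization $B^m\to B^m\otimes_A C\to B^m\otimes_B C$ is an instance of the very proposition you are proving, so it is circular. Also, the terms $B^m\otimes_A C^{\otimes_A(n+1)}$ are not termwise base changes of $B^m\otimes_B C^{\otimes_B(n+1)}$ along split maps; the comparison map goes the other way and only the limits agree.

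What is missing is that universal Adams completeness passes to the first map of a factorization, with no condition on the second map. In geometric terms this is the ``passage to the second composition factor'' of Liu--Zheng. Since $B\to E:=B\otimes_A C\xrightarrow{\mu}C$ composes to $B\to C$, the map $B\to E$ is universally Adams complete. To prove this, run a second double-\v{C}ech argument on $E^{\otimes_B(n+1)}\otimes_B C^{\otimes_B(k+1)}$:
\begin{itemize}
\item For fixed $n$, the limit over $k$ is $E^{\otimes_B(n+1)}$, by the hypothesis applied to the base change $B\to E^{\otimes_B(n+1)}$.
\item For fixed $k$, the limit over $n$ is $C^{\otimes_B(k+1)}$, because $\mu$ induces a ring retraction $C^{\otimes_B(k+1)}\otimes_B E\to C^{\otimes_B(k+1)}$.
\end{itemize}
Hence $\lim_n E^{\otimes_B(n+1)}\simeq\lim_k C^{\otimes_B(k+1)}\simeq B$, and the same holds after any base change of $B$. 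Since $B^m\otimes_A C^{\otimes_A(n+1)}\simeq B^m\otimes_B E^{\otimes_B(n+1)}$ (with $B^m$ a $B$-algebra via one factor), this supplies your fixed-$m$ claim. Naturality in $m$ is automatic because the coaugmentation is natural, and your argument then goes through.

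The completed proof is a hands-on version of the paper's. The paper instead works in $\mathrm{PShv}(\mathcal{C})$. It shows $U\to X^{\wedge}_V$ is universally of descent as the composite of $U\to V$ with the effective epimorphism $V\to X^{\wedge}_V$. It then uses that $X^{\wedge}_V\to X$ is a monomorphism, so $(X^{\wedge}_V)^{\wedge}_U\simeq X^{\wedge}_U$. The Liu--Zheng stability properties it quotes are proved by double-\v{C}ech arguments of exactly this kind.
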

\begin{proof}
The assumptions are stable under base change, so it suffices to show that $A^{\wedge}_I \simeq A^{\wedge}_K$. For this, we shall use an abstract argument, so we explain it in that generality, following the universal descent terminology of Liu--Zheng \cite[\S 3.1]{LiuZheng}.

Let $\mathcal{C}$ be a small $\infty$-category admitting finite limits. 

First, let us introduce a notion of completion. For a map $f:Y \to Z$ in $\mathcal{C}$, write $Z^{\wedge}_Y := \colim Y^{\bullet/Z}$ for the geometric realization of the Cech nerve $Y^{\bullet/Z}$ of $f$, regarded as a subobject of $Z$ in the $\infty$-topos $\mathrm{PShv}(\mathcal{C})$ of presheaves of spaces on $\mathcal{C}$; explicitly, for any test object $T \in \mathcal{C}$, the fibre of $\mathrm{Map}(T,Y^{\bullet/Z}) \to \mathrm{Map}(T,Z)$ over a map $g:T \to Z$ is empty if $g$ does not factor over $f$, and a singleton $\{\ast\}$ otherwise. These definitions extend naturally to arbitrary maps $f:Y \to Z$ in $\mathrm{PShv}(\mathcal{C})$, not merely those lying in the image of the Yoneda embedding $\mathcal{C} \to \mathrm{PShv}(\mathcal{C})$.

Next, we introduce the notion of descent. Fix a presheaf $F:\mathcal{C}^{op} \to \mathcal{D}$ valued in a presentable $\infty$-category $\mathcal{D}$; recall that $F$ extends uniquely to a functor $\mathrm{PShv}(\mathcal{C})^{op} \to \mathcal{D}$ (also denoted $F$) carrying  colimits in $\mathrm{PShv}(\mathcal{C})$ to limits in $\mathcal{D}$, and we shall identify $F$ with such an extension. We say that a map $f:Y \to Z$ in $\mathcal{C}$ (or in $\mathrm{PShv}(\mathcal{C})$) is of {\em $F$-descent} if $F(Z) \simeq F(Z^{\wedge}_Y) = \lim F(Y^{\bullet/Z})$, and we say that $f$ is {\em universally of $F$-descent} if the same holds true for every base change of $f$. Maps that are universally of $F$-descent are stable under base change, composition, retractions, as well as passage to the second composition factor (see \cite[Lemma 3.1.2]{LiuZheng}).

In the above notation, the key lemma is the following:

\begin{lemma}
Fix a composition $U \to V \to X$ of maps in $\mathcal{C}$ such that $U \to V$ is universally of $F$-descent. Then $F$ carries the map natural map $X^{\wedge}_U \to X^{\wedge}_V$ to an isomorphism, i.e., $F(X^{\wedge}_V) \simeq F(X^{\wedge}_U)$.
\end{lemma}
\begin{proof}
We shall show that the composition $U \to X^{\wedge}_U \to X^{\wedge}_V$ is universally of $F$-descent. Granting this statement, it follows that $F(X^{\wedge}_V) \simeq F((X^{\wedge}_V)^{\wedge}_U)$. But we also have $U^{\bullet/X^{\wedge}_V} \simeq U^{\bullet/X}$ since $X^{\wedge}_V \to X$ is a monomorphism, whence $(X^{\wedge}_V)^{\wedge}_U \simeq X^{\wedge}_U$ via the natural map. Combining the last two sentences gives the desired identification $F(X^{\wedge}_V) \simeq F(X^{\wedge}_U)$.

It remains to show that the composition $U \to X^{\wedge}_U \to X^{\wedge}_V$ is universally of $F$-descent. Since this map factors as $U \to V \to X^{\wedge}_V$, using our assumptions on $U \to V$, it suffices to show that $V \to X^{\wedge}_V$ is universally of $F$-descent. But this statement is clear and independent of $F$: the map $V \to X^{\wedge}_V$ is an effective epimorphism in $\mathrm{PShv}(\mathcal{C})$, so $V^{\bullet/X^{\wedge}_V} \to X^{\wedge}_V$ is a colimit diagram in $\mathrm{PShv}(\mathcal{C})$, and is thus carried to a limit diagram by any presheaf on $\mathcal{C}$ (regarded as a colimit preserving functor on $\mathrm{PShv}(\mathcal{C})$).
\end{proof}

To prove the proposition, one applies the preceding lemma to $\mathcal{C}^{op} = \mathrm{CAlg}^{an}$, $\mathcal{D} = \mathcal{D}(\mathbf{Z})$, $F = R\Gamma(-, \mathcal{O})$, and $U \to V \to X$ being the opposite of $A \to B \to C$. 
\end{proof}

The following consequence will be particularly useful for maps of the form $S \otimes_R S \to S$, where $R \to S$ is a map of animated rings surjective on $\pi_0$.

\begin{corollary}[Completeness of $\pi_0$-isomorphisms]
\label{AdamsComppi0}
Say $A \to B$ is a map of animated rings with fibre $I$ such that $\pi_0(A) \simeq \pi_0(B)$. Then $A$ is $I$-adically Adams complete and similarly after any base change. 
\end{corollary}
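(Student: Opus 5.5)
The idea is to reduce to \autoref{QuillenConvAdams} by factoring $A \to B$ through $\pi_0$, and then to apply \autoref{AdamsComptwo}. Let $I = \mathrm{fib}(A \to B)$. The hypothesis $\pi_0(A) \simeq \pi_0(B)$ means the map is surjective on $\pi_0$ with vanishing kernel on $\pi_0$, so $I$ is connective. In fact $\pi_0(I) = \mathrm{coker}(\pi_1(A) \to \pi_1(B))$, which need not vanish, so $I$ is not obviously connected and \autoref{QuillenConvAdams} does not apply directly. The remedy is to factor through $\pi_0$: consider the composition
\[ A \to B \to \pi_0(B) = \pi_0(A). \]
Write $J = \mathrm{fib}(B \to \pi_0(B))$ and $K = \mathrm{fib}(A \to \pi_0(A))$. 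Both fibres are connected: $\pi_0(J) = 0$ and $\pi_0(K) = 0$, because $B \to \pi_0(B)$ and $A \to \pi_0(A)$ are surjective on $\pi_1$ (the target has $\pi_1 = 0$) and bijective on $\pi_0$.

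By \autoref{QuillenConvAdams}, $B$ is Adams complete along $J$. Moreover, for any base change $B \to B'$, the map $B' \to B' \otimes_B \pi_0(B)$ still has connected fibre $J \otimes_B B'$, since a connected module tensored with a connective ring stays connected. So $B$ is Adams complete along $J$ universally. \autoref{AdamsComptwo}, applied to $A \to B \to \pi_0(B)$, then gives $A^{\wedge}_I \simeq A^{\wedge}_K$, and likewise after any base change on $A$. Applying \autoref{QuillenConvAdams} once more to $A \to \pi_0(A)$, whose fibre $K$ is connected, gives $A \simeq A^{\wedge}_K$. Combining the two, $A \simeq A^{\wedge}_I$. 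One should check that this composite identification is the natural map $A \to A^{\wedge}_I$; this follows from naturality of the comparison map in \autoref{AdamsComptwo}, since all the maps are under $A$.

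For the base change clause, take $A \to A'$ and set $B' = A' \otimes_A B$. Then $\pi_0(B') = \pi_0(A') \otimes_{\pi_0(A)} \pi_0(B) = \pi_0(A')$, so the hypothesis is preserved. Running the same argument with $A'$ in place of $A$ gives the claim.

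The step I expect to need the most care is verifying the universal completeness hypothesis in \autoref{AdamsComptwo}, i.e.\ that connectedness of the fibre survives arbitrary base change on $B$. This rests on the observation that $\pi_0(J \otimes_B B') = \pi_0(J) \otimes_{\pi_0(B)} \pi_0(B') = 0$ for connective $J$ and $B'$. Everything else is a formal combination of the earlier results.
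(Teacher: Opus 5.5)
Your proposal is correct and follows the paper's own proof. The paper also factors $A \to B \to \pi_0(B) \simeq \pi_0(A)$, uses \autoref{QuillenConvAdams} to get that $B$ is universally Adams complete along $B \to \pi_0(B)$, and then combines \autoref{AdamsComptwo} with \autoref{QuillenConvAdams} applied to $A \to \pi_0(A)$. You additionally spell out details the paper leaves implicit: the computation of the relevant $\pi_0$'s of fibres, the stability of connectedness under base change, and the preservation of the hypothesis $\pi_0(A') \simeq \pi_0(B')$ under base change.
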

\begin{proof}
The special case where $B=\pi_0(A)$ (or more generally when $I$ is connected) follows from \autoref{QuillenConvAdams}. In the general case, consider the composition $A \to B \to \pi_0(B)$. Then $B$ universally Adams complete along $B \to \pi_0(B)$ by the first statement applied to this map. Since $\pi_0(A) \simeq \pi_0(B)$ by assumption, the claim follows from \autoref{AdamsComptwo} and the first statement applied to $A \to \pi_0(A)$. 
\end{proof}

In the rest of this section, we record the interaction of the Frobenius with the derived $I$-adic filtration, and use it to give a model-free proof of a recent result from \cite{BallardIyengarLankMukhopadhyayPollitzHighFrobeniusPushforwards}; this material is not relevant to the application to dualizing complexes, and only needs \autoref{quillenconv}.

\begin{remark}[The Frobenius and the $I$-adic filtration]
\label{FrobIAdic}
Say $A \to B$ is a map of animated $\mathbb{F}_p$-algebra with fibre $I$; assume $\pi_0(A) \to \pi_0(B)$ is surjective. \autoref{quillenconv} constructed, via animation, a filtration $F^*$ on $A$. Examining the construction shows that the Frobenius $\phi:A \to A$ is naturally $p$-speed filtered, i.e., it extends to a filtered map $F^* \to F^{p*}$. Thus, for any integer $e \geq 1$, the $e$-fold Frobenius $\phi^e:A \to A$ refines to a filtered map $F^* \to F^{p^e *}$; this compatibility can in fact be seen for any filtered animated $\mathbf{F}_p$-algebra via the Rees construction by observing that the Frobenius on an affine derived scheme over $\mathbf{A}^1/\mathbf{G}_m \times \mathrm{Spec}(\mathbf{F}_p)$ lies over the Frobenius on $B\mathbf{G}_m$, and pullback along the latter corresponds to rescaling the grading by $p$ when we identify quasi-coherent sheaves on $B\mathbf{G}_m$ with $\mathbf{Z}$-graded vector spaces.
\end{remark}

\autoref{FrobIAdic} has the following consequence:

\begin{corollary}[Factoring the Frobenius through $\pi_0$]
\label{FactorFrob}
Let $A$ be an animated $\mathbb{F}_p$-algebra with $\pi_i(A) = 0$ for $i \notin [0,n]$. For $e > \log_p(n)$, the $e$-fold Frobenius map $\phi^e:A \to A$ factors over $A \to \pi_0(A)$. In particular, $\phi^e_* A \in D(A)$ comes from $D(\pi_0(A))$.
\end{corollary}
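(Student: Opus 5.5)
Apply \autoref{quillenconv} and \autoref{FrobIAdic} to the truncation map $A \to B := \pi_0(A)$. This map is surjective (indeed an isomorphism) on $\pi_0$, and its fibre $I$ is connected. By \autoref{quillenconv} we obtain a complete descending filtration $F^*$ on $A$ with $F^0 = A$ and $\gr^*_F A \simeq \mathrm{Sym}^*_B(L_{B/A}[-1])$. The connectedness part gives $F^m \in D^{\leq -m}$, i.e. $\pi_i(F^m)=0$ for $i<m$ in homological grading.

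\textbf{Key steps.} First, by \autoref{FrobIAdic}, $\phi^e$ refines to a filtered map $F^* \to F^{p^e *}$. In degree $1$ this says that the composite $F^1 \to A \xrightarrow{\phi^e} A$ factors through $F^{p^e} \to A$. Second, $A$ has homotopy only in degrees $[0,n]$ and $F^{p^e}$ is $(p^e-1)$-connected. Since $p^e > n$, the object $F^{p^e}$ is $n$-connected, so any map $F^{p^e} \to A$ vanishes; it factors through $\tau_{\le n}F^{p^e}=0$, as $A$ is $n$-truncated. Hence $\phi^e|_{F^1}$ is nullhomotopic as a map of $A$-modules (via $\phi^e$).

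Third, we need to identify $A/F^1$ with $B = \pi_0(A)$, compatibly with the algebra structure. From the graded identification, $\gr^0 = \mathrm{Sym}^0_B = B$, so $A/F^1 = F^0/F^1 \simeq B$. The filtered Frobenius then induces a map on $F^0/F^1$, and the filtered map $F^* \to F^{p^e*}$ yields the commutative square $A \to A/F^1=B \to A/F^{p^e}$ lying under $\phi^e$. It remains to lift $A/F^{p^e} \leftarrow A$ back to $A$: since $F^{p^e}$ is $n$-connected and $A$ is $n$-truncated, the map $A \to A/F^{p^e}$ is an isomorphism on $\tau_{\le n}$ and hence an equivalence on $n$-truncated objects. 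We therefore get $\phi^e = (B \to A/F^{p^e} \simeq A)\circ(A \to B)$.

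\textbf{Ring-level upgrade and main obstacle.} To obtain a factorization as animated rings (or at least $E_\infty$-algebras), I would run the same argument for the filtered $E_\infty$-algebra $F^*$: the filtered Frobenius gives an algebra map $\mathrm{gr}^0$-level $B = F^0/F^1 \to F^0/F^{p^e} \simeq A$, where the last equivalence comes from $n$-truncatedness after applying $\tau_{\le n}$, which is a map of algebras. The main obstacle is making the rescaled filtered map $F^*\to F^{p^e*}$ induce a map on quotients $F^0/F^1 \to F^0/F^{p^e}$ as algebras. This follows by composing with $F^{p^e*} \to$ the filtration $F^{p^e\lceil *\rceil}$, which is constant $A$ in degree $0$ and $F^{p^e}$ in degree $\ge1$, and then taking the cofibre of degree $1$ into degree $0$ as an algebra map.

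The final claim is then immediate: $\phi^e_*A$ is the restriction of the $A$-module $A$ along a map factoring through $A \to \pi_0(A)$, so it is pulled back from $D(\pi_0(A))$.
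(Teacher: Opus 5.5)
Your proof is correct and is essentially the paper's argument: you apply \autoref{quillenconv} to $A \to \pi_0(A)$, use the filtered Frobenius of \autoref{FrobIAdic} to get the square $A \to A/F^1 \simeq \pi_0(A) \to A/F^{p^e}$ under $\phi^e$, and then split $A \to A/F^{p^e}$ by the truncation $A/F^{p^e} \to \tau_{\leq n}(A/F^{p^e}) \simeq A$, which works because $F^{p^e} \in D^{\leq -p^e} \subset D^{<-n}$. Two small points: $A/F^{p^e}$ itself need not be equivalent to $A$, since it can have nonzero homotopy above degree $n$ coming from $F^{p^e}$, so the map you want is this section rather than an equivalence as written in your third step; and the null-homotopy of $\phi^e|_{F^1}$ in your second step is not needed.
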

\begin{proof}
We have a commutative square
\[ \xymatrix{ A \ar[d] \ar[r]^{\phi^e} & A \ar[d] \\ 
A/F^1 \simeq \pi_0(A) \ar[r] & A/F^{p^e} }\]
of animated $\mathbb{F}_p$-algebras, where the vertical maps are the canonical ones, the top horizontal map is $\phi^e$, and the bottom horizontal map is induced by $\phi^e$ thanks to \autoref{FrobIAdic}. By \autoref{quillenconv} and the assumption $p^e > n$, we have $F^{p^e} \in D^{\leq -p^e} \subset D^{< -n}$, so the right vertical map $A \to A/F^{p^e}$ induces an isomorphism on $\tau_{\leq n}$. But $\pi_i(A) = 0$ for $i \notin [0,n]$, so the right vertical map admits a unique section $s:A/F^{p^e} \to A$ (given by the truncation map $A/F^{p^e} \to \tau_{\leq n} (A/F^{p^e})$). Composing the bottom horizontal map with $s$ then provides the desired factorization.
\end{proof}

A concrete consequence of this corollary (with a similar proof, albeit phrased in terms of explicit models) appeared recently as an essential technical input in \cite[Proposition 2.7]{BallardIyengarLankMukhopadhyayPollitzHighFrobeniusPushforwards}; we explain the statement for completeness. 

\begin{theorem}[Ballard--Iyengar--Lank--Mukhopadhyay--Pollitz]
Given a Noetherian local $F$-finite $\mathbb{F}_p$-algebra $(R,\mathfrak{m},k)$, the smallest thick subcategory of $D(R)$ generated by $\phi^e_* R$ contains $k$ for $e > \log_p(c)$, where $c:=\mathrm{codepth}_{\mathfrak{m}}(R) = \mathrm{edim}(\widehat{R}) - \mathrm{depth}_{\mathfrak{m}}(R)$.
\end{theorem}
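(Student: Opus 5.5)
The plan is to apply \autoref{FactorFrob} to a Koszul complex on $R$, viewed as an animated $\mathbb{F}_p$-algebra. That complex has homotopy concentrated in $[0,c]$ and $\pi_0 = k$. Its Frobenius pushforward will then be a $k$-complex, hence a sum of shifts of $k$, and it will lie in the thick subcategory generated by $\phi^e_* R$.

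\textbf{Step 1: the Koszul algebra.} Let $n = \mathrm{edim}(R) = \mathrm{edim}(\widehat{R})$, and choose minimal generators $x_1, \dots, x_n$ of $\mathfrak{m}$. Set
\[
A := R \otimes^{\myL}_{\mathbb{F}_p[X_1,\dots,X_n]} \mathbb{F}_p, \qquad X_i \mapsto x_i,
\]
which is an animated $R$-algebra whose underlying complex is the Koszul complex $\mathrm{Kos}(R;x_1,\dots,x_n)$. We have $\pi_0(A) = R/\mathfrak{m} = k$. By depth sensitivity of the Koszul complex, $H_i(\mathrm{Kos}(R;x)) = 0$ for $i > n - \mathrm{depth}_{\mathfrak{m}}(R) = c$. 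So $\pi_i(A) = 0$ for $i \notin [0,c]$, and each $\pi_i(A)$ is a finite-length $R$-module.

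\textbf{Step 2: apply \autoref{FactorFrob}.} For $e > \log_p(c)$, \autoref{FactorFrob} shows that $\phi^e_* A \in D(A)$ is restricted from $D(\pi_0(A)) = D(k)$. Every object of $D(k)$ is a direct sum of shifts of $k$. The homotopy of $\phi^e_* A$ is that of $A$: it is bounded, and each group is a finite-dimensional $k$-vector space, being finite length over $R$ (using $F$-finiteness to control the module structure). Hence, as an object of $D(R)$ via $R \to A$, $\phi^e_* A$ is a finite direct sum of shifts of $k$, and it is nonzero since $\pi_0 \neq 0$. Therefore $k$ is a direct summand of a shift of $\phi^e_* A$ in $D(R)$.

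Here I must be careful about which $R$-module structure is meant. I regard $\phi^e_*A$ as an $R$-module through the composite $R \to A \xrightarrow{\phi^e} A$. Since Frobenius is natural, this equals $A \circ \phi^e_R$, i.e. the $R$-module $\phi^e_*A$ obtained by restricting along the Frobenius of $R$. The factorization from \autoref{FactorFrob} is a factorization of ring maps, so this $R$-structure is indeed pulled back from a $k$-module structure; restriction along $R \to k$ then gives the sum of shifts of $k$.

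\textbf{Step 3: thick subcategory.} The underlying $R$-complex of $A$ is the finite Koszul complex, built from $R$ by $n$ iterated cones of multiplication by the $x_i$. Applying the exact functor $\phi^e_*\colon D(R) \to D(R)$ shows that $\phi^e_* A$ is built from $\phi^e_* R$ by finitely many cones and shifts. Hence $\phi^e_*A \in \mathrm{thick}(\phi^e_*R)$. Combined with Step 2 and closure of thick subcategories under retracts, this gives $k \in \mathrm{thick}(\phi^e_* R)$.

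The main obstacle is Step 2: making precise that the $R$-module structure on $\phi^e_*A$ factors through $k$ compatibly with the one used in Step 3. Both use the same restriction of scalars along $\phi^e$, so the key check is that naturality of Frobenius identifies $R \to A \xrightarrow{\phi^e} A$ with $R \xrightarrow{\phi^e} R \to A$. The other step needing care is verifying the finite-dimensionality of the homotopy, so that the direct sum is finite.
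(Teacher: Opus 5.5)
Your proposal is correct and follows essentially the same route as the paper: the Koszul algebra on minimal generators of $\mathfrak{m}$ has amplitude in $[0,c]$, \autoref{FactorFrob} makes $\phi^e_*A$ a $k$-complex so a copy of $k$ splits off, and $\phi^e_*A \in \mathrm{thick}(\phi^e_*R)$. The differences are cosmetic: you get the amplitude bound from depth sensitivity of the Koszul complex rather than Auslander--Buchsbaum over a Cohen presentation $k\llbracket x_1,\dots,x_n\rrbracket \to \widehat{R}$, and you spell out the naturality-of-Frobenius compatibility of $R$-structures that the paper leaves implicit; your finite-dimensionality check is harmless but unnecessary, since one nonzero summand $k[i]$ already suffices.
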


\begin{proof}
Fix a minimal system of generators $f_1,...,f_d \in \mathfrak{m}$. Consider the animated $R$-algebra $A=\mathrm{Kos}(R;f_1,...,f_d)$ given by the Koszul construction. By choosing a splitting of $R \to k$, we obtain a surjection $P := k\llbracket x_1,...,x_d \rrbracket \to \widehat{R}$ via $x_i \mapsto f_i$. Auslander--Buchscbaum over $P$ shows that the amplitude of $A$ (which is the projective dimension of $\widehat{R}$ over $P$) equals the codepth $c$, whence $\pi_i(A) = 0$ for $i \notin [0,c]$. Applying \autoref{FactorFrob}, we learn that for $e > \log_p(c)$, the $e$-fold Frobenius $A \to A$ factors over $A \to \pi_0(A) = k$. Thus, the nonzero object $\phi^e_* A \in D(R)$ admits a $k$-complex structure, i.e., it lies in the image of $D(k) \to D(R)$. As $k$ is a field, we can then split a copy of $k$ from $\phi^e_* A$, whence the smallest thick subcategory of $D(R)$ generated by $\phi^e_* R$ certainly contains $k$. 
\end{proof}

\subsubsection*{Formally completing Noetherian $F$-finite rings along the diagonal}
\label{compffin}

Our main object of study is the following notion in the case of an $F$-finite $\mathbb{F}_p$-algebra $R$:

\begin{definition}[The completion along the diagonal]
\label{AdamsCompFFinDef}
For an animated $\mathbb{F}_p$-algebra $R$, we define 
\[ R \widehat{\otimes_{\mathbb{F}_p}} R  := \mathrm{Comp}(R \otimes_{\mathbb{F}_p} R \xrightarrow{\mu} R),\]
and call it the completion of $R \otimes_{\mathbb{F}_p} R$ along the diagonal. 
\end{definition}

As defined,  $R \widehat{\otimes_{\mathbb{F}_p}} R$ is an animated ring with potentially unbounded homotopy. The main goal of this section is to show that, in fact,  $R \widehat{\otimes_{\mathbb{F}_p}} R$ is a classical Noetherian ring when $R$ is $F$-finite (\autoref{FfinCompDiag}). One of our main tools to understanding $ R \widehat{\otimes_{\mathbb{F}_p}} R$ is the following:

\begin{remark}[The derived $I_\Delta$-adic filtration]
For an animated $\mathbb{F}_p$-algebra $R$, the animated ring $R \widehat{\otimes_{\mathbb{F}_p}} R$ comes endowed with the complete derived $I_\Delta$-adic filtration, where $I_\Delta = \mathrm{fib}(R \otimes_{\mathbb{F}_p} R \to R)$; see \autoref{derIadic}. The associated graded of this filtration is identified with 
\[ \mathrm{Sym}^*_R(L_{R/R \otimes_{\mathbb{F}_p} R}[-1]) \simeq \mathrm{Sym}^*_R(L_{R/\mathbb{F}_p}),\]
where we identified $L_{R/R \otimes_{\mathbb{F}_p} R}[-1] \simeq L_{R/\mathbb{F}_p}$ via (say) the first inclusion.
\end{remark}

\begin{remark}[The cotangent complex of $F$-finite rings]
Let $S$ be a Noetherian $F$-finite ring. The Frobenius $F_S:S \to S$ induces the $0$ map $L_{S/\mathbb{F}_p} \to L_{S/\mathbb{F}_p}$, so it follows from the transitivity triangle that $L_{S/\mathbb{F}_p}$ is a direct summand of the relative cotangent complex $L_{F_S}$. By the assumption on $S$, the map $F_S$ is a finite map of Noetherian rings, so $L_{F_S}$ is a pseudocoherent $S$-complex (see for instance  \cite[6.11]{Iyengar.AndreQuillenHomologyOfCommutativeAlgebras}, \cite[Proposition 4.12]{Quillen.OnThe(Co-)homologyOfCommRings}, or \cite[\href{https://stacks.math.columbia.edu/tag/08PZ}{Tag 08PZ}]{stacks-project} and \cite[\href{https://stacks.math.columbia.edu/tag/066E}{Tag 066E}]{stacks-project})
%\todo{Karl: @Bhargav, hopefully these are the references you meant, I didn't track it down in Illusie's book}%*** reference Quillen, Iyenger or Illusie ****), 
whence the same holds true for the summand $L_{S/\mathbb{F}_p}$. In particular, the associated graded of the derived $I_\Delta$-adic filtration on $S \widehat{\otimes_{\mathbb{F}_p}} S$ identifies with $\mathrm{Sym}^*_S(K)$ for a pseudocoherent complex $K$, and thus has good finiteness properties. 
\end{remark}

Let us give some important examples:

\begin{example}[The geometric case]
\label{GeomCompDiag}
Say $R$ is a finite type algebra over a perfect field $k$ of characteristic $p$. Then $R \widehat{\otimes_{\mathbb{F}_p}} R$ coincides with the classical completion $R \widehat{\otimes_k} R$ of $R \otimes_k R$ along the ideal $I_\Delta$ of the diagonal via the natural map $R \widehat{\otimes_{\mathbb{F}_p}} R \to R \widehat{\otimes_k} R$; in particular, $R \widehat{\otimes_{\mathbb{F}_p}} R$ is an ordinary ring. To prove this, by identifying $R \widehat{\otimes_{\mathbb{F}_p}} R$ as the $I_\Delta$-adic Adams completion of $R \otimes_k R$ using Carlsson's theorem (\autoref{ClassicalAdams}) and comparing the corresponding complete derived $I_\Delta$-adic filtrations on the two sides, the claim follows from the fact that $L_{R/\mathbb{F}_p} \simeq L_{R/k}$ as $L_{k/\mathbb{F}_p} = 0$ by perfectness. 
\end{example}

\begin{example}[The regular $F$-finite case]
\label{regularFfindiag}
Say $R$ is a Noetherian $F$-finite regular ring. Then we claim the following:
\begin{enumerate}
\item $R \widehat{\otimes_{\mathbb{F}_p}} R$ identifies with the classical $J_\Delta$-adic completion $A = \lim_n (R \otimes_{\mathbb{F}_p} R)/J_\Delta^n$ of $R \otimes R$ along the diagonal. This identification matches the derived $I_\Delta$-adic filtration to the classical $J_\Delta$-adic filtration on $A$.\label{regularFfindiag.itm.1}

\item $R \widehat{\otimes_{\mathbb{F}_p}} R$ is regular, and the kernel $J$ of $R \widehat{\otimes_{\mathbb{F}_p}} R \to R$ is a regular ideal with $J/J^2 \simeq \Omega^1_{R/\mathbb{F}_p} \simeq L_{R/\mathbb{F}_p}$.  \label{regularFfindiag.itm.2}
\end{enumerate}

For \autoref{regularFfindiag.itm.1}: by comparing the complete derived $I_\Delta$-adic filtration on $R \widehat{\otimes_{\mathbb{F}_p}} R$ with the evident complete $J_\Delta$-adic filtration on $A$, we are reduced to showing that the natural map induces an isomorphism $\mathrm{Sym}^n_R(L_{R/\mathbb{F}_p}) \simeq \oplus J_\Delta^n/J_\Delta^{n+1}$ for all $n$. This is classical when $R$ is a smooth $\mathbb{F}_p$-algebra. In general, both sides are compatible with filtered colimits in $R$, so the claim follows by writing $R$ as a filtered colimit of smooth $\mathbb{F}_p$-algebras using Popescu's theorem.

For \autoref{regularFfindiag.itm.2}: as $R$ is regular, Popescu's theorem implies that the $R$-complex  $L_{R/\mathbb{F}_p}$ identifies with $\Omega^1_{R/\mathbb{F}_p}$ and is $R$-flat. Moreover, as $R$ is $F$-finite, the $R$-module $\Omega^1_{R/\mathbb{F}_p} \simeq \Omega^1_{R/R^p}$ is also finitely generated. Thus, we learn that $L_{R/\mathbb{F}_p}$ is a finite projective $R$-module. The first factor inclusion $i_1:R \to R \widehat{\otimes_{\mathbb{F}_p}} R$ can then be extended to a map $\mathrm{Sym}_R^*(L_{R/\mathbb{F}_p}) \to R \widehat{\otimes_{\mathbb{F}_p}} R$ splitting the derived $I_\Delta$-adic filtration, i.e., identifying the target with the completion of the source along the ideal of positive degree elements. As the latter is the completion of the regular ring $\mathrm{Sym}^*_R(\Omega^1_{R/\mathbb{F}_p})$, it must be regular itself; this immediately proves the desired statements in (2).
\end{example}

\begin{example}[The Artinian case]
\label{ArtinDiagComp}
Assume $(S,\mathfrak{m})$ is an Artinian local $F$-finite ring with residue field $k$. We claim that $S \widehat{\otimes_{\mathbb{F}_p}} S$ is discrete. To see this, choose a section $k \to S$ of $S \to S/\mathfrak{m} = k$, so $k \to S$ is a finite flat universal homeomorphism. By the upcoming \autoref{PushoutFFin}, we learn that
\[ \xymatrix{ k \otimes_{\mathbb{F}_p} k \ar[r] \ar[d] & k \widehat{\otimes_{\mathbb{F}_p}} k \ar[d]  \\
S \otimes_{\mathbb{F}_p} S \ar[r] & S \widehat{\otimes_{\mathbb{F}_p}} S  } \]
is a pushout square of animated rings. But the left vertical map is obviously finite flat, whence the same is true for the right vertical map. Now $k \widehat{\otimes_{\mathbb{F}_p}} k$ is discrete by \autoref{regularFfindiag}, so the flat algebra $S \widehat{\otimes_{\mathbb{F}_p}} S$ is also discrete.
\end{example}

The following lemma was used above and will be quite useful later in reducing to the regular case, where one has better homological control:

\begin{lemma}
\label{PushoutFFin}
Let $R \to S$ be a map of Noetherian $F$-finite rings which is surjective up to powers of Frobenius. Then the square 
\[ \xymatrix{ R \otimes_{\mathbb{F}_p} R \ar[r]^c \ar[d]^a & R \widehat{\otimes_{\mathbb{F}_p}} R \ar[d]^b \\
S \otimes_{\mathbb{F}_p} S \ar[r]^d & S \widehat{\otimes_{\mathbb{F}_p}} S  }\]
is a pushout square of animated rings.
\end{lemma}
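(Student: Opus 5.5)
The plan is to identify the pushout $P := (R \widehat{\otimes_{\mathbb{F}_p}} R) \otimes_{R \otimes_{\mathbb{F}_p} R} (S \otimes_{\mathbb{F}_p} S)$ with an Adams completion. We then compare it with $S \widehat{\otimes_{\mathbb{F}_p}} S$ using \autoref{AdamsComptwo} and \autoref{AdamsComppi0}.

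First, Adams completion commutes with base change along the $R\otimes R$-algebra $S \otimes S$ in a suitable sense. Writing $R\widehat{\otimes}R = \lim \mathrm{Cech}(R\otimes R \to R)$, the base change $P$ maps to the Adams completion of $S\otimes S$ along $S\otimes S \to S \otimes_{R\otimes R} R =: T$. Here $T = S\otimes_R S$, computed as an animated ring. To show $P \simeq \mathrm{Comp}(S\otimes S \to T)$ we need base change to commute with the totalization. I would argue via the filtration of \autoref{derIadic}. Both sides carry complete derived filtrations, with graded pieces $\mathrm{Sym}^n_R(L_{R/\mathbb{F}_p}) \otimes_{R\otimes R} (S\otimes S)$ on one side and $\mathrm{Sym}^n_T(L_{T/S\otimes S}[-1])$ on the other. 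These agree by base change for the cotangent complex.

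The issue is completeness of the base-changed filtration on $P$. I will handle it by using that $R \to S$ is surjective up to Frobenius: for some $e$, $S^{p^e} \subseteq \mathrm{im}(R)$ up to the finite map. Since $S$ is $F$-finite, $F^e_* S$ is finite over $S$, and $S\otimes S$ is then a finite-type module over $R\otimes R$ modulo Frobenius twists. Concretely, $S\otimes_{\mathbb{F}_p} S$ is a filtered colimit/finite $R\otimes R$-algebra after applying $F^e$. A cleaner route is to note that the Frobenius is $p$-speed filtered (\autoref{FrobIAdic}). Thus $F^n$ eventually lands in high connectivity after tensoring with anything bounded, and the pseudocoherence of $L_{R/\mathbb{F}_p}$ and $L_{S/\mathbb{F}_p}$ controls connectivity of the graded pieces. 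Establishing this completeness is the step I expect to be the main obstacle.

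Second, I apply \autoref{AdamsComptwo} to $S\otimes S \to T = S\otimes_R S \to S$. Because $R\to S$ is surjective up to Frobenius powers, the ideal of $\pi_0(S\otimes_R S) \to S$ consists of elements $s\otimes 1 - 1\otimes s$ whose $p^e$-th powers vanish. It is therefore a nil ideal, and it is finitely generated since $S$ is $F$-finite over $R$, hence nilpotent. So $T \to \pi_0(T) \to S$ is a composite in which $\pi_0(T)\to S$ has nilpotent kernel, which is descendable (\autoref{DescendableComp}), universally. Meanwhile $T \to \pi_0 T$ is universally Adams complete by \autoref{AdamsComppi0}. Composing these via the composition stability in the proof of \autoref{AdamsComptwo}, $T$ is universally Adams complete along $T\to S$. \autoref{AdamsComptwo} then gives $\mathrm{Comp}(S\otimes S\to T)\simeq \mathrm{Comp}(S\otimes S \to S) = S\widehat{\otimes}S$.

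Finally, combining the two steps shows that $P \to S\widehat{\otimes}S$ is an isomorphism compatible with $b$ and $d$, so the square is a pushout.
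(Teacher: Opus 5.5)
Your second step is sound and is essentially the paper's final step. The kernel of $\pi_0(S\otimes_R S)\to S$ is nilpotent, $S\otimes_R S\to\pi_0(S\otimes_R S)$ is universally Adams complete by \autoref{AdamsComppi0}, and so \autoref{AdamsComptwo} gives $\mathrm{Comp}(S\otimes_{\mathbb{F}_p}S\to S\otimes_R S)\simeq S\widehat{\otimes_{\mathbb{F}_p}}S$.

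The gap is your first step: identifying the pushout $P$ with $\mathrm{Comp}(S\otimes_{\mathbb{F}_p}S\to S\otimes_R S)$. This amounts to showing that $-\otimes_{R\otimes_{\mathbb{F}_p}R}(S\otimes_{\mathbb{F}_p}S)$ commutes with the totalization of $\mathrm{Cech}(R\otimes_{\mathbb{F}_p}R\to R)$. You leave the needed completeness open, and the mechanism you sketch does not work.
\begin{itemize}
\item The estimate $F^n\in D^{\leq -n}$ of \autoref{quillenconv} needs a connected fibre. That fails for $R\otimes_{\mathbb{F}_p}R\to R$, whose fibre has $\pi_0$ equal to $I_\Delta$, in general nonzero. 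So the derived $I_\Delta$-adic filtration has no connectivity growing with $n$, and the $p$-speed Frobenius remark does not create any.
\item Finite generation of $S\otimes_{\mathbb{F}_p}S$ over $R\otimes_{\mathbb{F}_p}R$ does not help either. That ring is not Noetherian and $I_\Delta$ is not finitely generated, so the usual ``completion commutes with tensoring by finite modules'' is unavailable.
\end{itemize}
For general $R$, $S\otimes_{\mathbb{F}_p}S$ is not perfect over $R\otimes_{\mathbb{F}_p}R$, and you have given no reason for base change to commute with the limit.

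The missing idea is to reduce to $R$ regular first. By Gabber's theorem, choose a regular $F$-finite $R_0\twoheadrightarrow R$; the composite $R_0\to S$ is still surjective up to Frobenius powers. Stack the squares for $R_0\to R$ and $R\to S$. By the pasting lemma for pushout squares, it suffices to prove the lemma for $R_0\to R$ and $R_0\to S$, i.e., to assume $R$ is regular.

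Then $S$ is a finite $R$-module (by $F$-finiteness and the hypothesis), hence perfect over $R$. So $S\otimes_{\mathbb{F}_p}S$ is a perfect $R\otimes_{\mathbb{F}_p}R$-complex, and tensoring with it commutes with all limits. The base change of a Cech nerve is the Cech nerve of the base change, so $P\simeq\mathrm{Comp}(S\otimes_{\mathbb{F}_p}S\to S\otimes_R S)$ follows directly, with no filtration argument. Your graded-piece comparison would also go through once perfectness is available, but it is unnecessary. In this case $S\otimes_R S$ is also bounded, so $S\otimes_R S\to S$ is simply descendable (\autoref{DescendableComp}), which is how the paper concludes.
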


\begin{proof}
By Gabber's theorem and the $2$-out-of-$3$ property for pushout squares, it is enough to show this claim when $R$ is regular, so let us assume $R$ is regular. Let $A$ denote the relevant pushout in the category of animated rings, so we have a pushout square
\[ \xymatrix{ R \otimes_{\mathbb{F}_p} R \ar[r] \ar[d] & R \widehat{\otimes_{\mathbb{F}_p}} R \ar[d] \\
S \otimes_{\mathbb{F}_p} S \ar[r] & A  }\]
as well as a natural map $A \to S \widehat{\otimes_{\mathbb{F}_p}} S$ that we aim to prove is an isomorphism. Now the top horizontal map is the Adams completion $\mathrm{Comp}(R \otimes_{\mathbb{F}_p} R \to R)$. As $R$ is regular, the ring $S \otimes_{\mathbb{F}_p} S$ is a perfect complex over $R \otimes_{\mathbb{F}_p} R$, so tensoring with $S \otimes_{\mathbb{F}_p} S$ commutes with limits, whence the bottom horizontal map is identifies $A$ with the Adams completion 
\[ \mathrm{Comp}\left(S \otimes_{\mathbb{F}_p} S \to (S \otimes_{\mathbb{F}_p} S) \otimes_{R \otimes_{\mathbb{F}_p} R} R \simeq S \otimes_R S\right).\] 
But, by our assumption on $R \to S$, the relative multiplication map $S \otimes_R S \to S$ is a quotient by a nilpotent ideal on $\pi_0$, so we have
\[ \mathrm{Comp}\left(S \otimes_{\mathbb{F}_p} S \to S \otimes_R S\right) \simeq \mathrm{Comp}\left(S \otimes_{\mathbb{F}_p} S \to S\right) =: S \widehat{\otimes_{\mathbb{F}_p}} S\]
by \autoref{DescendableComp} and \autoref{AdamsComptwo}, so we win.
\end{proof}

The following lemma is standard but we record it for convenience:

\begin{lemma}
\label{cotcompleteclass}
Let $k$ be a base ring, let $S$ be an animated $k$-algebra, and let $J \subset \pi_0(S)$ be a finitely generated ideal; write $S^{\wedge}_J$ for the derived $J$-adic completion of $S$. Then $L_{S/k} \to L_{S^{\wedge}_J/k}$ becomes an isomorphism after application of $- \otimes_S^L \pi_0(S)/J$.
\end{lemma}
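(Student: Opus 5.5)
The plan is to use the transitivity triangle for $k \to S \to S^{\wedge}_J$:
\[ L_{S/k} \otimes_S S^{\wedge}_J \to L_{S^{\wedge}_J/k} \to L_{S^{\wedge}_J/S}. \]
Set $\bar S := \pi_0(S)/J$. Applying $-\otimes_{S^{\wedge}_J} \bar S$ gives a triangle whose first term is $L_{S/k}\otimes_S \bar S$, because $\bar S$ is an $S^{\wedge}_J$-algebra through $S \to S^{\wedge}_J$. So the whole statement reduces to showing
\[ L_{S^{\wedge}_J/S} \otimes_{S^{\wedge}_J} \bar S \simeq 0. \]

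The key input is that $S \to S^{\wedge}_J$ is an isomorphism after derived reduction modulo $J$. Choose generators $f_1,\dots,f_r$ of $J$ and let $C := \mathrm{Kos}(S; f_1,\dots,f_r)$ be the Koszul animated $S$-algebra. The derived $J$-completion satisfies $S/\!\!/(f) \simeq S^{\wedge}_J/\!\!/(f)$, i.e. $C \to C \otimes_S S^{\wedge}_J$ is an isomorphism. This holds because $C$ is perfect, the fibre of $S \to S^{\wedge}_J$ is $J$-local (derived $J$-adically it vanishes), and tensoring a $J$-local object with the Koszul complex kills it.

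By base change of the cotangent complex,
\[ L_{S^{\wedge}_J/S} \otimes_S C \simeq L_{(S^{\wedge}_J\otimes_S C)/C} = L_{C/C} = 0. \]
Since $\bar S = \pi_0(S)/J$ receives a map from $C$ (indeed $\pi_0(C) = \pi_0(S)/J$), we obtain
\[ L_{S^{\wedge}_J/S}\otimes_{S^{\wedge}_J} \bar S \simeq \big(L_{S^{\wedge}_J/S}\otimes_S C\big)\otimes_C \bar S = 0. \]
Some care is needed with the tensor identifications: write $L_{S^{\wedge}_J/S}\otimes_{S^{\wedge}_J}(S^{\wedge}_J \otimes_S C) = L_{S^{\wedge}_J/S}\otimes_S C$.

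Combining this vanishing with the triangle gives $L_{S/k}\otimes_S \bar S \simeq L_{S^{\wedge}_J/k}\otimes_{S^{\wedge}_J}\bar S$. This is the claim, reading $-\otimes^L_S \pi_0(S)/J$ on the target as restriction along $S\to S^{\wedge}_J$.

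The main obstacle is justifying $C \simeq C\otimes_S S^{\wedge}_J$ in the animated setting. I would use the description $S^{\wedge}_J = \lim_n S\otimes_{\mathbb{Z}[x]}\mathbb{Z}[x]/(x^n)$, with $x=(x_1,\dots,x_r)\mapsto f$, together with the fact that $C$ is perfect so tensoring commutes with this limit. This yields $\lim_n C\otimes_{\mathbb{Z}[x]}\mathbb{Z}[x]/(x^n)$, and the pro-system is pro-constant equal to $C$ because each $f_i$ acts nilpotently (up to homotopy) on $C$.
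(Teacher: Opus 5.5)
Your proposal is correct and follows the paper's proof: the transitivity triangle reduces the claim to the vanishing of $L_{S^{\wedge}_J/S}\otimes \pi_0(S)/J$, which follows from base change of the cotangent complex together with the fact that $S \to S^{\wedge}_J$ becomes an equivalence modulo $J$. The only difference is that you first base-change to the Koszul algebra $C=\mathrm{Kos}(S;f_1,\dots,f_r)$, where this equivalence is clear because $C$ is perfect, and only then tensor down to $\pi_0(S)/J$; the paper base-changes directly to $\pi_0(S)/J$ and asserts the equivalence without proof.
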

\begin{proof}
By the transitivity triangle, it suffices that $L_{S^{\wedge}_J/S} \otimes_S \pi_0(S)/J = 0$. Now the formation of the cotangent complex always commutes with derived base change. As $S \to S^{\wedge}_J$ becomes an isomorphism after $- \otimes_S^L \pi_0(S)/J$, the claim follows.
\end{proof}

The next lemma shows that the map $S \otimes_{\mathbf{F}_p} S \to  S \widehat{\otimes_{\mathbb{F}_p}} S$ behaves like the completion map of a Noetherian ring at the level of cotangent complexes: 

\begin{corollary}
\label{cotcomplete}
Let $S$ be a Noetherian $F$-finite ring. Then $L_{(S \widehat{\otimes_{\mathbf{F}_p}} S)/ (S \otimes_{\mathbf{F}_p} S)} \otimes_{ S \widehat{\otimes_{\mathbb{F}_p}} S} S \simeq 0$.
\end{corollary}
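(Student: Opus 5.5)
Write $P = S \otimes_{\mathbb{F}_p} S$ and $\widehat{P} = S \widehat{\otimes_{\mathbb{F}_p}} S$. The plan is to reduce to the regular case via \autoref{PushoutFFin}, and then treat the regular case as a classical completion of a Noetherian ring along a finitely generated ideal, where \autoref{cotcompleteclass} applies. The key observation is that the vanishing we want is $L_{\widehat{P}/P} \otimes_{\widehat{P}} S \simeq 0$. By transitivity, this is equivalent to $L_{\widehat{P}/\mathbb{F}_p} \otimes_{\widehat{P}} S \simeq L_{P/\mathbb{F}_p}\otimes_P S$ via the natural map. Also, the cotangent complex commutes with derived base change. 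So the statement is stable under pushouts of the form in \autoref{PushoutFFin}.

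\textbf{Step 1: reduction.} By Gabber's theorem (\autoref{gabbify}) choose a surjection $R \to S$ with $R$ regular and $F$-finite. \autoref{PushoutFFin} then gives $\widehat{P} \simeq (S\otimes S) \otimes_{R \otimes R} (R \widehat{\otimes_{\mathbb{F}_p}} R)$. By base change,
\[ L_{\widehat{P}/P} \simeq L_{(R\widehat{\otimes_{\mathbb{F}_p}}R)/(R\otimes_{\mathbb{F}_p}R)} \otimes_{R\widehat{\otimes_{\mathbb{F}_p}}R} \widehat{P}. \]
Tensoring down to $S$, we get
\[ L_{\widehat{P}/P}\otimes_{\widehat P} S \simeq \bigl(L_{(R\widehat{\otimes}R)/(R\otimes R)} \otimes_{R\widehat{\otimes}R} R\bigr) \otimes_R S, \]
because the composite $R\widehat{\otimes}R \to \widehat{P} \to S$ factors through the multiplication $R\widehat{\otimes}R \to R$ followed by $R \to S$. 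It therefore suffices to treat $S = R$ regular.

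\textbf{Step 2: the regular case.} By \autoref{regularFfindiag}, $A := R\widehat{\otimes_{\mathbb{F}_p}}R$ is the classical $J_\Delta$-adic completion of $R\otimes_{\mathbb{F}_p} R$. The ring $R\otimes R$ is not Noetherian and $J_\Delta$ need not be finitely generated, so \autoref{cotcompleteclass} does not apply directly. To get around this, I would pick a $p$-basis locally, or more robustly use Popescu: write $R = \colim R_i$ with $R_i$ smooth over $\mathbb{F}_p$. Then $R \otimes R = \colim R_i\otimes R_i$, and the cotangent complex commutes with filtered colimits. Still, completion does not commute with colimits, so I would rather argue directly.

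The claim is that $L_{A/(R\otimes R)}\otimes_A R \simeq 0$. Consider the composite $R \to R\otimes R \to A$, using the first factor. We have $L_{(R\otimes R)/R} = R \otimes \Omega_R$, pulled back from the second factor. In the regular case $L_{A/R}\otimes_A R$ can be computed via the splitting in \autoref{regularFfindiag}(2): there $A \cong \widehat{\mathrm{Sym}}_R(\Omega_R)$, a completion of a smooth $R$-algebra along a finitely generated ideal. Now \autoref{cotcompleteclass} (with $k=R$) gives $L_{A/R}\otimes_A R \simeq L_{\mathrm{Sym}_R\Omega_R/R}\otimes R \simeq \Omega_R$. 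Then I compare the transitivity triangles for $R \to R\otimes R \to A$ after $\otimes_A R$. The map $L_{(R\otimes R)/R}\otimes R \simeq \Omega_R \to L_{A/R}\otimes_A R \simeq \Omega_R$ is the identity on $\Omega_R$ (conormal $J/J^2 \cong \Omega_R$ on both sides, via $x\otimes 1 - 1\otimes x \mapsto dx$). Hence the cofibre $L_{A/(R\otimes R)}\otimes_A R$ vanishes.

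\textbf{Main obstacle.} The step I expect to be hardest is checking that this comparison map $\Omega_R \to \Omega_R$ really is an isomorphism, i.e.\ the compatibility of the Sym-splitting with the diagonal conormal identification. I would verify it on degree-one graded pieces of the derived $I_\Delta$-adic filtration from \autoref{derIadic}, where both sides are canonically $L_{R/\mathbb{F}_p}$.
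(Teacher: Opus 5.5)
Your argument is correct and is essentially the paper's: reduce to the regular case by Gabber's theorem and \autoref{PushoutFFin}, then compare two copies of $\Omega^1_{R/\mathbb{F}_p}$ in a transitivity triangle using \autoref{regularFfindiag}. The paper uses the triangle for $R \otimes_{\mathbb{F}_p} R \to R \widehat{\otimes_{\mathbb{F}_p}} R \to R$ directly, where $L_{R/(R \otimes_{\mathbb{F}_p} R)}$ and $L_{R/(R \widehat{\otimes_{\mathbb{F}_p}} R)}$ are both $\Omega^1_{R/\mathbb{F}_p}[1]$. That triangle is equivalent to your first-factor triangle via $L_{B/R} \otimes_B R \simeq L_{R/B}[-1]$ for augmented $R$-algebras $B$, and your ``main obstacle'' is just the fact that $\mathrm{gr}^1$ of the filtered map $R \otimes_{\mathbb{F}_p} R \to R \widehat{\otimes_{\mathbb{F}_p}} R$ is an isomorphism, which \autoref{regularFfindiag} already provides.
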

\begin{proof}
Thanks to Gabber's theorem and \autoref{PushoutFFin}, it suffices to prove the claim when $S=R$ is a regular Noetherian $F$-finite ring. In this case, \autoref{regularFfindiag} shows that  the map $L_{S/(S \otimes_{\mathbb{F}_p} S)} \to L_{S/(S \widehat{\otimes_{\mathbb{F}_p}} S)}$ is an isomorphism (as both sides identify with $\Omega^1_{S/\mathbb{F}_p}[1]$), so the claim follows by the transitivity triangle.
\end{proof}

The promised result is the following:

\begin{proposition}
\label{FfinCompDiag}
Let $S$ be a Noetherian $F$-finite ring. Then $S \widehat{\otimes_{\mathbb{F}_p}} S$ is a classical Noetherian ring.

\end{proposition}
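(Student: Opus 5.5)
The plan is to reduce to the regular case via Gabber's theorem and \autoref{PushoutFFin}, and then read off discreteness and Noetherianity from a flat (or at least finite) base change.

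\textbf{Reduction to the regular case.} Choose a surjection $R \to S$ with $R$ a Noetherian $F$-finite regular ring. We may take $R = G(S;x)$ from \autoref{gabbify}, noting that $R$ is $F$-finite because it has a finite $p$-basis. A surjection is in particular surjective up to powers of Frobenius, so \autoref{PushoutFFin} gives a pushout square of animated rings:
\[ S \widehat{\otimes_{\mathbb{F}_p}} S \simeq (S \otimes_{\mathbb{F}_p} S) \otimes^{\myL}_{R \otimes_{\mathbb{F}_p} R} (R \widehat{\otimes_{\mathbb{F}_p}} R) \simeq (S \otimes_{\mathbb{F}_p} S)\otimes^{\myL}_{R\otimes_{\mathbb{F}_p} R} B, \]
where $B := R \widehat{\otimes_{\mathbb{F}_p}} R$.

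\textbf{The regular ring $B$.} By \autoref{regularFfindiag}, $B$ is the classical completion $\lim_n (R\otimes_{\mathbb{F}_p} R)/J_\Delta^n$. It is a regular Noetherian ring, isomorphic to the completion of $\mathrm{Sym}^*_R(\Omega^1_R)$ along the positive-degree ideal.

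Write $I = \ker(R \to S)$, a finitely generated ideal. Then $S \otimes_{\mathbb{F}_p} S = (R\otimes R)/(I\otimes 1 + 1 \otimes I)$ as classical rings. Moreover, the derived tensor product $S \otimes^{\myL}_{\mathbb{F}_p} S$ is already discrete, since $\mathbb{F}_p$ is a field.

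Substituting, and using associativity of the derived tensor product,
\[ S \widehat{\otimes_{\mathbb{F}_p}} S \simeq (S\otimes^{\myL}_{\mathbb{F}_p} R) \otimes^{\myL}_{R\otimes R} \bigl( B \otimes^{\myL}_{R\otimes R} (R \otimes S)\bigr). \]
Equivalently, it is $B \otimes^{\myL}_{R \otimes R} (S \otimes S) \simeq B \otimes^{\myL}_{i_1, R} S \otimes^{\myL}_{i_2, R} S$, where $i_1, i_2 \colon R \to B$ are the two factor inclusions.

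\textbf{Main step: flatness of the factor inclusions.} The key claim is that each $i_j \colon R \to B$ is flat. Granting this, $B \otimes^{\myL}_{R} S = B/IB$ is discrete. Then
\[ (B/IB)\otimes^{\myL}_{i_2,R} S \simeq (B/IB) \otimes^{\myL}_B (B \otimes^{\myL}_{i_2,R} S) = (B/IB)\otimes^{\myL}_B (B/I'B), \]
where $I' = i_2(I)B$.

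Discreteness of this last term is the crux. I would try first to show that $B$ is flat over $R \otimes_{\mathbb{F}_p} R$ up to the relevant Tor, via the following model. Using \autoref{regularFfindiag}, choose (locally) a $p$-basis $x_1,\dots,x_n$ of $R$. Then $B \simeq R\llbracket t_1,\dots,t_n\rrbracket$ with $t_i = x_i\otimes 1 - 1\otimes x_i$, via $i_1$. Under this identification, $i_2$ sends $f \mapsto \sum_\alpha D_\alpha(f)\, t^\alpha$ (a Taylor expansion). So $i_2$ makes $B$ a power series ring in the $t_i$ over $i_2(R)$ as well, and in particular $B$ is flat over $R$ via either factor.

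To get the Tor vanishing, I would use the $(t)$-adic filtration on $B$. Its associated graded is $\mathrm{Sym}^*_R(\Omega^1_R)$, and modulo $t$ both $I$ and $I'$ reduce to $I$. Thus the associated graded of $(B/IB)\otimes^{\myL}_B(B/I'B)$ is computed by $S\otimes^{\myL}_R S \otimes_R \mathrm{Sym}^*\Omega^1_R$, which has higher Tor in general. So this naive route fails, and I must instead use $F$-finiteness.

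\textbf{Fallback via $F$-finiteness.} The better plan avoids the explicit Tor computation. Since $S$ is $F$-finite, we have $R\otimes_{\mathbb{F}_p} R \cong R \otimes_{R^p} \dots$, and the relevant tensor products can be reorganized over $R\otimes_{R^{p^e}} R$.

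Concretely, the map $R\otimes_{\mathbb{F}_p} R \to R\otimes_{R^{p^e}}R$ is a quotient, and completion along the diagonal should factor through it. This is because $R\otimes_{R^{p^e}} R \to R$ has nilpotent kernel (generated by the $t_i$ with $t_i^{p^e}=0$), which lets us use \autoref{AdamsComptwo}. Then $B \simeq \lim_e R\otimes_{R^{p^e}} R$, a limit of finite flat $R$-algebras.

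Replacing $I$ by $I^{[p^e]}$ is allowed, since $S$ and $R/I^{[p^e]}$ differ by nilpotents, which is harmless by \autoref{PushoutFFin} again. With that replacement, $I^{[p^e]}\otimes 1$ and $1\otimes I^{[p^e]}$ agree in $R\otimes_{R^{p^e}}R$, so both ideals become extended from the same ideal of $R^{p^e}$. Hence
\[ S\widehat{\otimes}S \simeq \lim_e (R/I^{[p^e]})\otimes_{R^{p^e}} R, \]
computed levelwise as a flat base change, which is discrete. This limit is a classical ring, namely the completion of the Noetherian ring $S \otimes_{S'} S$ along a finitely generated ideal, where $S' = R^{p^e}/\dots$. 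It is therefore Noetherian by \autoref{ClassicalAdams} and \cite[\href{https://stacks.math.columbia.edu/tag/05GH}{Tag 05GH}]{stacks-project}.

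\textbf{Expected main obstacle.} The hardest step is making the identification of $B$ with $\lim_e R\otimes_{R^{p^e}}R$ compatible with the base change to $S$, including the Mittag-Leffler/$\lim^1$ issues needed to keep the limit discrete.
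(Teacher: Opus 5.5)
\textbf{There is a genuine gap: your fallback step rests on a false identification.} The error is the claim that replacing $I$ by $I^{[p^e]}$ is harmless. Applied to $R/I^{[p^e]} \to S$, \autoref{PushoutFFin} only exhibits $S \widehat{\otimes_{\mathbb{F}_p}} S$ as a derived base change of $(R/I^{[p^e]})\widehat{\otimes_{\mathbb{F}_p}}(R/I^{[p^e]})$ along $(R/I^{[p^e]})\otimes_{\mathbb{F}_p}(R/I^{[p^e]}) \to S \otimes_{\mathbb{F}_p} S$. You drop this base change and then let $I^{[p^e]}$ shrink to $0$, which erases $S$ entirely.

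Concretely, take your choice $R = G(S;x)$, which has a global $p$-basis $x_1,\dots,x_n$ and is $I$-adically complete. Then $(R/I^{[p^e]})\otimes_{R^{p^e}} R \cong (R/I^{[p^e]})[t_1,\dots,t_n]/(t_1^{p^e},\dots,t_n^{p^e})$. So your limit is $R\llbracket t_1,\dots,t_n\rrbracket = R \widehat{\otimes_{\mathbb{F}_p}} R$, independent of $S$. For instance, $S = \mathbb{F}_p$ with $R = \mathbb{F}_p\llbracket X \rrbracket$ gives $\mathbb{F}_p\llbracket X,Y\rrbracket$ instead of $\mathbb{F}_p$.

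What your ingredients actually give uses three facts: $R \widehat{\otimes_{\mathbb{F}_p}} R = \lim_e R \otimes_{R^{p^e}} R$; perfectness of $S \otimes_{\mathbb{F}_p} S$ over $R \otimes_{\mathbb{F}_p} R$, which lets the base change commute with the limit; and flatness of $R$ over $R^{p^e}$. Together these yield
\[ S \widehat{\otimes_{\mathbb{F}_p}} S \simeq \lim_e \bigl( S \otimes^{\myL}_{R^{p^e}} S \bigr). \]
These terms are not discrete. In the example, $\mathbb{F}_p \otimes^{\myL}_{\mathbb{F}_p\llbracket X^{p^e}\rrbracket} \mathbb{F}_p$ has $\pi_1 \cong \mathbb{F}_p$ for every $e$; here the transition maps happen to kill it. Discreteness would therefore require showing that the higher homotopy pro-systems contribute neither $\lim$ nor $\lim^1$ terms. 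That is the whole difficulty: it is the same Tor you correctly detected in the $t$-adic associated graded, just moved into the limit, and the proposal gives no argument for it. Your final Noetherianity claim rests on the same false formula.

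For comparison, the paper uses your pushout setup only for the first step. There, $S \widehat{\otimes_{\mathbb{F}_p}} S$ is perfect over the regular ring $R \widehat{\otimes_{\mathbb{F}_p}} R$, hence a Noetherian animated ring with finitely many homotopy groups; this is how Noetherianity should be obtained. The paper then proves classicality in three steps:
\begin{itemize}
\item It checks classicality after $\mathfrak{m}$-adic completion at maximal ideals, and a cotangent complex comparison reduces to $(S,\mathfrak{m})$ complete local.
\item The identification $S \widehat{\otimes_{\mathbb{F}_p}} S \simeq \lim_n (S/\mathfrak{m}^n)\widehat{\otimes_{\mathbb{F}_p}}(S/\mathfrak{m}^n)$, proved via Morrow's pro-vanishing result for cotangent complexes, reduces to $S$ Artinian.
\item In the Artinian case a coefficient field $k \to S$ is finite flat and surjective up to Frobenius. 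So \autoref{PushoutFFin} makes $S \widehat{\otimes_{\mathbb{F}_p}} S$ flat over the discrete ring $k \widehat{\otimes_{\mathbb{F}_p}} k$ (\autoref{ArtinDiagComp}).
\end{itemize}
The idea missing from your proposal is exactly this: to kill the Tor terms you need a map from a regular ring that is \emph{flat}, not surjective. The paper only produces such a map after reducing to the Artinian case.
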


Note that the classicality assertion in the proposition is quite false for the associated graded of derived $I_\Delta$-adic filtration on $S \widehat{\otimes_{\mathbb{F}_p}} S$: in fact, the cotangent complex $L_{R/\mathbb{F}_p}$ can be unbounded to the left if $R$ is not lci. Nonetheless, the derived $I_\Delta$-adic filtration will be our main tool; the strategy is to use completeness with respect to this filtration to reduce the statement to the Artinian case treated in \autoref{ArtinDiagComp}.

\begin{proof}
First, let us show that $S \widehat{\otimes_{\mathbb{F}_p}} S$ is a Noetherian\footnote{An animated ring $R$ is called Noetherian if $\pi_0(R)$ is Noetherian and each $\pi_i(R)$ is a finitely generated $\pi_0(R)$-module.} animated $\mathbb{F}_p$-algebra and has only finitely many nonzero homotopy groups. By Gabber's theorem, we can write $S = R/I$, where $R$ is a Noetherian $F$-finite regular ring and $I \subset R$ is an ideal such that $R$ is $I$-adically complete. Consider the diagram
\[ \xymatrix{ R \otimes_{\mathbb{F}_p} R \ar[r] \ar[d] & R \widehat{\otimes_{\mathbb{F}_p}} R \ar[d] \\
S \otimes_{\mathbb{F}_p} S \ar[r] & S \widehat{\otimes_{\mathbb{F}_p}} S  }\]
of animated rings. By \autoref{PushoutFFin}, this square is a pushout square of animated rings. As $R$ is regular, $S$ is a perfect $R$-complex, and thus $S \otimes_{\mathbb{F}_p} S$ is a perfect $(R \otimes_{\mathbb{F}_p} R)$-complex. By the pushout property above, it follows that $S \widehat{\otimes_{\mathbb{F}_p}} S$ is a perfect $(R \widehat{\otimes_{\mathbb{F}_p}} R)$-complex. But $R \widehat{\otimes_{\mathbb{F}_p}} R$ is a regular ring by \autoref{regularFfindiag}, so the claim follows: any perfect complex over a regular ring has only finitely many cohomology groups, each of which is finitely generated.

Thanks to this Noetherianness, it suffices to show that  $S \widehat{\otimes_{\mathbb{F}_p}} S$ becomes classical after (derived or equivalently Adams) $\mathfrak{m}$-adic completion for every maximal ideal $\mathfrak{m} \subset \mathrm{Spec}(\pi_0(S \widehat{\otimes_{\mathbb{F}_p}} S))$. Now $ \pi_0(S \widehat{\otimes_{\mathbb{F}_p}} S)$ is complete along the multiplication map to $S$, so $\mathfrak{m}$ in fact comes from a maximal ideal of $S$ (also called $\mathfrak{m}$) under the inclusion $\mathrm{Spec}(S) \subset \mathrm{Spec}(\pi_0(S \widehat{\otimes_{\mathbb{F}_p}} S))$. To check the desired classicality, we shall show the following assertions:
\begin{enumerate}
    \item The natural map gives an isomorphism $(S \widehat{\otimes_{\mathbb{F}_p}} S)_{\mathfrak{m}}^{\wedge} \simeq S_{\mathfrak{m}}^{\wedge} \widehat{\otimes_{\mathbb{F}_p}} S_{\mathfrak{m}}^{\wedge}$.
    \item If $S$ is complete with respect to some ideal $J$, then the natural map induces an isomorphism
$S \widehat{\otimes_{\mathbb{F}_p}} S \simeq \lim_n (S/J^n \widehat{\otimes_{\mathbb{F}_p}} S/J^n)$.
    \end{enumerate}

The proposition follows from these assertions: (a) allows us to assume $(S,\mathfrak{m})$ is a complete Noetherian local ring, and then (b) together with the calculation in \autoref{ArtinDiagComp} shows the desired discreteness (as limits of discrete rings cannot have higher homotopy). Thus, it remains to show (a) and (b).

To show (a), by identifying Adams completions and derived completions for Noetherian rings, we learn that both terms in (a) are Adams complete along the map down to $k=S/\mathfrak{m}$.  Comparing graded pieces of the resulting complete filtrations, we are reduced to checking that 
\[ L_{k/(S \widehat{\otimes_{\mathbb{F}_p}} S)_{\mathfrak{m}}^{\wedge}} \simeq L_{k/S_{\mathfrak{m}}^{\wedge} \widehat{\otimes_{\mathbb{F}_p}} S_{\mathfrak{m}}^{\wedge}} \]
via the natural map. Using \autoref{cotcompleteclass} and \autoref{cotcomplete} multiple times, both are identified with $L_{k/(S \otimes_{\mathbb{F}_p} S)}$, so the claim follows.

For (b), we shall compare the derived $I_\Delta$-adic filtrations on both sides (where the limit is endowed with the limiting filtration). As the map is naturally filtered and both filtrations are complete, it suffices to show we get an isomorphism on graded pieces, i.e., for each $k \geq 0$, we must show that
\[ \mathrm{Sym}^k_S(L_{S/\mathbb{F}_p}) \simeq \lim_n \mathrm{Sym}^k_{S/J^n}(L_{(S/J^n)/\mathbb{F}_p}).\]

For $k=1$, consider the triangle
\[ \{L_{S/\mathbb{F}_p} \otimes_S S/J^n\}_n \to \{L_{(S/J^n)/\mathbb{F}_p}\}_n \to Q_n\]
of projective systems of complexes over the projective system $\{S/J^n\}_n$ of rings. By \cite[Theorem 1.2 (ii)]{MorrowProcdh}, the pro-system $\{H^i(Q_n)\}_n$ is essentially $0$ for all $i$, so the system $\{Q_n\}$ has vanishing inverse limit. Thus, we learn that
\[ \lim_n L_{S/\mathbb{F}_p} \otimes_S S/J^n \simeq \lim_n L_{(S/J^n)/\mathbb{F}_p},  \]
so it suffices to show that the natural map $K \to \lim_n K \otimes_S S/J^n$ is an isomorphism for $K=L_{S/\mathbb{F}_p}$; but this holds true for any pseudocoherent complex by general nonsense on derived completions, so we win. 

For $k > 1$, by taking symmetric powers of the above exact triangle, it suffices to show that for each $i > 0$ and $M \in D^{\leq 0}(S)$, the system $\{ \mathrm{Sym}^i_{S/J^n}(Q_n) \otimes_S M\}_n$ has essentially $0$ cohomology groups. In the spectral sequence computing $\mathrm{Sym}^i_{S/J^n}(Q_n) \otimes_S M$ induced by the Postnikov filtration of $Q_n$, the $E_\infty^j$-term $H_j(\mathrm{Sym}^i_{S/J^n}(Q_n) \otimes_S M)$ only sees contributions the finitely many $E_2$ terms of the form $H_{j-k}(\mathrm{Sym}^i_{S/J^n}(H_k(Q_n)) \otimes_S M)$, so the claim again follows from Morrow's theorem. 
\end{proof}

\begin{remark}[A semi-classical description of $S \widehat{\otimes_{\mathbb{F}_p}} S$]
For any $\mathbb{F}_p$-algebra $S$, the Tot-tower filtration on $S \widehat{\otimes_{\mathbb{F}_p}} S$ shows that
\[ S \widehat{\otimes_{\mathbb{F}_p}} S = \lim_n \left( (S \otimes_{\mathbb{F}_p} S)/I_\Delta^{\otimes n}\right), \]
where the tensor on the last term takes place in $D(S \otimes_{\mathbb{F}_p} S)$. Now if $S$ is Noetherian and $F$-finite, then \autoref{FfinCompDiag} shows that this object is discrete, i.e., concentrated in degree $0$. The Milnor sequence for cohomology groups of inverse limits then gives a short exact sequence
\[ 0 \to R^1 \lim \ker(\alpha_n) \to S \widehat{\otimes_{\mathbb{F}_p}} S \to \lim_n \left( (S \otimes_{\mathbb{F}_p} S)/I_\Delta^n\right) \to 0,\]
where the term on the right is the classical $I_\Delta$-adic completion of the commutative ring $S \otimes_{\mathbb{F}_p} S$, while $\alpha_n$ is the natural map $I_\Delta^{\otimes n} \to S \otimes_{\mathbb{F}_p} S$. When $S$ is regular, the term on the left vanishes by the calculation in \autoref{regularFfindiag}; we do not know if it vanishes in general.
\end{remark}

%%%%%%%%%%%%%%%%%%%%%%%%%%%%%%%%%%%%%%%%%%%%%%%%%%%%%%%%%%%%%%%%%%%%%%%%
%%%%%%%%%%%%%%%%%%%%%%%%%%%%%%%%%%%%%%%%%%%%%%%%%%%%%%%%%%%%%%%%%%%%%%%%
%%%%%%%%%%%%%%%%SUBSECTION:  FINITE TYPE OVER A FIELD%%%%%%%%%%%%%%%%%%%
%%%%%%%%%%%%%%%%%%%%%%%%%%%%%%%%%%%%%%%%%%%%%%%%%%%%%%%%%%%%%%%%%%%%%%%%
%%%%%%%%%%%%%%%%%%%%%%%%%%%%%%%%%%%%%%%%%%%%%%%%%%%%%%%%%%%%%%%%%%%%%%%%

\subsection{Recap of the finite type over a field case}
\label{DualCompStensClassical}

We fix a field $k$ and work with finite type $k$-algebras.  In classical approaches to Grothendieck duality, the dualizing complex $\omegacan{X} \in D^b_{coh}(X)$ of a variety $X$ is defined in a somewhat ad hoc fashion. Indeed, when $X$ is proper, one can define $\omegacan{X}$ quite conceptually in terms of the right adjoint $D^b_{coh}(k) \to D^b_{coh}(X)$ to pushforward; but then in general one sets $\omegacan X := \omegacan{\overline{X}}|_X$, where $X \subset \overline{X}$ is a compactification, and then checks independence of $\overline{X}$. 
This approach via compactifications  makes the intrinsic meaning of the dualizing complex quite mysterious. There are a few solutions to this problem: one can  develop a full $6$ functor formalism for quasi-coherent sheaves at the expense of passing to fancier frameworks (e.g., to the solid quasi-coherent sheaves \cite{ClausenScholzeCondensedMathAndComplex}) to extend the simple description that worked for proper $X$, or one can use rigidified dualizing complexes \cite{VandenBerghExistence,YekutieliDualizingComplexesOverNCGraded} (or closely related Hochschild cohomology techniques \cite{AvramovIyengarLipmanNayak,NeemanTheRelationBetweenGDandHH}), also see \cite{JiangGrothendieckDualityViaDiagonallySupported}. In this section we give an exposition of a slight variant of the latter: we explain that the dualizing complex of an affine variety $X$ can be characterized as the unit for a symmetric monoidal structure on $D^+_{coh}(X)$ called the $\stens$-product; the main ingredients for constructing this monoidal structure are the notion of exterior tensor products as well as $!$-pullback along closed immersions. Our main purpose for writing this exposition is to have a convenient reference for our later treatment (in \S \ref{ss:DualDiagFFin}) of the analogous constructions for arbitrary $F$-finite Noetherian rings.

Recall that given a $k$-algebra $S$ and $M,N \in D(S)$, we can construct the tensor product $M \otimes_S N \in D(S)$ as the pullback of $M \otimes_k N \in D(S \otimes_k S)$ along the diagonal $\mathrm{Spec}(S) \to \mathrm{Spec}(S \otimes_k S)$. The $\stens$-product is obtained by changing the meaning of pullback in the last sentence:

\begin{construction}[The $\stens$-product]
Fix a finite type $k$-algebra $S$. Let $p,q:S \to S \otimes_k S$ be the two tautological inclusions, and let $\Delta: S \otimes_k S \to S$ be the multiplication map. The pushforward $\Delta_*:D(S) \to D(S \otimes_k S)$ has a right adjoint $\Delta^!$ given concretely by $\RHom_{S \otimes_k S}(S,-)$. We then obtain a functor
\[ D(S) \times D(S) \xrightarrow{\stens} D(S) \quad \text{via} \quad M \stens_S N := \Delta^!(p^* M \otimes_{S \otimes_k S} q^* N) := \RHom_{S \otimes_k S}(S, M \otimes_k N).\]
For $M,N \in D^+_{coh}(S)$, we have $M \stens_S N \in D^+_{coh}(S)$ as $S \in D^b_{coh}(S \otimes_k S)$.
\end{construction}

\begin{remark}[Interpreting $\stens$-products via the completion]
Fix a finite type $k$-algebra $S$. For future use, let us remark that the $\stens$-product can be computed using the completion $S \widehat{\otimes_k} S$ of $S \otimes_k S$ along the diagonal, i.e., for $M,N \in D^+_{coh}(S)$, we have
%\todo{{Karl:} I changed an $R$ to a {\color{blue} $S$}.}
\[ 
M \stens_S N \simeq \RHom_{S \widehat{\otimes_k} S}({S}, M \widehat{\otimes_k} N),
\]
where $M \widehat{\otimes_k} N$ is the base change of $M \otimes_k N \in D(S \otimes_k S)$ along the completion map $S \otimes_k S \to S \widehat{\otimes_k} S$ (or equivalently $M \widehat{\otimes_k} N$ is the derived completion of $M \otimes_k N \in D(S \otimes_k S)$ along the ideal $I_\Delta$ of the diagonal). To see this equivalence, we use the following: if $A$ is any Noetherian ring with an ideal $J$ and $J$-adic completion $\widehat{A}$, for any $K,L \in D(A)$ with $K$ being $J$-torsion (i.e., $K$ in the image of $D(A/J) \to D(A)$) and $L \in D^+_{coh}(A)$, we claim that the natural maps give isomorphisms
\[ \RHom_A(K,L) \simeq \RHom_A(K, L \otimes_A \widehat{A}) \simeq \RHom_{\widehat{A}}(K \otimes_A \widehat{A},L \otimes_A \widehat{A}) \simeq \RHom_{\widehat{A}}(K,  L \otimes_A \widehat{A}). \]
For the first isomorphism, we use the identification of $L \otimes_A \widehat{A}$ with the derived $J$-completion of $L$ (by the coherence assumption on $L$ as well as the finite cohomological dimensionality of the derived $J$-completion functor) as well as the fact that $\RHom_A(K,-)$ annihilates the cone of the derived $J$-completion map as $K$ is $J$-torsion. The second isomorphism comes from adjunction, and the third comes from the isomorphism $K \otimes_A \widehat{A} \simeq K$ as $K$ is $J$-torsion.
\end{remark}

Let us explain why the $\stens$-product is symmetric monoidal.

\begin{lemma}[Monoidality of the $\stens$-product]
\label{MonoidalClassical}
The functor $D^+_{coh}(S) \times D^+_{coh}(S) \xrightarrow{\stens} D^+_{coh}(S)$ naturally defines a (a priori non-unital) $S$-linear symmetric monoidal structure on $D^+_{coh}(S)$.
\end{lemma}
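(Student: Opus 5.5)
The plan is to build the symmetric monoidal structure geometrically: the $n$-fold $\stens$-product will be the $!$-restriction of the external tensor product along the small diagonal. For $n \geq 1$, write $S^{\otimes n} := S \otimes_k \cdots \otimes_k S$ and $\Delta_n \colon S^{\otimes n} \to S$ for the multiplication map. We define
\[ \stens_S(M_1,\dots,M_n) := \Delta_n^!(M_1 \otimes_k \cdots \otimes_k M_n) = \RHom_{S^{\otimes n}}(S, M_1 \otimes_k \cdots \otimes_k M_n). \]
The external product $(M_1,\dots,M_n) \mapsto M_1 \otimes_k \cdots \otimes_k M_n$ is symmetric monoidal from the product of copies of $D(S)$ to the family $\{D(S^{\otimes n})\}_n$, and it is functorial for the $\Sigma_n$-actions. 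The functor $\Delta_n^!$ is $\Sigma_n$-equivariant because $\Delta_n$ is invariant under permutation of the factors. This gives the symmetry and the coherent $\Sigma_n$-actions. $S$-linearity follows from the $S$-module structure on $\RHom_{S^{\otimes n}}(S,-)$, and these constructions are compatible for different module structures because $S$ is $S^{\otimes n}$-torsion.

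Associativity reduces to a base change identity. Any partition of $\{1,\dots,n\}$ into blocks $B_1,\dots,B_m$ factors $\Delta_n$ as
\[ S^{\otimes n} \xrightarrow{\ \otimes_j \Delta_{B_j}\ } S^{\otimes m} \xrightarrow{\ \Delta_m\ } S. \]
Since $(-)^!$ for closed immersions composes, $\Delta_n^! \simeq \Delta_m^! \circ (\otimes_j \Delta_{B_j})^!$, and this identification is canonical and coherent because it comes from composing right adjoints of pushforwards. The key step is the Künneth formula
\[ (\Delta_{B_1} \otimes \cdots \otimes \Delta_{B_m})^!\bigl(\boxtimes_j N_j\bigr) \simeq \boxtimes_j \Delta_{B_j}^! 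N_j \]
for $N_j \in D^+_{coh}(S^{\otimes B_j})$. To get it, we first write down the natural map from right to left. It comes from the external product of the counits $\Delta_{B_j,*}\Delta_{B_j}^! N_j \to N_j$, followed by adjunction. We then prove this map is an isomorphism.

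This Künneth step is the main obstacle, because $\RHom$ does not commute with external tensor products in general. We will use finiteness: $S$ is a finite type $k$-algebra and $S^{\otimes B_j}$ is Noetherian, so $S$ is pseudocoherent over $S^{\otimes B_j}$. We resolve each factor $S$ by a complex $P_j$ of finite free $S^{\otimes B_j}$-modules. Then $\boxtimes_j P_j$ is a finite free resolution of $S^{\otimes m}$ over $S^{\otimes n}$, and we get
\[ \sHom(\boxtimes_j P_j, \boxtimes_j N_j) = \boxtimes_j \sHom(P_j, N_j) \]
termwise. The remaining issue is the passage to the limit. Since the $N_j$ are bounded below and $k$ is a field, so that $\otimes_k$ is exact, we can truncate: in any fixed cohomological degree, only finitely many terms of the $P_j$ contribute on both sides.

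The same bounded-below argument shows that $\stens$ preserves $D^+_{coh}$, which is already noted in the construction. Putting these pieces together, the $\stens_S$ functors with their associativity and symmetry constraints form the data of a non-unital symmetric monoidal $\infty$-category, for example as a functor out of the category of finite nonempty sets and surjections. The higher coherences are then inherited from those of the external product and of $(-)^!$ under composition.
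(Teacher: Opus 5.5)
Your proposal is correct and follows the paper's proof. The paper likewise defines the $T$-fold product as $\RHom_{S^{\otimes_k T}}(S, \bigotimes_{k,t\in T} M_t)$, and builds each partition constraint from the exterior-product isomorphism for $\RHom$'s (\autoref{ExtProductRHom}, used with the bounded arguments equal to $S$) followed by composing $!$-pullbacks along $S^{\otimes_k I}\otimes_k S^{\otimes_k J} \to S\otimes_k S \to S$. The only real difference is how you prove the K\"unneth step: you use explicit finite free resolutions and degree counting, while the paper approximates by perfect complexes and bounds the connectivity of the fibre; both rest on the same inputs (pseudocoherence of $S$ over $S^{\otimes_k n}$, bounded-below targets, exactness of $\otimes_k$).
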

\begin{proof}
Given a finite set $T$, we define an $S$-linear functor
\[ 
D^+_{coh}(S)^T \to D^+_{coh}(S) \quad \text{via} \quad \{M_t\}_{t \in T} \mapsto \bigstens{S, t \in T} M_t  := \RHom_{S^{\otimes_k T}}(S, \bigotimes_{k, t \in T} M_t), 
\]
where $S^{\otimes_k T}$ is the tensor product (over $k$) of copies of $S$ parametrized by $T$; note that this construction is clearly symmetric in the input data. Given any partition $T = I \sqcup J$, we shall construct a natural map
\[ \eta:\left(\bigstens{S, i \in I} M_i\right) \stens_S \left(\bigstens{S, j \in J} M_j \right) \to \bigstens{S, t \in T} M_t \]
which is an isomorphism. The desired monoidal properties will be immediate from the construction. To define this map, we first observe that taking external products of homomorphisms gives a natural map
\[ \eta':\RHom_{S^{\otimes_k I}}(S, \bigotimes_{k, i \in I} M_i) \otimes_k \RHom_{S^{\otimes_k J}}(S, \bigotimes_{k, j \in J} M_j)  \to  \RHom_{S^{\otimes_k I} \otimes_k S^{\otimes_k J}}(S \otimes_k S, \bigotimes_{k, t \in T} M_t) \]
in $D(S \otimes_k S)$, where $S \otimes_k S$ acts on the left with the first (resp. second) copy of $S$ acting on the first (resp. second) tensor factor. Moreover, this map is an isomorphism by \autoref{ExtProductRHom} below. Applying $\Delta^! = \RHom_{S \otimes_k S}(S,-)$ then gives the desired map $\eta$ once we use the functoriality of $!$-pullback along the composition of surjections $ S^{\otimes_k I} \otimes_k S^{\otimes_k J} \to S \otimes_k S \to S$ to simplify the target.
\end{proof}

The key lemma powering the above calculation (as well as some subsequent ones) is the following:

\begin{lemma}[Exterior products of $\RHom$'s]
\label{ExtProductRHom}
Let $R,S$ be finite type $k$-algebras. Fix $M,N \in D^+_{coh}(R)$ and $M',N' \in D^+_{coh}(S)$ with both $M$ and $M'$ bounded. Then the natural map
\[ c_{M,N,M',N'}:\RHom_R(M,N) \otimes_k \RHom_S(M',N') \to \RHom_{R \otimes_k S}(M \otimes_k M', N \otimes_k N') \]
is an isomorphism.
\end{lemma}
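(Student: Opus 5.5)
We argue by dévissage, reducing first in the source variables $M, M'$ and then in the target variables $N, N'$, until only free modules remain.

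\emph{Step 1: finite resolutions of the sources.} Both sides are exact functors of each of the four variables, contravariant in $M, M'$ and covariant in $N, N'$, and $c$ is natural. Hence the class of $(M,M')$ for which $c$ is an isomorphism (with $N, N'$ fixed) is closed under shifts, cones and direct summands.

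\emph{Step 2: reducing in the source.} Since $R$ is Noetherian, a bounded coherent $M$ is quasi-isomorphic to a bounded-above complex $P$ of finite free $R$-modules. I would truncate $P$ stupidly: for $n \gg 0$ there is a triangle $\sigma_{\ge -n}P \to P \to \sigma_{<-n}P$, where $\sigma_{\ge -n}P$ is a finite complex of finite frees and $\sigma_{<-n}P$ is concentrated in degrees $<-n$. Because $N \in D^+$, the functor $\RHom_R(\sigma_{<-n}P, N)$ lies in degrees $>n+\inf(N)$. The same holds on the right-hand side: $(\sigma_{<-n}P) \otimes_k M'$ sits in degrees $< -n + \sup(M')$, because $M'$ is bounded and $\otimes_k$ is exact. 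Likewise $N \otimes_k N'$ is bounded below, since $k$ is a field.

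Thus in any fixed degree both sides only see $\sigma_{\ge -n}P$ for $n$ large, and we may replace $M$ by a finite complex of finite frees, and similarly $M'$. By Step 1 this reduces us to $M=R$ and $M'=S$, where $c$ becomes the identity $N\otimes_k N' \to N \otimes_k N'$. Note this step uses no coherence of $N, N'$; only bounded-belowness is needed.

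\emph{Main obstacle.} The only delicate point is the degree bookkeeping, which guarantees that truncation errors are pushed to infinity uniformly on both sides. Concretely, $\RHom$ out of a bounded-above complex of frees into a bounded-below complex is computed by the product totalization, and we need the tensor product over $k$ of such product totalizations to agree with the product totalization on the right. This holds because in each fixed total degree only finitely many terms contribute. That finiteness is exactly where boundedness of $M, M'$ and bounded-belowness of $N, N'$ are used, and where the finite-type (Noetherian) hypothesis supplies finite free resolutions. I would write this out degreewise: fix a degree $d$, choose $n$ so that the discarded pieces vanish in degrees $\le d+1$ on both sides, and conclude that $H^d(c)$ is an isomorphism.
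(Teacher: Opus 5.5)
Your proposal is correct and follows essentially the paper's argument. Like the paper, you approximate $M$ and $M'$ by stupid truncations of finite free resolutions, and you use that $N$, $N'$ are bounded below (together with exactness of $\otimes_k$) to push the error terms into arbitrarily high cohomological degree on both sides, reducing to the perfect case and hence to $M=R$, $M'=S$. The second d\'evissage in the target variables $N,N'$ that you announce at the start is never needed, as you yourself observe.
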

\begin{proof}
By shifting, we may assume $N,N' \in D^{\geq 0}$, and that $M,M' \in D^{\leq 0}$. Also, the claim is clear if $M$ and $M'$ are perfect as this reduces to the case $M=R$ and $M'=S$. In general, for any positive integer $n$, by truncating a complex of finite free $R$-modules representing $M$, we can find a perfect $R$-complex $P \in D^{\leq 0}$ and a map $P \to M$ whose cone lies in $D^{\leq -n}$; similarly we can find a perfect $S$-complex $P' \in D^{\leq 0}$ and a map $P' \to M'$ whose cone lies in $D^{\leq -n}$. Using the claim for $P$ and $P'$ and using the natural map $c_{M,N,M',N'} \to c_{P,N,P',N'}$, we learn that the fibre of $c_{M,N,M',N'}$ lies in $D^{\geq n}$. As this holds true for all $n$, the cone must be $0$, so we win.
\end{proof}

\begin{example}[$\stens$-products for smooth $k$-algebras]
\label{smoothstens}
Say $S$ is a smooth $k$-algebra. Then $\Perf(S) = D^b_{coh}(S)$ by regularity of $S$. Moreover, by smoothness, $S$ is perfect over $S \otimes_k S$, and hence the functor $\RHom_{S \otimes_k S}(S,-)$ preserves perfectness. Consequently the $\stens$-product restricts to an $S$-linear symmetric monoidal structure on $\Perf(S)$. By $S$-linearity, we must have a natural isomorphism
\[ M \stens_S N \simeq M \otimes_S (S \stens_S S) \otimes_S N.\]
In other words, the $\stens$-product on $\Perf(S)$ is determined by the object $S \stens_S S \in \Perf(S)$. Using \autoref{dualregular} and the standard identification $I_\Delta/I_\Delta^2 \simeq \Omega^1_{S/k}$ where $I_\Delta = \ker(S \otimes_k S \to S)$, we can also compute that
\[ S \stens_S S = \RHom_{S \otimes_k S}(S, S \otimes_k S) \simeq \det(\Omega^1_{S/k})^\vee[-\dim(S)], \]
thus obtaining a complete description of the $\stens$-product in this case: it differs from the usual $\otimes$-product by translating by an invertible object $\omegacan S := \det(\Omega^1_{S/k})[\dim(S)]$. In particular, the $\stens$-product is unital: $\omegacan S$ with the isomorphism $\tau:\omegacan S \stens_S \omegacan S \simeq \omegacan S$ coming from the preceding considerations provides a unit. 
\end{example}

The following shall be useful in reducing general statements about $\stens$-products to the smooth case:

\begin{proposition}[Finite pullbacks]
\label{FinPull}
Let $f:R \to S$ be a finite map of finite type $k$-algebras. The functor $f^!:D^+_{coh}(R) \to D^+_{coh}(S)$ is naturally (non-unitally) symmetric monoidal for the $\stens$-product. 

Moreover, if $D^+_{coh}(R)$ admits a $\stens$-unit $\omega_R^\bullet$, then $f^! \omega_R^\bullet$ is a $\stens$-unit in $D^+_{coh}(S)$.
\end{proposition}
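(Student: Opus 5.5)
We construct the comparison map directly from the definitions and then reduce its invertibility to \autoref{ExtProductRHom}. For $M,N \in D^+_{coh}(R)$, write $f^!M = \RHom_R(S,M)$, viewed in $D^+_{coh}(S)$; it is coherent since $f$ is finite.

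\textbf{Construction of the map.} Apply \autoref{ExtProductRHom} with the bounded modules $S$ over $R$ on both sides. It gives an isomorphism
\[ f^!M \otimes_k f^!N = \RHom_R(S,M)\otimes_k \RHom_R(S,N) \simeq \RHom_{R\otimes_k R}(S\otimes_k S, M\otimes_k N) = (f\otimes f)^!(M\otimes_k N), \]
where $f\otimes f: R\otimes_k R \to S\otimes_k S$ is again finite. Now apply $\Delta_S^! = \RHom_{S\otimes_k S}(S,-)$ and use transitivity of $!$-pullback for finite maps in the commutative square
\[ \Delta_S\circ (f\otimes f) = f \circ \Delta_R \colon R\otimes_k R \to S. \]
Concretely, both sides compute $\RHom_{R\otimes_k R}(S,-)$ by adjunction:
\[ \Delta_S^!(f\otimes f)^! \simeq (\Delta_S\circ(f\otimes f))^! = (f\circ\Delta_R)^! \simeq f^!\Delta_R^!. \]
Combining these gives a natural isomorphism $f^!M \stens_S f^!N \simeq f^!(M\stens_R N)$.

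\textbf{Higher coherences.} Run the same argument for every finite set $T$, using the functors $\bigstens{}$ from the proof of \autoref{MonoidalClassical}:
\[ \RHom_{S^{\otimes T}}\Bigl(S, \bigotimes_t \RHom_R(S,M_t)\Bigr) \simeq \RHom_{R^{\otimes T}}\Bigl(S, \bigotimes_t M_t\Bigr) \simeq f^!\, \RHom_{R^{\otimes T}}\Bigl(R, \bigotimes_t M_t\Bigr). \]
Each isomorphism is built from exterior products of $\RHom$'s and adjunction/transitivity isomorphisms. Since the monoidal structure constraints of \autoref{MonoidalClassical} are themselves built from exactly these canonical maps, compatibility with partitions $T=I\sqcup J$ and with symmetries follows formally. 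This yields non-unital symmetric monoidality.

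\textbf{Unit statement.} Let $\omega_R^\bullet$ be a $\stens$-unit with structure isomorphism $\tau\colon \omega_R^\bullet\stens_R\omega_R^\bullet\simeq\omega_R^\bullet$. Then $f^!\omega_R^\bullet$ is an idempotent via
\[ f^!\omega_R^\bullet \stens_S f^!\omega_R^\bullet \simeq f^!(\omega_R^\bullet\stens_R\omega_R^\bullet) \xrightarrow{f^!\tau} f^!\omega_R^\bullet. \]
It remains to show that $-\stens_S f^!\omega_R^\bullet$ is the identity on $D^+_{coh}(S)$. Here I would not pass through $f^!$ of objects of $D(R)$, because not every $S$-complex is of the form $f^!$. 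Instead I would compute directly, for $N\in D^+_{coh}(S)$:
\[ N\stens_S f^!\omega_R^\bullet = \RHom_{S\otimes_k S}\bigl(S, N\otimes_k \RHom_R(S,\omega_R^\bullet)\bigr). \]
By \autoref{ExtProductRHom}, this is $\RHom_{S\otimes_k R}\bigl(S, N\otimes_k\omega_R^\bullet\bigr)$, where $S$ is regarded as an $S\otimes_k R$-module via $\mathrm{id}\otimes f$. Now regard $N$ as an object of $D^+_{coh}(R)$ via $f_*$; finiteness of $f$ keeps it coherent. The unit property of $\omega_R^\bullet$ gives
\[ \RHom_{R\otimes_k R}\bigl(R, f_*N\otimes_k\omega_R^\bullet\bigr)\simeq f_*N. \]
The remaining task is to compare these two $\RHom$'s. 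The map $S\otimes_k R \to S$ factors through $S\otimes_R(R\otimes_k R)$, i.e.\ it is the base change of $\Delta_R$ along $R\otimes_k R\to S\otimes_k R$. Since $N\otimes_k\omega_R^\bullet$ is already an $S\otimes_k R$-module, adjunction for the finite map $R\otimes_k R\to S\otimes_k R$ together with the $S$-module structure on $N$ identifies $\RHom_{S\otimes_k R}(S,-)$ with the $S$-linear refinement of $\RHom_{R\otimes_k R}(R,-)$. Checking that the resulting isomorphism is $S$-linear and compatible with $\tau$ is what makes $f^!\omega_R^\bullet$ a genuine unit.

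I expect this last identification to be the main obstacle. The point is to keep track of the $S$-linear structure rather than just the $R$-linear one. A fallback is to use that $\stens$-units are characterized as idempotents $E$ for which $-\stens E$ is an equivalence, and to test this on generators.
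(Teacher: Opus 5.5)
Your proposal follows the paper's proof essentially step for step. Monoidality comes from \autoref{ExtProductRHom}, which identifies $f^!M\otimes_k f^!N$ with $(f\otimes f)^!(M\otimes_k N)$, followed by comparing $!$-pullbacks along the two factorizations of $R\otimes_k R\to S$. The unit statement is the same chain $M\stens_S f^!\omega_R^\bullet\simeq \RHom_{S\otimes_k R}(S,M\otimes_k\omega_R^\bullet)\simeq \RHom_{R\otimes_k R}(R,M\otimes_k\omega_R^\bullet)\simeq M$: transitivity plus \autoref{ExtProductRHom}, then base change of $\Delta_R$ along $R\otimes_k R\to S\otimes_k R$, then the unit property of $\omega_R^\bullet$ applied to $f_*M$. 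The paper does not verify the $S$-linearity you single out as the main obstacle; it simply treats it as part of the naturality of these maps, and your fallback via idempotents is not used there.
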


We shall show later that $D^+_{coh}(R)$ always admits a $\stens$-unit (\autoref{DualClassicalExists}). However, the proof of existence uses a special case of the second assertion above, which is the reason for the present formulation.

\begin{proof}
For the first part, given $M,N \in D^+_{coh}(R)$, we must show that there is a natural isomorphism
\[ f^! M \stens_S f^! N \simeq f^!(M \stens_R N)\]
compatible with the associativity and commutativity constraints. Unwinding definitions, we must construct a natural isomorphism
\[ \eta:\RHom_{S \otimes_k S}(S, \RHom_R(S,M) \otimes_k \RHom_R(S,N)) \simeq \RHom_R(S, \RHom_{R \otimes_k R}(R, M \otimes_k N)) \]
in $D(S)$ compatible with the associativity and commutativity constraints. Using \autoref{ExtProductRHom}, we have
\[ \RHom_R(S,M) \otimes_k \RHom_R(S,N) \simeq \RHom_{R \otimes_k R}(S \otimes_k S, M \otimes_k N) \]
in $D(S \otimes_k S)$. The source of $\eta$ is thus the $!$-pullback of $M \otimes_k N \in D^+_{coh}(R \otimes_k R)$ along the composition $R \otimes_k R \to S \otimes_k S \to S$. On the other hand, the target of $\eta$ is the $!$-pullback of $M \otimes_k N$ along the composition $R \otimes_k R \to R\to S$ by definition. As these two compositions give the same composite map $R \otimes_k R \to S$, we obtain the desired identification $\eta$; we leave it to the reader to check the desired compatibilities.

For the second part, assume $D^+_{coh}(R)$ admits a $\stens$-unit $\omega_R^\bullet$. Write $\omega_S^\bullet = f^! \omega_R^\bullet$ and fix $M \in D^+_{coh}(S)$. We must show that there is a natural isomorphism $M \stens \omega_S^\bullet \simeq M$. Consider the natural maps
\[ 
    \begin{array}{rl}
    M \stens_S \omegacan S := & \RHom_{S \otimes_k S}(S, M \otimes_k \RHom_R(S,\omegacan R)) \\ \xrightarrow{a} & \RHom_{S \otimes_k R}(S, M \otimes_k \omegacan R) \\
        \xrightarrow{b} & \RHom_{R \otimes_k R}(R, M \otimes_k \omegacan R) 
         =: M \stens_R \omegacan R \simeq M,
    \end{array}
\]
where the map $a$ comes from restricting scalars along $S \otimes_k R \to S \otimes_k S$ and post-composing with the evaluation at $1$ map $\RHom_R(S,\omega_R^\bullet) \to \omega_R^\bullet$,  the map $b$ comes from restricting scalars along $R \otimes_k R \to S \otimes_k R$ and pre-composing with $R \to S$, and the last isomorphism comes from $\omega_R^\bullet$ being a $\stens$-unit in $D^+_{coh}(R)$. We claim both $a$ and $b$ are isomorphisms. In fact, the isomorphy of $b$ is the standard base change identification: the map $S \otimes_k R \to S$ is the pushout of $R \otimes_k R \to R$ along the natural map $R \otimes_k R \to S \otimes_k R$. For $a$, observe that the middle term $\RHom_{S \otimes_k R}(S, M \otimes_k \omegacan R)$ above is the $!$-pullback of $M \otimes_k \omegacan R \in D^+_{coh}(S \otimes_k R)$ along $S \otimes_k R \to S$. Factoring this map as $S \otimes_k R \to S \otimes_k S \to S$ then shows that
\[ \RHom_{S \otimes_k R}(S, M \otimes_k \omegacan R) \simeq \RHom_{S \otimes_k S}(S, \RHom_{S \otimes_k R}(S \otimes_k S, M \otimes_k \omegacan R)),\]
which simplifies to $\RHom_{S \otimes_k S}(S, M \otimes_k \RHom_R(S,\omegacan R))$ using \autoref{ExtProductRHom}, as wanted.
\end{proof}

We can now prove the promised theorem: the $\stens$-product is unital.

\begin{theorem}[Existence of $\stens$-units]
\label{DualClassicalExists}
Let $S$ be a finite type $k$-algebra. Then the $\stens$-product on $D^+_{coh}(S)$ admits a unit $\omegacan S$. The unit $\omegacan S$ lies in $D^b_{coh}(S)$ and in fact has finite injective dimension.
\end{theorem}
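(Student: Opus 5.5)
Since $S$ is of finite type over $k$, I will choose a surjection $f\colon P := k[x_1,\dots,x_n] \twoheadrightarrow S$. This is a finite map of finite type $k$-algebras, and $P$ is smooth over $k$. By \autoref{smoothstens}, $D^+_{coh}(P)$ has a $\stens$-unit $\omegacan P = \det(\Omega^1_{P/k})[n] \simeq P[n]$. Strictly speaking, \autoref{smoothstens} gives unitality only on $\Perf(P) = D^b_{coh}(P)$, so I first need to extend it to $D^+_{coh}(P)$. For $M \in D^+_{coh}(P)$, the dualizing property of $\omegacan{P}$ together with $S$-linearity should yield $M \stens_P \omegacan P \simeq M \otimes_P (P \stens_P P) \otimes_P \omegacan P \simeq M$. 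The identification $M \stens_P N \simeq M \otimes_P (P\stens_P P)\otimes_P N$ continues to hold for $N$ perfect and $M$ merely in $D^+_{coh}$, because $P$ is perfect over $P\otimes_k P$. Consequently $\RHom_{P\otimes_k P}(P,-)$ commutes with the relevant tensor products, and writing $P \otimes_k N$ as a finite resolution reduces the claim to $N = P$.

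Next I apply the second part of \autoref{FinPull} to $f$. It shows that $\omegacan S := f^!\omegacan P = \RHom_P(S, P[n])$ is a $\stens$-unit in $D^+_{coh}(S)$. The unit constraint comes from the isomorphisms $a,b$ constructed in that proof, combined with the unit isomorphism for $P$. The composite $M \stens_S \omegacan S \simeq M$ is natural in $M$, which is all that unitality requires. Uniqueness of units is a formal consequence, so the result is independent of the chosen presentation, although the statement does not ask for this.

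For the finiteness assertions: $P[n]$ is a bounded complex of finite injective dimension, since $P$ is regular of finite Krull dimension and hence $\mathrm{injdim}_P P = n < \infty$. The object $\RHom_P(S, P[n])$ is then coherent, because $S$ is a finitely generated $P$-module and $P$ is Noetherian. It is bounded, since $\Ext^i_P(S,P)$ vanishes outside $[0,n]$. Its injective dimension over $S$ is finite by the standard adjunction $\RHom_S(N, \RHom_P(S,I)) \simeq \RHom_P(N,I)$: applying this to a finite injective resolution $I$ of $P[n]$ shows that $\RHom_P(S,I)$ is a bounded complex of injective $S$-modules.

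I expect the main obstacle to be the bookkeeping in the first step, namely verifying that the $\stens$-unit for the smooth ring $P$ works on all of $D^+_{coh}(P)$ and not only on $\Perf(P)$. The other steps are direct citations of \autoref{FinPull} together with classical facts about $f^\flat$.
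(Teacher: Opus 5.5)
Your proposal is correct and follows the paper's proof. You choose a surjection from a smooth (here polynomial) algebra, take its unit from \autoref{smoothstens}, transport that unit along $f^!$ using the second part of \autoref{FinPull}, and deduce finite injective dimension because $f^! = \RHom_P(S,-)$ carries injectives to injectives. You also extend unitality from $\Perf(P)$ to $D^+_{coh}(P)$, using that $P$ is perfect over $P\otimes_k P$ so that $M \stens_P N \simeq M \otimes_P (P \stens_P P) \otimes_P N$ for $N$ perfect; the paper leaves this step implicit, and your argument for it is sound.
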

\begin{proof}
When $S$ is smooth, everything follows from \autoref{smoothstens}. In the general case, since the data of a unit in a symmetric monoidal category is unique up to unique isomorphism\footnote{A unit in a symmetric monoidal category $(\mathcal{C},\otimes)$ is given by an object $u \in \mathcal{C}$ and an isomorphism $\tau:u \otimes u \simeq u$ with the additional property that $- \otimes u$ is fully faithful. Indeed, it is clear that any unit comes with a unique such $\tau$. Conversely, given $(u,\tau)$ as before, any $x \in \mathcal{C}$ comes equipped with a natural isomorphism $a_{x,u}:x \simeq x \otimes u$ obtained by applying the full faithfulness of $- \otimes u$ to the isomorphism $\mathrm{id}_x \otimes \tau: x \otimes u \simeq x \otimes (u \otimes u) \simeq (x \otimes u) \otimes u$. Moreover, the pair $(u,\tau)$ is unique up to unique isomorphism when it exists: applying the previous reasoning to both $u$ and $u'$ gives an isomorphism $a_{u,u'}^{-1} \circ a_{u',u}:u' \simeq u$ that carries $\tau$ to $\tau'$.}, we may make choices to construct a unit. In this case, if we choose a surjection $f:R \to S$ with $R$ smooth over $k$, then \autoref{FinPull} shows that $\omegacan S := f^! \omegacan R$ is a unit for $(D^+_{coh}(S), \stens_S)$. It remains to show that the unit $\omega_S^\bullet$ has finite injective dimension. But this follows from its definition as $f^! \omega_R^\bullet$ combined with the observation that $f^!$ preserves injectivity (being right adjoint to the exact functor $f_*$). 
\end{proof}

As an application, we obtain the following well-known characterization of dualizing complexes (due to Van den Bergh and Yekutieli):

\begin{corollary}[Uniqueness of rigidified dualizing complexes]
\label{CharactDual}
Let $S$ be a finite type $k$-algebra. There is a unique (up to unique isomorphism) pair $(\omegacan S, \tau)$ where:
\begin{itemize}
\item $\omegacan S \in D^b_{coh}(S)$ has finite injective dimension such that the natural map gives an isomorphism $S \simeq \RHom_S(\omegacan S,\omegacan S)$.
\item $\tau$ is an isomorphism $\tau:\omegacan S \simeq \omegacan S \stens_S \omegacan S := \RHom_{S \otimes_k S}(S,\omegacan S \otimes_k \omegacan S)$.
\end{itemize}
\end{corollary}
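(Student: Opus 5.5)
Existence is essentially \autoref{DualClassicalExists}. The unit $\omegacan S$ there lies in $D^b_{coh}(S)$, has finite injective dimension, and comes with the unit isomorphism $\tau\colon \omegacan S \stens_S \omegacan S \simeq \omegacan S$ (inverted to match the stated direction). It remains to check that $S \to \RHom_S(\omegacan S,\omegacan S)$ is an isomorphism. The question is local, so I would choose a surjection $f\colon R \to S$ from a smooth $k$-algebra. Then $\omegacan S = f^!\omegacan R$ with $\omegacan R = \det(\Omega^1_{R/k})[\dim R]$ invertible, and
\[ \RHom_S(f^!\omegacan R, f^!\omegacan R) \simeq \RHom_R(f_*f^!\omegacan R,\omegacan R) \simeq \RHom_R(\RHom_R(S,\omegacan R),\omegacan R). \]
This receives $S$ via biduality for coherent complexes over the regular ring $R$ with respect to the invertible (shifted) object $\omegacan R$. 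That biduality holds because $\Perf(R) = D^b_{coh}(R)$.

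For uniqueness, suppose $(\omega',\tau')$ is another pair. The plan is to show that $\omega'$ is itself a $\stens$-unit and then invoke the uniqueness of units recorded in the footnote to \autoref{DualClassicalExists}. By the footnote's criterion, given $\tau'$ it suffices to show that $-\stens_S\omega'$ is fully faithful, in fact an autoequivalence, on $D^+_{coh}(S)$. Both $\omega'$ and $\omegacan S$ are dualizing complexes in the sense of the first bullet. By the standard classification on each connected component of $\Spec S$, we get $\omega' \simeq \omegacan S \otimes_S L[n]$ for an invertible module $L$ and a locally constant shift $n$; the classification is Zariski-local, so reducing to connected $S$ is harmless.

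Next I would compute $\omega'\stens_S\omega'$. The functor $\stens$ is $S$-linear in each variable, in the sense that $(M\otimes_S P)\stens_S N \simeq (M\stens_S N)\otimes_S P$ for perfect $P$. This holds because $\RHom_{S\otimes_k S}(S,-)$ commutes with tensoring by the perfect complex $P\otimes_k S$, which acts on $S$ through $P$. Using it, I get
\[ \omega'\stens_S\omega' \simeq (\omegacan S\stens_S\omegacan S)\otimes_S L^{\otimes 2}[2n] \simeq \omegacan S\otimes L^{\otimes 2}[2n]. \]
The existence of $\tau'$ then gives $\omegacan S\otimes L^{\otimes 2}[2n] \simeq \omegacan S\otimes L[n]$. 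Applying $\RHom_S(\omegacan S,-)$ and using $S\simeq\RHom_S(\omegacan S,\omegacan S)$, this yields $L[n]\simeq S$, so $L\simeq S$ and $n=0$. Hence $\omega'\simeq\omegacan S$, and in particular $-\stens_S\omega'$ is an autoequivalence. The footnote argument then upgrades $(\omega',\tau')$ to a unit, and uniqueness of units provides a unique isomorphism $(\omega',\tau')\simeq(\omegacan S,\tau)$.

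The main obstacle is this last step: the isomorphism class of $\omega'$ is pinned down only abstractly, and full faithfulness of $-\stens_S\omega'$ must come from $S$-linearity together with the invertibility of $-\stens_S\omegacan S$. I would verify the $S$-linearity compatibility carefully, since that is what makes the twisting by $L[n]$ harmless. Once that is in place, uniqueness of the \emph{pair} up to unique isomorphism is purely formal.
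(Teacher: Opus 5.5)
Your proposal is correct and follows essentially the same route as the paper. Both write $\omega' \simeq \omegacan{S} \otimes_S K$ with $K$ invertible, use the $\Perf(S)$-linearity of $\stens$ together with $\tau,\tau'$ to get $K \simeq K^{\otimes 2}$, and conclude $K \simeq S$. The paper re-derives the classification of dualizing complexes via the approximation argument and the $S$-linear autoequivalence, whereas you cite it; you also spell out two steps the paper leaves implicit, namely the reflexivity $S \simeq \RHom_S(\omegacan{S},\omegacan{S})$ for the constructed unit and the appeal to the footnote's uniqueness of units to obtain the unique isomorphism matching $\tau$ and $\tau'$.
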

\begin{proof}
We already know such a pair $(\omegacan S,\tau)$ exists thanks to \autoref{DualClassicalExists}. Fix one such object once and for all. We must show its uniqueness. This is a standard argument but we sketch a proof as we could not find a reference; our arguments heavily rely on those in the Stacks Project, especially \cite[Tag 0A7E, 0A7F]{stacks-project}. 

Say $(\omega',\tau')$ is another pair as in the corollary. Our goal is to construct a unique isomorphism $\omega' \simeq \omegacan S$ carrying $\tau$ to $\tau'$. Write $F(-) = \RHom_S(-,\omegacan S)$ and  $G(-):= \RHom_S(-,\omega')$ for the displayed functors. 
    
First, let us explain some general properties of $G(-)$ (and thus also $F(-)$). As $\omega'$ has finite injective dimension, the functor $G(-)$ preserves $D^b_{coh}(-)$ and moreover we have a natural map $\eta_M:M \to G^2(M)$. We claim that the latter is an equivalence; this implies that $G$ is a contravariant autoequivalence of $D^b_{coh}(S)$. The isomorphy of $\eta_M$ for $M$ perfect reduces to the case of $M=S$, which holds true by assumption. To extend to arbitrary objects in $D^b_{coh}$, we use approximation. Given $M \in D^b_{coh}(S)$ and an integer $n$, we can find $P \in \Perf(S)$ and a map $P \to M$ whose cone is $n$-connected. By the finite injective dimension assumption on $\omega' \in D^b_{coh}(S)$, it follows that $G^2(P) \to G^2(M)$ has an $(n-c)$-connected cone for some $c$ depending only on $\omega'$ and independent of $n$. Comparing $\eta_M$ with $\eta_P$ then shows that the cone of $\eta_M$ is $(n-c-1)$-connected for all $n$, and thus $0$.

The functor $F(-)$ is a contravariant autoequivalence by the previous paragraph as well. Consider the covariant autoequivalence $GF:D^b_{coh}(S) \to D^b_{coh}(S)$. As this functor is an $S$-linear autoequivalence, we must have $GF(M) \simeq  M \otimes_S GF(S)$ (see \cite[Tag 0A7E]{stacks-project}). Moreover, again because this functor is an equivalence, the object $GF(S)$ must have the form $L[n]$ for $L \in \mathrm{Pic}(S)$ and some integer $n$, so we learn that $\RHom_S(\omega',\omegacan S) \simeq L[n]$. As $F(-) = \RHom_S(-,\omegacan S)$ is an equivalence with $F^2 = \mathrm{id}$, we learn that 
\[ \omega' \simeq \RHom_S(L[n], \omegacan S) \simeq L^\vee[-n] \otimes_S \omegacan S.\]
Write $L^\vee[-n] = M$ for simplicity. We claim that the rigidifying data forces $M=S$. Indeed, the $\stens$-product is $\Perf(S)$-linear in each factor, 
so we have a natural isomorphism.
\[ \omega' \stens_S \omega' \simeq M \otimes_S (\omegacan S \stens_S \omegacan S) \otimes_S M.\]
Using the rigidifying data $\tau'$ and $\tau$, this collapses to give
\[ \omega' \simeq  M^{\otimes 2}  \otimes_S \omegacan S,\]
thus giving
\[ M \otimes_S \omegacan S \simeq M^{\otimes 2} \otimes_S \omegacan S.\]
Applying $F(-) = \RHom_S(-,\omegacan S)$ and using $F(\omegacan S) = S$ as well as the $\Perf(S)$-linearity, we learn that
\[ M^\vee \simeq (M^{\otimes 2})^\vee.\]
As $M$ is an invertible object of $\Perf(S)$, this forces $M \simeq S$, as wanted.
\end{proof}

%%%%%%%%%%%%%%%%%%%%%%%%%%%%%%%%%%%%%%%%%%%%%%%%%%%%%%%%%%%%%%%%%%%%%%%%
%%%%%%%%%%%%%%%%%%%%%%%%%%%%%%%%%%%%%%%%%%%%%%%%%%%%%%%%%%%%%%%%%%%%%%%%
%%%%%%%%%%%%%%%%SUBSECTION:  F-Finite dualizing via diagonal%%%%%%%%%%%%
%%%%%%%%%%%%%%%%%%%%%%%%%%%%%%%%%%%%%%%%%%%%%%%%%%%%%%%%%%%%%%%%%%%%%%%%
%%%%%%%%%%%%%%%%%%%%%%%%%%%%%%%%%%%%%%%%%%%%%%%%%%%%%%%%%%%%%%%%%%%%%%%%

\subsection{Dualizing complexes over $F$-finite rings via the diagonal}
\label{ss:DualDiagFFin}

In this section, we finally complete the proof of our main result.  To do this, we explain how to construct the monoidal $\stens$-product for complexes over an arbitrary $F$-finite Noetherian ring $S$; the essential ingredient is a good notion of completion of $S \otimes_{\mathbb{F}_p} S$ along the diagonal, which we obtain via the material on completions in \S \ref{sec:CompRev} as well as Gabber's theorem \cite{Gabber_someTstructures}, \S \ref{sec:gabberconstr}. Once we have the $\stens$-product, we redo the arguments in \S \ref{DualCompStensClassical} to construct the dualizing complex as the unit.

\subsubsection*{The $\stens$-product for $F$-finite rings}

We now mimic the material in \S \ref{DualCompStensClassical} to construct the $\stens$-product on the derived category of a Noetherian $F$-finite ring, and also to prove it is unital. First, let us define a variant of the external tensor product, resulting from the notion of completion explained in \S \ref{sec:CompRev}.

\begin{construction}[The completed external product]
\label{stensffindef}
Let $S$ be a Noetherian $F$-finite ring. Let $p,q:S \to S \widehat{\otimes_{\mathbb{F}_p}} S$ be the two tautological inclusions. For $M,N \in D(S)$, we define
\[ M \widehat{\otimes_{\mathbb{F}_p}} N = p^* M  \otimes_{S \widehat{\otimes_{\mathbb{F}_p}} S} q^* N \in D( S \widehat{\otimes_{\mathbb{F}_p}} S),\]
i.e.,  $M \widehat{\otimes_{\mathbb{F}_p}} N \in D(S \widehat{\otimes_{\mathbb{F}_p}} S)$ is the base change of $M \otimes_{\mathbb{F}_p} N \in D(S \otimes_{\mathbb{F}_p} S)$ along $S \otimes_{\mathbb{F}_p} S \to S \widehat{\otimes_{\mathbb{F}_p}} S$. (We warn the reader that, despite the notation, the object $M \widehat{\otimes_{\mathbb{F}_p}} N$ is not complete for the derived $J_\Delta$-adic filtration on $S \widehat{\otimes_{\mathbb{F}_p}} S \to S$ in general; however, this is the case when $M$ and $N$ lie in $D^+_{coh}(S)$, which is the only relevant case for the sequel.)
\end{construction}

To ensure that the arguments in \S \ref{DualCompStensClassical} go through in our setting, we need the following:

\begin{lemma}[Boundedness of completed external products]
\label{BoundExtProduct}
Let $S$ be a Noetherian $F$-finite ring. There exists some integer $c > 0$ such that if $M,N \in D^{\geq 0}_{coh}(S)$, then $M \widehat{\otimes_{\mathbb{F}_p}} N \in D^{\geq -c}_{coh}(S \widehat{\otimes_{\mathbb{F}_p}} S)$. 
 \end{lemma}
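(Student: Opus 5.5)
The plan is to reduce to the regular case via Gabber's theorem and the pushout square of \autoref{PushoutFFin}, and then to control Tor-amplitude over a regular ring of finite Krull dimension.

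\textbf{Reduction.} By Gabber's theorem write $S = R/I$ with $R$ a Noetherian $F$-finite regular ring. Put $\widehat{A} := R \widehat{\otimes_{\mathbb{F}_p}} R$ and $\widehat{B} := S \widehat{\otimes_{\mathbb{F}_p}} S$. By \autoref{PushoutFFin} we have $\widehat{B} \simeq (S \otimes_{\mathbb{F}_p} S) \otimes_{R \otimes_{\mathbb{F}_p} R} \widehat{A}$. Now view $M,N$ as $R$-complexes via restriction of scalars. Then
\[ M \widehat{\otimes_{\mathbb{F}_p}} N = (M \otimes_{\mathbb{F}_p} N) \otimes_{S\otimes_{\mathbb{F}_p} S} \widehat{B} \simeq (M \otimes_{\mathbb{F}_p} N) \otimes_{R \otimes_{\mathbb{F}_p} R} \widehat{A}, \]
and this identification holds as $\widehat{A}$-complexes, hence as complexes of abelian groups. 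So it suffices to prove the following. There is a $c$ such that for all $M,N \in D^{\geq 0}_{coh}(R)$ (the restrictions are still coherent, since $S$ is a finite $R$-module), the completed external product $M \widehat{\otimes} N$ over $R$ lies in $D^{\geq -c}$. Coherence of the cohomology over $\widehat{B}$ is a separate point, addressed at the end.

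\textbf{Regular case.} By \autoref{regularFfindiag}, $\widehat{A}$ is a regular Noetherian ring, the completion of $\mathrm{Sym}^*_R(\Omega_R)$ along the augmentation ideal. Via the first factor $p\colon R \to \widehat{A}$ it is therefore a formal power series ring over $R$ locally, hence flat over $R$; the same holds via the second factor. I will use this to write
\[ M \widehat{\otimes} N = p^*M \otimes_{\widehat{A}} q^*N. \]
Flatness of $p$ and $q$ gives $p^*M, q^*N \in D^{\geq 0}_{coh}(\widehat{A})$. Since $\widehat{A}$ is regular, the Tor-amplitude of any coherent module over it is bounded by $d = \dim \widehat{A}$. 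This dimension is finite: $R$ has finite Krull dimension by Kunz, and the rank of $\Omega_R$ is bounded. Therefore $p^*M \otimes^L_{\widehat{A}} q^*N \in D^{\geq -d}$, and we may take $c = d$.

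The \textbf{main obstacle} is making this Tor-dimension step rigorous. One option is to argue locally on $\mathrm{Spec}\,\widehat{A}$, whose maximal ideals all lie over $\mathrm{Spec}\,R$ because $\widehat{A}$ is complete along the diagonal; this gives the uniform bound $\dim \widehat{A} \le \dim R + \operatorname{rank}\Omega_R$. The other option is to use that unbounded-above coherent complexes are filtered colimits of their truncations, each of which is handled by the Tor-amplitude bound.

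\textbf{Coherence.} Cohomology modules of $\widehat{B}$-complexes are finitely generated if they are finitely generated over $\widehat{A}$. For the latter, note that $p^*M \otimes q^*N$ is a tensor of coherent complexes over the regular Noetherian ring $\widehat{A}$. Each cohomology group of such a tensor is finitely generated: using bounded-below truncations, each degree only depends on finitely many cohomology groups of the factors, and tensor products of coherent modules over a Noetherian ring have coherent Tor.
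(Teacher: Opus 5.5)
Your proposal is correct. Its first half is exactly the paper's reduction: Gabber's theorem, the pushout square of \autoref{PushoutFFin}, and base change identify $M\widehat{\otimes_{\mathbb{F}_p}}N$ with $(M\otimes_{\mathbb{F}_p}N)\otimes_{R\otimes_{\mathbb{F}_p}R}\widehat{A}$ for $M,N$ restricted to $R$. The two routes diverge only in the homological bound.

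\textbf{The paper's route.} It stays on the source side. After reducing to finitely presented $R$-modules $M',N'$, it resolves each by finite projective $R$-modules of length at most $\dim R$. Then $M'\otimes_{\mathbb{F}_p}N'$ is computed by a complex of projective $R\otimes_{\mathbb{F}_p}R$-modules of length at most $2\dim R$, and its base change to $\widehat{A}$ is a bounded complex of finite projective $\widehat{A}$-modules. This gives $c=2\dim R$ with coherence for free, and the final step uses nothing about $\widehat{A}$ beyond Noetherianity.

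\textbf{Your route.} You base change first and bound Tor over $\widehat{A}$. This needs three extra inputs.
\begin{itemize}
\item Flatness of $p$ and $q$, from \autoref{regularFfindiag}. This is right, because $\widehat{A}$ is the completion of the flat Noetherian $R$-algebra $\mathrm{Sym}^*_R(\Omega_R)$. Calling it ``locally a power series ring'' is a bit loose, since completion does not commute with localization.
\item Finiteness of $\dim\widehat{A}$. Your argument via maximal ideals containing the diagonal ideal works and gives $\dim\widehat{A}_{\mathfrak m}=\dim R_{\mathfrak m}+\operatorname{rk}\Omega_{R_{\mathfrak m}}$.
\item A separate truncation argument for coherence.
\end{itemize}
The constant you obtain is never smaller than the paper's $2\dim R$, since $\operatorname{rk}\Omega_{R_{\mathfrak m}}\geq\dim R_{\mathfrak m}$.

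Your flatness observation is still worth keeping. Combine it with a single projective resolution of $M$ over $R$, using $p^*M\otimes_{\widehat{A}}q^*N\simeq M\otimes^{\mathbf{L}}_R q^*N$. Here $q^*N$ is viewed as an $R$-module via $p$, and it is an honest module when $N$ is, by flatness of $q$. This gives the sharper bound $c=\dim R$.
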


In the classical case where $S$ is finite type over a perfect field $k$, the constant $c$ can be taken to be $0$: indeed, in this case, we have $M \widehat{\otimes_{\mathbb{F}_p}} N \simeq (M \otimes_k N) \otimes_{(S \otimes_k S)} (S \widehat{\otimes_{\mathbb{F}_p}} S)$, so the claim follows as $S \widehat{\otimes_{\mathbb{F}_p}} S$ is flat over $S \otimes_k S$ (\autoref{GeomCompDiag}).
But we do not know if $c$ can be taken to be $0$ in general; the proof below gives a much weaker bound.

\begin{proof}
By Gabber's theorem, we can write $S = R/I$, where $R$ is a Noetherian $F$-finite regular ring and $I \subset R$ is an ideal such that $R$ is $I$-adically complete. We shall check that $c=2\dim(R)$ works. Fix $M,N \in D^{\geq 0}_{coh}(S)$. By \autoref{PushoutFFin}, the  square
\[ \xymatrix{ R \otimes_{\mathbb{F}_p} R \ar[r]^c \ar[d]^a & R \widehat{\otimes_{\mathbb{F}_p}} R \ar[d]^b \\
S \otimes_{\mathbb{F}_p} S \ar[r]^d & S \widehat{\otimes_{\mathbb{F}_p}} S  }\]
is a pushout square of animated rings with the vertical maps realizing the target as perfect complexes over the source. The condition of lying in $D^{\geq -c}_{coh}(-)$ can be checked after restricting scalars along such maps, so it suffices to show that $b_*\left(M \widehat{\otimes_{\mathbb{F}_p}} N\right)  \in D(R \widehat{\otimes_{\mathbb{F}_p}} R)$ lies in $D^{\geq -c}_{coh}$. But we also have $M \widehat{\otimes_{\mathbb{F}_p}} N = d^*(M \otimes_{\mathbb{F}_p} N)$. By base change, we are then reduced to checking that $c^* a_*(M \otimes_{\mathbb{F}_p} N) \in D^{\geq -c}_{coh}$. Now $a_*(M \otimes_{\mathbb{F}_p} N) \simeq a_* M \otimes_{\mathbb{F}_p} a_* N$ lies in $D^{\geq 0}_{coh}(R \otimes_{\mathbb{F}_p} R)$ (as $M$ and $N$ lie in $D^+_{coh}(R)$), so it suffices to prove the more general statement that $c^*(M' \otimes_{\mathbb{F}_p} N') \in D^{\geq -c}_{coh}$ for $M',N' \in D^{\geq 0}_{coh}$. As the construction $M',N' \mapsto M' \otimes_{\mathbb{F}_p} N'$ is $t$-exact in each variable separately, by consideration of the canonical filtration, it suffices to show that if $M'$ and $N'$ are finitely presented $R$-modules, then $c^*(M \otimes_{\mathbb{F}_p} N) \in D^{\geq -c}$. But this is clear from the regularity of $R$: the global dimension of $M'$ and $N'$ individually is bounded above by $\dim(R)$, and hence $M' \otimes_{\mathbb{F}_p} N'$ has global dimension $\leq c=2\dim(R)$.
\end{proof}

Using the above, we obtain:

\begin{lemma}[Exterior products of $\RHom$'s]
\label{ExtProductRHomFFin}
Let $R$ be a Noetherian $F$-finite ring. Fix $M,N,M',N' \in D^+_{coh}(R)$ with both $M$ and $M'$ bounded. Then the natural map
\[ c_{M,N,M',N'}:\RHom_R(M,N) \widehat{\otimes_{\mathbb{F}_p}} \RHom_R(M',N') \to \RHom_{R  \widehat{\otimes_{\mathbb{F}_p}} R}(M  \widehat{\otimes_{\mathbb{F}_p}} M', N  \widehat{\otimes_{\mathbb{F}_p}} N') \]
is an isomorphism.
\end{lemma}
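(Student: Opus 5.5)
We follow the proof of \autoref{ExtProductRHom}, with \autoref{BoundExtProduct} replacing the flatness of $S \otimes_k S$ over $k$. Write $A = R \widehat{\otimes_{\mathbb{F}_p}} R$, a classical Noetherian ring by \autoref{FfinCompDiag}. By shifting we may assume $N, N' \in D^{\geq 0}_{coh}(R)$ and $M, M' \in D^{\leq 0}_{coh}(R)$; since $M, M'$ are bounded, they also lie in $D^{\geq -a}$ for some $a$.

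First we treat the perfect case. If $M = R$ and $M' = R$, then $R \widehat{\otimes_{\mathbb{F}_p}} R = p^*R \otimes_A q^*R = A$. Both sides of $c$ are then $N \widehat{\otimes_{\mathbb{F}_p}} N'$ and the map is the identity. The construction $c_{M,N,M',N'}$ is exact in $M$ and in $M'$ separately, since both $\RHom$ and the base change functors $p^*, q^*$ are exact, and it is compatible with retracts. So $c_{P,N,P',N'}$ is an isomorphism for all $P, P' \in \Perf(R)$.

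Next we approximate. Fix $n \gg 0$ and choose $P \in \Perf(R) \cap D^{\leq 0}$ with a map $P \to M$ whose cone $C$ lies in $D^{\leq -n}$, and similarly $P' \to M'$ with cone $C'$; these come from truncating free resolutions. We then compare the fibres of the maps on the source and on the target of $c_{M,N,M',N'} \to c_{P,N,P',N'}$.

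On the source, $\RHom_R(C,N) \in D^{\geq n}$. By \autoref{BoundExtProduct}, applied after a shift and using that all terms are coherent and bounded below, the completed exterior product of $\RHom_R(C,N)$ with $\RHom_R(M',N')$ lies in $D^{\geq n-c}$. The same holds with the roles of the two factors exchanged. Hence the map on sources has fibre in $D^{\geq n-c-1}$.

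On the target, we must control $\RHom_A(C \widehat{\otimes_{\mathbb{F}_p}} M', N \widehat{\otimes_{\mathbb{F}_p}} N')$, and this is the main obstacle. By \autoref{BoundExtProduct}, $N \widehat{\otimes_{\mathbb{F}_p}} N' \in D^{\geq -c}$. What we still need is a bound on the other variable: $C \widehat{\otimes_{\mathbb{F}_p}} M'$ must lie in $D^{\leq -n + a'}$ for a constant $a'$ independent of $n$. This is not automatic, because $A$ need not be flat over $R \otimes_{\mathbb{F}_p} R$.

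To get this bound we pass through Gabber's presentation $S \to R$ with $S$ regular and use the pushout square of \autoref{PushoutFFin}. Write $b \colon S \widehat{\otimes_{\mathbb{F}_p}} S \to A$ for the right vertical map. Restricting scalars along $b$ and using base change, we are reduced to the statement that $(-) \otimes_{S \otimes_{\mathbb{F}_p} S} (S \widehat{\otimes_{\mathbb{F}_p}} S)$ has bounded Tor amplitude on modules of the form $C \otimes_{\mathbb{F}_p} M'$. This holds because $S \widehat{\otimes_{\mathbb{F}_p}} S$ is flat over $S \otimes_{\mathbb{F}_p} S$, since it is a completion of a regular ring in the sense of \autoref{regularFfindiag}. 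As a fallback, if flatness is unavailable, the Tor-dimension of $a_*$ is bounded by $2\dim S$, exactly as in the proof of \autoref{BoundExtProduct}.

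With this right-boundedness in hand, the $\RHom$ on the target lands in $D^{\geq n - c - a' - c}$. Thus the fibre of $c_{M,N,M',N'}$ lies in $D^{\geq n - C_0}$ for a constant $C_0$ independent of $n$. Letting $n \to \infty$, this fibre vanishes, and $c_{M,N,M',N'}$ is an isomorphism.
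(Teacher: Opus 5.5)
Your overall argument is the paper's. You run the approximation argument of \autoref{ExtProductRHom}, and \autoref{BoundExtProduct} supplies the lower bounds that exactness of $\otimes_k$ gave for free in the classical case. You use it both for $\RHom_R(C,N)\widehat{\otimes_{\mathbb{F}_p}}\RHom_R(M',N')$ on the source and for $N\widehat{\otimes_{\mathbb{F}_p}}N'\in D^{\geq -c}$ on the target.

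The one thing to fix is the paragraph you call the main obstacle, which is a misdiagnosis. What you need there is an \emph{upper} bound. The cofibre of $P\widehat{\otimes_{\mathbb{F}_p}}P'\to M\widehat{\otimes_{\mathbb{F}_p}}M'$ is an extension of $P\widehat{\otimes_{\mathbb{F}_p}}C'$ and $C\widehat{\otimes_{\mathbb{F}_p}}M'$. These lie in $D^{\leq -n}$ automatically, for two reasons: $-\otimes_{\mathbb{F}_p}-$ is exact, and derived base change along the map of connective rings $R\otimes_{\mathbb{F}_p}R\to R\widehat{\otimes_{\mathbb{F}_p}}R$ is right $t$-exact. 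Flatness or Tor amplitude only controls the opposite, $D^{\geq}$, side, and that is exactly the job of \autoref{BoundExtProduct}. So the detour through Gabber's presentation, and your Tor-dimension fallback, address the wrong side.

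The justification you give in that detour is also not valid as stated. $S\otimes_{\mathbb{F}_p}S$ is in general not Noetherian. \autoref{regularFfindiag} shows that $S\widehat{\otimes_{\mathbb{F}_p}}S$ is regular (a completion of $\mathrm{Sym}^*_S(\Omega^1_S)$), not that it is flat over $S\otimes_{\mathbb{F}_p}S$. In fact such flatness would let the proof of \autoref{BoundExtProduct} run with $c=0$, which the paper explicitly leaves open.

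Replace that paragraph by the one-line right $t$-exactness remark, and your proof is complete and coincides with the paper's.
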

\begin{proof}
This follows by the same approximation argument used in \autoref{ExtProductRHom}, using \autoref{BoundExtProduct} to control the left term.
\end{proof}

The $\stens$-product is defined just as in the classical case, replacing the abstract tensor product with a completed one:

\begin{construction}[The $\stens$-product]
\label{stensFFin}
Let $S$ be a Noetherian $F$-finite ring. Let $p,q:S \to S \widehat{\otimes_{\mathbb{F}_p}} S$ be the two tautological inclusions, and let $\Delta: S \widehat{\otimes_{\mathbb{F}_p}} S \to S$ be the multiplication map. The pushforward $\Delta_*:D(S) \to D(S \widehat{\otimes_{\mathbb{F}_p}} S)$ has a right adjoint $\Delta^!$ given concretely by $\RHom_{S \widehat{\otimes_{\mathbb{F}_p}} S}(S,-)$. We then obtain a functor
\[ D(S) \times D(S) \xrightarrow{\stens} D(S) \quad \text{via} \quad M \stens_S N := \Delta^!(p^* M \otimes_{S \widehat{\otimes_{\mathbb{F}_p}} S} q^* N) = \RHom_{S \widehat{\otimes_{\mathbb{F}_p}} S}(S, M \widehat{\otimes_{\mathbb{F}_p}} N),\]
This functor is $\Perf(S)$-linear in each variable separately by construction.
\end{construction}
Using the control on the completion provided by \autoref{FfinCompDiag} as well as \autoref{BoundExtProduct}, we learn that the $\stens$-product preserves $D^+_{coh}$, as in the classical case:

\begin{proposition}
\label{stenscoh}
Let $S$ be a Noetherian $F$-finite ring. For $M,N \in D^+_{coh}(S)$, we have $M \stens_S N \in D^+_{coh}(S)$. 
\end{proposition}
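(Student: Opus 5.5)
The argument has two steps: first place $M \widehat{\otimes_{\mathbb{F}_p}} N$ in $D^+_{coh}$ of the completed diagonal, then show that $\Delta^! = \RHom_{S \widehat{\otimes_{\mathbb{F}_p}} S}(S,-)$ preserves $D^+_{coh}$ along the finite map $\Delta$. Throughout, write $A := S \widehat{\otimes_{\mathbb{F}_p}} S$. By \autoref{FfinCompDiag}, $A$ is a classical Noetherian ring. The multiplication map $\Delta\colon A \to S$ is surjective on $\pi_0$, so $S = A/J$ for an ideal $J$ of the Noetherian ring $A$. In particular $S$ is a finitely generated $A$-module, i.e. $S \in D^b_{coh}(A)$.

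\textbf{Step 1: the completed external product lies in $D^+_{coh}(A)$.} Given $M,N \in D^+_{coh}(S)$, shift so that $M,N \in D^{\geq 0}_{coh}(S)$. Then \autoref{BoundExtProduct} gives
\[
M \widehat{\otimes_{\mathbb{F}_p}} N \in D^{\geq -c}_{coh}(A).
\]
This lemma supplies both the uniform lower bound and the coherence of all cohomology groups, so nothing further is needed here.

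\textbf{Step 2: $\RHom_A(S,-)$ preserves $D^+_{coh}$.} Here I use a standard Noetherian fact. Choose a resolution $P_\bullet \to S$ by finite free $A$-modules, possible since $A$ is Noetherian and $S$ is finitely generated. For $K \in D^{\geq -c}_{coh}(A)$, compute $\RHom_A(S,K)$ as $\Hom_A(P_\bullet, K)$ with $K$ represented by a bounded-below complex. The cohomology in degree $n$ depends only on finitely many $P_i$ and finitely many truncations of $K$. One can make this precise via the spectral sequence $E_2^{ij} = \Ext^i_A(S, H^j(K)) \Rightarrow H^{i+j}$, which has only finitely many nonzero terms contributing to each total degree because $j \geq -c$ and $i \geq 0$. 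Each $\Ext^i_A(S, H^j(K))$ is a finitely generated $A$-module, since $S$ and $H^j(K)$ are finitely generated over the Noetherian ring $A$. Hence each cohomology group of $\RHom_A(S,K)$ is a finitely generated $A$-module concentrated in degrees $\geq -c$.

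\textbf{Step 3: conclude coherence over $S$.} These cohomology groups are killed by $J$, since the $A$-action on $\RHom_A(S,K)$ factors through the $S$-module structure of $\RHom_A(S,K) = \Delta^!K \in D(S)$. So they are finitely generated $S$-modules, and therefore $M \stens_S N \in D^{\geq -c}_{coh}(S)$.

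\textbf{Main obstacle.} The real input is Step 1: the boundedness and coherence of $M \widehat{\otimes_{\mathbb{F}_p}} N$ over a ring that is Noetherian only because of \autoref{FfinCompDiag}. Both are already packaged in \autoref{BoundExtProduct}, so the proof reduces to combining it with \autoref{FfinCompDiag} and the standard $\Ext$ finiteness over Noetherian rings. The one point to check is that \autoref{BoundExtProduct} asserts coherence, and not merely the lower bound $D^{\geq -c}$. Its statement does assert coherence, and its proof obtains it from perfectness of $A$ over $R \widehat{\otimes_{\mathbb{F}_p}} R$ in the pushout square of \autoref{PushoutFFin}.
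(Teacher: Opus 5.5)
Your proposal is correct and follows the paper's proof. \autoref{FfinCompDiag} makes $S \widehat{\otimes_{\mathbb{F}_p}} S$ a classical Noetherian ring, so $S$ is pseudocoherent over it and $\Delta^!$ preserves $D^+_{coh}$, while \autoref{BoundExtProduct} places $M \widehat{\otimes_{\mathbb{F}_p}} N$ in $D^+_{coh}$. Your Step 2 just spells out that pseudocoherence argument explicitly via the $\Ext$ spectral sequence.
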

\begin{proof}
By \autoref{FfinCompDiag}, the animated ring $S \widehat{\otimes_{\mathbb{F}_p}} S$ is classical and Noetherian, so the quotient $S$ is pseudocoherent, whence $\RHom_{S \widehat{\otimes_{\mathbb{F}_p}} S}(S,-)$ preserves $D^+_{coh}(-)$. It is thus enough to check that for $M,N \in D^+_{coh}(S)$, we have $M \widehat{\otimes_{\mathbb{F}_p}} N \in D^+_{coh}(S \widehat{\otimes_{\mathbb{F}_p}} S)$, which follows from \autoref{BoundExtProduct}.
\end{proof}

This proposition implies that $\stens_S$ defines a functor $D^+_{coh}(S)^{\times 2} \to D^+_{coh}(S)$. This underlies a symmetric monoidal structure:

\begin{lemma}[Monoidality of the $\stens$-product]
\label{MonoidalFFin}
For a Noetherian $F$-finite ring $S$, the functor $D^+_{coh}(S) \times D^+_{coh}(S) \xrightarrow{\stens} D^+_{coh}(S)$ naturally defines a (a priori non-unital) $S$-linear symmetric monoidal structure on $D^+_{coh}(S)$.
\end{lemma}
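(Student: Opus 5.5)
The plan is to mimic the proof of \autoref{MonoidalClassical}, replacing $S^{\otimes_k T}$ by a completed version. For a finite set $T$, I will define
\[ S^{\widehat{\otimes} T} := \mathrm{Comp}\bigl(S^{\otimes_{\mathbb{F}_p} T} \to S\bigr), \]
the Adams completion of the $T$-fold tensor product along the multiplication map. For $\{M_t\}\in D^+_{coh}(S)^T$ I set
\[ \bigstens{S, t\in T} M_t := \RHom_{S^{\widehat{\otimes} T}}\Bigl(S, \bigl(\textstyle\bigotimes_{\mathbb{F}_p, t} M_t\bigr)\otimes_{S^{\otimes T}} S^{\widehat{\otimes} T}\Bigr). \]
This is manifestly symmetric in $T$. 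For $|T|=2$ it recovers \autoref{stensFFin}.

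First I need the analogues of \autoref{FfinCompDiag}, \autoref{BoundExtProduct} and \autoref{stenscoh} for general $T$. The same arguments should go through. Choose a Gabber presentation $S=R/I$. The $T$-fold version of \autoref{PushoutFFin} (same proof, via \autoref{DescendableComp} and \autoref{AdamsComptwo}) shows that $S^{\widehat\otimes T}$ is perfect over $R^{\widehat\otimes T}$. The ring $R^{\widehat\otimes T}$ is regular: it is the completion of $\mathrm{Sym}^*_R(\Omega_R^{\oplus (|T|-1)})$, exactly as in \autoref{regularFfindiag}. The reduction to the Artinian case then gives classicality and Noetherianity. The same regularity gives a uniform bound $c_T = |T|\dim R$ in the analogue of \autoref{BoundExtProduct}, so the multi-fold $\stens$ preserves $D^+_{coh}$.

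The key step is to produce, for each partition $T=I\sqcup J$, a natural isomorphism
\[ \eta\colon \Bigl(\bigstens{S,i\in I}M_i\Bigr)\stens_S\Bigl(\bigstens{S,j\in J}M_j\Bigr)\xrightarrow{\sim}\bigstens{S,t\in T}M_t. \]
I will build it in two moves.
\begin{enumerate}
\item A completed version of \autoref{ExtProductRHomFFin} (same approximation argument, using the bounds above) gives
\[ \RHom_{S^{\widehat\otimes I}}(S,\cdot)\,\widehat{\otimes}\,\RHom_{S^{\widehat\otimes J}}(S,\cdot)\simeq \RHom_{S^{\widehat\otimes I}\widehat\otimes S^{\widehat\otimes J}}(S\widehat{\otimes}S,\cdot). \]
Here $S^{\widehat\otimes I}\widehat\otimes S^{\widehat\otimes J}$ denotes the base change of $S^{\widehat\otimes I}\otimes_{\mathbb{F}_p} S^{\widehat\otimes J}$ to the completion along $S\widehat\otimes S$.
\item Apply $\Delta^!$ and use transitivity of $!$-pullback, i.e.\ $\RHom_B(C,\RHom_A(B,-))=\RHom_A(C,-)$, along the surjections $S^{\widehat\otimes I}\widehat\otimes S^{\widehat\otimes J}\to S\widehat\otimes S\to S$.
\end{enumerate}
The compatibilities (associativity for iterated partitions, symmetry) then follow formally, since every identification is an instance of transitivity of $!$-pullback together with the exterior-product map, all natural in $T$. $S$-linearity comes from $\Perf(S)$-linearity in each variable.

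The main obstacle is identifying the relevant completed rings: I must show that
\[ \mathrm{Comp}\bigl(S^{\widehat\otimes I}\otimes_{\mathbb{F}_p} S^{\widehat\otimes J}\to S\bigr)\simeq S^{\widehat\otimes T}, \]
compatibly with the intermediate completion through $S\widehat\otimes S$. My plan is to apply \autoref{AdamsComptwo} to the composite
\[ S^{\otimes T}\to S^{\widehat\otimes I}\otimes S^{\widehat\otimes J}\to S. \]
This requires checking that the base change of $S^{\otimes I}\to S^{\widehat\otimes I}$ is Adams complete in the needed sense. I would first try \autoref{AdamsComppi0}-type arguments: after base change to $S$, the map $S^{\otimes T}\to S^{\widehat\otimes I}\otimes S^{\widehat\otimes J}$ should be a $\pi_0$-isomorphism, using \autoref{cotcomplete}. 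Failing that, I would reduce via \autoref{PushoutFFin} to the regular case, where everything is a classical Noetherian completion and the identification is the usual one for iterated adic completions.
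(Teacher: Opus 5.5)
Your plan is essentially the paper's proof, which consists only of the instruction to rerun the argument of \autoref{MonoidalClassical} with a multivariable version of \autoref{ExtProductRHomFFin}; you additionally spell out what that requires: the $T$-fold completions $S^{\widehat\otimes T}$, their classicality and Noetherianity, and the uniform Tor bound. One correction concerns identifying $\mathrm{Comp}(S^{\widehat\otimes I}\otimes_{\mathbb{F}_p}S^{\widehat\otimes J}\to S)$ with $S^{\widehat\otimes T}$: \autoref{AdamsComptwo} does not apply, since it needs both maps surjective on $\pi_0$ and it compares completions of one ring along two targets, whereas you need two rings completed along the same target $S$. Instead, compare the complete derived $I$-adic filtrations on the two completions: their graded pieces agree because $L_{S^{\widehat\otimes I}/S^{\otimes I}}\otimes S\simeq 0\simeq L_{S^{\widehat\otimes J}/S^{\otimes J}}\otimes S$ by the multi-fold version of \autoref{cotcomplete}, exactly as in the proof of \autoref{FFinPull}.
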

\begin{proof}
This follows by the same argument as in \autoref{MonoidalClassical} using (a multivariable version of) \autoref{ExtProductRHomFFin} instead of \autoref{ExtProductRHom}.
\end{proof}

This symmetric monoidal structure interacts well with $!$-pullback, as in \autoref{FinPull} in the classical case:

\begin{proposition}[Finite pullbacks]
\label{FFinPull}
Let $R \to S$ be a finite map of Noetherian $F$-finite rings. Then the functor $f^!:D^+_{coh}(R) \to D^+_{coh}(S)$ is naturally (non-unitally) symmetric monoidal for the $\stens$-product. 

Moreover, if there exists a $\stens$-unit $\omega_R^\bullet \in D^+_{coh}(R)$, then $f^! \omega_R^\bullet \in D^+_{coh}(S)$ is also a $\stens$-unit.
\end{proposition}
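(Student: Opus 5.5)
The plan is to transcribe the proof of \autoref{FinPull} into the $F$-finite setting, replacing $\otimes_k$ by $\widehat{\otimes_{\mathbb{F}_p}}$. \autoref{ExtProductRHomFFin} plays the role of \autoref{ExtProductRHom}, and \autoref{PushoutFFin} supplies the base-change identifications that were automatic over a field. Throughout, $f^! = \RHom_R(S,-)$, and it preserves $D^+_{coh}$ because $S$ is a finite $R$-module.

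\textbf{Monoidality.} For $M,N \in D^+_{coh}(R)$, apply \autoref{ExtProductRHomFFin} with the bounded objects $S,S$ (viewed over $R$) in the first slots:
\[ \RHom_R(S,M) \widehat{\otimes_{\mathbb{F}_p}} \RHom_R(S,N) \simeq \RHom_{R \widehat{\otimes_{\mathbb{F}_p}} R}(S \widehat{\otimes_{\mathbb{F}_p}} S, M \widehat{\otimes_{\mathbb{F}_p}} N). \]
One point needs justification here. The right-hand side uses $S\widehat{\otimes_{\mathbb{F}_p}}S$ in the sense of \autoref{stensffindef} over $R$, namely the base change of $S\otimes_{\mathbb{F}_p}S$ along $R\otimes R\to R\widehat\otimes R$. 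By \autoref{PushoutFFin} this is the ring $S\widehat{\otimes_{\mathbb{F}_p}}S$. The lemma applies because a finite map is surjective up to Frobenius powers: $S$ is finite over $R$ and $S$ is $F$-finite, so $S^{p^e}$ lies in the image of $R$ modulo a nilpotent, which is enough for the relative multiplication $S\otimes_R S\to S$ to have nilpotent kernel on $\pi_0$. I would double-check this hypothesis first.

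With that identification, the source $f^!M \stens_S f^!N$ becomes the $!$-pullback of $M\widehat{\otimes_{\mathbb{F}_p}}N$ along $R\widehat\otimes R \to S\widehat\otimes S \to S$. The target $f^!(M\stens_R N)$ is the $!$-pullback along $R\widehat\otimes R\to R\to S$. These composites agree, so transitivity of $\RHom$ for finite maps gives the isomorphism. Compatibility with the associativity and commutativity constraints follows by running the same argument with the multivariable products $S^{\widehat\otimes T}$ used in \autoref{MonoidalFFin}.

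\textbf{Unit.} Let $\omega_S^\bullet = f^!\omega_R^\bullet$ and $M\in D^+_{coh}(S)$. I would reuse the chain $a,b$ from \autoref{FinPull}, now through the intermediate ring $S\widehat{\otimes_{\mathbb{F}_p}}R$, defined as the base change of $R\widehat\otimes R$ along $R\otimes R\to S\otimes R$:
\[ M\stens_S\omega_S^\bullet \xrightarrow{a} \RHom_{S\widehat\otimes R}(S, M\widehat\otimes \omega_R^\bullet) \xrightarrow{b} \RHom_{R\widehat\otimes R}(R, M\widehat\otimes\omega_R^\bullet) = M\stens_R\omega_R^\bullet\simeq M. \]
Here \autoref{PushoutFFin}, applied to one factor, identifies $S\widehat\otimes S$ with $(S\widehat\otimes R)\otimes_R S$ (a finite extension). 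The map $b$ is then an isomorphism by base change: $S\widehat\otimes R\to S$ is the pushout of $\Delta\colon R\widehat\otimes R\to R$ along $R\widehat\otimes R\to S\widehat\otimes R$. To see this, note both $S\widehat\otimes R$ and $R\widehat\otimes R$ are the corresponding uncompleted rings tensored with the completion, and the uncompleted square is a pushout. The map $a$ is handled by factoring $S\widehat\otimes R\to S\widehat\otimes S\to S$ and applying \autoref{ExtProductRHomFFin} with $(R,\omega_R^\bullet)$, $(S,\omega_R^\bullet)$, together with the unit property over $R$.

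\textbf{Main obstacle.} The hard step is making the intermediate ring $S\widehat{\otimes_{\mathbb{F}_p}}R$ precise and proving the needed pushout and base-change statements for it. Over a field these were trivial. Here they require a mixed version of \autoref{PushoutFFin}, which I would deduce from the two-out-of-three property of pushout squares. The argument also needs \autoref{ExtProductRHomFFin} in a mixed form for two different finite $R$-algebras. I would derive that form by restricting scalars along the perfect map $R\widehat\otimes R\to S\widehat\otimes S$, after reducing via Gabber to $R$ regular as in \autoref{BoundExtProduct}.
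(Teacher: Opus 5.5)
There is a genuine gap, and it sits at the hypothesis you planned to double-check: a finite map of Noetherian $F$-finite rings need not be surjective up to powers of Frobenius. For $\mathbb{F}_p \to \mathbb{F}_{p^2}$, no Frobenius power moves $\mathbb{F}_{p^2}$ into $\mathbb{F}_p$. Moreover $S \otimes_R S \cong \mathbb{F}_{p^2} \times \mathbb{F}_{p^2}$ maps to $S$ with kernel generated by an idempotent, not a nilpotent. Finite \'etale maps and $\mathbb{F}_p[t^2] \to \mathbb{F}_p[t]$ ($p$ odd) behave the same way. So \autoref{PushoutFFin} is unavailable. Write $\widehat{\otimes}$ for $\widehat{\otimes_{\mathbb{F}_p}}$ and $A := (S \otimes_{\mathbb{F}_p} S) \otimes_{R \otimes_{\mathbb{F}_p} R} (R \widehat{\otimes} R)$; this is the ring that appears when you apply \autoref{ExtProductRHomFFin} over $R$. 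In general $A$ is not $S \widehat{\otimes} S$, and the natural map $c \colon A \to S \widehat{\otimes} S$ is a further completion. For $\mathbb{F}_p[t^2] \to \mathbb{F}_p[t]$, $A$ is the completion of $\mathbb{F}_p[t_1,t_2]$ along $(t_1^2-t_2^2)$, while $S \widehat{\otimes} S$ is its completion along $(t_1-t_2)$. Set $K := \RHom_{R \widehat{\otimes} R}(A, M \widehat{\otimes} N) \simeq f^!M \widehat{\otimes} f^!N \in D(A)$, the exterior product over $R$. Then the source $f^!M \stens_S f^!N$ equals $\RHom_{S \widehat{\otimes} S}(S, c^*K)$. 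Your observation that the two composites to $S$ agree only computes $\RHom_A(S,K) \simeq f^!(M \stens_R N)$. These coincide for free only when $c$ is an isomorphism, i.e.\ when $S \otimes_R S \to S$ has nilpotent kernel on $\pi_0$ (surjections, Frobenius). The same false identification underlies your unit step, since $(S \widehat{\otimes} R) \otimes_R S$ is again $A$ rather than $S \widehat{\otimes} S$. It also underlies your claim that $R \widehat{\otimes} R \to S \widehat{\otimes} S$ is perfect; for $\mathbb{F}_p[t^2] \to \mathbb{F}_p[t]$ it is not even finite.

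The missing idea is that $!$-pullback to $S$ cannot distinguish $K$ from $c^*K$, and proving this requires the completion machinery. The paper argues as follows.
First write $R$ as a quotient of a regular $R_0$. Then $A$ is perfect over the regular ring $R_0 \widehat{\otimes} R_0$, hence Noetherian with finitely many homotopy groups.
Next, the transitivity triangle for $A \to S \otimes^L_R S \to S$ shows that $L_{S/S \otimes_{\mathbb{F}_p} S} \to L_{S/A}$ is an isomorphism. Comparing derived $I$-adic filtrations then identifies $c$ with the Adams completion of $A$ along $A \to S$. By \autoref{ClassicalAdams}, this is the derived $J$-completion for the finitely generated ideal $J = \ker(\pi_0(A) \to S)$.
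Since $S$ is $J$-torsion, $\RHom_A(S,-)$ kills the cone of $K \to K^{\wedge}_J$. Pseudocoherence of $K$ gives $K^{\wedge}_J \simeq K \otimes_A (S \widehat{\otimes} S)$. Hence $\RHom_A(S,K) \simeq \RHom_{S \widehat{\otimes} S}(S, c^*K)$, which is exactly the isomorphism you need; the unit step needs an argument of the same kind.
As written, your proposal proves the statement only for finite maps with $S \otimes_R S \to S$ a nilpotent thickening on $\pi_0$. That case suffices for \autoref{mainthmdualcanffin} and \autoref{FrobCompatCanDual}, but not for arbitrary finite maps.
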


The proof below simplifies significantly in the case where $R \to S$ is surjective (which is the case relevant for \autoref{mainthmdualcanffin}) or more generally when $S \otimes_R S \to S$ is a universal homeomorphism (which is the case for the Frobenius in \autoref{FrobCompatCanDual}): the map labelled $c$ in the proof below is an isomorphism by \autoref{PushoutFFin}.

\begin{proof}
For the first part, fix $M,N \in D^+_{coh}(R)$. Our goal is to construct a natural isomorphism $f^! M \stens_S f^! N \simeq f^!(M \stens_R N)$. We shall do so imitating the argument in \autoref{FinPull}, though the completions create some extra complications. Consider the following commutative diagram of animated rings
\[ \xymatrix{ R \otimes_{\mathbb{F}_p} R \ar[r] \ar[d] & R \widehat{\otimes_{\mathbb{F}_p}} R \ar[d] \ar[r] & R \ar[d] & \\
S \otimes_{\mathbb{F}_p} S \ar[r] & A \ar[r]^a \ar[rd]^c & S \otimes^L_R S \ar[rd]^b & \\
& & S \widehat{\otimes_{\mathbb{F}_p}} S \ar[r]^d & S} \]
where the square on the top left is a pushout square defining $A$, and the outer square in the top row a pushout square for general reasons (whence the second square in the top row is also a pushout square). 
%\todo{\emph{Karl:}  Could we say what category these push    out squares are in?  Animated rings?  }
Consider the object 
\[ K :=  \RHom_{R \widehat{\otimes_{\mathbb{F}_p}} R}(A, M \widehat{\otimes_{\mathbb{F}_p}} N) \simeq \RHom_R(S,M) \widehat{\otimes_{\mathbb{F}_p}} \RHom_R(S,N) \in D(A),\]
where the isomorphism results from the analog of \autoref{ExtProductRHom} over $R$. Following the proof of \autoref{FinPull}, our task is to identify $b^! a^! K$ with $d^! c^* K$ (and not $d^! c^! K$, which would be obvious); this is clear when $c$ is an isomorphism (e.g., if $R \to S$ surjective, by \autoref{PushoutFFin}). In general, we shall use Noetherianness. Specifically, by first writing $R$ as a quotient of a regular ring $R_0$, we learn that $A$ is a perfect complex over the Noetherian regular ring $R_0 \widehat{\otimes_{\mathbb{F}_p}} R_0$, and thus $A$ is a derived Noetherian ring with only finitely many homotopy groups. Next, the natural map $L_{S/\mathbb{F}_p}[1] = L_{S/S \otimes_{\mathbb{F}_p} S} \to L_{S/A}$ is an isomorphism: this follows by the transitivity triangle for $A \to S \otimes^L_R S \to S$ and the calculations $L_{(S \otimes^L_R S)/A} \simeq L_{R/R \widehat{\otimes_{\mathbb{F}_p}} R} \simeq L_{R/\mathbb{F}_p}[1]$, $L_{S/S \otimes_R S} \simeq L_{S/R}[1]$. It follows from contemplating the derived $I$-adic filtrations that 
\[ S \widehat{\otimes_{\mathbb{F}_p}} S := \mathrm{Comp}(S \otimes_{\mathbb{F}_p} S \to S) \to \mathrm{Comp}(A \to S) \]
is an isomorphism, i.e., the map $c$ realizes the target as the Adams completion of $A$ along $A \to S$. Since $A$ is bounded, \autoref{ClassicalAdams} also identifies $A \to \mathrm{Comp}(A \to S)$ with the derived $J$-completion of $A$, where $J \subset \pi_0(A) = \pi_0(S \otimes^L_R S) $ is the (necessarily finitely generated, by Noetherianness) kernel of $\pi_0(A) \to S$. But for any $T \in D^b(A)$ with $H^*(T)$ annihilated by $J$, the functor $\RHom_A(T,-)$ annihilates the cone of the derived $J$-completion map; applying this with $T=S$ shows that the natural map
\[ \RHom_A(S,K) \to \RHom_A(S, K^{\wedge}_J) \simeq \RHom_A(S, K \otimes_A (S \widehat{\otimes_{\mathbb{F}_p}} S))\]
is an isomorphism, where $K^{\wedge}_J$ denotes the derived $J$-completion of $K$ and the second isomorphism results from the pseudocoherence of $K$. Unwinding definitions, this exactly gives that $b^! a^! K \simeq d^! c^* K$, as wanted.

Now assume we are given a $\stens$-unit $\omega_R^\bullet \in D^+_{coh}(R)$; write $\omega_S^\bullet = f^! \omega_R^\bullet$. For $M \in D^+_{coh}(S)$, we must construct a natural isomorphism $M \simeq M \stens \omega_S^\bullet$. This will follow by the argument in \autoref{FinPull}. More precisely, we have the following sequence of isomorphisms
\begin{align*}
M \simeq M \stens \omega_R^\bullet  := \RHom_{R \widehat{\otimes_{\mathbb{F}_p}} R}(R, M \widehat{\otimes_{\mathbb{F}_p}} \omega_R^\bullet) \\
\simeq \RHom_{S \widehat{\otimes_{\bF_p}} R}(S, M \widehat{\otimes_{\mathbb{F}_p}} \omega_R^\bullet) \\
\simeq \RHom_{S \widehat{\otimes_{\bF_p}} S}(S, \RHom_{S \widehat{\otimes_{\bF_p}} R}(S \widehat{\otimes_{\bF_p}} S, M \widehat{\otimes_{\mathbb{F}_p}} \omega_R^\bullet)) \\
\simeq \RHom_{S \widehat{\otimes_{\bF_p}} S}(S, M \widehat{\otimes_{\mathbb{F}_p}} \omega_S^\bullet) =: M \stens \omega_S^\bullet
\end{align*}
where the isomorphism in the first line comes from $\omega_R^\bullet$ being a $\stens$-unit in $D^+_{coh}(R)$, the one in the second line comes from the standard base change adjunction for the second square in the diagram 
\[ \xymatrix{
R \ar[r] \ar[d] & R \widehat{\otimes_{\mathbb{F}_p}} R \ar[r] \ar[d] & R \ar[d] \\
S \ar[r] & S \widehat{\otimes_{\mathbb{F}_p}} R \ar[r] & S}
\]
where both squares are (by definition) pushout squares of animated rings, the isomorphism in the third line comes from writing $!$-pullback along the composition
\[ S \widehat{\otimes_{\mathbb{F}_p}} R \to S \widehat{\otimes_{\mathbb{F}_p}} S \to S \]
as the composition of the corresponding $!$-pullbacks (where $!$-pullback always means the right adjoint to pushforward), and the one in the last line comes from the exterior product compatibility from \autoref{ExtProductRHomFFin}.
\end{proof}

Our main theorem is that this operation is naturally a unital symmetric monoidal structure:

\begin{theorem}
\label{mainthmdualcanffin}
Let $S$ be a Noetherian $F$-finite ring. Then the $\stens$-product on $D^+_{coh}(S)$ is naturally symmetric monoidal and unital. The unit $\omega_S^\bullet$ lies in $D^b_{coh}(S)$ and has finite injective dimension.
\end{theorem}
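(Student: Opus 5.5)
The plan mirrors the proof of \autoref{DualClassicalExists}, with \autoref{FFinPull} replacing \autoref{FinPull} and Gabber's construction replacing the choice of a smooth cover. Symmetric monoidality (non-unital) is already \autoref{MonoidalFFin}. Units are unique up to unique isomorphism when they exist (footnote in \autoref{DualClassicalExists}), so we may make choices. By \autoref{gabbify} and \autoref{lem.UniversalGabberViaCompletion}, choose a surjection $f\colon R \to S$ from a Noetherian $F$-finite regular ring $R$ (e.g.\ $R = G(S;x)$). By \autoref{FFinPull}, if $D^+_{coh}(R)$ has a $\stens$-unit $\omegacan R$, then $\omegacan S := f^!\omegacan R = \RHom_R(S,\omegacan R)$ is a unit for $D^+_{coh}(S)$. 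So everything reduces to the regular case.

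For $R$ regular, follow \autoref{smoothstens}, with the completed diagonal of \autoref{regularFfindiag}. Since $R\widehat{\otimes_{\mathbb{F}_p}}R$ is regular and $J = \ker(R\widehat{\otimes_{\mathbb{F}_p}}R \to R)$ is a regular ideal with $J/J^2 \simeq \Omega^1_{R}$, the object $R$ is perfect over $R\widehat{\otimes_{\mathbb{F}_p}}R$. Hence $\Delta^!$ preserves perfect complexes and commutes with $-\otimes_{R\widehat{\otimes}R}(\,\cdot\,)$ in the appropriate sense. For $M,N$ perfect, \autoref{stensFFin} is $\Perf(R)$-linear in each variable, giving
\[
M\stens_R N \simeq M\otimes_R (R\stens_R R)\otimes_R N .
\]
The fundamental local isomorphism \autoref{dualregular} gives $R\stens_R R \simeq \det(\Omega^1_R)^\vee[-n]$. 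Here $n$ is the locally constant rank of $\Omega_R$; work on connected components, or interpret the shift componentwise. Set $\omegacan R := \det(\Omega^1_R)[n]$. This is an invertible object, and $\omegacan R\stens_R\omegacan R\simeq \omegacan R$, with $-\stens_R \omegacan R \simeq \mathrm{id}$ on $\Perf(R) = D^b_{coh}(R)$.

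To get the unit on all of $D^+_{coh}(R)$, not just $D^b$, I would argue that for any $M \in D^+_{coh}(R)$ and invertible $L$ we still have $M\stens_R L \simeq M\otimes_R L\otimes_R (R\stens_R R)$. The map exists by linearity in the second variable. It is an isomorphism because, $R$ being perfect over $R\widehat{\otimes}R$ and $L$ invertible, $\Delta^!(M\widehat{\otimes}L) \simeq \Delta^*(M\widehat{\otimes}L)\otimes \Delta^!(R\widehat{\otimes}R)$. Also $\Delta^*(M\widehat{\otimes}L) = M\otimes_R L$, since the pushout of $R\to R\widehat{\otimes}R\to R$ is $R$. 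This is where \autoref{BoundExtProduct} and the perfectness of $R$ over the completed tensor are used. So $M\stens\omegacan R\simeq M$, naturally.

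Finally, finiteness. $\omegacan S = f^!\omegacan R$ lies in $D^b_{coh}(S)$ because $\omegacan R$ is a shifted line bundle on a regular ring of finite Krull dimension (Kunz), hence of finite injective dimension. Then $\RHom_R(S,-)$ preserves coherence ($S$ is finite over $R$) and preserves injectives, since it is right adjoint to the exact $f_*$, so finite injective dimension passes to $S$. The main obstacle I expect is the regular case on all of $D^+_{coh}$: checking carefully that $\Delta^!$ commutes with the completed external product for unbounded-below coherent $M$. That step rests on \autoref{FfinCompDiag}, \autoref{regularFfindiag} and \autoref{BoundExtProduct}, plus the unit axioms (naturality of the identification $\tau$).
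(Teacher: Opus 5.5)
Your proposal is correct and follows the paper's proof. It uses non-unital monoidality from \autoref{MonoidalFFin}, the regular case via \autoref{regularFfindiag} and the fundamental local isomorphism as in \autoref{smoothstens}, the general case via Gabber's surjection and the second part of \autoref{FFinPull}, and the finiteness assertions exactly as in \autoref{DualClassicalExists}.

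Your extra step, extending the regular-case unit from $\Perf(R)$ to $D^+_{coh}(R)$, is sound; the paper leaves it implicit even though \autoref{FFinPull} needs it. It is also easier than you fear: since $R$ is perfect over $R\widehat{\otimes_{\mathbb{F}_p}}R$, one has $\Delta^!K\simeq \Delta^*K\otimes_R\Delta^!(R\widehat{\otimes_{\mathbb{F}_p}}R)$ for every $K$, so \autoref{BoundExtProduct} is not needed there.
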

\begin{proof}
First, \autoref{MonoidalFFin} gives the (a priori non-unital) symmetric monoidal structure. 

Next, we find the unit when $S$ is further assumed to be regular. In this case, note that Popescu's theorem and $F$-finiteness show that $\Omega^1_{S/\mathbb{F}_p} \simeq L_{S/\mathbb{F}_p}$ is a finite projective $S$-module. One then argues as in \autoref{smoothstens} to conclude that $\omegacan S := \mathrm{det}(\Omega^1_{S/\mathbb{F}_p})[\operatorname{rk} \Omega^1_{S/\mathbb{F}_p}]$ is the unit for $(\Perf(S),\stens_S)$; the essential calculation of the Fundamental Local Isomorphism (see \cite[III, Proposition 7.2]{HartshorneResidues}, \cite[I, Theorem 4.5]{AltmanKleimanGD},  \cite[Equation (2.5.2)]{ConradGDualityAndBaseChange}, or \autoref{dualregular}) applies directly in our case thanks to \autoref{regularFfindiag}.%\todo{{@Bhargav, do you mean the fundamental local isomorphism here?}}

Next, the general $F$-finite case follows from the regular case thanks to Gabber's theorem and the second part of \autoref{FFinPull}.

Finally, the assertions about the unit are deduced by exactly the same argument as in the classical case in \autoref{DualClassicalExists}.
\end{proof}

For completeness, let us record some other features of the canonical dualizing complex mimicking those in the classical case. First, we characterize the canonical dualizing complex:

\begin{corollary}[Characterizing the canonical dualizing complex]
\label{candualffin}
Let $S$ be a Noetherian $F$-finite ring. There is a unique (up to unique isomorphism) pair $(\omegacan S, \tau)$ where:
\begin{itemize}
\item $\omegacan S \in D^b_{coh}(S)$ has finite injective dimension and $S \simeq \RHom_S(\omegacan S,\omegacan S)$ via the natural map.
\item $\tau$ is a rigidifying isomorphism $\tau:\omegacan S \simeq \omegacan S \stens_S \omegacan S$.
\end{itemize}
\end{corollary}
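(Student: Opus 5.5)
We follow the proof of \autoref{CharactDual} verbatim, replacing \autoref{DualClassicalExists} by \autoref{mainthmdualcanffin} and the classical $\stens$-product by the one of \autoref{stensFFin}.

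\emph{Existence.} By \autoref{mainthmdualcanffin}, the $\stens$-product on $D^+_{coh}(S)$ has a unit $\omegacan S \in D^b_{coh}(S)$ of finite injective dimension. As noted in the footnote to \autoref{DualClassicalExists}, a unit comes with a canonical isomorphism $\tau\colon \omegacan S \stens_S \omegacan S \simeq \omegacan S$. It remains to show that $S \to \RHom_S(\omegacan S,\omegacan S)$ is an isomorphism. When $S$ is regular, $\omegacan S$ is an invertible shifted module (the determinant of $\Omega^1_{S/\mathbb{F}_p}$), so this is clear. In general we write $S = R/I$ with $R$ regular by Gabber's theorem, so that $\omegacan S = f^!\omegacan R$ by \autoref{FFinPull}. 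We then use the standard identity
\[
\RHom_S(f^!\omega,f^!\omega) \simeq \RHom_R(f_*f^!\omega,\omega) \simeq \RHom_R(S,\omega)\text{-dual},
\]
that is, the fact that $f^!$ of a dualizing complex is dualizing. Since $\omegacan R$ is invertible up to shift on a regular ring, it is dualizing, so $\RHom_R(S,\omegacan R)$ is dualizing on $S$. Alternatively, one can use that the unit is unique up to unique isomorphism, so any construction may be used.

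\emph{Uniqueness.} Let $(\omega',\tau')$ be another such pair and set $F = \RHom_S(-,\omegacan S)$ and $G = \RHom_S(-,\omega')$. The approximation argument of \autoref{CharactDual} shows that $F$ and $G$ are contravariant autoequivalences of $D^b_{coh}(S)$. Only finite injective dimension and reflexivity of $S$ are used here, and both are purely ring-theoretic facts about a Noetherian ring. Hence $GF$ is an $S$-linear autoequivalence, so $GF(M) \simeq M \otimes_S GF(S)$ by \cite[Tag 0A7E]{stacks-project}. Since $S$ need not be connected, $GF(S)$ is an invertible object $P$ of $\Perf(S)$: locally a line bundle placed in a locally constant degree. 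We conclude $\omega' \simeq M \otimes_S \omegacan S$ with $M = P^\vee$ invertible in $\Perf(S)$.

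The $\stens$-product is $\Perf(S)$-linear in each variable (\autoref{stensFFin}), so $\omega'\stens_S\omega' \simeq M^{\otimes 2}\otimes_S(\omegacan S\stens_S\omegacan S)$. Using $\tau$ and $\tau'$, this gives $M\otimes_S\omegacan S \simeq M^{\otimes 2}\otimes_S\omegacan S$. Applying $F$ and using $F(\omegacan S)=S$ yields $M^\vee\simeq (M^{\vee})^{\otimes 2}$, hence $M\simeq S$ because $M$ is invertible. Therefore $\omega'\simeq\omegacan S$.

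\emph{Rigidity of the isomorphism.} The isomorphism must also be made to carry $\tau$ to $\tau'$, and be unique. Once $\omega'\simeq\omegacan S$, the condition $S\simeq \RHom_S(\omega',\omega')$ shows that $\omega'$ is itself a $\stens$-unit: tensoring with it is the composite of the equivalence $-\stens\omegacan S\simeq \id$ with translation by an invertible object, hence fully faithful. By the footnote to \autoref{DualClassicalExists}, a pair (object, $\tau$) making $-\stens u$ fully faithful is a unit and is unique up to unique isomorphism. This gives the unique isomorphism compatible with $\tau,\tau'$.

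The main obstacle is only bookkeeping: checking that the classical argument used nothing about finite type over $k$ beyond $\Perf(S)$-linearity of $\stens$ and the existence of a unit, and handling $S$ with several connected components, where the shifts are locally constant.
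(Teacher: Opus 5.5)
Your proposal is correct and is exactly the paper's argument: the paper proves this corollary by saying it follows verbatim from the proof of \autoref{CharactDual}, with \autoref{mainthmdualcanffin} supplying the unit and \autoref{stensFFin} supplying the $\Perf(S)$-linearity, which is what you do. Your additions only make explicit what the paper leaves implicit: checking $S \simeq \RHom_S(\omegacan S,\omegacan S)$ because $f^!$ preserves dualizing complexes, allowing locally constant shifts when $S$ is disconnected, and obtaining the $\tau$-compatible isomorphism from the footnote on units. There is one harmless slip: you set $M = P^\vee$, but in fact $\omega' \simeq GF(\omegacan S) \simeq P \otimes_S \omegacan S$; this does not matter, since only the invertibility of $M$ is used.
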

\begin{proof}
This follows exactly like the corresponding classical analog in \autoref{CharactDual}.
\end{proof}

\begin{remark}
\label{rem.UnitForShriekTesnorIsCanDual}
The proof of \autoref{mainthmdualcanffin} makes it clear that the dualizing complex provided by \autoref{candualffin} agrees with the one we mentioned in the introduction \autoref{eq.CanonicalDualizingComplexForFFiniteRing} and the one we will see later in  \autoref{def.GeneralDefinitionOfCanonicalForFFiniteRings}.

\end{remark}

\begin{corollary}
\label{FrobCompatCanDual}
Let $S$ be a Noetherian $F$-finite ring with Frobenius ${F}:S \to S$. There is a canonical identification
\[  F^! \omegacan S \simeq \omegacan S \in D^b_{coh}(S)\]
%\todo{Karl:  I made "Frob" into "$F$" to be compatible with earlier.}
compatible with the rigidifying data (where $\omegacan S$ is the canonical dualizing complex from \autoref{candualffin}).
\end{corollary}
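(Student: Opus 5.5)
The plan is to apply the second part of \autoref{FFinPull} to the finite map $F \colon S \to S$, and then use the uniqueness statement \autoref{candualffin}. Since $S$ is $F$-finite, the Frobenius $F\colon S \to S$ is a finite map of Noetherian $F$-finite rings. By \autoref{FFinPull}, the functor $F^!\colon D^+_{coh}(S) \to D^+_{coh}(S)$ is (non-unitally) symmetric monoidal for $\stens_S$. Moreover, because $\omegacan S$ is a $\stens$-unit (\autoref{mainthmdualcanffin}), the object $F^!\omegacan S$ is again a $\stens$-unit. The monoidal structure then supplies a rigidifying isomorphism
\[ F^!\tau \colon F^!\omegacan S \simeq F^!(\omegacan S \stens_S \omegacan S) \simeq F^!\omegacan S \stens_S F^!\omegacan S. \]
Concretely, $F^!\omegacan S = \RHom_S(F_*S,\omegacan S)$.

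Next I would check that the pair $(F^!\omegacan S, F^!\tau)$ satisfies the hypotheses of \autoref{candualffin}. Finite injective dimension follows because $F^!$ is right adjoint to the exact functor $F_*$, so it preserves injectives; it also preserves boundedness and coherence, since $F_*S$ is a finite $S$-module. For the condition $S \simeq \RHom_S(F^!\omegacan S, F^!\omegacan S)$, I would use the standard fact that $f^!$ of a dualizing complex along a finite map is dualizing. Alternatively, by adjunction the right side is $\RHom_{S}(F_*F^!\omegacan S,\omegacan S)$ computed over the source copy of $S$, which equals $F_*S$ by duality for $\omegacan S$, and this gives $S$ over the target copy. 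Once these hypotheses hold, uniqueness up to unique isomorphism in \autoref{candualffin} yields a unique isomorphism $F^!\omegacan S \simeq \omegacan S$ carrying $F^!\tau$ to $\tau$. This is exactly the asserted compatibility with rigidifying data, and it is canonical because no choices were made.

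The main obstacle is verifying that \autoref{FFinPull} really applies to Frobenius. That proposition required identifying $b^!a^!K$ with $d^!c^*K$, where $c \colon A \to S\widehat{\otimes_{\bF_p}} S$ with $A = (S\otimes_{\bF_p}S)\otimes_{R\otimes_{\bF_p} R}(R\widehat{\otimes_{\bF_p}}R)$. For $R=S$ and $f=F$, the map $S\otimes_{F,S}S \to S$ is a universal homeomorphism, since its kernel on $\pi_0$ is nil because $(s\otimes1 - 1\otimes s)^p = s^p\otimes 1 - 1\otimes s^p = 0$. So $F$ is surjective up to powers of Frobenius, as in the remark preceding the proof of \autoref{FFinPull}, and \autoref{PushoutFFin} makes $c$ an isomorphism. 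I would first confirm this hypothesis of \autoref{PushoutFFin} carefully; after that the argument is formal.
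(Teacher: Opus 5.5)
Your proposal is correct and matches the argument the paper intends. The paper gives no separate proof: the corollary follows from the second part of \autoref{FFinPull} applied to the finite map $F$, together with the uniqueness in \autoref{candualffin}, and the remark before the proof of \autoref{FFinPull} makes your exact observation that $c$ is an isomorphism for Frobenius by \autoref{PushoutFFin}. Your ``main obstacle'' is not really one, because \autoref{FFinPull} is stated for every finite map, so the universal-homeomorphism check only simplifies its proof and is not needed in order to apply it.
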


\subsection{The $\stens$-product for schemes}

We briefly sketch how to mimic the material in the previous section for not necessarily affine Noetherian $F$-finite schemes.

For a Noetherian $F$-finite ring $R$, we have defined its completed self-tensor product 
$R \widehat{\otimes_{\mathbb{F}_p}} R$ and shown (\autoref{FfinCompDiag}) it is a Noetherian $F$-finite ring that comes equipped with a surjection $R \widehat{\otimes_{\mathbb{F}_p}} R \to R$ along which it is complete; the relative cotangent complex of this surjection is $L_{R/\mathbb{F}_p}[1]$ by \autoref{cotcomplete}. In particular, as the formation of the cotangent complex is Zariski (or even \'etale) local, we can glue this construction together:

\begin{construction}[Completed self-product of Noetherian $F$-finite schemes]
Let $X$ be a Noetherian $F$-finite scheme. Write $X \widehat{\times_{\mathbf{F}_p}} X$ for the Noetherian formal scheme with underlying topological space $X$ obtained by gluing together $\mathrm{Spf}(R \widehat{\otimes_{\mathbb{F}_p}} R)$ for varying affine opens $\mathrm{Spec}(R) \subset X$. We then have a tautological closed immersion $i:X \to X \widehat{\times_{\mathbb{F}_p}} X$, the projections $p,q:X {\times_{\mathbb{F}_p}} X \to X$, and the completion map $\pi:X \widehat{\times_{\mathbb{F}_p}} X \to X \times_{\mathbb{F}_p} X$. Following \autoref{stensFFin} and \autoref{stenscoh}, the formula
\[ (M,N) \mapsto  M \stens N := i^! \pi^* (p^* M \otimes q^*N)\]
underlies a symmetric monoidal structure on $D^+_{coh}(X)$ with unit $\omega_X^\bullet$.
\end{construction}

We then have the following global version of \autoref{FFinPull}

\begin{proposition}[Proper pullbacks]
\label{FFinPullProper}
    Suppose $X \to Y$ is a proper map with $Y$ a Noetherian $F$-finite scheme.  Then the functor $f^! : D^+_{coh}(Y) \to D^+_{coh}(X)$ is naturally symmetric monoidal for the $\stens$-product.
\end{proposition}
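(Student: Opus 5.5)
The plan is to reduce to the affine finite case already handled in \autoref{FFinPull}, by gluing and by using the local nature of $f^!$ for proper maps. Since $f$ is proper, $f^!$ is right adjoint to $\myR f_*$ on $D^+_{qc}$. It preserves $D^+_{coh}$ because $Y$ has a dualizing complex, by \autoref{mainthmdualcanffin} applied locally and glued. Moreover, $f^!$ is compatible with restriction to opens of $Y$: for $V \subset Y$ open, $(f^! M)|_{f^{-1}V} \simeq (f|_V)^!(M|_V)$ by flat base change. Both $f^!M \stens f^!N$ and $f^!(M\stens N)$ are formed Zariski locally. We therefore first build a natural comparison map globally, and then check that it is an isomorphism locally on $X$.

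\textbf{Step 1: the comparison map.} Factor $X \to X\widehat{\times}X \to X\times_{\bF_p} X$ and compare with $Y$. Form the diagram of completed self-products, with $f\times f$ inducing a map of formal schemes $\hat f: X\widehat{\times}X \to Y\widehat{\times}Y$ lying over $f$ on diagonals. Two ingredients give the map. First, there is the exterior product map $f^!M \widehat{\otimes} f^!N \to \hat f^{!}(M\widehat{\otimes}N)$, which is adjoint to the counits $\myR f_* f^! \to \id$ together with a Künneth formula for $\myR\hat f_*$. Second, $!$-pullback along the two compositions $X \to X\widehat\times X \to Y\widehat\times Y$ and $X \to Y \to Y\widehat\times Y$ is functorial, and these two compositions agree. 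Together these give
\[ i_X^!\pi^*(f^!M\boxtimes f^!N) \to i_X^!\hat f^! (\cdots) \simeq f^! i_Y^!\pi^*(M\boxtimes N).\]
Since everything is built from adjunction data, compatibility with the associativity and commutativity constraints follows formally, as in \autoref{MonoidalFFin}.

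\textbf{Step 2: reduction to the affine finite case.} Being an isomorphism is local on $X$ and $Y$, so we may take $Y=\Spec R$ affine. Compactify locally: a proper $X$ over affine $Y$ is covered by affine opens $U$, each of which is quasi-finite over, hence open in something finite over, $\Spec$ of a polynomial ring over $R$ (Zariski's main theorem). This reduces the problem to three cases. The finite case is exactly \autoref{FFinPull}. Open immersions are handled by the locality of $\stens$ and of $!$-pullback. Smooth maps such as $\mathbb{A}^n_R \to \Spec R$ remain. In the smooth case $f^! = \bigwedge^d\Omega_f[d]\otimes f^*$, and the completed diagonal of $R[t]$ is $(R\widehat\otimes R)\llbracket t_1-t_2\rrbracket$-type data (compare \autoref{cor.ExtendingThePGeneratingSet} and \autoref{regularFfindiag}). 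The fundamental local isomorphism \autoref{dualregular} then identifies the extra $\stens$-twist with $\det\Omega_f^{\vee}[-d]$, which cancels one copy of $\bigwedge^d\Omega_f[d]$.

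\textbf{Main obstacle.} I expect the hardest step to be the Künneth and base-change compatibility for $\hat f$ between the completed formal products: showing that $\myR\hat f_*$ and the completed exterior product commute, and that the local comparison map agrees with the one from \autoref{FFinPull}. For this I would first try \autoref{PushoutFFin}, which presents the completed products as base changes of the uncompleted ones, together with the pseudocoherence and boundedness from \autoref{BoundExtProduct}. This transports the statement to ordinary (uncompleted) base change, where proper base change applies, and then passes to derived completion along the diagonal ideal using \autoref{ClassicalAdams}.
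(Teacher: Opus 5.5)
There is a genuine gap in Step~1, and it is not merely technical. You build the comparison map by adjunction along $\hat f\colon X\widehat{\times}X\to Y\widehat{\times}Y$. To turn the counits into a map $\myR\hat f_*(f^!M\widehat{\otimes}f^!N)\to M\widehat{\otimes}N$, you use a K\"unneth isomorphism $\myR\hat f_*(A\widehat{\otimes}B)\simeq\myR f_*A\widehat{\otimes}\myR f_*B$. But $X\widehat{\times}X$ is completed along the diagonal $X$, not along the preimage $X\times_Y X$ of the ideal of definition of $Y\widehat{\times}Y$. So $\hat f$ is not adic: for $R\to R[t]$, the element $t_1-t_2$ lies in the ideal of definition upstairs but is not nilpotent modulo the ideal coming from $R\widehat{\otimes}R$. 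The K\"unneth formula is then simply false. Take $k$ perfect and $f\colon\mathbf{P}^1_k\to\Spec k$. Then $Y\widehat{\times}Y=\Spec k$ because $L_{k/\bF_p}=0$. By \autoref{GeomCompDiag}, $\myR\hat f_*\cO$ is the cohomology of the formal neighbourhood of the diagonal in $\mathbf{P}^1_k\times_k\mathbf{P}^1_k$. Its graded pieces $\cO(-2n)$, $n\geq 1$, make $H^1$ infinite-dimensional, whereas K\"unneth predicts $k$.

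Your proposed repair does not fix this. Pushforward commutes with completion along the preimage of an ideal of the base (formal functions), not along the smaller closed subset $X$. Also, \autoref{PushoutFFin} does not apply to maps like $R\to R[t]$, which are not surjective up to powers of Frobenius, and its conclusion actually fails there. So Step~1 produces no map, and Step~2 has nothing to test. Even granting a global map, Step~2 would need its restriction to each affine $U$ to agree with the composite of the finite comparison of \autoref{FFinPull} and a new smooth comparison for $\mathbf{A}^n_R\to\Spec R$. That smooth comparison is only sketched, and this coherence is not formal. Finally, you never show that $f^!$ sends the unit to a unit, which is part of the statement.

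The missing idea is to keep a genuinely proper object in play and let the completion enter only at the last step.
\begin{itemize}
\item Over $Y=\Spec R$, the paper base changes $X\times_{\bF_p}X\to\Spec(R\otimes_{\bF_p}R)$ along $\Spec(R\widehat{\otimes_{\bF_p}}R)\to\Spec(R\otimes_{\bF_p}R)$. The result $\widetilde{X\times_{\bF_p}X}$ is a proper derived scheme over a Noetherian ring.
\item Base change $\alpha^*q^!\simeq q^!\alpha^*$ holds for proper $q$ and arbitrary non-flat $\alpha$, since $q^!$ commutes with colimits. It gives $K:=\widetilde{f\times f}^!\pi^*(M\otimes N)\simeq\pi^*(f^!M\otimes f^!N)$.
\item Write the diagonal $X\to\widetilde{X\times_{\bF_p}X}$ as $a\circ b$ through $X\times_RX$ and as $c\circ d$ through $X\widehat{\times}X$. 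Then $f^!(M\stens N)\simeq b^!a^!K=d^!c^!K$ and $f^!M\stens f^!N\simeq d^!c^*K$. Everything reduces to $d^!c^!L\simeq d^!c^*L$ for $L\in D^+_{coh}$.
\item After formal GAGA along $X\times_RX$ (the adic direction, where the problem above does not arise), this is local. It is the same derived-completion argument as in the affine case of \autoref{FFinPull}.
\item The unit is handled separately, via a natural isomorphism $\myR\Gamma(X,M)\simeq\myR\Gamma(X,f^!\omegacan{R}\stens M)$, $\Perf(X)$-linearity, and $D_{qc}(X)=\mathrm{Ind}(\Perf(X))$.
\end{itemize}
This route never decomposes $f$ into finite, open and smooth pieces, and never compares local comparison maps.
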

\begin{proof}
The proof is essentially the same as \autoref{FFinPull}, except that certain evident statements about $!$-pullback for finite maps need slightly more justification in the proper case.

We shall first show that $f^!$ is non-unitally symmetric monoidal, and then check the preservation of the unit. As our constructions are natural, we may assume $Y=\mathrm{Spec}(R)$ is affine. Fix $M,N \in D^+_{coh}(R)$. Consider the following diagram:
\[ \xymatrix{
X \ar[r]^d \ar[dr]^b & X \widehat{\times_{\mathbf{F}_p}} X \ar[dr]^c & & \\
& X \times_R X \ar[r]^a \ar[d] & \widetilde{X \times_{\mathbf{F}_p} X} \ar[d]^-{\widetilde{f \times f}} \ar[r]^{\pi} & X \times_{\mathbf{F}_p} X \ar[d]^-{f \times f} \\
& \mathrm{Spec}(R) \ar[r] & \mathrm{Spec}(R \widehat{\otimes_{\mathbf{F}_p}} R) \ar[r]^-{\pi} & \mathrm{Spec}(R \otimes_{\mathbf{F}_p} R) 
}
\]
where the bottom two horizontal squares are cartesian in derived schemes, while the top left parallelogram is given by the natural maps. In particular, $\widetilde{X \times_{\mathbf{F}_p} X}$ is a proper derived scheme over the Noetherian $F$-finite ring $R \widehat{\otimes_{\mathbf{F}_p}} R$ and hence Noetherian.

Consider the object
\[ K := \widetilde{f \times f}^!\pi^* (M \otimes_{\mathbb{F}_p} N) \in D^+_{coh}(\widetilde{X \times_{\mathbf{F}_p} X}). \]
We first claim that there are natural isomorphisms
\begin{equation}
    \label{stens!pullproper}
\widetilde{f \times f}^!\pi^* (M \otimes_{\mathbb{F}_p} N) \simeq \pi^*(f \times f)^!(M \otimes_{\mathbb{F}_p} N) \simeq \pi^*(f^! M \otimes_{\mathbb{F}_p} f^! N).
\end{equation}
For the first isomorphism, we shall use the following base change result: if
\[ \xymatrix{ U \ar[r]^\alpha \ar[d]^q & V \ar[d]^q \\ W \ar[r]^\alpha & Z }\]
is a Cartesian diagram of derived schemes with $q$ proper and all structure sheaves being bounded, then we have a natural isomorphism
\[ \alpha^* q^! \simeq q^! \alpha^* \]
of functors $D^+_{coh}(Z) \to D^+_{coh}(U)$; this can be deduced from the fact that $q^!:D^{+}_{qc}(Z) \to D^+_{qc}(W)$ preserves colimits as its left adjoint $q_*$ preserves pseudocoherence by properness. For the second isomorphism in \autoref{stens!pullproper}, we claim more generally that $(f \times f)^! (M \otimes_{\mathbb{F}_p} N) \simeq f^! M \otimes_{\mathbb{F}_p} f^! N$, which follows by the same base change result applied twice by writing $f \times f$ as a composition of $(f \times \id) \circ (\id \times f)$.

To finish proving non-unital symmetric monoidality, it now suffices to show that $d^! c^! L \simeq d^! c^* L$ for any $L \in D^+_{coh}(\widetilde{X \times_{\mathbf{F}_p} X})$: indeed, applying $d^! c^!$ to the leftmost term in \autoref{stens!pullproper} gives $f^!(M \stens N)$, while applying $d^! c^*$ to the rightmost term of \autoref{stens!pullproper} gives $f^!M \stens f^!N$. By formal GAGA, we can replace $\widetilde{X \times_{\mathbf{F}_p} X}$ with its formal completion along $X \times_R X$. After making this replacement, the claim becomes local and follows by the same argument given in the affine case.

It remains to show that $f^!$ is unital, i.e., for $M \in D^+_{coh}(X)$, there is a natural isomorphism $M \stens f^! \omega_R^\bullet \simeq M$ (in fact, it suffices to do this for $M=\omega_X^\bullet$). For this, one checks that the argument in the affine case in \autoref{FFinPull}, translated to schemes, gives a natural isomorphism 
\[ R\Gamma(X,M) \simeq R\Gamma(X, f^! \omega_R^\bullet \stens M) \]
for all $M \in D^+_{coh}(X)$. Using the $\mathrm{Perf}(X)$-linearity of all functors involved, we obtain a natural isomorphism
\[ \mathrm{RHom}_X(K,M) \simeq \mathrm{RHom}_X(K, f^! \omega_R^\bullet \stens M)\]
for $K \in \mathrm{Perf}(X)$ by applying the previous isomorphism to $M \otimes_{\mathcal{O}_X} K^\vee$. Since $D_{qc}(X)$ is generated by $\mathrm{Perf}(X)$ under colimits, this yields\footnote{Given $N,N' \in D_{qc}(X)$, any natural isomorphism $\mathrm{RHom}_X(-,N) \simeq \mathrm{RHom}_X(-,N')$ of functors on $\mathrm{Perf}(X)$ comes uniquely from an isomorphism $N \simeq N'$: indeed, as $D_{qc}(X) = \mathrm{Ind}(\mathrm{Perf}(X))$, the claim follows by noting that for any small $\infty$-category $\mathcal{C}$, we can realize $\mathrm{Ind}(\mathcal{C})$ as the full subcategory of $\mathrm{PShv}(\mathcal{C}) =\mathrm{Fun}(\mathcal{C}^{op},\mathcal{S})$ consisting of filtered colimits of representable functors, so specifying a map between ind-objects is equivalent to specifying a map between the corresponding functors on $\mathcal{C}$.} a natural isomorphism $M \simeq f^! \omega_R^\bullet \stens M$, as wanted.
\end{proof}

As $f^!$ for a finite type map is defined by compactifying a separated morphism (say affine), by breaking up a general finite type morphism into separated pieces, this yields the following, which finishes the proof of \autoref{thmdualexistintro}.

\begin{corollary}
\label{cor.FiniteTypePullbacksViaMonoidal}
    Suppose $f : X \to Y$ is a finite type map of Noetherian $F$-finite schemes.   Then the functor $f^! : D^+_{\mathrm{coh}}(Y) \to D^{+}_{\mathrm{coh}}(X)$ satisfies 
    \[
        f^! \omega_Y^{\mydot} \simeq \omega_X^{\mydot}.
    \]
\end{corollary}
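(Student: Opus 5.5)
The plan is to reduce, via the definition of $f^!$ through compactifications, to the proper case handled by \autoref{FFinPullProper}, together with the open immersion case. Both $\omega_X^\mydot$ and $f^!\omega_Y^\mydot$ are determined Zariski locally up to unique isomorphism once rigidified, by \autoref{candualffin}. So it suffices to produce canonical isomorphisms that respect the rigidifying data $\tau$; these then glue. The argument has three steps.

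\textbf{Step 1: open immersions.} For $j\colon U\to X$ an open immersion, $j^! = j^*$, and we need $j^*\omega_X^\mydot\simeq\omega_U^\mydot$ compatibly with $\tau$. The construction $X\widehat{\times_{\mathbb{F}_p}}X$ is Zariski local, since it has underlying space $X$ and is glued from affines. Hence $j^*$ is symmetric monoidal for $\stens$, so $j^*\omega_X^\mydot$ is a rigidified dualizing complex on $U$. By uniqueness in \autoref{candualffin} it is identified with $\omega_U^\mydot$. The one input needed is that completing along the diagonal commutes with localization. In the affine case I would deduce this from \autoref{PushoutFFin}, or from the fact that the cotangent complex is étale local, as used in the gluing construction.

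\textbf{Step 2: the separated case.} By Nagata, choose a compactification $X\xrightarrow{j}\overline{X}\xrightarrow{g}Y$ with $j$ an open immersion and $g$ proper. Then $\overline{X}$ is $F$-finite Noetherian, because it is finite type over $Y$ and $F$-finiteness is preserved by finite type maps. By \autoref{FFinPullProper}, $g^!$ is symmetric monoidal including units, so $g^!\omega_Y^\mydot\simeq\omega_{\overline{X}}^\mydot$ as rigidified objects. Step 1 then gives $f^!\omega_Y^\mydot = j^*g^!\omega_Y^\mydot\simeq\omega_X^\mydot$. To see this is independent of the compactification, compare two compactifications through a third that dominates both, via the usual closure-of-the-graph argument. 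Every isomorphism in the comparison respects rigidifications, and rigidified dualizing complexes have no nontrivial automorphisms. So the comparison is automatic.

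\textbf{Step 3: general finite type $f$.} Cover $X$ by affine opens $U_i$. Each $U_i\to Y$ is separated (indeed affine over an affine cover of $Y$), so Step 2 gives canonical isomorphisms $(f^!\omega_Y^\mydot)|_{U_i}\simeq\omega_{U_i}^\mydot$. On overlaps these agree, because both sides are rigidified units on $U_i\cap U_j$ and the identification is unique. Since $\operatorname{Ext}^{<0}(\omega,\omega)=0$, the isomorphisms glue in $D^b_{coh}(X)$, for instance via the BBD gluing lemma.

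\textbf{Main obstacle.} I expect the hardest part to be making the unit-preservation in \autoref{FFinPullProper} carry the rigidifications $\tau$ precisely, rather than only giving abstract isomorphisms. Without that, the identifications cannot be pinned down canonically and the gluing in Step 3 fails. I would handle this by appealing to the footnote in \autoref{DualClassicalExists}: any symmetric monoidal functor that sends a unit to a unit automatically matches the rigidifying isomorphisms, so the uniqueness statement of \autoref{candualffin} does the remaining work.
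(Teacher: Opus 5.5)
Your proposal is correct and follows the paper's route: for separated $f$, a Nagata compactification plus \autoref{FFinPullProper}, with the restriction-to-opens step (left implicit in the paper) made explicit; then, in general, gluing over a cover by separated pieces. Your appeal to uniqueness of rigidified units to force agreement on overlaps is a concrete form of the paper's ``naturality of the isomorphisms in \autoref{FFinPullProper}'', and it rests on the same compatibility of those monoidal structures with composition and restriction to opens. One small correction: \autoref{PushoutFFin} does not apply to a localization $R\to R_f$, which is not surjective up to powers of Frobenius ($R_f\widehat{\otimes_{\mathbb{F}_p}}R_f$ is a completion of a localization, not a base change), so the Zariski-locality in your Step 1 should come from the cotangent-complex argument the paper uses to glue $X\widehat{\times_{\mathbb{F}_p}}X$.
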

\begin{proof}
    If $f$ is separated, we may compactify the map via Nagata \cite[\href{https://stacks.math.columbia.edu/tag/0F3T}{Tag 0F3T}]{stacks-project} and then simply apply \autoref{FFinPullProper}.  In general, we can cover $X$ by an open cover $\mathcal{U} = \{U_i\}$ such that each $U_i \to Y$ is separated.  Following \cite[\href{https://stacks.math.columbia.edu/tag/0AU5}{Tag 0AU5}]{stacks-project}, using the naturality of the isomorphisms in \autoref{FFinPullProper}, the dualizing complexes on the $U_i$ glue into a unique dualizing complex on $X$ which necessarily is the one constructed in this section.
    %\todo[inline]{@Bhargav can you look at this.}
\end{proof}

\section{A classical approach to duality for $F$-finite schemes}
\label{sec.ClassicalDuality}

In the previous section, we saw how to construct a canonical dualizing complex on arbitrary $F$-finite schemes using a modern perspective of identifying the dualizing complex as a unit in a certain monoidal structure on the derived category.  

One could instead ask if the classical approaches to duality as laid out in \cite{HartshorneResidues}, \cite{AltmanKleimanGD}, \cite{ConradGDualityAndBaseChange}, \cite{LipmanHashimotoFoundationsOfGDualityForDiagrams}, or \cite[\href{https://stacks.math.columbia.edu/tag/0DWE}{Tag 0DWE}]{stacks-project} can be utilized instead.    In this section, we carry this out in detail for $F$-finite regular rings and schemes, and then explain what we believe are the necessary steps to proving it for arbitrary $F$-finite rings and schemes.

\subsection{Duality for $F$-finite regular rings}
\label{subsec.DualityForFFiniteRegular}
In this subsection and its sequel, we define a canonical dualizing complex $\omegacan{-}$ for regular $F$-finite rings and show that it is compatible with $(-)^!$-pullback for all (essentially) finite type maps among these. This compatibility allows us to define a canonical dualizing complex for regular $F$-finite schemes, again compatible with $(-)^!$-pullback for finite type maps, see \autoref{thm.CanDualRegular}. 
\begin{defn}[Canonical dualizing complex, regular F-finite case]
    \label{defn.CanDualizingComplexRegular}
    Suppose $R$ is a regular $F$-finite domain.
    We define the \emph{canonical dualizing complex} to be the top exterior power of the Kähler differentials placed in homological degree $-n$ 
    \[
        \omegacan{R} = \Wedge^n \Omega_R [n]
    \]
    where $n$ is the rank of the locally free module $\Omega_R$.
    Notice if $R$ has a $p$-basis then $\Wedge^n \Omega_R [n]$ is free of rank 1.  If $R$ is not a domain, then the dualizing complex is defined as the direct sum of the complexes induced by working modulo each minimal prime.
\end{defn}
\begin{rem}
We point out that the normalization of our dualizing complex is not by Krull dimension, but instead by $p$-dimension \cite{KerzStrunkTamme_VorstConj}, which in the case of the presence of a $p$-basis is just equal to the length of that $p$-basis, or equivalently, to the rank of the Kähler differentials. For example, if $R=\mathbb{F}_p(x)$ is the function field in one variable over $\mathbb{F}_p$ then $x$ is a $p$-basis and hence the dualizing complex of this field is $\omegacan{\mathbb{F}_p(x)}=\Omega_{\mathbb{F}_p(x)}[1]$ which is concentrated in homological degree $-1$. The advantage of our normalization is that this dualizing complex will localize, or equivalently, will be compatible with finite type maps.
\end{rem}

\begin{rem}[finite type over $k = k^p$ case]
    Suppose $R$ is a regular domain and finite type over a perfect field $k$.  Notice that we have surjections:
    \[
        \Omega_R = \Omega_{R/\bF_p} \twoheadrightarrow \Omega_{R/k} \twoheadrightarrow \Omega_{R/R^p}.
    \]
    However, as noted, $\Omega_R = \Omega_{R/R^p}$ and so all notions coincide.  It follows that 
    \[
        \omegacan{R} = \Wedge^{\dim R} \Omega_{R/k}[\dim R]
    \]
    which is how we would have classically defined the dualizing complex.
\end{rem}
We need to show that for any finite type morphism $f \colon R \to S$ of $F$-finite regular rings, there is an isomorphism
\begin{equation}
    \label{eq.xif.MapForFiniteTypeMapsOfRegular}
    \xi_f: \omegacan{S} \to f^! \omegacan{R},
\end{equation}
and the formation of these isomorphisms is compatible with composition.  In this section, we will do this by factoring $f$ into a smooth and finite map and then using the explicit descriptions of $f^!$ for those as defined in \autoref{subsec.FunLocalIso}.
%As pointed out in the introduction, we will do this in two different ways in this paper, by 
%\begin{enumerate}
 %   \item a classical approach via differential forms, which we carry out in this section (although relegating the tedious checking of the crucial compatibilities to \autoref{app.ProofsOfCompatibilities}).
    %\item a modern approach which we pursue in \autoref{ss:DualDiagFFin}, by characterizing $\omegacan{}$ uniquely as the unit of $\stens$ symmetric monoidal structure on $D_{\textrm{coh}}^b(R)$.
%\end{enumerate}

In particular, we use the explicit version of Grothendieck-Serre duality in terms of differential forms for lci immersions and smooth maps as set up in \cite[Chapter III]{HartshorneResidues} with compatible sign convention laid out in \cite[Chapter 2]{ConradGDualityAndBaseChange} and its relation to duality for finite maps. Explicitly, this package gives a $2$-functor $(\_)^!$ on (essentially) finite type morphisms, which is the datum of a functor $f^! \colon D_{\textrm{coh}}(R) \to D_{\textrm{coh}}(S)$ for all essentially of finite type maps $f \colon R \to S$ together with natural compatible isomorphisms
\begin{equation}
\label{eq:psidef}
    \psi_{g,f} 
\colon  (g \circ f)^!\to[\cong]
g^! \circ f^!
\end{equation}
given another essentially of finite type map $g \colon S \to T$. Via factorization, the construction of $(\_)^!$ is reduced to the case of smooth and lci morphisms, where it can be expressed explicitly.

If $f \colon R \to S$ and $g \colon S \to T$ are either smooth or lci, the maps $\psi_{g,f}$ of \autoref{eq:psidef} are explicitly given and ultimately determine the general case after verification of the requisite compatibilities as in \cite[Chapter 2]{ConradGDualityAndBaseChange}.

We now define the isomorphisms $\xi_f : \omegacan{S} \to f^! \omegacan{R}$ for the smooth and lci case using the explicit versions of $(-)^!$.

\begin{defn}[Smooth morphisms]
    \label{defn:SmoothMapsUpperShriekPullbackForRegularRings}
For a smooth morphism $f \colon R \to S$ of relative dimension $d$, define
    \[\begin{tikzcd}[row sep=-.6em]
	{\xi_f \colon \omegacan{S}} && {f^!\omegacan{R} \simeq f^\sharp \omegacan{R} = \bigwedge^d \Omega_{S/R}[d] \otimes_R \omegacan{R}} \\
	{\alpha \wedge f^*\beta} && {\alpha \otimes \beta}
	\arrow[from=1-1, to=1-3]
	\arrow[maps to, from=2-3, to=2-1]
\end{tikzcd}\]
as the isomorphism induced by taking the determinant of the sequence of K\"ahler differentials
\[
0 \to S \otimes_R \Omega_R  \to \Omega_S \to \Omega_{S/R} \to 0.
\]
\end{defn}
For the lci case consider $f \colon R \twoheadrightarrow S$ a surjective map of $F$-finite regular rings.  In particular, it is locally a complete intersection map with $J := \ker f$ locally generated by $c$ elements $r_1,\ldots,r_c$, whose dual basis in $J/J^2$ we denote by $r_1^*, \ldots, r_c^*$. Suppose $m$ is the rank of $\Omega_S$ so that $m + c$ is the rank of $\Omega_R$.

We have the short exact sequence
    \begin{equation}\label{eq:lcidifferentials}
        0 \to J/J^2 \to S \otimes_R \Omega_{R}  \xrightarrow{\phi} \Omega_{S} \to 0
    \end{equation}
where exactness on the left is verified by considering $S$-module rank and the fact that $\Omega_{S}$ is a locally free $S$-module.  In particular, this sequence is split, with a splitting map $\vartheta : \Omega_{S} \to S \otimes_R \Omega_{R}$ which then induces $\Wedge^m \Omega_S \to \Wedge^m S \otimes_R \Omega_{R}$, a map we also call $\vartheta$.  

With this notation and setup we define:
\begin{defn}[Local complete intersections]
    \label{defn.lciCase} Let $f \colon R \to S$ be a surjection of regular $F$-finite rings inducing a codimension-$c$-lci map of schemes. In the notation of the preceding paragraph, the isomorphism
    $\xi_f \colon \omegacan{S} \to f^! \omegacan{R}$ is defined such that the composition with the isomorphism 
    \[
        \eta = \eta_f \colon f^\flat(\_) \to[\cong] \bigwedge^c(J/J^2)^\vee[-c] \otimes_S (S \otimes^{\myL}_R \_ )
    \]   
    \autoref{eq:etadef} explained in \autoref{dualregular} is locally given by
\[\begin{tikzcd}[row sep=-.6em]
	{ \omegacan{S}} &&& {f^!\omegacan{R}} & {\bigwedge^c(J/J^2)^\vee[-c] \otimes_S (S \otimes_R \omegacan{R})} \\
	\alpha &&&& {r_1^* \wedge \cdots \wedge r_c^* \otimes dr_c \wedge \cdots \wedge dr_1 {\wedge} \vartheta_f(\alpha)}
	\arrow["\eta", from=1-4, to=1-5]
	\arrow["{\xi_f}", from=1-1, to=1-4]
	\arrow[maps to, from=2-1, to=2-5]
\end{tikzcd}
\]
It can be verified that this is an isomorphism independent of the choices (the $r_i's$ and $\theta$).
\end{defn}

As any finite type morphism between regular rings can be factored into a smooth morphism (adjoining variables) followed by an lci morphism, we can use the above cases to define $\xi_f$ in general.
\begin{theorem}\label{thm.CanDualRegPullsback}
    Let $h \colon R \to[f] S \to[g] T$ be a finite type morphism of $F$-finite regular rings, factored as $f$ a polynomial extension and $g$ surjective. Define
    $\xi_h \colon \omegacan{T} \to h^! \omegacan{R}$ to be the composition
    \[
    \xi_h \colon \omegacan{T} \to[\xi_g] g^!\omegacan{S} \to[g^!\xi_f] g^!f^!\omegacan{R} \to[\psi_{g,f}^{-1}] h^!\omegacan{R}.
    \]
    This is independent of the factorization.

    If $e : T \to U$ is a further finite type map of $F$-finite regular rings then the following diagram commutes:
    \[
    \begin{tikzcd}
	{\omegacan{U}} && {e^! \omegacan{T}} \\
	{(e \circ h)^!\omegacan{R}} && {e^!(h^!\omegacan{R})}
	\arrow["{\xi_{e\circ h}}"', from=1-1, to=2-1]
	\arrow["{\xi_e}", from=1-1, to=1-3]
	\arrow["{e^!\xi_h}", from=1-3, to=2-3]
	\arrow["{\psi_{e,h}}", from=2-1, to=2-3]
\end{tikzcd}.\]
\end{theorem}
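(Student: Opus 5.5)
We will reduce both assertions to the classical compatibilities for smooth maps and lci immersions from \cite[Chapter III]{HartshorneResidues} and \cite[Chapter 2]{ConradGDualityAndBaseChange}, together with three local compatibilities of the $\xi$'s: (i) $\xi$ is compatible with composition of two smooth maps, i.e.\ transitivity of determinants of the cotangent sequences for $R \to S \to S'$; (ii) $\xi$ is compatible with composition of two surjections $R \twoheadrightarrow S \twoheadrightarrow T$; (iii) $\xi$ is compatible with base change squares where a surjection is pulled back along a smooth map. All three are statements about explicit isomorphisms of invertible modules, so they may be checked Zariski locally. There we may choose regular sequences and splittings $\vartheta$ compatibly, and verify equality on generators.

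For independence of the factorization, we follow the standard argument (as in Conrad). First, two factorizations $R \to R[x_1,\dots,x_a] \twoheadrightarrow T$ and $R \to R[y_1,\dots,y_b] \twoheadrightarrow T$ are dominated by $R \to R[x,y] \twoheadrightarrow T$. This $R[x,y]$ maps to each of $R[x]$ and $R[y]$ by a surjection over $R$ killing $y_j - g_j(x)$, respectively $x_i - h_i(y)$, and it is also a polynomial ring over each. Hence it suffices to compare a factorization $R \to P \twoheadrightarrow T$ with $R \to P \to P[y] \twoheadrightarrow T$, where the surjection $P[y] \to T$ restricts to the given one on $P$. Write $\sigma\colon P[y] \twoheadrightarrow P$ for the section killing $y_j - g_j$. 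The second factorization's $\xi$ then becomes the composite over $P \to P[y] \xrightarrow{\sigma} P \twoheadrightarrow T$. Using (ii) for $P[y] \to P \to T$ and (i) for $R \to P \to P[y]$, we reduce to one identity: for the composite $P \to P[y] \xrightarrow{\sigma} P$, which is the identity, the composite $\xi_\sigma \circ \sigma^!\xi$ agrees with the canonical isomorphism $\psi$ given by $(\sigma \circ \iota)^! = \id$. This is a direct calculation. Take $r_j = y_j - g_j$; then $dr_j \equiv dy_j$ modulo $\Omega_P$, and with $\vartheta$ the obvious splitting the formula in \autoref{defn.lciCase} pairs $r^*$ against $dr$, so the composite is $\alpha \mapsto \alpha$. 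The only sign issue is the ordering $dr_c\wedge\cdots\wedge dr_1$ versus $dy_1 \wedge \dots \wedge dy_b$ in \autoref{defn:SmoothMapsUpperShriekPullbackForRegularRings}. This is exactly what Conrad's sign conventions for $\psi$ between $f^\sharp$ and $g^\flat$ absorb, and we must match those conventions carefully.

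For the composition statement with $e\colon T \to U$, factor $h$ as $R \to R[x] \twoheadrightarrow T$ and $e$ as $T \to T[y] \twoheadrightarrow U$. Lift to $R[x] \to R[x,y] \twoheadrightarrow T[y]$; the square with the smooth map $R[x] \to R[x,y]$ and the surjections to $T$ and $T[y]$ is cocartesian. The composite $e \circ h$ then factors as $R \to R[x,y] \twoheadrightarrow U$, which is legitimate by the independence just proved. Commutativity of the diagram then decomposes into (i), (ii), (iii) and the pseudofunctoriality of $\psi$ (compatibility of $\psi$ with associativity, which the classical theory provides).

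The main obstacle is (iii) and the sign bookkeeping in the section-identity above. In (iii), the base change isomorphism between $f^\sharp$ and $g^\flat$ from \cite{ConradGDualityAndBaseChange} is defined via the fundamental local isomorphism $\eta$. We must check that the determinant isomorphism of \autoref{defn:SmoothMapsUpperShriekPullbackForRegularRings} and the formula of \autoref{defn.lciCase} intertwine with it, which requires comparing the splittings $\vartheta$ on both sides. To do this, we will first choose local regular sequences in $R[x]$ generating $\ker(R[x] \to T)$, use their images as the regular sequence for $R[x,y] \to T[y]$, and choose the splittings compatibly; the identity then reduces to an equality of wedge products.
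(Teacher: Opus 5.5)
Your proposal is correct and follows essentially the same route as the paper. Your (i), (ii) and section identity are the paper's \autoref{lem:smoothlemma}, \autoref{lem:lcilemma} and \autoref{lem:sectionlemma}, assembled through domination of factorizations exactly as in \autoref{compatclaim}; your (iii) is the commutativity of square (8) in Figure~\ref{keydiagram}, which the paper proves as you plan: flat base change of the fundamental local isomorphism, a regular sequence from $S$ reused in $S[x]$, a splitting $\theta_{g'}$ built from $a^*\theta_g$, and an element check that tracks the Koszul sign $(-1)^{cd}$.
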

The proof of the independence of the factorization and the compatibility with composition is straightforward from \cite{ConradGDualityAndBaseChange} but somewhat tedious and hence relegated to \autoref{app.ProofsOfCompatibilities}.

As a simple direct consequence of this applied to the finite Frobenius morphism we obtain:
\begin{cor}
Let $R$ be a regular $F$-finite ring. Then there is a canonical isomorphism $\omegacan{R} \to F^!\omegacan{R}$.
\end{cor}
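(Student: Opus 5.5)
The plan is to apply \autoref{thm.CanDualRegPullsback} to $h = F \colon R \to R$, the absolute Frobenius. Since $R$ is $F$-finite, $F$ is a finite (hence finite type) morphism of $F$-finite regular rings. The theorem then hands us an isomorphism $\xi_F \colon \omegacan{R} \to F^! \omegacan{R}$, independent of any choice of factorization, and that independence is what makes the isomorphism canonical. So the main work is to exhibit at least one factorization of $F$ of the form required in \autoref{thm.CanDualRegPullsback}: a polynomial extension followed by a surjection of $F$-finite regular rings.

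The factorization I would use is the following. Choose generators $x_1, \dots, x_n$ of $R$ as an $R^p$-algebra, which is possible since $R = R^p[x_1,\dots,x_n]$ by $F$-finiteness. Then set
\[ R \xrightarrow{f} R[X_1,\dots,X_n] \xrightarrow{g} R, \]
where $f$ is the inclusion and $g$ restricts to Frobenius on coefficients and sends $X_i \mapsto x_i$. The composite is $F$, and $g$ is surjective because its image is $R^p[x_1,\dots,x_n] = R$.

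Next I would check the hypotheses:
\begin{itemize}
\item $R[X]$ is regular, because $R$ is.
\item $R[X]$ is $F$-finite, with $p$-generators $x_i$ together with $X_j$.
\item $f$ is smooth of relative dimension $n$.
\item $g$ is a surjection of regular rings, hence an lci map as required in \autoref{defn.lciCase}.
\end{itemize}
If $R$ is not a domain, I would work componentwise on the connected or irreducible components, using the convention of \autoref{defn.CanDualizingComplexRegular}; Frobenius preserves each component, so this reduction is harmless. The desired isomorphism is then $\xi_F = \psi_{g,f}^{-1} \circ g^!\xi_f \circ \xi_g$.

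The only real subtlety is canonicity: a different choice of generators $x_i$ yields a different factorization. But \autoref{thm.CanDualRegPullsback} asserts precisely that $\xi_h$ is independent of the factorization, so the resulting isomorphism does not depend on the choice. Since that theorem's proof is relegated to the appendix, I would simply invoke it. The main obstacle is therefore already absorbed, and no further verification beyond the hypothesis checks above should be needed.
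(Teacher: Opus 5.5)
Your proposal is correct and matches the paper, which obtains the corollary as a one-line consequence of \autoref{thm.CanDualRegPullsback} applied to the finite (hence finite type) Frobenius $F\colon R \to R$, with canonicity coming from the theorem's independence of the chosen factorization. Your explicit factorization $R \to R[X_1,\dots,X_n] \to R$ (Frobenius on coefficients, $X_i \mapsto x_i$) is a valid instance of the polynomial-then-surjection factorization that the paper leaves implicit.
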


We also note the following compatibility of the canonical dualizing complex with localization and completion.

\begin{prop}\label{prop.OmegaAndLocCompl}
Let $R$ be a regular $F$-finite ring with $p$-basis. Let $f \colon R \to S$ be either a localization at a multiplicative set $W \subseteq R$ or the completion along an ideal $I \subseteq R$. Then $S$ also has a $p$-basis and one has a canonical isomorphism 
\[
    f^*\omegacan{R} \to[\cong] \omegacan{S}.
\]
More generally, the result also holds for any flat map $f : R \to S$ such that $\Omega_R \otimes_R S \to \Omega_{S}$ is an isomorphism.
\end{prop}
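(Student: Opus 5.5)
The plan is to reduce everything to the general flat statement: if $f\colon R \to S$ is flat and the natural map $\Omega_R \otimes_R S \to \Omega_S$ is an isomorphism, then $S$ has a $p$-basis and $f^*\omegacan{R} \cong \omegacan{S}$. We then check that localization and completion satisfy this hypothesis.

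For the general statement, let $x_1,\dots,x_n$ be a $p$-basis of $R$. Then $dx_1,\dots,dx_n$ is a free basis of $\Omega_R$, so its image $d(f(x_1)),\dots,d(f(x_n))$ is a free basis of $\Omega_S \cong \Omega_R\otimes_R S$. Thus $f(x_1),\dots,f(x_n)$ is a differential basis of $S$. We assume $S$ is Noetherian, as it is in both cases of interest; the general flat case should include this hypothesis. By the result of André recalled in \autoref{pbasisdiffbasis}, a differential basis of a Noetherian $\bF_p$-algebra is a $p$-basis, so $S$ has a $p$-basis. Moreover $S$ is regular, being $F$-finite (it is a finite module over $S^p$, generated by monomials in the $f(x_i)$), reduced by flatness over the reduced ring $R$ (or directly in our cases), and possessing a $p$-basis, by \autoref{lem.regularFfiniteHasLocalPbasis}.

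Both sides now live in degree $-n$: ranks agree because $\Omega_S$ has the same rank $n$. Taking $\Wedge^n$ of the isomorphism $\Omega_R\otimes_R S \to \Omega_S$ gives a canonical isomorphism $(\Wedge^n\Omega_R)\otimes_R S \cong \Wedge^n\Omega_S$. Since $f$ is flat, $f^* = -\otimes_R S$ needs no derivation, and shifting by $[n]$ yields $f^*\omegacan{R} \cong \omegacan{S}$. The map is canonical: it is just $\alpha\otimes s \mapsto s\cdot \Wedge^n(df)(\alpha)$, which does not depend on the $p$-basis. The $p$-basis is used only to see freeness and rank. If $R$ is not a domain, the same applies componentwise, using \autoref{defn.CanDualizingComplexRegular}.

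It remains to check the hypothesis in the two cases.

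\emph{Localization.} Flatness is clear, and $\Omega_{W^{-1}R} = W^{-1}\Omega_R$ is standard.

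\emph{Completion $S=\widehat{R}^I$.} Flatness holds since $R$ is Noetherian. The main obstacle is the differentials, since $\Omega_{\widehat R}$ is in general not the completion of $\Omega_R$ for non-$F$-finite reasons. Here $F$-finiteness saves us. By the preliminaries, $\Omega_S = \Omega_{S/S^p}$. The Frobenius powers $I^{[p^e]}$ are cofinal with $I^n$, so
\[
\widehat R = \varprojlim R/I^{[p]^e} \quad\text{and}\quad R\otimes_{R^p}(\widehat R)^p = \widehat R,
\]
the latter because $F_*R$ is a finite $R$-module, so $F_*\widehat R = F_*R\otimes_R\widehat R$. Hence $\widehat R$ is free over $\widehat R^{\,p}$ on the monomials $x^a$ with $0\le a_i\le p-1$, so the $x_i$ are a $p$-basis of $\widehat R$ directly. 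Then $\Omega_{\widehat R/\widehat R^p} = \Omega_{R/R^p}\otimes_R \widehat R$ by base change of Kähler differentials along $R^p\to \widehat R^{\,p}$. The key point to verify carefully is the identification $\widehat R = R\otimes_{R^p}\widehat R^{\,p}$, which is where I expect the actual work to lie.
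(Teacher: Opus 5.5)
Your treatment of the two cases that matter is correct and is essentially the paper's proof. For localization you use $W^{-1}\Omega_R\cong\Omega_{W^{-1}R}$. For completion you use $F$-finiteness to get $\widehat R\cong R\otimes_{R^p}\widehat{R^p}$, then base change of Kähler differentials together with $\Omega_{(-)}=\Omega_{(-)/(-)^p}$. In both cases you finish by taking top exterior powers. You also check explicitly that $S$ has a $p$-basis, which the written proof leaves implicit, and your argument via $F_*\widehat R\cong F_*R\otimes_R\widehat R$ is fine.

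One step in your general flat case is wrong. Flatness over a reduced ring does not imply reducedness, and under the stated hypotheses $S$ need not be reduced, hence need not be regular. Here is an example. Let $L$ be a perfect field containing an element $c$ transcendental over $\bF_p$, and let $K=\bF_p(u)$. Let $f\colon K\to S=L[\epsilon]/(\epsilon^p)$ send $u\mapsto c+\epsilon$; this is well defined since $g(c+\epsilon)$ is a unit for every $0\neq g\in\bF_p[u]$. Then:
\begin{itemize}
\item $f$ is flat, and $S$ is Noetherian and $F$-finite;
\item $S^p=L$ and $S\cong L[U]/(U^p-c^p)$ with $U\mapsto c+\epsilon$, so $u$ is a $p$-basis of $S$;
\item $\Omega_K\otimes_K S\to\Omega_S=S\,du$ is an isomorphism;
\item yet $S$ is not reduced.
\end{itemize}

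So in the ``more generally'' clause you cannot conclude that $S$ is regular. You have two ways to fix this. You can add regularity (or reducedness) of $S$ as a hypothesis, which is anyway needed for $\omegacan{S}$ to mean the canonical dualizing complex of \autoref{defn.CanDualizingComplexRegular}. Alternatively, you can read $\omegacan{S}$ merely as $\Wedge^n\Omega_S[n]$, in which case your exterior-power argument still gives $f^*\omegacan{R}\cong\Wedge^n\Omega_S[n]$. The paper's proof is silent on this point. The error only affects the general clause, not the localization and completion cases, where reducedness is clear, as your parenthetical remark says.
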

\begin{proof}
    If $S = W^{-1}R$ is localization one has $W^{-1}\Omega_R \cong \Omega_{W^{-1}R}$ by \cite[\href{https://stacks.math.columbia.edu/tag/00RT}{Tag 00RT}]{stacks-project}.  If $S = \widehat{R}$ is the completion of $R$ along an ideal $I$, then as $R$ is $F$-finite we have that $\widehat{R^p} \otimes_{R^p} R = \widehat{R}$.  Using \cite[\href{https://stacks.math.columbia.edu/tag/00RV}{Tag 00RV}]{stacks-project} that 
\[
    \Omega_{R/\bF_p} \otimes_{R} \widehat{R} = \Omega_{R/R^p} \otimes_{R^p} \widehat{R^p} = \Omega_{\widehat{R} / \widehat{R^p}} = \Omega_{\widehat{R} / \bF_p}.
\]
In either case we have an isomorphism $\Omega_R \otimes_R S \to \Omega_{S}$, the result follows by taking top exterior powers.
%Let $f \colon R \to W^{-1}R$ be the localization map. The image of any $p$-basis of $R$ is also a $p$-basis of $W^{-1}R$. For the K\"ahler differentials one has by \cite[\href{https://stacks.math.columbia.edu/tag/00RT}{Tag 00RT}]{stacks-project} the isomorphism $W^{-1}\Omega_R \cong \Omega_{W^{-1}R}$. Taking top exterior powers (and shifting) shows the claim.
%
%Next let $\widehat{R}$ denote the completion of $R$ along $I$.  Recall that we have a canonical isomorphism $\widehat{R^p} \otimes_{R^p} R = \widehat{R}$ since $R$ is $F$-finite.  We then have by \cite[\href{https://stacks.math.columbia.edu/tag/00RV}{Tag 00RV}]{stacks-project} that 
%\[
    %\Omega_{R/\bF_p} \otimes_{R} \widehat{R} = \Omega_{R/R^p} \otimes_{R^p} \widehat{R^p} = \Omega_{\widehat{R} / \widehat{R^p}} = \Omega_{\widehat{R} / \bF_p}.
%\]
%In particular, a differential basis/$p$-basis for $R$ (over $\bF_p$) base changes to become a differential basis/$p$-basis for $\widehat{R}$ (over $\bF_p$).  The result follows by taking top exterior powers.
%
%\todo{mnl: Karl you wanted to write out the completion case here.}
%Now if $\phi\colon R \to \widehat{R}$ is  the completion along some ideal of $R$, then again the image of a $p$-basis $x_1,\ldots,x_n$ of $R$ is a $p$-basis $\widehat{x}_1,\ldots,\widehat{x}_n$ of $\widehat{R}$. In particular $\Omega_{\widehat{R}}$ is a finite free $\widehat{R}$-module. Hence the natural map
%\[
%    \Omega_R \otimes_R \widehat{R} \to \Omega_{\widehat{R}}
%\]
%is an isomorphism as it identifies the bases $dx_1 \otimes 1, \ldots, dx_n \otimes 1$ and $d\widehat{x}_1,\ldots,d\widehat{x}_n$. Taking top exterior powers implies the claim.
\end{proof}

%\todo[inline]{There is still the "fun" exercise to be done that shows that this canonical Cartier module structure is indeed given by the classical Cartier operator as recalled above. Since the isomorphism is explicit, one needs to factor $F: R \to R[Y] \to R$ and use the explicit maps from \cite{ConradGDualityAndBaseChange} to fit everything together.}

The compatibilities shown in this section allow us to conclude that the theory developed here in the affine case will globalize to all regular Noetherian $F$-finite schemes and finite type morphisms. In accordance with \autoref{defn.CanDualizingComplexRegular} we define: 
\begin{defn}[Canonical dualizing complex, regular F-finite global case]
    Suppose $X$ is regular Noetherian and $F$-finite scheme. We define the \emph{canonical dualizing complex} to be the top exterior power of the Kähler differentials placed, on each irreducible component $X_i$, in homological degree $-n$ where $n = \operatorname{rk}\Omega_{X_i}$.  That is, on an irreducible $X$,
    \[
        \omegacan{X} := (\Wedge^n \Omega_X) [n].
    \]
\end{defn}

\begin{prop}\label{thm.CanDualRegular}
Let $f \colon X \to Y$ be a finite type morphism of regular Noetherian $F$-finite schemes. Then there is a canonical isomorphism \[\xi_f \colon \omegacan{X} \to f^!\omegacan{Y}\] which is compatible with composition in the sense of \autoref{thm.CanDualRegPullsback}. In particular for the Frobenius map $F \colon X \to X$ we have a canonical isomorphism $\xi_F \colon \omegacan{X} \to[\cong] F^!\omegacan{X}$.
\end{prop}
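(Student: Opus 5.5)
The plan is to globalize \autoref{thm.CanDualRegPullsback} by Zariski gluing, the standard device from \cite[\href{https://stacks.math.columbia.edu/tag/0AU5}{Tag 0AU5}]{stacks-project}.

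\emph{Step 1 (local isomorphisms).} Cover $Y$ by affine opens $V=\Spec R$ and cover $f^{-1}(V)$ by affine opens $U=\Spec S$. Since $f$ is of finite type, each $R\to S$ is a finite type map of $F$-finite regular rings. \autoref{thm.CanDualRegPullsback} then gives an isomorphism $\xi_{U}\colon \omegacan{S}\to (f|_U)^!\omegacan{R}$, and it is independent of the factorization used to define it.

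\emph{Step 2 (compatibility on overlaps).} We must show that these local isomorphisms agree after restriction to smaller affines. Suppose $U'\subseteq U$ and $V'\subseteq V$ are affine opens with $U'\to V'$. The restriction maps $S\to S'$ and $R\to R'$ are localizations. By \autoref{prop.OmegaAndLocCompl}, $\omegacan{-}$ is compatible with these localizations, and they are finite type (indeed essentially \'etale) maps. Apply the composition compatibility of \autoref{thm.CanDualRegPullsback} to the two factorizations $R\to S\to S'$ and $R\to R'\to S'$ of the same map $R\to S'$. This identifies both restrictions of $\xi$ with $\xi_{R\to S'}$. For an open immersion $j$ we have $j^!=j^*$, and one checks that $\xi_j$ is the localization isomorphism of \autoref{prop.OmegaAndLocCompl}. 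It follows that $\xi_U|_{U'}=\xi_{U'}$, where both are viewed inside $(f|_{U'})^!\omegacan{Y}|_{V'}$.

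\emph{Step 3 (gluing).} Both $\omegacan{X}$ and $f^!\omegacan{Y}$ are dualizing complexes, so $\RsHom(\omegacan{X},f^!\omegacan{Y})$ is an invertible sheaf up to shift. On each connected component, after the correct shift, negative local Exts vanish, so a compatible family of local isomorphisms glues uniquely to a global one (BBD gluing, or \cite[Tag 0AU5]{stacks-project}). This gives $\xi_f$, and the global compatibility with composition follows because it can be checked locally, where it is \autoref{thm.CanDualRegPullsback}. Applying this to the absolute Frobenius, which is finite by $F$-finiteness, gives $\xi_F$.

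I expect the main obstacle to be Step 2, in particular matching the degree normalizations. Ranks of $\Omega$ must be constant along connected components so that the shifts $[n]$ agree. This holds because the rank is locally constant and, by \autoref{prop.OmegaAndLocCompl}, preserved under localization. The other delicate point is identifying $\xi$ for an open immersion with the identity; I would handle this by factoring the localization $R\to R_g$ as $R\to R[t]\to R[t]/(gt-1)$ and checking the explicit formulas of \autoref{defn:SmoothMapsUpperShriekPullbackForRegularRings} and \autoref{defn.lciCase}.
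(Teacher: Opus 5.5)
Your proposal is correct and follows essentially the same route as the paper: define $\xi$ on affine charts via \autoref{thm.CanDualRegPullsback}, use its composition compatibility with open immersions of affines to get independence of the target chart and agreement on smaller affines, and then glue, since both sides are shifted line bundles. Your explicit check, via $R\to R[t]\to R[t]/(gt-1)$, that $\xi_j$ for an open immersion is the restriction identification makes precise a point the paper leaves implicit. The only caveat is that a general affine open need not be a localization, which is harmless because it suffices to compare the maps on principal opens.
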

\begin{proof}
We define $\xi_f$ by gluing the morphism obtained in the affine case (\autoref{thm.CanDualRegPullsback}). Let $U \subseteq X$, $U' \subseteq Y$ be affine open subschemes with $f(U) \subseteq U'$. Then by \cite[\href{https://stacks.math.columbia.edu/tag/01T2}{Lemma 01T2}]{stacks-project} the map
\[
    f_U \colon U \to U', 
\]
whose composition with the inclusion $U' \subseteq Y$ is $f|_U$, is a finite type map of affine regular $F$-finite schemes. Hence we use \autoref{thm.CanDualRegPullsback} to define the isomorphism $\xi_U \colon \omegacan{X}|_U \to (f^! \omegacan{Y})|_U$ as the composition
\[
    \omegacan{X}|_U \cong \omegacan{U} \to[\xi_{f_U}] f_U^!\omegacan{U'} \cong f_U^!(\omegacan{Y}|_U') \cong (f^!\omegacan{Y})|_U 
\]
By \autoref{thm.CanDualRegPullsback} applied to $j \circ f_U$ for some open immersion of open affines $j: U' \hookrightarrow U''$ of $Y$ this is independent of the auxiliary choice of $U'$ in the definition of $\xi_U$. 

Since $\omegacan{U}$ and $f^!\omegacan{Y}$ are both shifted line bundles, it remains to check that for any open affine subset $V \subseteq U$ the restriction $\xi_U|_V$ is equal to $\xi_V$. Using the independence of $\xi_V$ from the auxiliary open $V'$ (which we hence may choose to be equal to $U'$) this comes down to the commutativity of the diagram 
\[\begin{tikzcd}
	{j^!\omegacan{U}} && {j^!f_U^!\omegacan{U'}} \\
	{\omegacan{V}}
	\arrow["{j^!\xi_{f_U}}", from=1-1, to=1-3]
	\arrow["{\xi_j}", from=2-1, to=1-1]
	\arrow["{\xi_{f_V}}"', from=2-1, to=1-3]
\end{tikzcd}\]
which holds by \autoref{thm.CanDualRegPullsback} since all the maps $f_V : V \to[j] U \to[f_U] U'$ are finite type maps of affine regular $F$-finite schemes.

From this definition of the isomorphism $\xi_f$ it follows directly from \autoref{thm.CanDualRegPullsback} again, that it is compatible with composition.
\end{proof}

\subsection{Proof of \autoref{thm.CanDualRegPullsback}}\label{app.ProofsOfCompatibilities}

We now verify the compatibilities appearing in the statement of \autoref{thm.CanDualRegPullsback}. The arguments, though somewhat tedious, are elementary and mainly involve diving into the explicit formulas for the maps defined in \cite{ConradGDualityAndBaseChange} on pages 30--31 and \cite[Chapter III]{HartshorneResidues}.

As indicated in \autoref{subsec.DualityForFFiniteRegular} we need to show that for any finite type morphism $f \colon R \to S$ of $F$-finite regular rings, there is a well defined isomorphism 
\[ 
\xi_f: \omegacan{S} \to f^! \omegacan{R}.
\]
such that if $g : S \to T$ is another finite type morphism between $F$-finite regular rings, then the following diagram commutes
\begin{equation}
\label{eq:compatdiag}
\tag{$\dagger$}
\begin{tikzcd}
	{\omegacan{T}} && {g^! \omegacan{S}} \\
	{(g \circ f)^!\omegacan{R}} && {g^!(f^!\omegacan{R})}
	\arrow["{\xi_{g\circ f}}"', from=1-1, to=2-1]
	\arrow["{\xi_g}", from=1-1, to=1-3]
	\arrow["{g^!\xi_f}", from=1-3, to=2-3]
	\arrow["{\psi_{g,f}}", from=2-1, to=2-3]
\end{tikzcd}
\end{equation}
where $\psi_{g,f}$ is the 2-functoriality morphism from \autoref{eq:psidef}. 

As with the construction of $(\_)^!$ itself, the construction of $\xi_f$ is given by factoring $f$ into a smooth map followed by an lci surjection and treating these cases individually. Let us start by recalling the definition and verify \autoref{eq:compatdiag} in these basic cases.

\begin{lemma}[See \autoref{defn:SmoothMapsUpperShriekPullbackForRegularRings}]
\label{lem:smoothlemma}
    For a smooth morphism $f \colon R \to S$ of relative dimension $d$ (for instance, adjoining variables), recall we define
    \[\begin{tikzcd}[row sep=-.6em]
	{\xi_f \colon \omegacan{S}} && {f^!\omegacan{R} \simeq f^\sharp \omegacan{R} = \bigwedge^d \Omega_{S/R}[d] \otimes_R \omegacan{R}} \\
	{\alpha \wedge f^*\beta} && {\alpha \otimes \beta}
	\arrow[from=1-1, to=1-3]
	\arrow[maps to, from=2-3, to=2-1]
\end{tikzcd}\]
which is the isomorphism determined by taking the determinant of the sequence of K\"ahler differentials
\[
0 \to S \otimes_R \Omega_R  \to \Omega_S \to \Omega_{S/R} \to 0.
\]
If $g \colon S \to T$ is a further smooth map of relative dimension $e$,
the diagram \autoref{eq:compatdiag} commutes.
\end{lemma}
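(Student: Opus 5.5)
The plan is to reduce the commutativity of \autoref{eq:compatdiag} for two smooth maps $f\colon R\to S$ (relative dimension $d$) and $g\colon S\to T$ (relative dimension $e$) to an identity among determinants of short exact sequences of locally free modules. The compatibility isomorphism $\psi_{g,f}$ for smooth maps in \cite[Chapter 2]{ConradGDualityAndBaseChange} (cf.\ \cite[III]{HartshorneResidues}) has an explicit description. Namely, $(g\circ f)^\sharp = \bigwedge^{d+e}\Omega_{T/R}[d+e]\otimes_R(-)$ is identified with $g^\sharp f^\sharp = \bigwedge^e\Omega_{T/S}[e]\otimes_S\bigwedge^d\Omega_{S/R}[d]\otimes_R(-)$. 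This identification comes from the determinant of the transitivity sequence
\[
0\to T\otimes_S\Omega_{S/R}\to\Omega_{T/R}\to\Omega_{T/S}\to 0,
\]
which is exact on the left and locally split because $g$ is smooth, together with the sign convention for commuting the shifts $[e]$ and $[d]$ past each other. I will first write down this formula, fixing the convention of \cite[(2.2.x)]{ConradGDualityAndBaseChange}. Since all objects involved are shifted line bundles, the claim can be checked Zariski locally on $\Spec T$. So I may assume that $\Omega_R$, $\Omega_{S/R}$ and $\Omega_{T/S}$ are free with bases coming from $dx_i$, $dy_j$ and $dz_k$, where $x_i\in R$, $y_j\in S$, $z_k\in T$, using \autoref{lem.regularFfiniteHasLocalPbasis} for $R$.

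Next, I chase a generator $\gamma\wedge g^*(\alpha\wedge f^*\beta)$ of $\omegacan{T}$ around both sides of the square. Here $\gamma=dz_1\wedge\cdots\wedge dz_e$, $\alpha=dy_1\wedge\cdots\wedge dy_d$ and $\beta=dx_1\wedge\cdots\wedge dx_n$; these wedge together to a basis of $\bigwedge^{n+d+e}\Omega_T$ by the exact sequences of \autoref{defn:SmoothMapsUpperShriekPullbackForRegularRings} for $f$, $g$ and $g\circ f$.
\begin{itemize}
\item Along the top and right, $\xi_g$ sends the generator to $\gamma\otimes(\alpha\wedge f^*\beta)$, and then $g^!\xi_f$ sends this to $\gamma\otimes\alpha\otimes\beta$.
\item Along the left, $\xi_{g\circ f}$ sends the generator to $(\gamma\wedge\alpha)\otimes\beta$, viewed in $\bigwedge^{d+e}\Omega_{T/R}\otimes\beta$.
\end{itemize}
Applying $\psi_{g,f}$ to $(\gamma\wedge\alpha)\otimes\beta$ gives $\gamma\otimes\alpha\otimes\beta$ up to the sign of the shift convention. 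This is exactly the determinant-of-a-filtration statement: the determinant isomorphisms for a three-step filtration $T\otimes_R\Omega_R\subset T\otimes_S\Omega_S\subset\Omega_T$ are associative, meaning that computing $\det\Omega_T$ via the two-step route through $\Omega_S$ or through $\Omega_{T/R}$ gives the same answer.

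The main obstacle is the signs. In the explicit formula for $\psi_{g,f}$, the shifts $[d][e]$ versus $[d+e]$ introduce a Koszul-type sign $(-1)^{de}$ (or a similar one), depending on the ordering convention in \cite{ConradGDualityAndBaseChange}. My ordering $\alpha\wedge f^*\beta$, with the relative form placed first, has to match that convention. I will first check the ordering against Conrad's definition of $\psi$ for smooth maps. If a discrepancy of $(-1)^{de}$ appears, I expect it to be cancelled by the sign built into Conrad's identification $[d][e]\cong[d+e]$, since both arise from moving a degree-$e$ block past a degree-$d$ block. Conrad's convention is designed precisely so that determinant associativity holds on the nose, so the two signs should cancel. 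Finally, independence of the chosen local bases follows because all maps are canonical determinant maps, and this lets the local verifications glue.
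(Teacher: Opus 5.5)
Your proposal is correct and follows essentially the same route as the paper. The paper likewise takes the explicit smooth-case formula for $\psi_{g,f}$ from \cite[p.~31 (a)]{ConradGDualityAndBaseChange}, namely the inverse of $\bigwedge^e \Omega_{T/S}[e]\otimes_S\bigwedge^d\Omega_{S/R}[d]\to\bigwedge^{e+d}\Omega_{T/R}[e+d]$, $\alpha\otimes\beta\mapsto\alpha\wedge g^*\beta$, and then chases elements as you do.

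Your worry about signs does not materialize. That formula puts the relative form of $g$ first, which is the same ordering used in the definitions of $\xi_f$ and $\xi_g$ (both taken from Conrad's $\zeta'$). So your chase $\gamma\wedge g^*(\alpha\wedge f^*\beta)\mapsto\gamma\otimes\alpha\otimes\beta$ agrees on both paths with no extra $(-1)^{de}$.
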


\begin{proof}
Note that in \cite[p. 29--31]{ConradGDualityAndBaseChange} the isomorphism map $\xi_f$ is explicitly constructed (denoted $\zeta'$ there). In our context, diagram \autoref{eq:compatdiag} is given by
\[\begin{tikzcd}
	{\omegacan{T}} && {\bigwedge^e \Omega_{T/S}[e] \otimes_S \omegacan{S}} \\
	{\bigwedge^{e+d} \Omega_{T/R}[e+d] \otimes_R \omegacan{R}} && {\bigwedge^e \Omega_{T/S}[e] \otimes_S \bigwedge^d \Omega_{S/R}[d] \otimes \omegacan{R}.}
	\arrow["{\xi_g}", from=1-1, to=1-3]
	\arrow["{\xi_{g\circ f}}", from=1-1, to=2-1]
	\arrow["{\psi_{g,f}}"', from=2-1, to=2-3]
	\arrow["{\bigwedge^e \Omega_{T/S}[e] \otimes \xi_f}", from=1-3, to=2-3]
\end{tikzcd}\]
The formula for the map $\psi_{g,f}$ on the bottom row as explained in \cite[p. 31 (a)]{ConradGDualityAndBaseChange} is given by the isomorphism
\[\begin{tikzcd}[row sep=-.6em]
	{\bigwedge^e \Omega_{T/S}[e] \otimes_S \bigwedge^d \Omega_{S/R}[d]} & {\bigwedge^{e+d}\Omega_{T/R}[e+d]} \\
	{\alpha \otimes \beta} & {\alpha \wedge g^* \beta}
	\arrow[from=1-2, to=1-1]
	\arrow[maps to, from=2-1, to=2-2]
\end{tikzcd}\]
With these explicit formulas for the maps it is easy to verify that the diagram commutes as claimed. 
\end{proof}

% Consider a surjective map $f : R \to S$ of regular $F$-finite domains with kernel $J = \ker f$ and where $f$ has codimension $c$. 
% Recall that, choosing $\vartheta : \Omega_{S} \to S \otimes_R \Omega_{R}$ giving local splittings of the short exact sequence \autoref{eq:lcidifferentials} gives  $\vartheta \colon \Wedge^m \Omega_S \to S \otimes_R \Wedge^m \Omega_R $ as in \autoref{examp.lciCase}.

%\todo[inline]{Does the above look right?  Explain the notation below, Brian does it on page 30 I think.}
\begin{lemma}[\autoref{defn.lciCase}]
\label{lem:lcilemma}
    Let $f: R \to S$ be a surjective map of regular $F$-finite domains with kernel $J$ of codimension $c$. Then define the isomorphism
    $\xi_f \colon \omegacan{S} \to f^! \omegacan{R}$ so that the composition with the isomorphism 
    \[
        \eta = \eta_f \colon f^\flat(\_) \to[\cong] \bigwedge^c(J/J^2)^\vee[-c] \otimes_S (S \otimes^{\myL} \_ ) = \omegacan{f} \otimes_S (S \otimes^{\myL} \_ ).
    \]   
    from \autoref{eq:etadef} on \autopageref{eq:etadef} is locally given by
\[\begin{tikzcd}[row sep=-.6em]
	{ \omegacan{S}} &&& {f^!\omegacan{R}} & {\bigwedge^c(J/J^2)^\vee[-c] \otimes_S (S \otimes_R \omegacan{R})} \\
	\alpha &&&& {r_1^* \wedge \cdots \wedge r_c^* \otimes dr_c \wedge \cdots \wedge dr_1 {\wedge} \vartheta_f(\alpha)}
	\arrow["\eta", from=1-4, to=1-5]
	\arrow["{\xi_f}", from=1-1, to=1-4]
	\arrow[maps to, from=2-1, to=2-5]
\end{tikzcd}\]
where $J = (r_1, \ldots, r_c)$ is locally given by the regular sequence $r_1, \ldots, r_c \in R$, and $r_1^*, \ldots, r_c^*$ are the corresponding dual basis of $J/J^2$, and $\vartheta_f : \Omega_{S} \to S \otimes_R \Omega_{R}$ gives a local splitting of the short exact sequence \autoref{eq:lcidifferentials}. If $g: S \to T$ is a further local complete intersection map, the diagram \autoref{eq:compatdiag} commutes.
\end{lemma}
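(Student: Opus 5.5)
The statement bundles two things: $\xi_f$ is well defined (independent of the regular sequence $r_1,\dots,r_c$ and of the splitting $\vartheta_f$), and diagram \autoref{eq:compatdiag} commutes for a composite of lci surjections $R \to[f] S \to[g] T$. I would first settle well-definedness locally and then glue. Since both sides are shifted invertible modules, it is enough to check that the local formula does not depend on the choices.

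\textbf{Independence of choices.} Changing the splitting $\vartheta_f$ changes $\vartheta_f(\alpha)$ by an element of $\Wedge^m$ of $S\otimes_R\Omega_R$ involving at least one factor in the image of $J/J^2$, spanned by the $dr_i$. Such a term is killed on wedging with $dr_c\wedge\cdots\wedge dr_1$. Changing the generators by an invertible matrix $A$ multiplies $dr_c\wedge\cdots\wedge dr_1$ by $\det A$ modulo $J$. It multiplies $r_1^*\wedge\cdots\wedge r_c^*$ by $\det A^{-1}$, so the tensor is unchanged. Because $\eta$ is canonical by \autoref{dualregular}, the local maps glue to a global isomorphism $\xi_f$.

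\textbf{Compatibility for $g\circ f$.} Let $K=\ker g$ and $L=\ker(g\circ f)$. Locally I would choose a regular sequence $r_1,\dots,r_c$ generating $J$ and elements $s_1,\dots,s_e\in R$ whose images in $S$ generate $K$. Then $r_1,\dots,r_c,s_1,\dots,s_e$ is a regular sequence generating $L$. Following \cite[pp.~30--31]{ConradGDualityAndBaseChange} and \cite[III.7]{HartshorneResidues}, $\psi_{g,f}$ transported through $\eta_{g\circ f}$ and $\eta_g\circ g^!\eta_f$ is the isomorphism
\[
\Wedge^{c+e}(L/L^2)^\vee \cong \Wedge^e(K/K^2)^\vee\otimes_S\Wedge^c(J/J^2)^\vee
\]
obtained from the short exact sequence $0\to J/J^2\otimes_S T\to L/L^2\to K/K^2\to 0$. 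It sends $r^*\wedge s^*$ to $s^*\otimes r^*$, with Conrad's sign $(-1)^{ce}$ or a similar one coming from the shifts $[-c]$ and $[-e]$.

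To compute, I choose splittings compatibly: first $\vartheta_g\colon\Omega_T\to T\otimes_S\Omega_S$, then compose with $T\otimes\vartheta_f$. This composite is a valid splitting $\vartheta_{g\circ f}$ for $L$, because $\Omega_R\otimes T\to\Omega_T$ factors through $\Omega_S\otimes T$. For $\alpha\in\Wedge^{m}\Omega_T$, $\xi_g(\alpha)$ involves $ds_e\wedge\cdots\wedge ds_1\wedge\vartheta_g(\alpha)$ in $\Wedge\Omega_S$. Applying $g^!\xi_f$ to this replaces it with $dr_c\wedge\cdots\wedge dr_1\wedge\vartheta_f(ds_e\wedge\cdots\wedge ds_1\wedge\vartheta_g(\alpha))$. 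Here $\vartheta_f(d\bar s_i)$ equals $ds_i$ modulo the span of the $dr_j$, and those extra terms die against $dr_c\wedge\cdots\wedge dr_1$. So the output is $dr_c\wedge\cdots\wedge dr_1\wedge ds_e\wedge\cdots\wedge ds_1\wedge\vartheta_{g\circ f}(\alpha)$, paired with $s^*\otimes r^*$. On the other side, $\xi_{g\circ f}(\alpha)$ uses the ordered sequence $(r,s)$ and gives $(r^*\wedge s^*)\otimes ds_e\wedge\cdots\wedge ds_1\wedge dr_c\wedge\cdots\wedge dr_1\wedge\vartheta_{g\circ f}(\alpha)$.

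\textbf{Main obstacle.} Comparing these requires reordering the wedge products. That gives the sign $(-1)^{ce}$, which must match the sign in $\psi_{g,f}$ and the sign conventions of $\eta$ under composition of Koszul complexes. This sign bookkeeping is where I expect the real difficulty. The reversed ordering $dr_c\wedge\cdots\wedge dr_1$ in the definition was presumably chosen precisely so that these signs cancel. I would verify this against Conrad's (2.5.2) and his composition formula for the fundamental local isomorphism, reducing to the universal Koszul case $R=\mathbb{Z}[r,s]$ if necessary.
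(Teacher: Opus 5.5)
Your proposal follows the paper's proof. The paper also uses naturality of $\eta_g$ together with Conrad's Theorem~2.5.1 (your ``transport'' of $\psi_{g,f}$ through the $\eta$'s) to reduce $(\dagger)$ to an explicit local comparison: it extends $r_1,\dots,r_c$ to a regular sequence $r_1,\dots,r_c,s_1,\dots,s_e$ for $\ker(g\circ f)$, takes $\vartheta_{g\circ f}=(T\otimes_S\vartheta_f)\circ\vartheta_g$, and records the common value $\bar s_1^*\wedge\cdots\wedge\bar s_e^*\otimes r_1^*\wedge\cdots\wedge r_c^*\otimes dr_c\wedge\cdots\wedge dr_1\wedge ds_e\wedge\cdots\wedge ds_1\wedge\vartheta_{g\circ f}(\alpha)$, which is exactly what you obtain along $g^!\xi_f\circ\xi_g$. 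The sign you flag is not worked out in the paper either; it is absorbed into the citation of Conrad's explicit formula (b) on pp.~30--31, and the paper's claimed agreement amounts to (b) sending $r^*\wedge s^*$ to $(-1)^{ce}\,\bar s^*\otimes r^*$, which is the sign the determinant of the dualized conormal sequence $0\to(K/K^2)^\vee\to(L/L^2)^\vee\to T\otimes_S(J/J^2)^\vee\to 0$ (sub first) produces, so it cancels your reordering sign as you anticipated (and your independence-of-choices paragraph supplies what the paper leaves as ``it can be verified'').
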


% \todo[inline]{Kevin: I think Karl is right, the notation here is a bit misleading at the very least. The idea is the following I believe. We have from the short exact sequence of Kahler differentials that $S \otimes \omega_R = \bigwedge^c J/J^2 \otimes \omega_S$. The element we want on the right side of the equation that $\alpha$ then maps to is $(r_1^* \wedge \cdots r_c^*) \otimes r_c \wedge \cdots \wedge r_1 \otimes \theta(\alpha)$ under this isomorphism, where $\theta$ is a splitting of the Kahler sequence (and this is independent of the choice of splitting). \\
% Mnl: I agree with all your comments.}

\begin{proof}
    Consider the diagram:
\[\begin{tikzcd}[column sep=large]
	{\omegacan{T}} & {g^\flat \omegacan{S}} & {\omegacan{g} \otimes \omegacan{S}} \\
	{(g \circ f)^\flat \omegacan{R}} & {g^\flat f^\flat \omegacan{R}} & {\omegacan{g} \otimes f^\flat \omegacan{R}} \\
	{\omegacan{g \circ f} \otimes \omegacan{R}} && {\omegacan{g} \otimes \omegacan{f} \otimes \omegacan{R}}
	\arrow["{\xi_g}", from=1-1, to=1-2]
	\arrow["\eta_g", from=1-2, to=1-3]
	\arrow["{\omegacan{g} \otimes \xi_f}", from=1-3, to=2-3]
	\arrow["{g^\flat \xi_f}", from=1-2, to=2-2]
	\arrow["{\xi_{g \circ f}}"', from=1-1, to=2-1]
	\arrow["{\psi_{g,f}}", from=2-1, to=2-2]
	\arrow["\eta_{g \circ f}"', from=2-1, to=3-1]
	\arrow["{\mbox{\footnotesize\cite[(b) on p.30/31]{ConradGDualityAndBaseChange}}}"', from=3-1, to=3-3]
	\arrow["{\omegacan{g} \otimes \eta_f}", from=2-3, to=3-3]
	\arrow["{\eta_g}", from=2-2, to=2-3]
\end{tikzcd}\]
for which we have to show the commutativity of the top left square. The top right square commutes due to the naturality of the fundamental local isomorphism \autoref{dualregular}. 
The bottom rectangle commutes in light of \cite[Theorem 2.5.1]{ConradGDualityAndBaseChange}. Finally, the commutativity of the outer square can be verified using the explicit local formulas for the maps defined above, together with the formula from \cite[(b) on p.30/31]{ConradGDualityAndBaseChange} 
for the bottom arrow. 

Concretely, if $r_1, \ldots, r_c \in R$ are a regular sequence generating the kernel of $f$, and we extend so that $r_1, \ldots, r_c, s_1, \ldots, s_e$ are a regular sequence generating the kernel of $f \circ g$, both compositions from the top left corner to the bottom right is given by
\[
\alpha \mapsto \overline{s_1}^* \wedge \cdots \wedge \overline{s_e}^* \otimes r_1^* \wedge \cdots \wedge r_c^* \otimes dr_c \wedge \cdots \wedge dr_1 \wedge ds_e \wedge \cdots \wedge ds_1 \wedge  \vartheta_{g \circ f}( \alpha)
\]
where $\overline{s_1}, \ldots, \overline{s_e}$ denote the images of $s_1, \ldots, s_e$ in $S$, and $\vartheta_{g \circ f}$ is induced by the composition of local splittings
\[
\Omega_T \to[\vartheta_g] T \otimes_S \Omega_S \to[T \otimes_S \vartheta_f] T \otimes_S S \otimes_R \Omega_R = T \otimes_R \Omega_R.
\]
From this, the commutativity of the upper left square now follows.
%\todo[inline]{\emph{Karl:} Is the $(g \circ f)^*$ just a composition of $\vartheta$'s I assume?}
\end{proof}

The following lemma is a crucial tool in the proof that the definition of $\xi_f$ via factorization into smooth and lci is independent of the chosen factorization.  

\begin{lemma}
\label{lem:sectionlemma}
Suppose that $f \colon R \to S=R[x_1,\ldots,x_n]$ is a polynomial extension of relative dimension $n$, and $g \colon S \to R$ is a section of $f$, so that $g \circ f = \id_R$. Then the diagram
\[\begin{tikzcd}
	{\omegacan{R}} & {g^\flat\omegacan{S}} \\
	{\omegacan{R} = (g \circ f)^! \omegacan{R}} & {g^\flat f^\sharp \omegacan{R}}
	\arrow["{\xi_g}", from=1-1, to=1-2]
	\arrow["{\id_{\omegacan{R}} = \xi_{\id_R}}"', from=1-1, to=2-1]
	\arrow["{\psi_{g,f}}"', from=2-1, to=2-2]
	\arrow["{g^\flat \xi_f}", from=1-2, to=2-2]
\end{tikzcd}\]
commutes. 
\end{lemma}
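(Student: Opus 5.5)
Since everything in the diagram is local on $\Spec R$ and all objects are shifted invertible modules, I would prove commutativity by tracking one local generator of $\omegacan{R}$ through both paths, choosing coordinates compatibly. I first replace $x_i$ by $t_i := x_i - f(g(x_i))$, which does not change $S$, $f$, or $g$. Then $S = R[t_1,\dots,t_n]$ and $J := \ker g = (t_1,\dots,t_n)$. This is a regular sequence, $J/J^2$ is free on the classes of the $t_i$, and $\Omega_{S/R}$ is free on $dt_1,\dots,dt_n$. Note also that $g^*\Omega_{S/R} \cong J/J^2$ via $dt_i \mapsto \overline{t_i}$. I would further localize $R$ so that $\omegacan{R} = \Wedge^m\Omega_R[m]$ is free with generator $\beta$, and fix the local splitting of \autoref{eq:lcidifferentials} for $g$ to be $\vartheta_g = $ base change of the inclusion $S\otimes_R\Omega_R \to \Omega_S$ along $g$. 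This is a valid splitting, since $g^*(S\otimes_R\Omega_R) = \Omega_R$ maps isomorphically onto $\Omega_R$ under $g^*\Omega_S\to\Omega_R$.

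First, the upper-right path. By \autoref{defn.lciCase}, composing $\xi_g$ with $\eta_g$ sends $\beta$ to
\[
t_1^*\wedge\cdots\wedge t_n^* \otimes dt_n\wedge\cdots\wedge dt_1\wedge \beta
\]
in $\Wedge^n(J/J^2)^\vee[-n]\otimes_R(R\otimes_S\omegacan{S})$. Applying $g^\flat\xi_f$ amounts to applying $\xi_f^{-1}$ in the second factor, which by \autoref{defn:SmoothMapsUpperShriekPullbackForRegularRings} is $\alpha\wedge f^*\beta\mapsto \alpha\otimes\beta$. This gives
\[
t_1^*\wedge\cdots\wedge t_n^* \otimes (dt_n\wedge\cdots\wedge dt_1)\otimes\beta \in \Wedge^n(J/J^2)^\vee[-n]\otimes_R g^*\Wedge^n\Omega_{S/R}[n]\otimes_R\omegacan{R},
\]
viewed through $\eta_g$ (using naturality of $\eta_g$ in the argument, \autoref{dualregular}).

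Second, the lower-left path, where $\psi_{g,f}$ for a section of a smooth map is the isomorphism of \cite[(2.7.x), p.~31--33]{ConradGDualityAndBaseChange}, which in this case is the composite $\omegacan{R}\cong \Wedge^n(J/J^2)^\vee\otimes g^*\Wedge^n\Omega_{S/R}\otimes\omegacan{R}$ coming from the canonical pairing $\Wedge^n(J/J^2)^\vee\otimes\Wedge^n(J/J^2)\to R$ and $g^*\Omega_{S/R}\cong J/J^2$ (Hartshorne's residue-type isomorphism, \cite[III, \S8]{HartshorneResidues}). Writing out this formula with Conrad's sign conventions, $\beta$ goes to
\[
\epsilon_n\, t_1^*\wedge\cdots\wedge t_n^*\otimes dt_n\wedge\cdots\wedge dt_1\otimes\beta
\]
for a universal sign $\epsilon_n$ attached to the ordering convention.

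The main obstacle is pinning down the explicit formula for $\psi_{g,f}$ in the section case and checking that the reversed ordering $dt_n\wedge\cdots\wedge dt_1$ chosen in \autoref{defn.lciCase} exactly absorbs Conrad's sign, giving $\epsilon_n = 1$. I would verify this first for $n=1$ directly from Conrad's formula, then use the transitivity already established in \autoref{lem:smoothlemma} and \autoref{lem:lcilemma}, factoring $R\to R[t_1,\dots,t_n]$ as iterated one-variable extensions with compatible sections, to reduce the general $n$ to $n=1$. Finally I would note that the result is independent of the choices of $t_i$ and $\vartheta_g$, since both maps are intrinsically defined, so the local verifications glue.
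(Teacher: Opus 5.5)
Your argument is the paper's: adjoin $\eta_g$ as a third column (the bottom rectangle commutes by the definition of $\psi_{g,f}$, the top right square by naturality of $\eta$), then check on a local generator that both paths send $\beta$ to $t_1^*\wedge\cdots\wedge t_n^*\otimes dt_n\wedge\cdots\wedge dt_1\otimes\beta$; your coordinate change $t_i = x_i - f(g(x_i))$ also justifies the paper's tacit assumption $g(x_i)=0$. The sign $\epsilon_n$ you leave open is exactly what the paper reads off directly for all $n$ from Conrad's section formula \cite[2.2.3 (c) or (d)]{ConradGDualityAndBaseChange}, so your reduction to $n=1$ via \autoref{lem:smoothlemma}, \autoref{lem:lcilemma} and pseudofunctor coherence is valid but unnecessary.
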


\begin{proof}
   We must show the commutativity of the top left square of the following diagram
\[\begin{tikzcd}[column sep=large]
	{\omegacan{R}} & {g^\flat \omegacan{S}} & {\omegacan{g} \otimes \omegacan{S}} \\
	{(g \circ f)^! \omegacan{R}} & {g^\flat f^\sharp \omegacan{R}} & {\omegacan{g} \otimes f^\sharp \omegacan{R}} \\
	{\omegacan{R}} && {\omegacan{g} \otimes \omegacan{f} \otimes \omegacan{R}}
	\arrow["{\xi_g}", from=1-1, to=1-2]
	\arrow["\eta", from=1-2, to=1-3]
	\arrow["{\omegacan{g} \otimes \xi_f}", from=1-3, to=2-3]
	\arrow["{g^\flat \xi_f}", from=1-2, to=2-2]
	\arrow["{\xi_{g \circ f}}"', from=1-1, to=2-1]
	\arrow["{\psi_{g,f}}", from=2-1, to=2-2]
	\arrow[shift left=1, no head, from=2-1, to=3-1]
	\arrow[shift left=1, no head, from=2-3, to=3-3]
	\arrow["{\eta f^\sharp}", from=2-2, to=2-3]
	\arrow[no head, from=2-1, to=3-1]
	\arrow[no head, from=2-3, to=3-3]
	\arrow["{\mbox{\footnotesize \cite[2.2.3 (c) or (d)]{ConradGDualityAndBaseChange}} \otimes \omegacan{R}}"', from=3-1, to=3-3]
\end{tikzcd}.\]
Again, the top right square commutes due to the naturality of the fundamental local isomorphism \autoref{dualregular}. The bottom rectangle commutes due to the definition of $\psi_{g,f}$ as in \cite[2.7.2]{ConradGDualityAndBaseChange}. As $f \colon R \to S = R[x_1, \ldots, x_n]$ is the natural inclusion, and $g \colon S \to R$ is determined by $g(x_i) = 0$ for all $i$. Using the explicit local formulas for the maps $\xi_f$ and $\xi_g$ defined
above, together with the formula from \cite[(c) or (d) on p.30/31]{ConradGDualityAndBaseChange} for the bottom arrow, both compositions from the top left corner to the bottom right are given by
\[
\alpha \mapsto x_1^* \wedge \cdots \wedge x_n^* \otimes dx_n \wedge \cdots \wedge dx_1 \otimes \alpha.
\]
This shows the commutativity of the upper left square.
\end{proof}

Now, for $h \colon R \to[f] S \to[g] T$ a finite type morphism of $F$-finite regular rings, factored as $f$ a polynomial extension and $g$ surjective. Define $\xi_h \colon \omegacan{T} \to h^! \omegacan{R}$ to be the composition
\[
    \xi_h \colon \omegacan{T} \to[\xi_g] g^!\omegacan{S} \to[g^!\xi_f] g^!f^!\omegacan{R} \to[\psi_{g,f}^{-1}] h^!\omegacan{R}.
\]
We are now ready to prove \autoref{thm.CanDualRegPullsback}.  Namely, we will show that this definition does not depend on the choice of factorization and that it is compatible with composition. 
We repeat the statement for the convenience of the reader.

\begin{theorem*}[\autoref{thm.CanDualRegPullsback}]\label{appthm.CanDualRegPullsback}
    Let $h \colon R \to[f] S \to[g] T$ be a finite type morphism of $F$-finite regular rings, factored as $f$ a polynomial extension and $g$ surjective. Define
    $\xi_h \colon \omegacan{T} \to h^! \omegacan{R}$ to be the composition
    \[
    \xi_h \colon \omegacan{T} \to[\xi_g] g^!\omegacan{S} \to[g^!\xi_f] g^!f^!\omegacan{R} \to[\psi_{g,f}^{-1}] h^!\omegacan{R}.
    \]
    This is independent of the factorization.

    If $e : T \to U$ is a further finite type map of $F$-finite regular rings then the following diagram commutes:
    \[
    \begin{tikzcd}
	{\omegacan{U}} && {e^! \omegacan{T}} \\
	{(e \circ h)^!\omegacan{R}} && {e^!(h^!\omegacan{R})}
	\arrow["{\xi_{e\circ h}}"', from=1-1, to=2-1]
	\arrow["{\xi_e}", from=1-1, to=1-3]
	\arrow["{e^!\xi_h}", from=1-3, to=2-3]
	\arrow["{\psi_{e,h}}", from=2-1, to=2-3]
\end{tikzcd}.\]
\end{theorem*}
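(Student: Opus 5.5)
The argument follows the classical pattern of \cite{HartshorneResidues, ConradGDualityAndBaseChange}. We reduce independence of the factorization to a comparison through a common refinement, and then handle the comparison with \autoref{lem:smoothlemma}, \autoref{lem:lcilemma} and \autoref{lem:sectionlemma}. Compatibility with composition will follow from independence together with these lemmas.

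\textbf{Independence of the factorization.} Let $R \to[f_1] S_1 = R[x_1,\dots,x_n] \to[g_1] T$ and $R \to[f_2] S_2 = R[y_1,\dots,y_m] \to[g_2] T$ be two factorizations of $h$. Form $S_{12} = R[x,y]$. There is a surjection $g_{12}\colon S_{12} \to T$ which restricts to $g_1$ and to $g_2$. It therefore suffices to compare the factorization through $S_1$ with the one through $S_{12}$, and by symmetry likewise for $S_2$.

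Write $u\colon S_1 \to S_{12}$ for the polynomial extension in the $y$'s. Choose $s_j \in S_1$ with $g_1(s_j) = g_{12}(y_j)$. Then $\sigma\colon S_{12} \to S_1$, $y_j \mapsto s_j$, is a section of $u$ with $g_{12} = g_1 \circ \sigma$. After the change of variables $y_j \mapsto y_j - s_j$ we are exactly in the setting of \autoref{lem:sectionlemma}. Combining the following pieces gives equality of the two candidate $\xi_h$:
\begin{itemize}
\item \autoref{lem:lcilemma} for the composite of surjections $g_1 \circ \sigma$;
\item \autoref{lem:smoothlemma} for the composite $u \circ f_1$;
\item \autoref{lem:sectionlemma} for $\sigma \circ u = \id$;
\item the pseudofunctoriality (cocycle) compatibilities of $\psi$ from \cite[Chapter 2]{ConradGDualityAndBaseChange}.
\end{itemize}
Concretely, one writes a large diagram from $\omegacan{T}$ to $h^!\omegacan{R}$ and fills each cell with one of these lemmas or with associativity of $\psi$. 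One point needs separate care: the change of coordinates $y_j \mapsto y_j - s_j$ is an automorphism of $S_{12}$ over $S_1$, and $\xi_u$ must be invariant under it. This holds because $\xi_u$ is defined intrinsically via the determinant of $0\to S_{12}\otimes\Omega_{S_1}\to\Omega_{S_{12}}\to\Omega_{S_{12}/S_1}\to 0$, so it is independent of the chosen coordinates.

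\textbf{Compatibility with composition.} Factor $h = g\circ f$ through $S = R[x]$ and $e$ through $T[z] \to U$. Lift to $S[z] \to T[z]$, so that $e \circ h$ factors as $R \to S \to S[z] \to T[z] \to U$. Using the commutation of a surjection with a polynomial extension (flat base change of the explicit isomorphisms), rewrite this as $R \to S[z] \to U$, which is a polynomial extension followed by a surjection. By the independence just proved, $\xi_{e\circ h}$ may be computed using this factorization. The required square then decomposes into three kinds of cell:
\begin{itemize}
\item squares for two composed smooth maps, handled by \autoref{lem:smoothlemma};
\item squares for two composed surjections, handled by \autoref{lem:lcilemma};
\item one base-change square expressing that $\xi$ for the surjection $S[z]\to T[z]$ is the pullback of $\xi_g$ along the smooth map $T \to T[z]$, compatibly with the base-change isomorphism between $f^\sharp$ and $g^\flat$ in \cite[Theorem 2.7.2 / (2.7.x)]{ConradGDualityAndBaseChange}.
\end{itemize}

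\textbf{Main obstacle.} The hardest step is this last base-change cell. It requires checking the explicit local formula: $r_1^*\wedge\cdots\wedge r_c^*\otimes dr_c\wedge\cdots\wedge dr_1\wedge\vartheta(\alpha)$, wedged with $dz$'s, must agree with Conrad's explicit smooth/finite interchange isomorphism, including signs. I would attempt it by choosing the same regular sequence $r_i$ on both sides and the splitting $\vartheta$ extended by $dz\mapsto dz$. Then both sides are the same generator up to a sign that can be read off from Conrad's sign conventions.
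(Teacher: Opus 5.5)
Your proposal is correct and follows essentially the same route as the paper. Independence of the factorization is proved by dominating both factorizations by a common polynomial refinement that carries a section compatible with the surjections (the paper's \autoref{compatclaim}, whose cells are filled by \autoref{lem:lcilemma}, \autoref{lem:smoothlemma}, \autoref{lem:sectionlemma} and 2-functoriality of $\psi$). Compatibility with composition goes through the lift $S[z]\to T[z]$ and reduces to the single base-change cell (the paper's square (8)). The paper checks that cell exactly as you suggest, using the same regular sequence and a splitting that is the identity on $\Omega_{T[z]/T}$.

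The sign does work out, rather than merely being determined up to sign. Both paths acquire $(-1)^{cd}$: one from the Koszul rule for commuting $\bigwedge^d\Omega_{T[z]/T}[d]$ past $(\bigwedge^c J/J^2)^\vee[-c]$, and the other from moving the $dz$'s past $dr_c\wedge\cdots\wedge dr_1$.
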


\begin{proof}[Proof of \autoref{thm.CanDualRegPullsback}]
    Since any two factorizations of $h$ as in \autoref{thm.CanDualRegPullsback} can be dominated by a third, and since polynomial extensions have a section, the first statement follows from the claim below.
\begin{claim}
\label{compatclaim}
    Let $R,S,S',T$ be $F$-finite regular rings. Suppose we are given maps
    \[\begin{tikzcd}
	& {S'} \\
	R & S & T
	\arrow["{f'}", from=2-1, to=1-2]
	\arrow["\gamma", shift left=1, from=1-2, to=2-2]
	\arrow["\phi", shift left=1, from=2-2, to=1-2]
	\arrow["f", from=2-1, to=2-2]
	\arrow["{g'}", shift right=1, from=1-2, to=2-3]
	\arrow["g", from=2-2, to=2-3]
\end{tikzcd}\]
where $f,f',\phi$ are polynomial extensions with $f' = \phi \circ f$, $\gamma,g,g'$ are surjective (hence also lci) with $g' = g \circ \gamma$, and $\gamma$ is a section of $\phi$ so that $\gamma \circ \phi = \id_S$. Then the following diagram commutes

% https://q.uiver.app/#q=WzAsNyxbMCwwLCJcXG9tZWdhY2Fue1R9Il0sWzAsMSwiZydeIVxcb21lZ2FjYW57Uyd9Il0sWzEsMCwiZ14hXFxvbWVnYWNhbntTfSJdLFsyLDAsImdeIWZeIVxcb21lZ2FjYW57Un0iXSxbMCwyLCJnJ14hZideIVxcb21lZ2FjYW57Un0iXSxbMiwyLCIoZydmJyleIVxcb21lZ2FjYW57Un09KGdmKV4hXFxvbWVnYWNhbntSfSJdLFsxLDJdLFswLDIsIntcXHhpX2d9Il0sWzAsMSwie1xceGlfe2cnfX0iLDJdLFsyLDMsIntnXiFcXHhpX2Z9Il0sWzEsNCwie2cnXiFcXHhpX3tmJ319IiwyXSxbNCw1LCJcXHBzaV97ZycsZid9XnstMX0iLDJdLFszLDUsIlxccHNpX3tnLGZ9XnstMX0iXV0=&macro_url=%5Cnewcommand%7B%5Comegacan%7D%5B1%5D%7B%5Comega%5E%7B%5Cmydot%7D_%7B%231%7D%7D
\[\begin{tikzcd}[cramped]
	{\omegacan{T}} & {g^!\omegacan{S}} & {g^!f^!\omegacan{R}} \\
	{g'^!\omegacan{S'}} \\
	{g'^!f'^!\omegacan{R}} & {} & {(g'f')^!\omegacan{R}=(gf)^!\omegacan{R}}
	\arrow["{{\xi_g}}", from=1-1, to=1-2]
	\arrow["{{\xi_{g'}}}"', from=1-1, to=2-1]
	\arrow["{{g^!\xi_f}}", from=1-2, to=1-3]
	\arrow["{{g'^!\xi_{f'}}}"', from=2-1, to=3-1]
	\arrow["{\psi_{g',f'}^{-1}}"', from=3-1, to=3-3]
	\arrow["{\psi_{g,f}^{-1}}", from=1-3, to=3-3]
\end{tikzcd}\]
\end{claim}

\begin{proof}[Proof of claim]   
Considering the 2-functoriality of $(\_)^!$, to verify the claim it suffices to check that the outer square in the diagram
    \[\begin{tikzcd}
	{\omegacan{T}} & {g^!\omegacan{S}} & {g^!f^!\omegacan{R}} \\
	{g'^!\omegacan{S'}} & {g^!\gamma^!\omegacan{S'}} \\
	{g'^!f'^!\omegacan{R}} && {g^!\gamma^!f'^!\omegacan{R}}
	\arrow["{\xi_{g'}}"', from=1-1, to=2-1]
	\arrow["{\xi_g}", from=1-1, to=1-2]
	\arrow["{g^!\xi_f}", from=1-2, to=1-3]
	\arrow["{g'^!\xi_{f'}}"', from=2-1, to=3-1]
	\arrow["{g^!\xi_\gamma}"', from=1-2, to=2-2]
	\arrow["{\psi_{g,\gamma}}", from=2-1, to=2-2]
	\arrow["{(1)}"{description}, shift left=4, curve={height=6pt}, draw=none, from=1-1, to=2-2]
	\arrow["{g^!\psi_{\gamma,f'}}", from=1-3, to=3-3]
	\arrow["{\psi_{g,\gamma}}"', from=3-1, to=3-3]
	\arrow["g^!\gamma^!\xi_{f'}"',from=2-2, to=3-3]
	\arrow["{(2)}"{description, pos=0.3}, shift right=2, curve={height=-12pt}, draw=none, from=1-2, to=3-3]
	\arrow["{(3)}"{description, pos=0.3}, shift right=1, curve={height=6pt}, draw=none, from=2-1, to=3-3]
\end{tikzcd}\]
commutes. Note that the square labeled (1) commutes using \autoref{lem:lcilemma}, and the square (3) commutes by the naturality of the transformation $\psi_{g,\gamma} \colon g'^! \to g^! \gamma^!$. Thus, we are left to check only that the square labeled (2) commutes.

To that end, it suffices to show the square labeled (4) below
    \[\begin{tikzcd}
	{\omegacan{S}} & {} & {f^!\omegacan{R}} \\
	\\
	{\gamma^!\omegacan{S'}} & {} & {\gamma^!f'^!\omegacan{R}} \\
	\\
	{\gamma^!\phi^!\omegacan{S}} & {} & {\gamma^!\phi^!f^!\omegacan{R}}
	\arrow["{\xi_f}", from=1-1, to=1-3]
	\arrow["{\gamma^!\xi_{f'}}", from=3-1, to=3-3]
	\arrow["{\gamma^!\phi^!\xi_f}", from=5-1, to=5-3]
	\arrow["{\xi_\gamma}"', from=1-1, to=3-1]
	\arrow["{\gamma^!\xi_\phi}"', from=3-1, to=5-1]
	\arrow["{\psi_{\gamma,f'}}", from=1-3, to=3-3]
	\arrow["{\gamma^!\psi_{\phi,f}}", from=3-3, to=5-3]
	\arrow["{\psi_{\gamma,\phi}}", shift left=5, curve={height=-40pt}, from=1-3, to=5-3]
	\arrow["{\psi_{\gamma,\phi}}"', shift right=5, curve={height=40pt}, from=1-1, to=5-1]
	\arrow["{(5)}"{description}, curve={height=40pt}, draw=none, from=1-1, to=5-1]
	\arrow["{(6)}"{description}, draw=none, from=3-2, to=5-2]
	\arrow["{(7)}"{description}, curve={height=-40pt}, draw=none, from=1-3, to=5-3]
	\arrow["{(4)}"{description}, draw=none, from=1-2, to=3-2]
\end{tikzcd}\]
commutes. First, observe the left wing labeled (5) commutes due to \autoref{lem:sectionlemma}. Next, observe the square labeled (6) commutes due to \autoref{lem:smoothlemma}. Also, the right wing
of the diagram labeled (7) commutes in light of the 2-functoriality of $(\_)!$, see \cite[(2.7.15), third nontrivial case]{ConradGDualityAndBaseChange} and also \cite[Theorem 2.7.2]{ConradGDualityAndBaseChange}. Since the outer square also commutes due to the naturality of the natural transformation $\psi_{\gamma,\phi} \colon \id \to \gamma^!\phi^!$, we conclude that square (4) commutes. Applying $g^!$, we see (2) commutes as well. This completes the verification of \autoref{compatclaim}.
\end{proof} 
The claim proves that the definition of $\xi_h$ is independent of the factorization of $h$. For the final statement in \autoref{thm.CanDualRegPullsback}, consider the following diagram which is obtained by taking a presentation of $h$ (resp. $e$) into a polynomial map $R \to S$ (resp. $T \to T[x]=T[x_1,\ldots,x_d]$) and a lci surjection $S \to T$ (resp. $T[x] \to U$): 
% https://q.uiver.app/#q=WzAsNixbMCwyLCJSIl0sWzEsMSwiUyJdLFsyLDIsIlQiXSxbNCwyLCJVIl0sWzMsMSwiVFt4XSJdLFsyLDAsIlNbeF0iXSxbMCwxLCJmIl0sWzEsNSwiYSciXSxbNSw0LCJnJyIsMCx7InN0eWxlIjp7ImhlYWQiOnsibmFtZSI6ImVwaSJ9fX1dLFs0LDMsImIiLDAseyJzdHlsZSI6eyJoZWFkIjp7Im5hbWUiOiJlcGkifX19XSxbMSwyLCJnIiwwLHsic3R5bGUiOnsiaGVhZCI6eyJuYW1lIjoiZXBpIn19fV0sWzIsNCwiYSJdLFswLDIsImgiLDJdLFsyLDMsImMiLDJdXQ==
\[\begin{tikzcd}[cramped]
	&& {S[x]} \\
	& S && {T[x]} \\
	R && T && U
	\arrow["f", from=3-1, to=2-2]
	\arrow["{a'}", from=2-2, to=1-3]
	\arrow["{g'}", two heads, from=1-3, to=2-4]
	\arrow["b", two heads, from=2-4, to=3-5]
	\arrow["g", two heads, from=2-2, to=3-3]
	\arrow["a", from=3-3, to=2-4]
	\arrow["h"', from=3-1, to=3-3]
	\arrow["e"', from=3-3, to=3-5]
\end{tikzcd}\]
Observe that the top square is Cartesian. As all upward arrows are polynomials maps, all downward arrows are lci surjections, the topmost path over the mountain is a factorization of the composition $e \circ h$. First observe, that but by \autoref{lem:smoothlemma} diagram \autoref{eq:compatdiag} holds for the composition $a'\circ f$ and as well for the composition $b\circ g'$ by \autoref{lem:lcilemma}. Hence it is enough to show equality of the isomorphisms $\xi_{g'\circ a'}=\psi^{-1}_{g',a'}\circ g'^!\xi_{a'}\circ\xi_{g'}$ and $\psi_{a,g}^{-1}\circ a^!\xi_{g}\circ\xi_{a}$ obtained by taking the top or bottom path in the cartesian square. In other words, our final goal is to prove that square (8) in Figure \ref{keydiagram} commutes (which is \autoref{eq:compatdiag} for the composition $a \circ g$). For simplicity of notation, all unlabeled tensor products for the remainder of this proof are taken over $T[x]$.

\begin{figure}[h]
% https://q.uiver.app/#q=WzAsOCxbMCwwLCJcXG9tZWdhX3tUW3hdfV5cXG15ZG90Il0sWzEsMCwiYV4hXFxvbWVnYV9UXlxcbXlkb3QiXSxbMiwwLCJhXiFnXiFcXG9tZWdhX1NeXFxteWRvdCJdLFsyLDIsImcnXiFhJ14hIFxcb21lZ2FfU15cXG15ZG90Il0sWzAsMiwiZydeISBcXG9tZWdhX3tTW3hdfV5cXG15ZG90Il0sWzAsNCwiXFxsZWZ0KCBcXGJpZ3dlZGdlXmMgXFxmcmFje0pbeF19e0pbeF1eMn1cXHJpZ2h0KV5cXHZlZVstY10gXFxvdGltZXMgZydeKiBcXG9tZWdhX3tTW3hdfV5cXG15ZG90Il0sWzMsMCwiYV4hXFxsZWZ0KFxcbGVmdCggXFxiaWd3ZWRnZV5jIFxcZnJhY3tKfXtKXjJ9IFxccmlnaHQpXlxcdmVlIFstY10gXFxvdGltZXNfVCBnXiogXFxvbWVnYV9TXlxcbXlkb3QgIFxccmlnaHQpIl0sWzMsMywiXFxsZWZ0KFxcYmlnd2VkZ2VeYyBcXGZyYWN7Slt4XX17Slt4XV4yfSBcXHJpZ2h0KV5cXHZlZVstY10gXFxvdGltZXMgZydeKiBhJ14hIFxcb21lZ2FfU15cXG15ZG90ICJdLFswLDEsIlxceGlfYSJdLFsxLDIsImFeIVxceGlfZyJdLFsyLDMsIlxccHNpX3tnJyxhJ30gXFxwc2lfe2EsZ31eey0xfSJdLFswLDQsIlxceGlfe2cnfSIsMl0sWzQsMywiZydeIVxceGlfe2EnfSIsMl0sWzQsNSwiXFxldGFfe2cnfSIsMl0sWzIsNiwiYV4hXFxldGFfZyJdLFszLDcsIlxcZXRhX3tnJ30iXSxbNiw3LCJcXGthcHBhIl0sWzUsNywiXFxsZWZ0KFxcYmlnd2VkZ2VeYyBcXGZyYWN7Slt4XX17Slt4XV4yfSBcXHJpZ2h0KV5cXHZlZVstY10gXFxvdGltZXMgZydeKlxceGlfe2EnfSIsMl0sWzEsMTIsIig4KSIsMSx7InNob3J0ZW4iOnsidGFyZ2V0IjoyMH0sInN0eWxlIjp7ImJvZHkiOnsibmFtZSI6Im5vbmUifSwiaGVhZCI6eyJuYW1lIjoibm9uZSJ9fX1dLFsxMiwxNywiKDEwKSIsMSx7InNob3J0ZW4iOnsic291cmNlIjoyMCwidGFyZ2V0IjoyMH0sInN0eWxlIjp7ImJvZHkiOnsibmFtZSI6Im5vbmUifSwiaGVhZCI6eyJuYW1lIjoibm9uZSJ9fX1dLFsxNCw3LCIoOSkiLDAseyJvZmZzZXQiOjUsInNob3J0ZW4iOnsic291cmNlIjoyMCwidGFyZ2V0IjoyMH0sInN0eWxlIjp7ImJvZHkiOnsibmFtZSI6Im5vbmUifSwiaGVhZCI6eyJuYW1lIjoibm9uZSJ9fX1dXQ==
\[\begin{tikzcd}[cramped]
	{\omega_{T[x]}^\mydot} & {a^!\omega_T^\mydot} & {a^!g^!\omega_S^\mydot} & {a^!\left(\left( \bigwedge^c \frac{J}{J^2} \right)^\vee [-c] \otimes_T g^* \omega_S^\mydot  \right)} \\
	\\
	{g'^! \omega_{S[x]}^\mydot} && {g'^!a'^! \omega_S^\mydot} \\
	&&& {\left(\bigwedge^c \frac{J[x]}{J[x]^2} \right)^\vee[-c] \otimes g'^* a'^! \omega_S^\mydot } \\
	{\left( \bigwedge^c \frac{J[x]}{J[x]^2}\right)^\vee[-c] \otimes g'^* \omega_{S[x]}^\mydot}
	\arrow["{\xi_a}", from=1-1, to=1-2]
	\arrow["{\xi_{g'}}"', from=1-1, to=3-1]
	\arrow["{a^!\xi_g}", from=1-2, to=1-3]
	\arrow[""{name=0, anchor=center, inner sep=0}, "{a^!\eta_g}", from=1-3, to=1-4]
	\arrow["{\psi_{g',a'} \psi_{a,g}^{-1}}", from=1-3, to=3-3]
	\arrow["\kappa", from=1-4, to=4-4]
	\arrow[""{name=1, anchor=center, inner sep=0}, "{g'^!\xi_{a'}}"', from=3-1, to=3-3]
	\arrow["{\eta_{g'}}"', from=3-1, to=5-1]
	\arrow["{\eta_{g'}}", from=3-3, to=4-4]
	\arrow[""{name=2, anchor=center, inner sep=0}, "{\left(\bigwedge^c \frac{J[x]}{J[x]^2} \right)^\vee[-c] \otimes g'^*\xi_{a'}}"', from=5-1, to=4-4]
	\arrow["{(8)}"{description}, draw=none, from=1-2, to=1]
	\arrow["{(9)}", shift right=5, draw=none, from=0, to=4-4]
	\arrow["{(10)}"{description}, draw=none, from=1, to=2]
\end{tikzcd}\]
\caption{We need to prove commutativity of (8).}\label{keydiagram}
\end{figure}

 To see that (8) commutes, our strategy is as follows: we shall argue that squares (9) and (10) in Figure \ref{keydiagram} commute, giving explicit references as we go, and then subsequently
 verify the commutativity of the outside diagram in Figure \ref{keydiagram} by an explicit check on elements. The first verification is straightforward: that (10) commutes is a direct consequence of the functoriality of the fundamental local isomorphism $\eta_{g'}$ from \autoref{dualregular} (\textit{cf.} \cite[III Proposition 7.2]{HartshorneResidues} and \cite[(2.5.3)]{ConradGDualityAndBaseChange}).

Next, to see that (9) in Figure \ref{keydiagram} commutes, we begin with the observation that the diagram on page~183 of \cite{HartshorneResidues} commutes, with the caveat that we are consistently using the conventions in \cite{ConradGDualityAndBaseChange} in the order of the tensor products. Explicitly, this gives a commutative diagram
% https://q.uiver.app/#q=WzAsNCxbMCwwLCJhXipnXlxcZmxhdFxcb21lZ2FfU15cXG15ZG90Il0sWzIsMCwiYV4qXFxsZWZ0KFxcbGVmdChcXGJpZ3dlZGdlXmMge0ogXFxvdmVyIEpeMn0gXFxyaWdodCleXFx2ZWUgWy1jXSBcXG90aW1lc19UIGdeKlxcb21lZ2FfU15cXG15ZG90IFxccmlnaHQpIl0sWzAsMSwiZydeXFxmbGF0IGEnXiogXFxvbWVnYV9TXlxcbXlkb3QiXSxbMiwxLCJcXGxlZnQoXFxiaWd3ZWRnZV5jIHtKW3hdIFxcb3ZlciBKW3hdXjJ9IFxccmlnaHQpXlxcdmVlWy1jXSBcXG90aW1lcyBnJ14qYSdeKlxcb21lZ2FfU15cXG15ZG90Il0sWzAsMSwiYV4qXFxldGFfZyJdLFswLDJdLFsyLDMsIlxcZXRhX3tnJ30iLDJdLFsxLDNdXQ==
\[\begin{tikzcd}[cramped]
	{a^*g^\flat\omega_S^\mydot} && {a^*\left(\left(\bigwedge^c {J \over J^2} \right)^\vee [-c] \otimes_T g^*\omega_S^\mydot \right)} \\
	{g'^\flat a'^* \omega_S^\mydot} && {\left(\bigwedge^c {J[x] \over J[x]^2} \right)^\vee[-c] \otimes g'^*a'^*\omega_S^\mydot}
	\arrow["{a^*\eta_g}", from=1-1, to=1-3]
	\arrow[from=1-1, to=2-1]
	\arrow[from=1-3, to=2-3]
	\arrow["{\eta_{g'}}"', from=2-1, to=2-3]
\end{tikzcd}\]
which commutes by compatibility of the fundamental local isomorphism with flat base change. Here the vertical arrows are the usual identifications arising from the Cartesian square. This compatibility is also asserted at the bottom of page~52 of \cite{ConradGDualityAndBaseChange}, where it is noted to hold in greater generality, and it appears again at the bottom of page~54 of \cite{ConradGDualityAndBaseChange}, which we will use momentarily. 

Applying the functor $\bigwedge^d \Omega_{T[x]/T}[d] \otimes (-)$ to the entire diagram above, we see that the top square in the diagram below commutes
% https://q.uiver.app/#q=WzAsOCxbMCwwLCJhXiFnXlxcZmxhdFxcb21lZ2FfU15cXG15ZG90Il0sWzIsMCwiYV4hXFxsZWZ0KFxcbGVmdChcXGJpZ3dlZGdlXmMge0ogXFxvdmVyIEpeMn0gXFxyaWdodCleXFx2ZWVbLWNdIFxcb3RpbWVzX1QgZ14qXFxvbWVnYV9TXlxcbXlkb3QgXFxyaWdodCkiXSxbMCwxLCJcXGJpZ3dlZGdlXmQgXFxPbWVnYV97VFt4XSBcXG92ZXIgVH1bZF0gXFxvdGltZXMgZydeXFxmbGF0IGEnXiogXFxvbWVnYV9TXlxcbXlkb3QiXSxbMiwxLCJcXGJpZ3dlZGdlXmQgXFxPbWVnYV97VFt4XSBcXG92ZXIgVH1bZF0gXFxvdGltZXNcXGxlZnQoXFxiaWd3ZWRnZV5jIHtKW3hdIFxcb3ZlciBKW3hdXjJ9IFxccmlnaHQpXlxcdmVlWy1jXSBcXG90aW1lcyBnJ14qYSdeKlxcb21lZ2FfU15cXG15ZG90Il0sWzAsNCwiZydeIWEnXiFcXG9tZWdhX1NeXFxteWRvdCJdLFsyLDMsIlxcbGVmdChcXGJpZ3dlZGdlXmMge0pbeF0gXFxvdmVyIEpbeF1eMn0gXFxyaWdodCleXFx2ZWVbLWNdIFxcb3RpbWVzIGcnXipcXGxlZnQoXFxiaWd3ZWRnZV5kIFxcT21lZ2Ffe1NbeF0gXFxvdmVyIFN9W2RdIFxcb3RpbWVzX3tTW3hdfWEnXipcXG9tZWdhX1NeXFxteWRvdFxccmlnaHQpIl0sWzIsNCwiXFxsZWZ0KFxcYmlnd2VkZ2VeYyB7Slt4XSBcXG92ZXIgSlt4XV4yfSBcXHJpZ2h0KV5cXHZlZVstY10gXFxvdGltZXMgZydeKmEnXiFcXG9tZWdhX1NeXFxteWRvdCJdLFsyLDIsIiBcXGxlZnQoXFxiaWd3ZWRnZV5jIHtKW3hdIFxcb3ZlciBKW3hdXjJ9IFxccmlnaHQpXlxcdmVlWy1jXVxcb3RpbWVzIFxcYmlnd2VkZ2VeZCBcXE9tZWdhX3tUW3hdIFxcb3ZlciBUfVtkXSBcXG90aW1lcyBnJ14qYSdeKlxcb21lZ2FfU15cXG15ZG90Il0sWzAsMSwiYV4hXFxldGFfZyJdLFswLDJdLFsyLDMsIlxcYmlnd2VkZ2VeZCBcXE9tZWdhX3tUW3hdIFxcb3ZlciBUfVtkXSBcXG90aW1lcyBcXGV0YV97Zyd9IiwyXSxbMSwzXSxbNSw2LCIiLDIseyJsZXZlbCI6Miwic3R5bGUiOnsiaGVhZCI6eyJuYW1lIjoibm9uZSJ9fX1dLFs0LDYsIlxcZXRhX3tnJ30iXSxbMiw0XSxbMyw3XSxbNyw1LCIiLDEseyJsZXZlbCI6Miwic3R5bGUiOnsiaGVhZCI6eyJuYW1lIjoibm9uZSJ9fX1dXQ==
\[\begin{tikzcd}[cramped]
	{a^!g^\flat\omega_S^\mydot} && {a^!\left(\left(\bigwedge^c {J \over J^2} \right)^\vee[-c] \otimes_T g^*\omega_S^\mydot \right)} \\
	{\bigwedge^d \Omega_{T[x] \over T}[d] \otimes g'^\flat a'^* \omega_S^\mydot} && {\bigwedge^d \Omega_{T[x] \over T}[d] \otimes\left(\bigwedge^c {J[x] \over J[x]^2} \right)^\vee[-c] \otimes g'^*a'^*\omega_S^\mydot} \\
	&& { \left(\bigwedge^c {J[x] \over J[x]^2} \right)^\vee[-c]\otimes \bigwedge^d \Omega_{T[x] \over T}[d] \otimes g'^*a'^*\omega_S^\mydot} \\
	&& {\left(\bigwedge^c {J[x] \over J[x]^2} \right)^\vee[-c] \otimes g'^*\left(\bigwedge^d \Omega_{S[x] \over S}[d] \otimes_{S[x]}a'^*\omega_S^\mydot\right)} \\
	{g'^!a'^!\omega_S^\mydot} && {\left(\bigwedge^c {J[x] \over J[x]^2} \right)^\vee[-c] \otimes g'^*a'^!\omega_S^\mydot}
	\arrow["{a^!\eta_g}", from=1-1, to=1-3]
	\arrow[from=1-1, to=2-1]
	\arrow[from=1-3, to=2-3]
	\arrow["{\bigwedge^d \Omega_{T[x] \over T}[d] \otimes \eta_{g'}}"', from=2-1, to=2-3]
	\arrow[from=2-1, to=5-1]
	\arrow[from=2-3, to=3-3]
	\arrow[equals, from=3-3, to=4-3]
	\arrow[equals, from=4-3, to=5-3]
	\arrow["{\eta_{g'}}", from=5-1, to=5-3]
\end{tikzcd}\]
and where the bottom square is roughly given by passing the tensor with the free module $\bigwedge^d \Omega_{T[x]/T} = T[x] \otimes_{S[x]}\bigwedge^d \Omega_{S[x]/S}$ inside the corresponding functors. That this resulting bottom square commutes is a special case of \cite[Equation~(2.5.7)]{ConradGDualityAndBaseChange}, up to a cohomological shift. Note that commuting the factors $\left(\bigwedge^c J[x]/J[x]^2\right)^\vee[-c]$ and $\bigwedge^d \Omega_{T[x]/T}[d]$ introduces a Koszul sign $(-1)^{cd}$ according to the sign conventions of \cite[bottom of page~11]{ConradGDualityAndBaseChange}. With this understood, the outer rectangle of the above diagram is precisely (9) in \autoref{keydiagram}. Indeed, the key point is the explicit description of the isomorphism $\psi_{g',a'} \psi_{a,g}^{-1}$ given as the composition down the left side \cite[Proposition 6.3, Corollary 6.4]{HartshorneResidues}. The composition down the right side is the map denoted $\kappa$ in the original diagram.

Thus, to complete the argument, we need to show that the exterior square of the original diagram commutes. Since all maps in sight are isomorphisms, we do this by an explicit computation on elements. We begin with an element of
\[
a^!\omega^\mydot_T = \bigwedge^d \Omega_{T[x] \over T}[d] \otimes a^*\omega_T^\mydot
\]
located in the middle of the top row of the diagram and trace its image to the bottom left corner along the two possible paths. As $\bigwedge^d \Omega_{T[x]/T}$ is free of rank one, generated by $dx = dx_1 \wedge \cdots \wedge dx_d$, it suffices to check this on elements of the form $dx \otimes a^*\beta$ for $\beta \in \omega_T$.
For convenience, and using the identification $\bigwedge^d \Omega_{T[x]/T} = T[x] \otimes_{S[x]}\bigwedge^d \Omega_{S[x]/S}$, we will also use $dx$ to denote the relevant element of $\Omega_{S[x]/S}$.

We first compute the image obtained by moving clockwise around the exterior \autoref{keydiagram}. In order to apply $\xi_g$, we fix a splitting $\theta_g$ of the short exact sequence
% https://q.uiver.app/#q=WzAsNSxbMSwwLCJKL0peMiJdLFsyLDAsIlRcXG90aW1lc19TXFxPbWVnYV9TIl0sWzMsMCwiXFxPbWVnYV9UIl0sWzAsMCwiMCJdLFs0LDAsIjAiXSxbMywwXSxbMCwxXSxbMSwyXSxbMiw0XSxbMiwxLCJcXHRoZXRhX2ciLDIseyJjdXJ2ZSI6Miwic3R5bGUiOnsiYm9keSI6eyJuYW1lIjoiZGFzaGVkIn19fV1d
\[\begin{tikzcd}
	0 & {J/J^2} & {T\otimes_S\Omega_S} & {\Omega_T} & 0
	\arrow[from=1-1, to=1-2]
	\arrow[from=1-2, to=1-3]
	\arrow[from=1-3, to=1-4]
	\arrow["{\theta_g}"', curve={height=12pt}, dashed, from=1-4, to=1-3]
	\arrow[from=1-4, to=1-5]
\end{tikzcd}\]
and working locally, we may assume that $r_1,\dots,r_c$ is a regular sequence in $S$ generating $J$. By the definition of $\xi_g$, the element
\begin{equation}
\label{eq:starelement}
\tag{$*$}
dx  \otimes a^*\beta \in a^! \omega_T^\mydot = \bigwedge^d \Omega_{T[x]/T}[d] \otimes a^*\omega_T^\mydot
\end{equation}
maps to
$$dx \otimes a^*\left(r_1^* \wedge \cdots \wedge r_c^* \otimes dr_c \wedge \cdots dr_1 \wedge \theta_g(\beta)\right)$$
in the top right entry 
$$a^! \left( \left( \bigwedge^c {J \over J^2} \right)^\vee [-c]\otimes_T g^* \omega_S^\mydot\right) = \bigwedge^d \Omega_{T[x] \over T} [d]\otimes a^*\left( \left( \bigwedge^c {J \over J^2} \right)^\vee[-c] \otimes_T g^* \omega_S^\mydot \right)$$ 
at the corner of (9) in \autoref{keydiagram}. Now, by construction we have that $\theta_g(\beta) \in T \otimes_S \omega_S^\mydot = g^* \omega_S^\mydot$. Thus, using that $S \to T$ is surjective, we may represent $\theta_g(\beta) = 1 \otimes \alpha = g^* \alpha$ for $\alpha \in \omega_S^\mydot$. Note that the choice of $\alpha$ is not canonical: however, the difference in any two choices is in $J \omega_S^\mydot$, so that this ambiguity disappears after wedging with $dr_c\wedge\cdots\wedge dr_1$ and shall not be a factor in what follows. Hence, the image of $dx  \otimes a^*\beta$ in the top right corner of  is then
$$dx \otimes a^*\left(r_1^* \wedge \cdots \wedge r_c^* \otimes dr_c \wedge \cdots dr_1 \wedge g^*\alpha\right)$$ in the top right corner of (9) in \autoref{keydiagram}.

We now apply the downward morphism $\kappa$ appearing on the right side of \autoref{keydiagram} (9). As discussed earlier, this map is obtained in two steps. First, the pullback is absorbed into the factor $\left(\bigwedge^c J/J^2\right)^\vee[-c]$; identifying $r_1,\dots,r_c$ with their images in $S[x]$, this yields an element of $\left(\bigwedge^c J[x]/J[x]^2\right)^\vee[-c]$. Second, the free module $\bigwedge^d \Omega_{T[x]/T}[d] = g'{^*} \bigwedge^d \Omega_{S[x]/S}[d]$ is commuted past the degree-$c$ factor, introducing a sign $(-1)^{cd}$. Consequently, $\kappa$ sends the above element to
$$(-1)^{cd} (r_1^* \wedge \cdots \wedge r_c^*) \otimes g'^*(dx \otimes dr_c \wedge \cdots \wedge dr_1 \wedge a'^*\alpha)$$
inside the entry
\[
    \left( \bigwedge^c {J[x] \over J[x]^2}\right)^\vee[-c] \otimes g'^* a'^! \omegacan{S} = \left( \bigwedge^c {J[x] \over J[x]^2}\right)^\vee[-c] \otimes g'^* \left(\bigwedge^d \Omega_{S[x] \over S}[d] \otimes_{S[x]} a'^* \omegacan{S}\right).
\]
at the bottom right corner of \autoref{keydiagram} (9).

Finally, we apply the inverse of the bottom horizontal arrow (i.e. the inverse of the bottom map in \autoref{keydiagram} (10)). Since $\xi_{a'}^{-1}$ is given by pullback and exterior product, this yields
\begin{equation}
    \label{eq:2starelement}
    \tag{$**$}
    (-1)^{cd} (r_1^* \wedge \cdots \wedge r_c^*) \otimes g'^*(dx \wedge dr_c \wedge \cdots \wedge dr_1 \wedge a'^*\alpha).
\end{equation}
Thus, in conclusion, starting from the element \autoref{eq:starelement} and moving clockwise around the exterior square of \autoref{keydiagram} yields \autoref{eq:2starelement} inside
the bottom left entry $\left(\bigwedge^c {J[x] \over J[x]^2}\right)^\vee[-c] \otimes g'^* \omegacan{S[x]}$ of \autoref{keydiagram} (10).

To conclude, we need to see that moving counterclockwise to the bottom left gives the same result. As before, the first step is straightforward: applying $\xi_a^{-1}$ to our starting element from \autoref{eq:starelement} gives
\[
dx \otimes a^*\beta \longmapsto dx \wedge a^*\beta \in \omega_{T[x]}^\mydot
\]
in the top left corner of \autoref{keydiagram} (8).
We next apply $\xi_{g'}$. To do so, we must choose a splitting $\theta_{g'}$ of the conormal sequence for
$g' : S[x] \twoheadrightarrow T[x]$ compatibly with the fixed splitting $\theta_g$ for $g$. Consider the diagram below.

\[\begin{tikzcd}[cramped]
	& {T[x] \otimes_S J/J^2} & {J[x]/J[x]^2} \\
	0 & {T \otimes_S S[x] \otimes_S \Omega_S} & {T\otimes_S \Omega_{S[x]}} & {T\otimes_S \Omega_{S[x]/S}} & 0 \\
	0 & {T[x]\otimes_{S[x]}S[x]\otimes_{S} \Omega_S} & {T[x]\otimes_{S[x]}\Omega_{S[x]}} & {T[x]\otimes_{S[x]} \Omega_{S[x]/S}} & 0 \\
	0 & {T[x]\otimes_T \Omega_T} & {\Omega_{T[x]}} & {\Omega_{T[x]/T}} & 0
	\arrow[equals, from=1-2, to=1-3]
	\arrow[hook, from=1-2, to=2-2]
	\arrow[hook, from=1-3, to=2-3]
	\arrow[from=2-1, to=2-2]
	\arrow[from=2-2, to=2-3]
	\arrow[equals, from=2-2, to=3-2]
	\arrow[from=2-3, to=2-4]
	\arrow[equals, from=2-3, to=3-3]
	\arrow[from=2-4, to=2-5]
	\arrow[equals, from=2-4, to=3-4]
	\arrow[from=3-1, to=3-2]
	\arrow[from=3-2, to=3-3]
	\arrow[two heads, from=3-2, to=4-2]
	\arrow[from=3-3, to=3-4]
	\arrow[two heads, from=3-3, to=4-3]
	\arrow[from=3-4, to=3-5]
	\arrow[equals, from=3-4, to=4-4]
	\arrow[from=4-1, to=4-2]
	\arrow["{a^* \theta_g}"', shift right=3, curve={height=6pt}, dashed, from=4-2, to=3-2]
	\arrow[from=4-2, to=4-3]
	\arrow["{\theta_{g'}}"', shift right=3, curve={height=6pt}, dashed, from=4-3, to=3-3]
	\arrow[from=4-3, to=4-4]
	\arrow[from=4-4, to=4-5]
\end{tikzcd}\]

\noindent
where the bottom three rows are short exact sequences. The left column is obtained by
pullback by $a^*$ from the conormal sequence for $g$, which is thus split by $a^*\theta_g = T[x] \otimes_T \theta_g$.
The map $\theta_{g'}$ is required to split the indicated surjection in the middle column, which is the conormal sequence for $g'$.

The right terms $\Omega_{T[x]/T}$ of the horizontal exact sequences are free, generated by
$dx_1,\dots,dx_d$. Choosing the splittings of the horizontal sequences determined by the polynomial coordinates $x_1,\dots,x_d$,
\[
\begin{array}{c}
0 \to T[x]\otimes_S\Omega_S \to T[x]\otimes_{S[x]}\Omega_{S[x]} \to \Omega_{T[x]/T} \to 0 \\
     0 \to T[x]\otimes_T\Omega_T \to \Omega_{T[x]} \to \Omega_{T[x]/T} \to 0
     
\end{array}
\]
these splittings are compatible with the vertical maps in the diagram, since the generators
$dx_i$ are preserved throughout. (Alternatively, since the right terms can be canonically
identified, a splitting of the top sequence induces one for the bottom.) Consequently, the middle terms admit compatible direct sum
decompositions
\[
\Omega_{T[x]} \cong (T[x]\otimes_T \Omega_T)\oplus \Omega_{T[x]/T},
\quad
T[x]\otimes_{S[x]} \Omega_{S[x]} \cong (T[x]\otimes_S \Omega_S)\oplus \Omega_{T[x]/T}.
\]
We define $\theta_{g'}$ to be the unique splitting of the middle vertical surjection such that:
\begin{itemize}
\item on the summand $T[x]\otimes_T \Omega_T$, it agrees with the base change
$a^*\theta_g=T[x]\otimes_T\theta_g$;
\item on the summand $\Omega_{T[x]/T}$, it is the identity.
\end{itemize}

With $\theta_{g'}$ fixed as above, we can now compute the image of
$dx\wedge a^*\beta$ under $\xi_{g'}$. As above, we continue to 
assume that $r_1,\dots,r_c$ is a regular sequence in $S$ generating $J$, and thus the same elements (viewed in $S[x]$) form a regular sequence generating $J[x]$.
By definition of $\xi_{g'}$ (using the chosen splitting $\theta_{g'}$), we have
\[
\xi_{g'}(dx\wedge a^*\beta)
=
(r_1^*\wedge\cdots\wedge r_c^*)
\;\otimes\;
dr_c\wedge\cdots\wedge dr_1 \wedge \theta_{g'}(dx\wedge a^*\beta)
\]
in $\left(\bigwedge^c \frac{J[x]}{J[x]^2}\right)^\vee[-c]\otimes g'^*\omega_{S[x]}^\mydot$.
By construction, $\theta_{g'}$ is the identity on the summand
$\Omega_{T[x]/T}$, hence $\theta_{g'}(dx)=dx$, and on the summand
$T[x]\otimes_T\Omega_T$ it agrees with the extension of scalars
$a^*\theta_g := T[x]\otimes_T\theta_g$.
Therefore
\[
\theta_{g'}(dx\wedge a^*\beta)=dx\wedge a^*\theta_g(\beta).
\]
Substituting this into the preceding formula yields
\[
\begin{aligned}
\xi_{g'}(dx\wedge a^*\beta)
&=
(r_1^*\wedge\cdots\wedge r_c^*)
\otimes
dr_c\wedge\cdots\wedge dr_1 \wedge dx \wedge a^*\theta_g(\beta) \\
&=
(-1)^{cd}\,
(r_1^*\wedge\cdots\wedge r_c^*)
\otimes
dx \wedge dr_c\wedge\cdots\wedge dr_1 \wedge a^*\theta_g(\beta).
\end{aligned}
\]
Using the identifications
\[
a^*\theta_g(\beta)= a^*(g^*\alpha) = g'^*\bigl(a'^* \alpha \bigr)
\qquad\text{and}\qquad
dx = g'^*(dx)\in g'^*\Omega_{S[x]/S},
\]
we may rewrite the last expression as
\begin{equation}
\label{eq:3starelement}
\tag{$***$}
\xi_{g'}(dx\wedge a^*\beta)
=
(-1)^{cd}\,
(r_1^*\wedge\cdots\wedge r_c^*)
\otimes
g'^*\!\left(dx\wedge dr_c\wedge\cdots\wedge dr_1 \wedge a'^*\alpha\right).
\end{equation}
This coincides with the element obtained by traversing the exterior of the diagram
clockwise starting from $dx\otimes a^*\beta$, computed in \autoref{eq:2starelement}.
Therefore the exterior square commutes on generators $dx\otimes a^*\beta$, and hence
commutes. This completes the proof. \end{proof}

\subsection{Canonical dualizing complex for $F$-finite rings -- a sketch}
\label{ss:CanDualFFin}
The aim of this section is to sketch how the results of \autoref{subsec.DualityForFFiniteRegular} in the regular case just obtained via explicit duality theory can be combined with our analysis of Gabber's construction in \autoref{sec:gabberconstr} to obtain a dualizing complex up to canonical isomorphism for an $F$-finite ring that is independent of the various choices.  However, checking the necessary compatibilities soon becomes unwieldy. For this reason, in the present section we limit ourselves to proving that any two dualizing complexes constructed via Gabber’s result are isomorphic. We do not attempt here to verify that this isomorphism enjoys all of the compatibilities one would want in a complete theory. Instead, we point the reader to \autoref{sec.DualizingComplexesAsUnits}, where an alternative viewpoint allows one to bypass these verifications.

By \autoref{gabbify}, the choice of a $p$-generating set $x$ of an $F$-finite Noetherian ring $A$ yields an $F$-finite regular ring $S$ with a $p$-basis $x_\infty$ which under the surjection $\pi \colon S \to A$ bijectively maps $x_\infty$ to $x$. 
Since we have defined the canonical dualizing complex for regular $F$-finite rings via differential forms in the preceding section we can use this to define dualizing complexes on any $F$-finite ring, following  \cite{Gabber_someTstructures}.  Again, we know this yields a good theory via \autoref{sec.DualizingComplexesAsUnits}, but the goal in this section is to explore the classical approach.

\begin{definition}
\label{def.GeneralDefinitionOfCanonicalForFFiniteRings}
Suppose $A$ is an $F$-finite ring and $S \to A$ is finite type with $S$ a regular $F$-finite ring, for instance surjective.  We define 
\begin{equation}
    \omegacan{A} \coloneqq \pi^! \omegacan{S}    
\end{equation}
as the canonical dualizing complex for $A$. 
\end{definition}

The difficulty is that the choice of regular ring is far from unique: there may be many regular rings \(S\) surjecting onto \(A\), or more generally admitting a finite type map to \(A\), in part because \(A\) may possess many different \(p\)-generating sets.

Our goal is therefore to prove that for any two regular rings \(R\) and \(S\) with finite type maps
\[
    R \xrightarrow{\pi} A \xleftarrow{\tau} S,
\]
the associated dualizing complexes \(\pi^! \omega_R^{\mydot}\) and \(\tau^! \omega_S^{\mydot}\) are isomorphic through an explicit isomorphism. As discussed above, a complete theory would require stronger compatibilities than this, and we will not check them here.

% Of course, there is a problem.  There are potentially many different regular rings $S$ surjecting onto (or mapping finite type to)  $A$ not least because there are many different $p$-generating sets for $A$.  

% Our goal is to show that for any two regular rings $R, S$ with finite type maps 
% \[
%     R \xrightarrow{\pi} A \xleftarrow{\tau} S
% \]
% the induced dualizing complexes $\pi^! \omega_R^{\mydot}$ and $\tau^! \omega_S^{\mydot}$ are isomorphic via an explicit isomorphism.  As mentioned above, to develop a fully suitable theory, one needs much stronger compatibilities that we will not check. % We will not develop this full theory as it is even messier than the regular case we did above, and as mentioned in the introduction we will bypass this using a more modern perspectives on dualizing complexes.  We will construct the isomorphism $\pi^! \omega_R^{\mydot}\cong \tau^! \omega_S^{\mydot}$, however.

\begin{lemma}
    \label{lem.TwoRegularRingsLocalizationCompletion}
    Suppose $\pi : R \xrightarrow{f} R' \xrightarrow{\pi'} A$ is a composition such that both $R$ and $R'$ are $F$-finite regular rings.  

    Suppose additionally that the following conditions are satisfied.
    \begin{enumerate}
        \item $f$ is flat, geometrically regular, and $\pi$ (and hence $\pi'$) is of finite type,\label{xi'.cond.MapConditions}
        \item $A \to A \otimes_R R'$ is an isomorphism.\label{xi'.cond.TensorIsom}
        \item $\Omega_R \otimes_R R' \to \Omega_{R'}$ is an isomorphism (and hence $\omegacan{R}\otimes_R R' \cong \omegacan{R'}$).\label{xi'.cond.Omega}
    \end{enumerate}
    Notice these conditions hold for example if $f$ is a completion along an ideal in the kernel of $\pi$, a localization at a multiplicative set mapping to units in $A$, or a combination thereof.
    Then we have an isomorphisms $\pi^! \omega_R^{\mydot} \cong \pi'^! \omega_{R'}^{\mydot}$ and:
    \[
        \pi^! \omega_R^{\mydot} \otimes_R R' \xrightarrow{\sim} {\pi'}^!(\omega_R^{\mydot} \otimes_R R').
    \]
\end{lemma}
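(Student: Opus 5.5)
The plan is to prove the displayed base-change isomorphism first, and then deduce $\pi^! \omega_R^{\mydot} \cong \pi'^! \omega_{R'}^{\mydot}$ from it using condition (c). Since both sides are modules over $A$, and $A \otimes_R R' \cong A$ by (b), we may regard $\pi^!\omega_R^{\mydot}\otimes_R R'$ as an $A$-complex.

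\textbf{Step 1: reduce to $\pi'$ finite.} First factor $\pi'$ as $R' \to R'[y_1,\dots,y_m] \twoheadrightarrow A$, by choosing lifts of finitely many generators of $A$ over $R'$. These lifts can be chosen to come from $R$: since $\pi$ is of finite type, the same generators already generate $A$ over $R$. This gives compatible factorizations
\[
R \to R[y] \xrightarrow{\ \rho\ } A, \qquad R' \to R'[y] \xrightarrow{\ \rho'\ } A,
\]
with $R[y]\to R'[y]$ still flat and geometrically regular. Condition (b) persists, since $A\otimes_{R[y]}R'[y] = A\otimes_R R'$. Condition (c) also persists, since $\Omega_{R[y]}\otimes R'[y]\cong \Omega_{R'[y]}$ by the split sequences for polynomial extensions.

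For the polynomial step, $(-)^! = \bigwedge^m\Omega_{R[y]/R}[m]\otimes(-)$ visibly commutes with $\otimes_R R'$. By the 2-functoriality $\psi$ of \autoref{eq:psidef}, we are therefore reduced to the case where $\pi$ is surjective. Then $\pi^! = \RHom_R(A,-)$ and $\pi'^! = \RHom_{R'}(A,-)$.

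\textbf{Step 2: the finite (surjective) case.} Because $R$ is regular and Noetherian, $A$ has a finite resolution by finite free $R$-modules, so $A \in \Perf(R)$. Hence, for $M=\omega_R^{\mydot}$, flat base change for $\RHom$ of a perfect complex gives
\[
\RHom_R(A,M)\otimes_R R' \simeq \RHom_{R'}(A\otimes^{\myL}_R R', M\otimes_R R').
\]
Flatness of $f$ together with (b) gives $A\otimes^{\myL}_R R' \simeq A\otimes_R R' \simeq A$. This yields the displayed isomorphism $\pi^!\omega_R^{\mydot}\otimes_R R' \simeq \pi'^!(\omega_R^{\mydot}\otimes_R R')$.

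Next, condition (c) gives $\omega_R^{\mydot}\otimes_R R'\cong \omega_{R'}^{\mydot}$ after taking top exterior powers. One caveat is that the ranks of $\Omega$ must agree componentwise, which again follows from (c). Finally, the left side equals $\pi^!\omega_R^{\mydot}\otimes_A (A\otimes_R R') = \pi^!\omega_R^{\mydot}$, since the $R$-module structure factors through $A$ and $A\otimes_R R'=A$. This gives $\pi^!\omega_R^{\mydot}\cong \pi'^!\omega_{R'}^{\mydot}$.

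\textbf{Expected obstacle and the example cases.} I expect the main subtle point to be Step 1: checking that the smooth-step identifications and the maps $\psi$ are compatible with $\otimes_R R'$, so that the reduction really produces an isomorphism of $A$-complexes. By contrast, the finite case is just flat base change for perfect complexes.

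It remains to verify that completion along an ideal $I\subseteq\ker\pi$ and localization at a set mapping to units satisfy (a)–(c).
\begin{itemize}
\item For completion, (a) holds because regular $F$-finite rings are excellent, so completion is flat and geometrically regular. For (b), $A$ is $I$-torsion-free, in fact killed by $I$, so $A\otimes\widehat R = \widehat A = A$. For (c), use \autoref{prop.OmegaAndLocCompl}.
\item For localization, (a) is standard, (b) holds since the multiplicative set already acts invertibly on $A$, and (c) is again \autoref{prop.OmegaAndLocCompl}.
\end{itemize}
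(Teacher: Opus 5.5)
Your argument is correct, but it proves the key base-change isomorphism $\pi^!(-)\otimes_R R' \simeq \pi'^!(-\otimes_R R')$ by hand. The paper instead simply cites the general flat base change theorem for $(-)^!$, \cite[Theorem 4.8.3]{LipmanHashimotoFoundationsOfGDualityForDiagrams}, which covers flat maps that are not of finite type, such as completions. The rest of the argument is the same as yours: the paper uses (b) to identify $\pi^!\omega_R^{\mydot}\otimes_R R'$ with $\pi^!\omega_R^{\mydot}$ in $D(A)$, and (c) to replace $\omega_R^{\mydot}\otimes_R R'$ by $\omega_{R'}^{\mydot}$.

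Your route has three steps. You base-change a factorization $R\to R[y]\twoheadrightarrow A$ to $R'\to R'[y]\twoheadrightarrow A$. You handle the smooth step with the explicit formula. For the surjective step you use that $A$ is perfect over the regular ring $R[y]$ (finite Krull dimension by Kunz), so $\RHom_{R[y]}(A,-)$ commutes with flat base change. This is elementary and stays inside the explicit Hartshorne--Conrad framework. It also shows that only flatness of $f$ together with (b) and (c) is needed; geometric regularity is never used.

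What your route does not give is canonicity. Your isomorphism a priori depends on the chosen generators of $A$ over $R$. The cited theorem supplies a pseudofunctorial base-change isomorphism, and that is what the compatibilities sketched at the end of the paper would require. The lemma as stated only asserts existence, though. So the compatibility of $\psi$ with $\otimes_R R'$ that you flag as the main obstacle is not actually needed: you are just composing isomorphisms.

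Two small slips, neither of which affects the argument:
\begin{itemize}
\item Perfectness gives a finite resolution by finitely generated \emph{projective} modules; a finite \emph{free} resolution need not exist globally.
\item In the completion example, $A$ is killed by $I$ (so $I$-torsion), not $I$-torsion-free.
\end{itemize}
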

\begin{proof}
    To see the first isomorphism in the result, notice that $\pi^! (\omegacan{R})$  can be viewed in $D(A)$ and use \autoref{xi'.cond.TensorIsom}.  
    
    We now prove the second isomorphism.  We observe that $\omegacan{R} \otimes_R R' \cong \omegacan{R'}$ by \autoref{prop.OmegaAndLocCompl}.  We then obtain an isomorphism 
    \[
        \pi^!(-) \otimes_R R' \to {\pi'}^!(- \otimes_R R')
    \]
    from
    \cite[Theorem 4.8.3 using Definition 4.8.2(d)]{LipmanHashimotoFoundationsOfGDualityForDiagrams} (some special cases which do not include completion can also be found in \cite[VI, Theorem 5.5]{HartshorneResidues} or \cite{ConradGDualityAndBaseChange}).
    We see that 
    \[
         \pi^! \omegacan{R} \otimes_R R' \cong \pi'^!(\omegacan{R} \otimes_R R') 
    \]
    This finishes the proof.
    %This proves part of the result.  $\pi^! (\omegacan{R})$ can be viewed in $D(A)$, we see that \autoref{xi'.cond.TensorIsom} implies that $\pi^! (\omegacan{R}) \otimes_R R' \cong ???$ as desired.
    %\todo[inline]{Karl working here.}
\end{proof}

\begin{remark}
    \label{rem.LipmanDualityOk}
    There is a subtlety in the above proof that we wish to address.  In the above proof we cited a fact about $f^!$ and base change from \cite{LipmanHashimotoFoundationsOfGDualityForDiagrams}.  
    Suppose we have a finite type map of $F$-finite rings $A \xrightarrow{f} B$.  We can form the functor $f^!$, say on $D_{\mathrm{coh}}^+$ ,in various different ways.  We can use the factorization into smooth maps and closed immersions as above (the strategy found in Hartshorne and Conrad).  Alternately, we can compactify these maps to proper maps (the strategy of Verdier, Deligne, etc. see also \cite{LipmanHashimotoFoundationsOfGDualityForDiagrams}, \cite[\href{https://stacks.math.columbia.edu/tag/0DWE}{Tag 0DWE}]{stacks-project}) to construct these functors.  
   
    It is common to see that these two $f^!$ agree in the concrete settings we needed above, see \cite[\href{https://stacks.math.columbia.edu/tag/0AA2}{Tag 0AA2}]{stacks-project} and \cite[\href{https://stacks.math.columbia.edu/tag/0AA1}{Tag 0AA1}]{stacks-project}.  However, that is not enough if we wanted to develop the full theory; see the discussion after \autoref{prop.DualizingComplexesIsomorphicForArbitraryFFiniteRing}.
\end{remark}

\begin{proposition}
    \label{prop.DualizingComplexesIsomorphicForArbitraryFFiniteRing}
    Suppose that $\pi : R \to A$ and $\tau : S \to A$ are finite type maps from $F$-finite regular rings to $A$.  Pick a $p$-generating set $\overline{x}$ for $R$, and likewise $\overline{y}$ for $S$.  Choose $\tld x$ and $\tld y$ respectively, elements of $A$, so that $x := {\pi}(\overline{x}), \tld{x}$ and $y := {\tau}(\overline{y}), \tld{y}$ are $p$-generating sets of $A$.  We form the following diagram:
\[
    \xymatrix@R=50pt{
        G(R;\overline{x}) \ar@/_1pc/[drr]_{\overline\pi} \ar@{->>}[d]_u \ar[r]^-{\tld f} & G(R;\overline{x})[\tld X][Y] \ar[dr]^{\tld \pi} \ar[r]^-{\overline{p}} & G(A;x,y) \ar@{->>}[d]^{\kappa}  & \ar[l]_-{\overline{q}} G(S; \overline{y})[\tld Y][X] \ar[dl]_{\tld \tau} & \ar[l]_-{\tld g} G(S; \overline{y}) \ar@{->>}[d]^v \ar@/^1pc/[dll]^{\overline\tau} \\
        R \ar[rr]_{{\pi}} & & A & & \ar[ll]^{{\tau}} S
    }
\]
where $\tld X \mapsto \tld x$, $\tld Y \mapsto \tld y$, and $X \mapsto x$, $Y \mapsto y$.
Note $\overline{p}$ and $\overline{q}$ are completion maps, and $\tld f$ and $\tld g$ are adjoining variables.
Then we have an isomorphism:
\[ 
    \pi^! \omegacan{R} \to \tau^! \omegacan{S}.
\]
\end{proposition}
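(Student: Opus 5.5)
The plan is to pass both $\pi^!\omegacan{R}$ and $\tau^!\omegacan{S}$ through the single regular ring $G(A;x,y)$ and its surjection $\kappa$, and to show each equals $\kappa^!\omegacan{G(A;x,y)}$. By symmetry of the diagram, it suffices to construct a canonical isomorphism $\pi^!\omegacan{R}\cong \kappa^!\omegacan{G(A;x,y)}$ along the left half. The same argument on the right half gives $\tau^!\omegacan{S}\cong\kappa^!\omegacan{G(A;x,y)}$, and composing the first with the inverse of the second yields the desired isomorphism.

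For the left half I would proceed in three steps.

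First, compare $R$ with $G(R;\overline{x})$. Since $R$ is regular with $p$-generating set $\overline{x}$, the map $u$ is the counit of \autoref{thm.gabbifyIsRightAdjointCompletion}. Locally $\overline{x}$ may fail to be a $p$-basis, so \autoref{gabbify_first_step_keyFacts}(a) does not apply directly. Instead, $u$ is a surjection of $F$-finite regular rings. Then $\overline{\pi}=\pi\circ u$ is finite type, and \autoref{thm.CanDualRegPullsback} together with the $2$-functoriality $\psi$ gives
\[
\overline{\pi}^!\omegacan{G(R;\overline x)}\cong \pi^!u^!\omegacan{G(R;\overline x)}\cong \pi^!\omegacan{R}.
\]

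Second, apply \autoref{thm.CanDualRegPullsback} to the polynomial extension $\tld f$, which gives $\omegacan{G(R;\overline x)[\tld X][Y]}\cong \tld f^!\omegacan{G(R;\overline x)}$. Hence
\[
\tld\pi^!\omegacan{G(R;\overline x)[\tld X][Y]}\cong \tld\pi^!\tld f^!\omegacan{G(R;\overline x)}\cong\overline\pi^!\omegacan{G(R;\overline x)},
\]
using $\tld\pi\circ\tld f=\overline\pi$ and $\psi_{\tld\pi,\tld f}$.

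Third, handle the completion $\overline{p}$. Here $G(R;\overline x)[\tld X][Y]$ is a regular ring with $p$-basis $\overline{x}_\infty,\tld X,Y$, and it surjects onto $A$ via $\tld\pi$, sending this $p$-basis bijectively onto the $p$-generating set $x,y$. (One should double-check that the intended $p$-generating set of $G(A;x,y)$ matches this tuple; $\tld x$ appears via $x=\pi(\overline x),\tld x$.) So \autoref{lem.UniversalGabberViaCompletion} identifies $\overline p$ with the completion along $\ker\tld\pi$. Therefore \autoref{lem.TwoRegularRingsLocalizationCompletion} applies with $f=\overline p$, $\pi'=\kappa$: conditions (a)–(c) hold for a completion along an ideal in the kernel, using \autoref{prop.OmegaAndLocCompl}. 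This gives $\tld\pi^!\omegacan{G(R;\overline x)[\tld X][Y]}\cong\kappa^!\omegacan{G(A;x,y)}$. Chaining the three steps gives $\pi^!\omegacan{R}\cong\kappa^!\omegacan{G(A;x,y)}$.

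I expect the main obstacle to be the last step. \autoref{lem.UniversalGabberViaCompletion} is stated for a regular ring with a $p$-basis, but $G(R;\overline x)[\tld X][Y]$ has a $p$-basis only if the completion step is set up properly. I would first verify that $G(R;\overline x)$ has the $p$-basis $\overline x_\infty$ by \autoref{gabbify}(b), and then check that adjoining variables preserves $p$-bases. A second concern is the applicability of Lipman's flat base change for the non-finite-type map $\overline p$, which is exactly the subtlety flagged in \autoref{rem.LipmanDualityOk}. Since only an abstract isomorphism is claimed, and no compatibilities, I would accept the citation there and not attempt to verify independence of choices.
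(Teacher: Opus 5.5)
Your chain of isomorphisms is the paper's proof step for step: $u$ via the lci isomorphism of \autoref{defn.lciCase}, $\tld f$ via the polynomial case, $\overline p$ via \autoref{lem.TwoRegularRingsLocalizationCompletion}, then the symmetric argument for $S$, composing through $\kappa^!\omegacan{G(A;x,y)}$. Your worry about a $p$-basis is unfounded. Adjoining variables to $G(R;\overline x)$ turns the $p$-basis $\overline x_\infty$ into the $p$-basis $\overline x_\infty,\tld X,Y$ of $T:=G(R;\overline x)[\tld X][Y]$.

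\textbf{The gap: surjectivity of $\tld\pi$.} The step that fails as written is your assertion that $\tld\pi\colon T\to A$ is surjective. This is the hypothesis of \autoref{lem.UniversalGabberViaCompletion} you actually need.

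\emph{Why it can fail.} The image of $\tld\pi$ is $\pi(R)[\tld x,y]$. Since $\pi$ is only of finite type, $p$-generation of $x$ and $y$ does not force this image to be $A$.

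\emph{Example.} Take $R=S=\mathbb{F}_p[u]\subset A=\mathbb{F}_p[u,v]$, with $\pi=\tau$ the inclusion, $\overline x=\overline y=(u)$ and $\tld x=\tld y=(v+v^p)$. Then $x=y=(u,v+v^p)$ is $p$-generating, because $v=(v+v^p)-v^p$. Yet $\tld\pi(T)=B:=\mathbb{F}_p[u,v+v^p]\neq A$.

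\emph{Consequences.} In this example $\overline p$ is not a completion map, and hypothesis (b) of \autoref{lem.TwoRegularRingsLocalizationCompletion} fails. Indeed $A\otimes_T G(A;x,y)\cong A\otimes_B A$, which is free of rank $p$ over $A$, since $\mathbb{F}_p[v]$ is finite \'etale of degree $p$ over $\mathbb{F}_p[v+v^p]$. The paper's proof passes over exactly the same point when it asserts that $\overline p$ and $\overline q$ are completion maps.

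\emph{Repair.} Only the existence of an isomorphism is claimed, so it suffices to choose $\tld x$ and $\tld y$ well.
\begin{itemize}
\item Let $\tld x$ contain a set of $R$-algebra generators of $A$ (possible since $\pi$ is of finite type), and let $\tld y$ contain a set of $S$-algebra generators of $A$.
\item Then $x$ is still $p$-generating, because $\pi(R)=\pi(R)^p[\pi(\overline x)]\subseteq A^p[\pi(\overline x)]$; the same holds for $y$.
\item Now $\tld\pi$ and $\tld\tau$ are surjective.
\end{itemize}
With this choice your third step goes through verbatim.
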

\begin{proof}
    We know $\omegacan{R} \cong u^! \omegacan{G(R; \overline{x})}$ thanks to \autoref{thm.CanDualRegPullsback} or more precisely \autoref{defn.lciCase}.  We then obtain 
    \[
        \pi^! \omegacan{R} \cong \pi^! u^! \omegacan{G(R; \overline{x})} \cong \overline{\pi}^! \omegacan{G(R; \overline{x})}.
    \]
    Arguing similarly, as $\tld{f}$ is simply adjoining variables, we see that that this also agrees with $\tld{\pi}^! \omegacan{G(R; \overline{x})[\tld X][Y]}$ thanks to \autoref{thm.CanDualRegPullsback} or \autoref{defn:SmoothMapsUpperShriekPullbackForRegularRings}.  Finally, using \autoref{lem.TwoRegularRingsLocalizationCompletion} for the composition
    \[
        G(R; \overline{x})[\tld X][Y] \to G(A;x,y) \to A
    \]
    we see that $\pi^! \omegacan{R} \cong \tld{\pi}^! \omegacan{G(R; \overline{x})[\tld X][Y]} \to \kappa^! \omegacan{G(A;x,y)}$ is an isomorphism.  The same holds for $S$ and hence the result is proven.
\end{proof}

This is not nearly enough, unfortunately, to complete the proof that one can construct well behaved dualizing complexes globally as done in the regular case above.  First, one must prove that the isomorphism just obtained in \autoref{prop.DualizingComplexesIsomorphicForArbitraryFFiniteRing} is independent of choices.  One must also show it satisfies numerous compatibilities.  While we will sketch what we think needs to be done in \autoref{rem.WhatShouldBeDone} below, we will not carry  out the details.  Instead, using the perspective on dualizing complexes developed in the following \autoref{sec.DualizingComplexesAsUnits}, one can show by different methods that every Noetherian $F$-finite scheme $X$ has a dualizing complex, that restricts to the dualizing complex described above on affine charts, and such that for any finite type map $h : Y \to X$, one has $h^! \omegacan{X} \cong \omegacan{Y}$, as desired.

We now explain how we expect that the proof that $\omegacan{X}$ can be constructed for arbitrary $F$-finite Noetherian schemes and that the expected compatibilities hold, using the methods above, and of \cite{HartshorneResidues,ConradGDualityAndBaseChange,LipmanHashimotoFoundationsOfGDualityForDiagrams}, could be used to construct dualizing complexes on $F$-finite Noetherian schemes.  

\begin{remark}
    \label{rem.WhatShouldBeDone}
    Consider $\pi : R \xrightarrow{f} R' \xrightarrow{\pi'} A$ where $f$ is of finite type or satisfies the conditions of \autoref{lem.TwoRegularRingsLocalizationCompletion} and $\pi$ is of finite type.  One must show that the resulting isomorphism $\pi^! \omegacan{R} \to \pi'^! \omegacan{R'}$  commutes with certain base changes and behaves well with respect to extensions $R \xrightarrow{f} R'\xrightarrow{g} R'' \to A$ where $f,g$ both satisfy the conditions of \autoref{lem.TwoRegularRingsLocalizationCompletion}.  
    
    Now then, define $\chi_{x \leftarrow y}$ to be the isomorphism of \autoref{prop.DualizingComplexesIsomorphicForArbitraryFFiniteRing}, and observe it is a composition of isomorphisms 
    from \autoref{defn.lciCase} (see \autoref{thm.CanDualRegPullsback}) and \autoref{lem.TwoRegularRingsLocalizationCompletion} and their inverses.  Using the compatibilities for these maps, one should show that $\chi_{x \leftarrow y}$ is compatible with composition ($\chi_{z\leftarrow x} = \chi_{z \leftarrow y} \circ \chi_{y \leftarrow x}$) and also base changes well.  

    At that point, one can glue the resulting dualizing complexes on a given $X$ as in \cite[\href{https://stacks.math.columbia.edu/tag/0AU5}{Tag 0AU5}]{stacks-project}, see \cite[Remark on page 310]{HartshorneResidues}, \cite{BBD}. Compatibility of $\chi_{x \leftarrow y}$ with composition and localization should suffice for this.  Using the fact that $\omegacan{R}$ is defined via $R \to A$ of finite type and not necessarily surjective should allow one the necessary flexibility to prove that if $h : Y \to X$ is finite type, then $\omegacan{Y} = h^! \omegacan{X}$.   
\end{remark}

There is a technical issue that must be addressed if one is following the above outline.

\begin{remark}
    Following \autoref{rem.LipmanDualityOk}, we observe that if we are given finite type maps $A \xrightarrow{f} B \xrightarrow{g} C$ the two theories of $(-)^!$ described above produce a priori different isomorphisms
    \[  
        g^! f^! \simeq (gf)^!.
    \]  
    This is problematic since the isomorphism constructed in \cite{HartshorneResidues,ConradGDualityAndBaseChange} was only shown to be compatible with base change for the ``residually stable'' case, which completion maps do not satisfy. But we do need compatibility with completion.  Indeed, we need compatibilities of the isomorphism produced in \autoref{lem.TwoRegularRingsLocalizationCompletion} which mix the two compatibilities of upper shrieks. 

    Joseph Lipman explained to us how one should overcome this difficulty and in particular, that there is a pseudo-functorial isomorphism between the different constructions of these pseudo-functors $(-)^!$, at least when restricted to $D_{\mathrm{coh}}^+$.  
    
    We sketch his argument now although have not carried out the details.  By \cite[Theorem 4.8.1]{LipmanHashimotoFoundationsOfGDualityForDiagrams}, the pseudo-functor $(-)^!$ is uniquely determined up to isomorphism on the category of separated finite type maps, by its behavior in certain circumstances (the necessary statements for proper and \'etale maps in (i) and (ii) of \cite[Theorem 4.8.1]{LipmanHashimotoFoundationsOfGDualityForDiagrams} are readily found in \cite[VII, 3.4(c) and 3.4(a) VAR 3]{HartshorneResidues}, but (iii) requires more work).  Lipman explained to us that \emph{everything in \cite[Section 4.8]{LipmanHashimotoFoundationsOfGDualityForDiagrams} is valid after one substitutes ``open immersion'' for ``\'etale''}.  Assuming that has been verified (which we did not do), one obtains that various isomorphisms and compatibilities in \cite{HartshorneResidues} satisfy the conditions we believe are needed to carry out the proof.
\end{remark}

\bibliographystyle{alpha}
\bibliography{literatur}
\end{document}